\definecolor{colorlinks}{RGB}{0, 24, 168}
\definecolor{colorcites}{RGB}{124, 10, 2}
\newcommand{\lrangle}[1]{\langle #1 \rangle}
\newcommand{\given}{\, |\,}
\newcommand{\bgiven}{\, \big|\,}
\newcommand{\ind}{\mathds{1}}
\renewcommand{\section}{\@startsection{section}{3}%
	\z@                     % Indent (none)
	{\baselineskip}         % Space before (exactly one line)
	{0.5\baselineskip}                  % Space after
	{\normalfont\large\scshape\bfseries\centering}} % Bold font
\renewcommand{\subsection}{\@startsection{subsection}{3}%
	\z@                     % Indent (none)
	{\baselineskip}         % Space before (exactly one line)
	{0.5\baselineskip}                  % Space after
	{\normalfont\large\bfseries}} % Bold font
\renewcommand{\subsubsection}{\@startsection{subsubsection}{3}%
	\z@                     % Indent (none)
	{\baselineskip}         % Space before (exactly one line)
	{-1em}                  % Space after (negative = run-in, 1em horizontal gap)
	{\normalfont\normalsize\bfseries}} % Bold font
\newcommand{\N}{\mathbb{N}}
\newcommand{\R}{\mathbb{R}}
\newcommand{\Z}{\mathbb{Z}}
\newcommand{\bbE}{\mathbb{E}}
\newcommand{\bbG}{\mathbb{G}}
\newcommand{\bbH}{\mathbb{H}}
\newcommand{\bbL}{\mathbb{L}}
\newcommand{\bbS}{\mathbb{S}}
\newcommand{\bbV}{\mathbb{V}}
\newcommand{\calA}{\mathcal{A}}
\newcommand{\calC}{\mathcal{C}}
\newcommand{\calE}{\mathcal{E}}
\newcommand{\calF}{\mathcal{F}}
\newcommand{\calK}{\mathcal{K}}
\newcommand{\calP}{\mathcal{P}}
\newcommand{\calR}{\mathcal{R}}
\newcommand{\calT}{\mathcal{T}}
\newcommand{\calU}{\mathcal{U}}
\newcommand{\calW}{\mathcal{W}}
\newcommand{\rmb}{\mathrm{b}}
\newcommand{\rmd}{\mathrm{d}}
\newcommand{\rme}{\mathrm{e}}
\newcommand{\rmf}{\mathrm{f}}
\newcommand{\rmi}{\mathrm{i}}
\newcommand{\rmw}{\mathrm{w}}
\newcommand{\rmC}{\mathrm{C}}
\newcommand{\rmE}{\mathrm{E}}
\newcommand{\rmT}{\mathrm{T}}
\newcommand{\fcone}{\mathcal{Y}^\blacktriangleleft}
\newcommand{\bcone}{\mathcal{Y}^\blacktriangleright}
\newcommand{\bend}{\mathbf{b}}
\newcommand{\fend}{\mathbf{f}}
\newcommand{\diam}{\mathrm{Diamond}}
\newcommand{\CPts}{\textnormal{CPts}}
\newcommand{\rCPts}{\mathrm{rCPts}}
\newcommand{\Skel}{\mathrm{Sk}}
\newcommand{\concatenate}{\circ}
\newcommand{\displace}{\mathrm{X}}
\newcommand{\irr}{\mathrm{\scriptscriptstyle ir}}
\newcommand{\SetRootMarkBackCont}{\mathfrak{B}_L}
\newcommand{\SetRootMarkForwCont}{\mathfrak{B}_R}
\newcommand{\SetRootDiaCont}{\mathfrak{A}}
\newcommand{\SetRootMarkBackContFV}{\mathfrak{B}_{L,n}^{\mathrm{good}}}
\newcommand{\SetRootMarkForwContFV}{\mathfrak{B}_{R,n}^{\mathrm{good}}}
\newcommand{\MixMeas}{\mathrm{MixMe}}
\newcommand{\LinInt}{\mathrm{LI}}
\newcommand{\bbP}{\mathbb{P}}
\newcommand{\CovMat}{\mathrm{D}}
\newcommand{\slab}{\mathcal{S}}
\newcommand{\slabCP}{\slab\CPts}
\newcommand{\goodCl}{\mathrm{GCl}}
\newcommand{\scale}{\mathrm{sc}}
\newcommand{\crossCl}{\mathcal{C}_{\mathrm{cross}}}
\newcommand{\rectangle}{\mathrm{rect}}
\newcommand{\FVBox}{\mathcal{R}}
\newcommand{\feCst}{\alpha}
\newcommand{\stateSwapCst}{c_{\mathrm{swap}}}
\newcommand{\doubleCross}{\mathrm{DoubleCross}}
\newcommand{\OZDecompmeas}{\mathrm{OZDec}}
\newcommand{\OZmeas}{\mathrm{OZ}}
\newcommand{\OZWalkmeas}{\mathrm{OZwalk}}
\newcommand{\restrict}{\mathrm{restr}}
\newcommand{\Env}{\mathrm{Env}}
\newcommand{\OutBnd}{\mathrm{OutBnd}}
\newcommand{\Fill}{\mathrm{Fill}}
\newcommand{\Forget}{\mathrm{Forget}}
\newcommand{\hf}{\mathtt{HF}}
\newcommand{\spin}{\mathtt{Spin}}
\newcommand{\at}{\mathtt{AT}}
\newcommand{\atrc}{\mathtt{ATRC}}
\newcommand{\matrc}{\mathtt{mATRC}}
\newcommand{\fk}{\mathtt{FK}}
\newcommand{\potts}{\mathtt{Potts}}
\newcommand{\edwardSokal}{\mathtt{ES}}
\newcommand{\fkfree}{\mathrm{f}}
\newcommand{\fkwired}{\mathrm{w}}
\newcommand{\atrcfree}{0}
\newcommand{\atrcwired}{1}
\newcommand{\clusters}{\kappa}
\newcommand{\clusterSet}{\mathrm{cl}}
\newcommand{\loops}{\mathrm{loop}}
\newcommand{\tvd}{\mathrm{d}_{\mathrm{TV}}}
\renewcommand{\diameter}{\mathrm{diam}}
\newcommand{\svc}{\mathbf{c}}
\newcommand{\svcb}{\mathbf{c}_{\mathrm{b}}}
\newcommand{\partialex}{\partial^{\mathrm{ex}}}
\newcommand{\partialin}{\partial^{\mathrm{in}}}
\newcommand{\partialedge}{\partial^{\mathrm{edge}}}
\def\Hloop#1#2{
\draw[blue] ({#1 + cos(-45)/(2*sqrt(2))},{#2+0.5 + sin(-45)/(2*sqrt(2))}) arc (-45:-135:{1/(2*sqrt(2))}) ;
\draw[blue] ({#1 + cos(45)/(2*sqrt(2))},{#2-0.5 + sin(45)/(2*sqrt(2))}) arc (45:135:{1/(2*sqrt(2))}) ;
}
\def\Vloop#1#2{
\draw[blue] ({#1 + 0.5 + cos(135)/(2*sqrt(2)) },{#2 + sin(135)/(2*sqrt(2))}) arc (135:225:{1/(2*sqrt(2))}) ;
\draw[blue] ({#1 - 0.5 + cos(-45)/(2*sqrt(2))},{#2 + sin(-45)/(2*sqrt(2))}) arc (-45:45:{1/(2*sqrt(2))}) ;
}
\def\DrawTile#1#2#3{
\draw[#3] ({#1-0.5},{#2}) -- ({#1},{#2+0.5}) -- ({#1+0.5},{#2}) -- ({#1},{#2-0.5}) -- ({#1-0.5},{#2}) ;
}
\def\Hedge#1#2#3{
\draw[#3] ({#1-0.5},#2) -- ({#1+0.5},#2) ;
}
\def\Vedge#1#2#3{
\draw[#3] (#1,{#2-0.5}) -- (#1,{#2+0.5}) ;
}
\def\UParrow#1#2#3{
\draw[#3] ({#1-0.1},{#2-0.1}) -- ({#1},{#2}) -- ({#1+0.1},{#2-0.1}) ;
}
\def\DOWNarrow#1#2#3{
\draw[#3] ({#1-0.1},{#2+0.1}) -- ({#1},{#2}) -- ({#1+0.1},{#2+0.1}) ;
}
\def\LEFTarrow#1#2#3{
\draw[#3] ({#1+0.1},{#2-0.1}) -- ({#1},{#2}) -- ({#1+0.1},{#2+0.1}) ;
}
\def\RIGHTarrow#1#2#3{
\draw[#3] ({#1-0.1},{#2-0.1}) -- ({#1},{#2}) -- ({#1-0.1},{#2+0.1}) ;
}
\theoremstyle{plain}
\newtheorem{lemma}{Lemma}[section]
\newtheorem{theorem}[lemma]{Theorem}
\newtheorem{proposition}[lemma]{Proposition}
\newtheorem{corollary}[lemma]{Corollary}
\newtheorem{remark}[lemma]{Remark}
\newtheorem{claim}[lemma]{Claim}
\theoremstyle{definition}
\newtheorem{definition}[lemma]{Definition}
\newif\ifpic
\title[Order-Disorder Interface in 2D Potts]{Discontinuous transition in 2D Potts:\\ I. order-disorder interface convergence}
\author{Moritz Dober}
\address{Fakultät für Mathematik, Universität Wien, Vienna, Austria}
\email{moritz.dober@univie.ac.at}
\author{Alexander Glazman}
\address{Universität Innsbruck, Innsbruck, Austria}
\email{alexander.glazman@uibk.ac.at}
\author{Sébastien Ott}
\address{Institute of Mathematics, EPFL, 1015 Lausanne, Switzerland}
\email{ott.sebast@gmail.com}
\date{\today}
\begin{document}

\begin{abstract}
	We study a $q$-state Potts model on the square grid when~$q>4$ at the point~$T_c(q)$ of its (discontinous) transition.
	This model exhibits exactly~$q+1$ extremal Gibbs measures: $q$ ordered (monochromatic) and one disordered (free).
	The current work deals with the Dobrushin order--disorder boundary conditions on a finite~$N\times N$ box.
	Our main result is that this interface is a well-defined object, has \(\sqrt{N}\) fluctuations, and converges to a Brownian bridge under diffusive scaling.
	The same holds also for the corresponding FK-percolation model for all~$q>4$.

	Our proofs rely on a coupling between FK-percolation, the six-vertex model, and the random-cluster representation of an Ashkin--Teller model (ATRC), and on a detailed study of the latter.
	The coupling relates the interface in FK-percolation to a long subcritical cluster in the ATRC model.
	For this cluster we develop a ``renewal picture'' \emph{à la} Ornstein-Zernike.
	This is based on fine mixing properties of the ATRC model that we establish using the link to the six-vertex model and its height function.
	Along the way, we derive various properties of the Ashkin-Teller model, such as Ornstein-Zernike asymptotics for its two-point function.
	
	In a companion work, we provide a detailed study of the Potts model under order-order Dobrushin conditions. 
	We show emergence of a free layer of width $\sqrt{N}$ between the two ordered phases ({\em wetting}) and establish convergence of its boundaries to two Brownian bridges conditioned not to intersect.
\end{abstract}

\maketitle

\setcounter{tocdepth}{1}
\tableofcontents

\newpage

\section{Introduction and results}
\label{sec:intro}

The Potts model is a classical model of statistical mechanics introduced in 1952~\cite{Pot52}.
Each vertex of a graph is assigned one of $q$ states (colours), with states of adjacent vertices interacting with a strength depending on the temperature $T>0$ of the system. 
At~$q=2$, this corresponds to the seminal Ising model.
The Potts model becomes increasingly ordered as the temperature decreases, and a phase transition occurs on lattices $\Z^{d}$ with $d \geq 2$ at some transition temperature \(T_c(q,d)>0\). 
Depending on $q$ and $d$, the transition is either discontinuous (first-order) or continuous (higher-order).
Such a rich behaviour has brought a lot of attention to the Potts model.

\subsubsection*{Interface in the planar Potts model.}

Our work is restricted to dimension~$d=2$.
In this case, planar duality and a correlation inequality (available when~$q\geq 1$) have allowed to a watershed of progress in the phase diagram of the Potts model in the last two decades:
\begin{itemize}
	\item the transition occurs at the self-dual point~$T_c(q):=[\ln (1+\sqrt{q})]^{-1}$~\cite{BefDum12a};
	\item the transition is continuous when~$q =2,3,4$, in a sense that, at~$T_c(q)$, there is a unique Gibbs measure~\cite{DumSidTas17} (see also~\cite{GlaLam23} for another argument);
	\item the transition is discontinuous when~$q > 4$~\cite{DumGagHar21} (see also~\cite{RaySpi20} for a short proof), and any Gibbs measure can be written as a linear combination of~$q+1$ extremal Gibbs measures ($q$ monochromatic and one free)~\cite{GlaMan23}.
\end{itemize}
We focus on discontinuous transitions ($q>4$) and study interfaces at~$T_c(q)$ separating different states.
The structure of extremal Gibbs measures (described above) leads to two natural definitions of Dobrushin boundary conditions:
\begin{itemize}
	\item {\em order-disorder}: one half of the boundary is of a fixed colour and the other one is free (no colour assigned);
	\item {\em order-order}: both halves of the boundary are assigned different fixed colours.
\end{itemize}
The phenomenology is quite different in the two cases, see Fig.~\ref{Fig:Potts_interface_simul}.

\begin{figure}
	\includegraphics[scale=0.13]{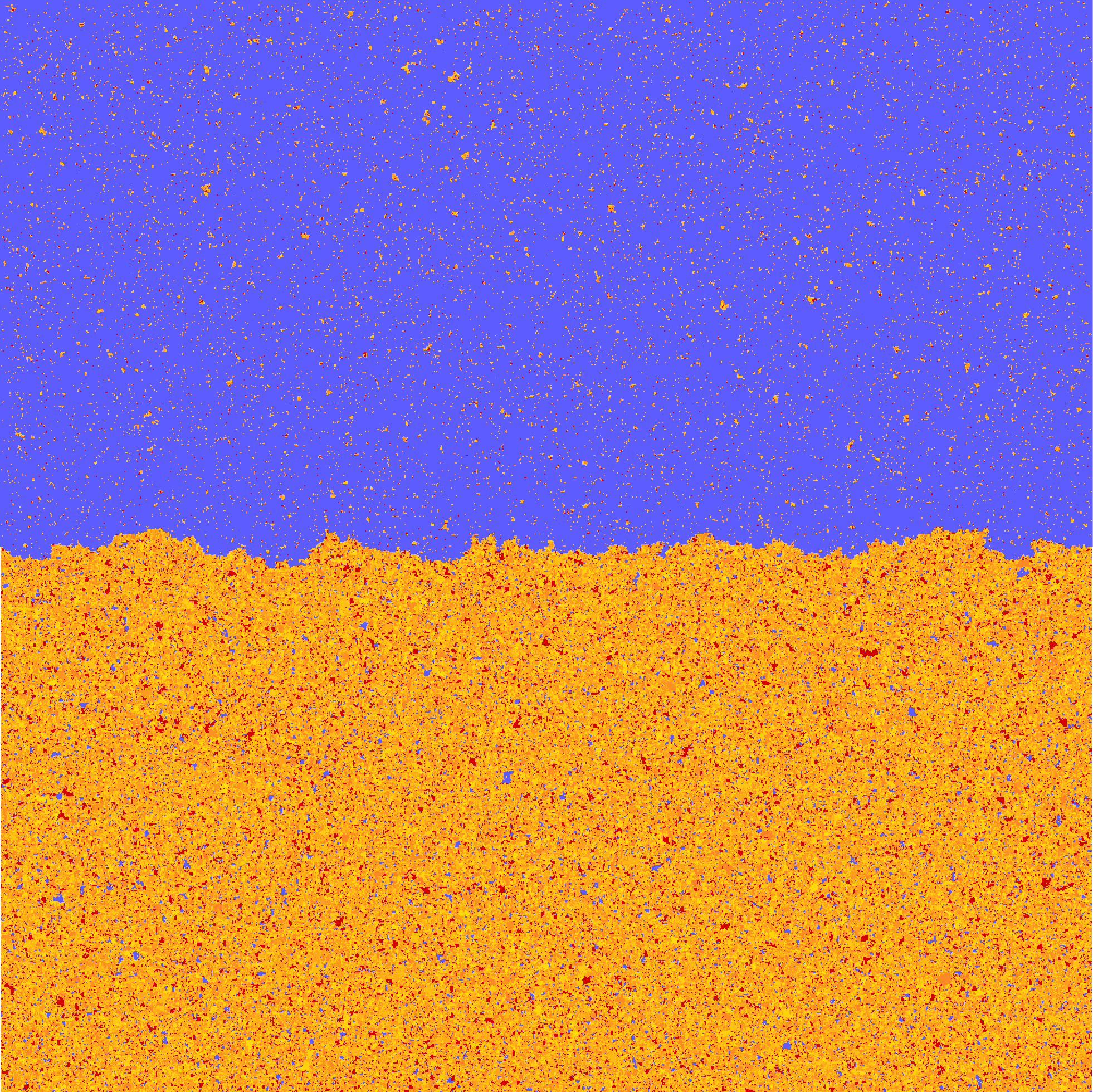}
	\hspace*{0.4cm}
	\includegraphics[scale=0.13]{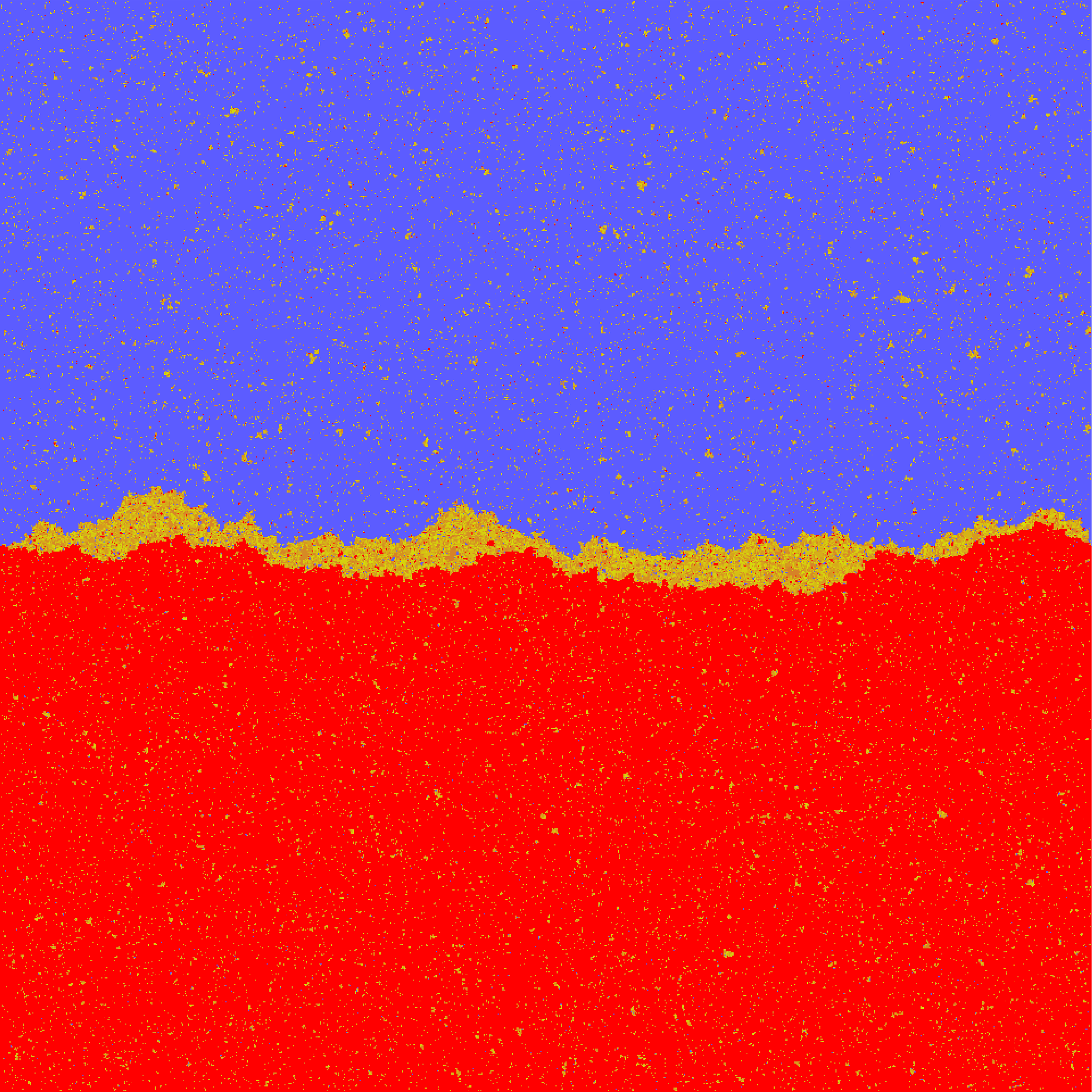}
	\caption{Sample of a 1000x1000 Potts model with 25 colours at \(T_c(25)\) with Dobrushin boundary conditions. Colours are: blue for the first, red for the second and interpolate between yellow and orange for colours 3 to 25. Left: order-disorder interface; upper part has blue b.c., bottom has white b.c. (no colour favoured). Right: order-order interface; upper part has blue b.c., bottom has red b.c..}
	\label{Fig:Potts_interface_simul}
\end{figure}

The main result of the current paper is convergence of the interface under order-disorder conditions to the Brownian bridge in the diffusive limit.
Results of such precision were previously proven for~$T<T_c(q)$ and the order-order interface. Indeed, planar duality transfers this problem to the study of the typical geometry of long clusters at \(T>T_c\), for which a quasi-renewal structure was developed in~\cite{CamIofVel08}.
We do use similar ideas, but the situation is more involved: duality does not help directly as we work at the transition point with order-disorder boundary conditions, where duality only maps the problem to itself.
Instead, we build on~\cite{BaxKelWu76} and~\cite{GlaPel23} to construct a sequence of combinatorial mappings relating the interface in the FK-percolation to a suitable long subcritical cluster in the Ashkin-Teller model.
We extend the study of~\cite{AouDobGla24} to establish mixing properties in the latter model.
This opens a way to obtain a renewal picture and invariance principle for long subcritical clusters in the ATRC model using the methods developed in~\cite{CamIofVel03,CamIofVel08,AouOttVel24}.
Finally, we transport back the convergence result to the FK and Potts models.

Let us also mention the work~\cite{MesMirSal+1991} that showed~$\sqrt{N}$ fluctuations when~$q$ is taken to be large enough.
Our results imply this for all~$q>4$.
In this generality, the only progress was a recent complete characterisation of Gibbs measures in the planar Potts model~\cite{GlaMan23}.
This work proves that all Gibbs measures are invariant to translations, and this implies that the interface (if defined at all) exhibits diverging fluctuations.

The case of the order-order conditions is addressed in our companion paper~\cite{DobGlaOtt26}: we show emergence of a free layer of width $\sqrt{N}$ between the two ordered phases ({\em wetting}) and establish convergence of its boundaries to a Brownian watermelon (two Brownian bridges conditioned not to intersect).

In particular, we prove that, when~$q>4$, in both cases of Dobrushin conditions, as the mesh of the lattice tends to zero, the scaling limit of the interface in the Potts model is a straight line.
To compare with what happens in the case of continuous phase transition, let us describe the expected behaviour  for $q=2,3,4$.
The mesmerizing physics conjecture from 1980s postulates conformal invariance.
Schramm's~\cite{Sch00} geometric interpretation of this conjecture asserts that as one takes the scaling limit of the Potts model, the interface converges to a random fractal curve called the Schramm--Loewner Evolution.
This has been proven rigorously only at~$q=2$ (Ising model) by Smirnov et al~\cite{Smi10,CheSmi11,CheDumHonKemSmi14}.

\subsubsection*{The ``Ornstein-Zernike'' (OZ) theory.}

It is a (non-rigorous) picture introduced in~\cite{OrnZer14,Zer16} of how correlation functions in various models behave. Their main idea was to postulate a suitable \emph{renewal structure} satisfied by correlation functions, which leads to very precise expressions for the asymptotics of the said correlations.
The first rigorous implementation of this renewal picture was done by Abraham and Kunz~\cite{AbrKun77} using perturbative expansions. The modern approach, which started with the work of Chayes, and Chayes~\cite{ChaCha86} on the self-avoiding walk (SAW) and of Campanino, Chayes, and Chayes~\cite{CamChaCha91} on Bernoulli percolation, is based on creating a renewal structure \emph{à la} OZ for elongated subcritical objects (SAW or percolation clusters containing a distant point). Both these works rely on heavy combinatorial study, and the renewal steps are ``irreducible crossings of slabs''.

This idea was further developed by Ioffe~\cite{Iof98} for the ballistic phase of the self-avoiding walk, introducing key measure-tilting ideas coming from large deviation theory. His strategy was then extended to Bernoulli percolation in~\cite{CamIof02}, to the high-temperature Ising model~\cite{CamIofVel03} and to the FK-percolation~\cite{CamIofVel08}. 
Compared to the cases of SAW and Bernoulli percolation where the measure factorizes nicely, the structure obtained in the case of Ising and FK, while still geometrically being a concatenation of ``irreducible blocs'', is not a real renewal structure. Indeed, it is only a sequence of ``fast mixing kernels'', which study is heavier and performed in~\cite{CamIofVel03}.
The last step to finally obtain a true renewal structure from the fast mixing kernel picture was done in~\cite{OttVel18}.

These works on subcritical clusters of the FK-percolation all take place in any dimensions.
On \(\Z^2\), planar duality allows to rewrite the interface of the Potts model at \(T<T_c(q)\) as a subcritical FK-percolation cluster conditioned to contain \((0,0)\) and \((N,0)\).

Our contribution to this line of works is to derive a ``renewal picture'' for the long clusters of the planar random-cluster representation of the Ashkin--Teller model, for values of the parameters lying on the self-dual line.
This model is then related to the Potts interface using a suitable adaptation of a coupling introduced in~\cite{GlaPel23}.

\vspace*{4pt}

We now formally state our main results for the Potts model, the FK percolation and the Ashkin--Teller model.

\subsection{Potts model}

We view~$\Z^2$ both as a set of points on the plane having integer coordinates and as a graph (square grid) with edges linking points at distance one.
Denote by~$\bbE$ the set of edges in~$\Z^2$ and write~$i\sim j$ if~$\{i,j\}\in\bbE$.
Let~$G=(V,E)$ be a subgraph of~$\Z^2$.
Take parameters $T>0,\,q\in \{2,3,4,\dots\}$, and boundary conditions $\eta\in\{0,1,\dots,q\}^{\bbV}$. 
The Potts model on \(G\) with boundary conditions \(\eta\) is the probability measure on \(\{1,\dots,q\}^{V}\) given by
\begin{equation*}
	\potts_{G;T,q}^{\eta}(\sigma):=
	\tfrac{1}{Z_{{\potts}}} \cdot 
	\exp\Bigg[\tfrac1T \cdot \Bigg(
	\sum_{\{i,j\}\in E} \delta(\sigma_i,\sigma_j) + \sum_{i\in V, j\in V^c:\, i\sim j} \delta(\sigma_i,\eta_j)\Bigg)\Bigg],
\end{equation*}
where~$Z_{{\potts}}=Z_{{\potts}}(G,T,q,\eta)$ is the unique normalising constant (called {\em partition function}) that renders the above a probability measure, and~$\delta(x,y) = 1$ if~$x=y$ and~$\delta(x,y) = 0$ otherwise. Value~\(0\) for~\(\eta\) corresponds to free boundary conditions favoring none of the \(q\) possible states of the spins.

\begin{figure}
	\centering
	\ifpic
	\begin{tikzpicture}[scale=0.6]
		\foreach \i in {-2,...,2}{
			\foreach \j in {-2,...,2}{
				\filldraw[black] (\i,\j) circle (2pt);
			}
		}
		\foreach \i in {-3,...,2}{
			\foreach \j in {0,1,2}{
				\draw[very thick] (\i,\j)--(\i+1,\j) ;
			}
		}
		\foreach \i in {-2,...,2}{
			\foreach \j in {0,1,2}{
				\draw[very thick] (\i,\j)--(\i,\j+1) ;
			}
		}
		\draw[thick, dotted] (-3.5,-0.5)--(3,-0.5);
		\draw (3,-0.5) node[right]{$y=-1/2$} ;
		\draw (0.35,0.35) node{$0$};
	\end{tikzpicture}
	\hspace{2cm}
	\begin{tikzpicture}[scale=0.6]
		\foreach \i in {-2,...,2}{
			\foreach \j in {-2,...,2}{
				\filldraw[black] (\i,\j) circle (2pt);
			}
		}
		\foreach \i in {-2,...,2}{
			\filldraw[black] (\i,3) circle (2pt);
		}
		\foreach \j in {0,1,2}{
			\filldraw[black] (-3,\j) circle (2pt);
			\filldraw[black] (3,\j) circle (2pt);
		}
		\foreach \i in {-3,...,2}{
			\foreach \j in {0,1,2}{
				\draw (\i,\j)--(\i+1,\j) ;
			}
		}
		\foreach \i in {-2,...,1}{
			\foreach \j in {-1,-2}{
				\draw (\i,\j)--(\i+1,\j) ;
			}
		}
		\foreach \i in {-2,...,2}{
			\foreach \j in {-2,...,2}{
				\draw (\i,\j)--(\i,\j+1) ;
			}
		}
		\draw (0.35,0.35) node{$0$};
	\end{tikzpicture}
	\fi
	\caption{Left: wired-free Dobrushin boundary condition. Right: the graph~\(G_{2}\).}
	\label{fig:WiredFree_FK}
\end{figure}
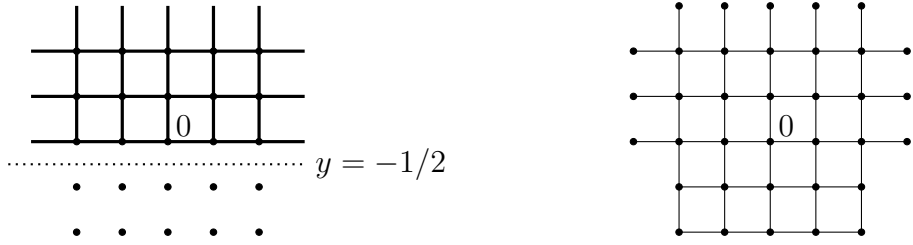

We say that~$\eta$ defines the order-disorder (1-free) Dobrushin boundary conditions (and denote it by~$1/\rmf$) if~$\eta((x,y))=\ind_{\geq 0}(y)$.
Identify~$\Lambda_n:=\{-n,\dots, n\}^2$ with the induced subgraph of~$\Z^2$ on this set of vertices.
The Dobrushin boundary conditions on~$\Lambda_n$ impose existence of an interface between color one and the rest.
This can be made explicit, but for brevity we choose to define directly the upper and lower discrete envelopes of this interface: $\Gamma_{\potts}^+$ and~$\Gamma_{\potts}^-$ respectively.
Given~$\sigma\in\{1,\dots,q\}^{\Lambda_n}$, define~$\bar\sigma \in\{0,1,\dots,q\}^{\Z^2}$ to be its extension outside of~$\Lambda_n$ by the Dobrushin boundary conditions: one on~$\Z\times \Z_{\geq 0}$ and zero on~$\Z\times \Z_{< 0}$. 
For~\(k =-n,\dots, n\), define
\begin{align*}
	\Gamma_{\potts}^{+,n}(k) & := \max\{y\in \Z:\ (k,y-1)\xleftrightarrow{\bar\sigma \neq 1}  (\Z\times \Z_{< 0})\setminus\Lambda_n\},\\
	\Gamma_{\potts}^{-,n}(k) & := \min\{y\in \Z:\ (k,y+1)\xleftrightarrow{\bar\sigma = 1, \text{ diag}}  (\Z\times \Z_{\geq 0})\setminus\Lambda_n\},
\end{align*}
where~$(k,y-1)\xleftrightarrow{\bar\sigma \neq1}  (\Z\times \Z_{<0})\setminus\Lambda_n$ states the existence of a path in~$\Z^2$ in going from~$(k,y-1)$ to~$(\Z\times \Z_{<0})\setminus\Lambda_n$ and 
consisting of vertices where~$\bar\sigma\neq 1$ and~$(k,y)\xleftrightarrow{\bar\sigma = 1, \text{ diag}}  (\Z\times \Z_{\geq 0})\setminus\Lambda_n$ states the existence of a path in~$\Z^2$ {\em with diagonal connectivity} going from~$(k,y)$ to~$(\Z\times \Z_{\geq 0})\setminus\Lambda_n$ and consisting of vertices where~$\bar\sigma= 1$.
By~$\Z^2$ with diagonal connectivity we mean a graph with the vertex-set~$\Z^2$ with edges linking vertices at distance at most~$\sqrt{2}$.
Define the rescaled linear interpolation of~$\Gamma_{\potts}^{+,n}$ and~$\Gamma_{\potts}^{-,n}$ by
\[
	\tilde{\Gamma}_{\potts}^{\pm,n}(t) := \tfrac{1}{\sqrt{n}} \big((1-\{2tn -n\})\Gamma_{\potts}^{\pm}(\lfloor 2tn -n\rfloor) + \{2tn -n\}\Gamma_{\potts}^{\pm}(\lceil 2tn -n\rceil)\big),
\]
where \(\lfloor\,\rfloor\), \(\lceil\, \rceil\), \(\{\,\}\) denote respectively lower and upper roundings and fractional part.

\begin{theorem}\label{thm:potts}
	Let~$q>4$ be integer and take~$T=T_c(q)$.
	For~$n\in\N$, sample~$\Gamma_{\potts}^{\pm,n}$ and~$\tilde{\Gamma}_{\potts}^{\pm,n}$ from~$\potts_{\Lambda_n;T_c(q),q}^{1/\rmf}$ as described above.
	Then, as~$n$ tends to infinity,
	\begin{enumerate}
		\item both \(\big(\tilde{\Gamma}_{\potts}^{+,n}(t)\big)_{t\in [0,1]}\) 
		and~\(\big(\tilde{\Gamma}_{\potts}^{-,n}(t)\big)_{t\in [0,1]}\) converge in law to \((c_q\rmb_t)_{t\in [0,1]}\), where \(\rmb_t\) is a standard Brownian bridge and \(c_q >0\) is some constant;
		\item the probability that~$\max_{k} \left|\Gamma_{\potts}^{+,n}(k) -\Gamma_{\potts}^{-,n}(k)\right|\geq C\ln(n)^2$ tends to zero.
	\end{enumerate}
\end{theorem}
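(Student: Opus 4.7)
The plan is to push the problem through a chain of couplings, Potts $\to$ FK-percolation $\to$ six-vertex $\to$ ATRC, and then carry out an Ornstein--Zernike analysis of the resulting long subcritical ATRC cluster. Via the Edwards--Sokal coupling I would first pass from $\potts_{\Lambda_n;T_c(q),q}^{1/\rmf}$ to the critical FK-percolation measure with wired/free Dobrushin boundary conditions. Under this coupling the Potts envelopes $\Gamma_{\potts}^{\pm,n}$ can be identified, up to $O(1)$ local corrections, with the upper and lower envelopes of the unique primal/dual FK interface: above~$\Gamma_{\potts}^{+,n}$ the configuration is wired-connected to the top arc, below~$\Gamma_{\potts}^{-,n}$ it is dual-connected to the free arc. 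Both statements then reduce to the analogous claims for these FK envelopes.

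Next, I would feed the FK measure into the combinatorial chain of \cite{BaxKelWu76,GlaPel23}: a bijective passage through the six-vertex model and then into the ATRC representation turns the FK Dobrushin interface into a long subcritical ATRC cluster conditioned to connect the two vertical sides of $\Lambda_n$ at height zero. The horizontal shape of this cluster drives both FK envelopes, so everything reduces to a fine study of this one ATRC object.

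On the ATRC side I would follow the OZ scheme of \cite{CamIofVel03,CamIofVel08,OttVel18,AouOttVel24}. The mixing estimates for ATRC obtained earlier in the paper from the six-vertex height function, extending \cite{AouDobGla24}, allow one to tilt the subcritical ATRC measure to the ballistic regime induced by the long-horizontal-extension conditioning, to decompose the tilted cluster into concatenations of irreducible ``slab crossings'', and to upgrade the resulting fast-mixing kernel to a genuine renewal structure. Donsker's invariance principle for this renewal walk, combined with the bridge conditioning fixing both endpoints at height~$0$, yields weak convergence of the diffusively rescaled backbone to $c_q \rmb_t$ for some explicit $c_q>0$; pulling this back through the coupling chain gives item~(1). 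For item~(2), the differences between $\Gamma_{\potts}^{+,n}$ and $\Gamma_{\potts}^{-,n}$ correspond to vertical ``fingers'' of the interface, which in the ATRC picture are excursions of the cluster above or below the backbone traced out by the renewal points. Exponential decay of the ATRC subcritical two-point function, combined with the mixing bounds, shows that such a finger of vertical length $\ell$ occurs at a given horizontal coordinate with probability at most $e^{-c\ell}$; a union bound over $k\in\{-n,\dots,n\}$ with $\ell = C\ln(n)^2$ then makes the probability of a gap exceeding $C\ln(n)^2$ tend to $0$.

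The main obstacle is the construction of the OZ renewal structure in the ATRC model. Unlike the cases of self-avoiding walk or Bernoulli percolation, the ATRC measure does not factorise over its clusters, so irreducibility alone does not produce true renewals; one must first build fast-mixing kernels out of the six-vertex-based mixing estimates and then upgrade them \emph{à la} \cite{OttVel18}. Once that piece is in place, the Edwards--Sokal coupling and the combinatorial chain of \cite{BaxKelWu76,GlaPel23} transport the bridge limit and the finger estimate back to the Potts envelopes essentially mechanically.
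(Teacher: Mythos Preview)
Your overall strategy matches the paper's: Edwards--Sokal to pass from Potts to FK, then the BKW/six-vertex coupling of \cite{BaxKelWu76,GlaPel23} to reach an ATRC-type cluster, then an OZ renewal analysis built on the mixing inputs, and finally transport back. You also correctly identify the main obstacle as the construction of the renewal structure in a non-factorising model.

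Two places where your ``essentially mechanically'' understates real work in the paper:

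\textbf{(a) Modified ATRC, not ATRC.} The coupling of Section~\ref{sec:coupling} does \emph{not} land on a standard ATRC measure: the Dobrushin FK boundary forces a modified boundary weight $\svcb$ and an unusual cluster weight along $\partial^-\Lambda$ (Definition~\ref{def:mATRC}). The OZ machinery of Section~\ref{sec:ATRC:RW_infinite_volume} is developed for the \emph{infinite-volume} ATRC, and a substantial part of the paper (all of Section~\ref{sec:invariance_principle}) is devoted to showing that the crossing cluster under the finite-volume modified measure can be coupled to the infinite-volume picture. This uses the strong-FKG property of the modified measure (Lemma~\ref{lem:fkg_matrc_+}), a stochastic sandwich~\eqref{eq:stoch_sandwitch}, and the strong mixing of Section~\ref{sec:strong_mixing}. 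Your sketch treats the coupling as landing directly on a clean subcritical ATRC cluster; that is the picture one wants, but getting there is not free.

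\textbf{(b) Potts versus FK envelopes.} Under Edwards--Sokal one only has the one-sided deterministic inequalities $\Gamma_{\potts}^{\pm}(k)\leq\Gamma_{\fk}^{\pm}(k)$, not equality up to $O(1)$: a $\bar\sigma=1$ diagonal path to the top need not be an $\omega$-open path, so $\Gamma_{\potts}^{-}$ can sit strictly below $\Gamma_{\fk}^{-}$ by an a priori unbounded amount. The paper closes this gap (Section~\ref{sec:proof_thm_potts}) by conditioning on the $\{\sigma\neq 1\}$ cluster $\calC$ reached from the lower boundary, observing that the FK law above $\calC$ is wired, and then invoking exponential decay of the \emph{dual} of the wired critical FK measure \cite{DumGagHar21} to show $\rmd_{\mathrm{H}}(\Gamma_{\potts}^{-},\Gamma_{\fk}^{-})\leq k$ except with probability $Cn^2e^{-ck}$. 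Combined with the sandwich $\Gamma_{\potts}^{-}\leq\Gamma_{\potts}^{+}\leq\Gamma_{\fk}^{+}$ and Theorem~\ref{thm:order-disorder:FK_loop}, this gives both items. Your ``$O(1)$ local corrections'' and your finger argument for item~(2) gloss over this step.
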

The constant \(c_q\) has an explicit characterization, see Remark~\ref{rem:diffusivity_constant}.

\begin{remark}\label{rem:thm-extensions}
	We focus on a horizontal interface for simplicity, but the analysis can easily be adapted to other directions at the cost of slightly heavier notations. 
	It is however important that the sides of the box are perpendicular to the direction of the interface.
	Indeed, to study other geometry, one would have to exclude boundary wetting in a suitable percolation model (the ATRC model introduced in Section~\ref{sec:atrc}).
\end{remark}

\subsection{FK percolation}

The main tool in analyzing the Potts model is the Fortuin--Kasteleyn (FK) percolation (or random-cluster) model~\cite{ForKas72} that allows to express the spin-spin correlation function via connection probabilities.
We first define it and then state the relation between the two models.
Take a finite subgraph $G=(V,E)$ of $\Z^2$, parameters $p\in (0,1),\,q>0$ and boundary conditions $\xi\in\{0,1\}^{\bbE}$. 
We identify any~$\omega\in\{0,1\}^{\bbE}$ with the set of edges~$e\in \bbE$ for which~$\omega_e=1$ ({\em open edges}) and with the spanning subgraph of~$\Z^2$ defined by the open edges.
The FK-percolation model on \(G\) with boundary conditions \(\xi\) is the probability measure on $\{0,1\}^{\bbE}$ given by 
\begin{equation*}
	\fk_{G;p,q}^{\xi}(\eta):=\tfrac{1}{Z_{\fk}}\cdot p^{|\eta\cap E|}(1-p)^{|E\setminus\eta|}\,q^{\clusters_V(\eta)}\, \ind_{\eta=\xi \text{ on } \bbE\setminus E},
\end{equation*}
where $Z_{\fk}=Z_{\fk}(G,p,q,\xi)$ is the partition function and~$\clusters_V(\eta)$ is the number of connected components ({\em clusters}) of $(\Z^2,\eta)$ that intersect $V$.

The \emph{free} and \emph{wired} measures correspond to the choices $\xi\equiv 0$ and $\xi\equiv 1$, respectively, and we simply write $\fkfree$ and $\fkwired$ instead of $\xi$, respectively. 
We will be interested in the Dobrushin wired/free boundary conditions: $\xi_e =1$ if and only if~$e \subset\Z\times\Z_{\geq 0}$ (see Fig.~\ref{fig:WiredFree_FK}).
We denote these boundary conditions by~$1/0$.

For \(n\geq 1\), define the graph \(G_{n}=(V_{n},E_{n})\) by
\begin{equation*}
E_{n}:=\{e\in\bbE: e\subset\Lambda_{n}\}\cup\{e\in\bbE:e\subset\Z\times\Z_{\geq 0}\text{ and }e\cap\Lambda_{n}\neq\varnothing\},\quad V_{n}:=\bigcup_{e\in E_{n}}e.
\end{equation*}

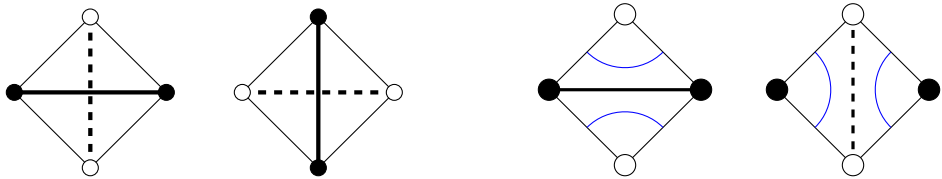
\begin{figure}
	\centering
	\begin{tikzpicture}
		\draw (0,0)--(1,1)--(0,2)--(-1,1)--(0,0);
		\draw[ultra thick] (1,1)--(-1,1);
		\draw[ultra thick, dashed] (0,0)--(0,2);
		\draw (3,0)--(4,1)--(3,2)--(2,1)--(3,0);
		\draw[ultra thick] (3,0)--(3,2);
		\draw[ultra thick, dashed] (4,1)--(2,1);
		\filldraw[fill=white] (0,2) circle(3pt);
		\filldraw[fill=white] (0,0) circle(3pt);
		\filldraw[fill=black] (-1,1) circle(3pt);
		\filldraw[fill=black] (1,1) circle(3pt);
		\filldraw[fill=white] (2,1) circle(3pt);
		\filldraw[fill=white] (4,1) circle(3pt);
		\filldraw[fill=black] (3,2) circle(3pt);
		\filldraw[fill=black] (3,0) circle(3pt);
	\end{tikzpicture}
	\hspace*{1.5cm}
	\begin{tikzpicture}[scale=2]
		\draw (0,-0.5)--(0.5,0)--(0,0.5)--(-0.5,0)--(0,-0.5);
		\draw (1.5,-0.5)--(2,0)--(1.5,0.5)--(1,0)--(1.5,-0.5);
		
		\Vloop{1.5}{0}
		\Hloop{0}{0}
		
		\draw[very thick] (-0.5,0)--(0.5,0); 
		\draw[very thick, dashed] (1.5,-0.5)--(1.5,0.5);
		
		\foreach \i in {0,1.5}{
			\filldraw[fill=black] ({\i-0.5},0) circle(2pt) ;
			\filldraw[fill=black] ({\i+0.5},0) circle(2pt) ;			
			\filldraw[fill=white] (\i,-0.5) circle(2pt) ;
			\filldraw[fill=white] (\i,0.5) circle(2pt) ;
		}
	\end{tikzpicture}
	\caption{Left: Tile associated to a mid-edge. Right: Tile centred at the middle of a horizontal primal edge (solid black) or its associated vertical dual edge (dashed black), with its two possible local loop configurations.}
	\label{fig:midEdgeTiles}
\end{figure}

The seminal Edwards--Sokal coupling~\cite{EdwSok88} states that, when~$q\geq 2$ is integer and~$p=1- \exp[-\tfrac{1}{T}]$, coloring clusters of~$\omega\sim \fk_{G_n;p,q}^{1/0}$ independently in colors~$1,2,\dots,q$ gives a spin configuration~$\sigma\sim \potts_{\Lambda_n;T,q}^{1/\rmf}$.
We will denote this coupling between the Potts model and the FK percolation 
by~\(\edwardSokal_{\Lambda_n,T,q}^{1/0}\).

We define an interface in the FK percolation forced by the Dobrushin boundary conditions using planar duality.
Note that the lattice dual to~$\Z^2$ is again a square lattice and we denote it by~$(\Z^2)^*$.
For each edge~$e$ of~$\Z^2$, denote by~$e^*$ the edge of~$(\Z^2)^*$ that is dual to~$e$, i.e. the unique edge of~$(\Z^2)^*$ that intersects~$e$; see Fig.~\ref{fig:midEdgeTiles}.
Given~$\omega\in \{0,1\}^{\bbE}$, define its dual by
\[
	\omega^*_{e^*}:= 1-\omega_e.
\]
The FK percolation at $p_c(q)$, given by
\[
	p_c(q) := 1- \exp[-\tfrac{1}{T_c(q)}] = \tfrac{\sqrt{q}}{\sqrt{q}+1},
\]
is known to enjoy the self-duality: if~$\omega\sim\fk_{G_n;T_c(q),q}^{1/0}$, then its dual~$\omega^*$ is also distributed as an FK-percolation with parameters~$q$ and~$p_c(q)$, but on a dual graph and under dual Dobrushin boundary conditions.
This self-dual nature is also revealed when looking at the loop representation of the FK percolation model.
Specifically, we draw two arcs next to every primal or dual edge of~$\omega$, as shown on Fig.~\ref{fig:midEdgeTiles}.
These arcs link together into loops separating primal and dual clusters and one interface tracing the boundary of the union of primal clusters attached to the upper boundary of~$\Lambda_n$.
Denote this interface by~$\Gamma_{\fk}$.
Remarkably, a standard application of the Euler's formula (see eg.~\cite[Lemma~3.9]{DumGagHar21}), allows to rewrite the distribution of~$\omega$ via loops which are symmetric with respect primal and dual configurations:
\begin{equation}\label{eq:FK_loop_expression}
	\fk_{G_n;p_c(q),q}^{1/0}(\omega) = \tfrac{1}{Z_{\sf loop}} \cdot \sqrt{q}^{\# \, {\rm loops} },
\end{equation}
where~$\# \, {\rm loops}$ stands for the number of loops in the loop representation of~$\omega$.
This is a part of the classical Baxter--Kelland--Wu~\cite{BaxKelWu76} coupling between the FK percolation and the six-vertex model; see Sections~\ref{sec:coupling:interfaces}~and~\ref{sec:bkw}.

As for the Potts model, we define the upper and the lower discrete envelops of~$\Gamma_{{\fk}}$:
\begin{align*}
	\Gamma_{\fk}^{+,n}(k) & := \max\{y\in \Z:\ (k\pm\tfrac12,y-\tfrac12)\xleftrightarrow{\omega^*} (\{\Z+\tfrac12\}\times \{\Z_{< 0} + \tfrac12\})\setminus\Lambda_n^*\},\\
	\Gamma_{\fk}^{-,n}(k) & := \min\{y\in \Z:\ (k,y+1)\xleftrightarrow{\omega} (\Z\times \Z_{\geq 0})\setminus\Lambda_n\},
\end{align*}
where \(\Lambda_n^*:=[-n,n]^2\cap(\Z^2)^*\).
The rescaled linear interpolations~$\tilde{\Gamma}_{\fk}^{\pm,n}$ of~$\Gamma_{\fk}^{\pm,n}$ are defined in the same way as in the Potts model.
Our main result for the FK percolation is an invariance principle for~$\tilde{\Gamma}_{\fk}^{\pm,n}$:

\begin{theorem}
	\label{thm:order-disorder:FK_loop}
	Let~$q>4$ be a real number and take~$p=p_c(q)$.
	For~$n\in\N$, sample~$\Gamma_{\fk}^{\pm,n}$ and~$\tilde{\Gamma}_{\fk}^{\pm,n}$ from~$\fk_{G_n;p_c(q),q}^{1/0}$ as described above.
	Then, as~$n$ tends to infinity, the convergence results from Theorem~\ref{thm:potts} hold for~$\Gamma_{\fk}^{\pm,n}$ and~$\tilde{\Gamma}_{\fk}^{\pm,n}$ in place of~$\Gamma_{\potts}^{\pm,n}$ and~$\tilde{\Gamma}_{\potts}^{\pm,n}$.
\end{theorem}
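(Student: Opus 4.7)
\textbf{Proof strategy for Theorem~\ref{thm:order-disorder:FK_loop}.}
The plan is to transport the FK-percolation interface at the self-dual point to a subcritical object in an auxiliary model, so that Ornstein--Zernike (OZ) theory becomes available, and then to invoke a renewal-based invariance principle. The first step is the combinatorial correspondence already advertised in the introduction: use the Baxter--Kelland--Wu coupling to associate to \(\omega\sim\fk_{G_n;p_c(q),q}^{1/0}\) a six-vertex configuration (via the loop representation~\eqref{eq:FK_loop_expression}), and then apply the coupling of~\cite{GlaPel23} from the six-vertex model to the random-cluster representation of an Ashkin--Teller model on its self-dual line. With appropriate Dobrushin-type boundary conditions, the composition identifies (a slight thickening of) \(\Gamma_{\fk}\) with a distinguished long cluster \(\calC_n\) in an ATRC model. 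Crucially, for \(q>4\) the ATRC parameters produced by the coupling lie in the subcritical regime, so that the two-point function decays exponentially. This is where the restriction \(q>4\) enters.

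The next step is to develop a quasi-renewal representation of \(\calC_n\) à la~\cite{CamIofVel03,CamIofVel08,AouOttVel24}: decompose the cluster into an irreducible head, a sequence of ``irreducible pieces'' separated by horizontal \emph{cone points}, and an irreducible tail. One must show that cone points are abundant (positive density) and that the configuration inside distinct pieces is asymptotically independent conditionally on the cone-point pattern. Positive density of cone points follows from exponential decay combined with the OZ asymptotics for the ATRC two-point function, which we derive separately in the paper. The asymptotic independence relies on the fine mixing properties of the ATRC, established by extending~\cite{AouDobGla24} via the six-vertex height function. As in~\cite{OttVel18}, a genuine renewal structure can then be extracted from the fast-mixing kernel picture.

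Given the renewal representation, the diffusive limit is standard: the process of cone-point horizontal displacements is a random walk with exponentially decaying step distribution, conditioned on its endpoint being at \((2n,0)\). Donsker's invariance principle for bridges yields convergence of its rescaled trajectory to \((c_q\rmb_t)_{t\in[0,1]}\) for some explicit \(c_q>0\) (Remark~\ref{rem:diffusivity_constant}). Transporting this back through the coupling produces the invariance principle for \(\tilde\Gamma_{\fk}^{\pm,n}\), provided one controls the maximal vertical gap between the two envelopes. The estimate \(\max_k|\Gamma_{\fk}^{+,n}(k)-\Gamma_{\fk}^{-,n}(k)|\leq C(\ln n)^2\) reduces, via duality and the coupling, to forbidding a pocket or a dangling primal/dual cluster of diameter larger than \((\ln n)^2\) along \(\calC_n\); this is delivered by the ATRC exponential decay together with a union bound over the \(O(n)\) horizontal positions, and simultaneously shows that the two rescaled envelopes have the same limit.

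The main obstacle is the mixing/renewal step. Unlike the planar FK-percolation, the ATRC does not come with a convenient FKG structure on a single lattice, so the standard tools from~\cite{CamIofVel08} cannot be imported verbatim; the detour through the six-vertex height function is what makes mixing tractable, and the delicate part is to upgrade qualitative mixing into quantitative, cone-point-compatible estimates precise enough to feed into the OZ machinery. Once these mixing bounds and the accompanying OZ asymptotics for the ATRC two-point function are in place, the remaining steps---renewal decomposition, invariance principle for the bridge, and transfer back to FK-percolation via the coupling---are implementations of now-classical schemes. The Potts invariance principle (Theorem~\ref{thm:potts}) is then obtained as a corollary via the Edwards--Sokal coupling \(\edwardSokal_{\Lambda_n,T_c(q),q}^{1/0}\), using that the Potts envelopes \(\Gamma_{\potts}^{\pm,n}\) are sandwiched between the FK envelopes and differ from them only by \(O(1)\) deformations controlled by the same pocket-size estimate.
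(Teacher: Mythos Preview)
Your high-level strategy matches the paper's: couple FK $\to$ six-vertex $\to$ ATRC, develop OZ theory for the long ATRC cluster, and transport the invariance principle back. However, several technical points are either wrong or missing.

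First, a factual error: the ATRC \emph{does} satisfy the strong FKG property (Lemma~\ref{lem:atrc_strong_fkg}, from~\cite{PfiVel97}); this is used throughout. The genuine obstruction to importing~\cite{CamIofVel08} verbatim is not FKG but the weaker domain Markov structure---the ATRC lives on \emph{pairs} of configurations, so conditioning on a contour does not decouple inside from outside unless one arranges both an $\omega_\tau^*$-circuit and an $\omega_{\tau\tau'}$-circuit (cf.\ Lemma~\ref{lem:decoupling_paths_mATRC} and Section~\ref{sec:strong_mixing}).

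Second, you elide a central difficulty: the coupling with Dobrushin FK does not produce a standard ATRC but a \emph{modified} ATRC with a different boundary cluster-weight $\svcb$ (Definition~\ref{def:mATRC}). The OZ/renewal theory of Section~\ref{sec:ATRC:RW_infinite_volume} is developed for the \emph{infinite-volume} standard ATRC; transferring it to the finite-volume modified measure is the content of Section~\ref{sec:invariance_principle} (Theorem~\ref{thm:main_Inv_principle_coupling_with_RW}), and relies on the modified measure still satisfying FKG (Lemma~\ref{lem:fkg_matrc_+}) so that it can be sandwiched between standard ATRC measures. This infinite-to-finite, standard-to-modified step is not ``an implementation of now-classical schemes'' but a substantial part of the work.

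Third, the actual proof in Section~\ref{sec:proof_thm_fk} first establishes the result on the \emph{tall rectangle} $G_{n,Cn}$ (where the coupling and Theorem~\ref{thm:main_Inv_principle_coupling_with_RW} apply directly, via Lemma~\ref{lem:closeness_interfaces_fk_matrc} for the proximity of the FK and ATRC interfaces), and only then transfers to the square $G_n$ by a stochastic-domination sandwich using the FKG and DLR properties of FK-percolation. Your sketch omits both Lemma~\ref{lem:closeness_interfaces_fk_matrc} and this aspect-ratio reduction.

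Finally, your last sentence on Potts is off: the Potts envelopes are \emph{not} sandwiched between the FK envelopes; one has $\Gamma_{\potts}^{\pm}\leq\Gamma_{\fk}^{\pm}$, and the missing lower control comes from exponential decay of the \emph{dual} of the wired FK measure above the explored Potts cluster (Section~\ref{sec:proof_thm_potts}).
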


We draw attention to the precision of our control of the interface.
Previously, at~$p_c(q)$ when~$q>4$ (discontinuous transition), it remained open that~$\Gamma_{\fk}^{n}$ does not exhibit linear fluctuations.
Our results imply that this is indeed the case and, moreover, the probability that~$\Gamma_{\fk}^{n}$ exhibits linear fluctuations is in fact exponentially small.
This should be compared to the behavior of~$\Gamma_{\fk}^{n}$ at~$p_c(q)$ when~$q\in [1,4]$ (continuous transition): there it does exhibit linear fluctuations~\cite{DumSidTas17}.

Our proof goes via developing a random walk representation for long clusters in the Ashkin--Teller model.
We proceed by introducing the latter model and stating our results for it.

\subsection{Ashkin--Teller model}
\label{sec:ATdef}

Introduced in 1943~\cite{AshTel43} as a generalization of the Ising model to a four-component system, the Ashkin--Teller (AT) model can be viewed as a pair of interacting Ising models.
Take a finite subgraph $G=(V,E)$ of $\Z^2$, parameters $J,U>0$ and boundary conditions $\sigma,\sigma'\in\{0,\pm1\}^{\Z^2}$. 
The AT model on~$G$ with parameters~$J,U\in\R$ and boundary conditions~$(\sigma,\sigma')$ is a probability measures on pairs~$\tau,\tau'\in \{\pm 1\}^{V}$ given by
\begin{align*}
	{\at}_{G,J,U}^{\sigma, \sigma'}(\tau,\tau') 
	= \tfrac{1}{Z_\at} \cdot \exp 
	 \biggl[\sum_{\{i,j\}\in E} J(\tau_i\tau_j &+ \tau'_i\tau'_j) + U\tau_i\tau_j\tau'_i\tau'_j\\
	 + \sum_{i\in V,j\in V^c\colon i\sim j} J(\tau_i\sigma_j &+ \tau'_i\sigma'_j) + U\tau_i\sigma_j\tau'_i\sigma'_j \biggr]\,
\end{align*}
where~$Z_\at=Z_\at(G,J,U,\sigma,\sigma')$ is the partition function.
Taking~$\sigma= \sigma' \equiv 1$ we obtain plus-plus boundary conditions and taking~$\sigma= \sigma' \equiv 0$ we obtain free-free boundary conditions; we denote the corresponding measures by~${\at}_{G,J,U}^{+,+}$ and~${\at}_{G,J,U}^{\mathrm{f},\mathrm{f}}$ respectively.

We consider only~$J>0$, since flipping the sign of~$J$ corresponds to flipping the sign of~$\tau'$ and~$\tau$ at one of the two partite classes of~$\Z^2$.
Using the Ising-duality for~$\tau'$ and then for~$\tau$, the {\em self-dual curve} of the parameters was identified~\cite{MitSte71}:
\begin{equation}\label{eq:at-sd}
	\sinh 2J = e^{-2U}.
\end{equation}
A classical GKS inequality~\cite{KelShe68} implies that the correlations are monotone along the lines of a constant ratio~$J/U$.
The case $U=0$ gives two independent Ising models and the line~$J=U$ is in direct correspondence with the four-state Potts model and the FK percolation with the cluster-weight~$q=4$.
In addition, the three models are related on the self-dual line~\eqref{eq:at-sd}, with~$q>4$ corresponding to~$U>J$.
A precise coupling was constructed in~\cite{GlaPel23} via the six-vertex (square ice) model based on the works of Fan~\cite{Fan72} and Wegner~\cite{Weg72} and on the seminal Baxter--Kelland--Wu (BKW) correspondence~\cite{BaxKelWu76}; see also~\cite{HuaDenJacSal13}.
This coupling is crucial to the current work and is described in detail in Sections~\ref{sec:coupling}~and~\ref{sec:bkw}.

When~$U>J>0$, correlation inequalities~\cite{KelShe68} guarantee the existence of the infinite-volume limits $\at_{J,U}^{\mathrm{f},\mathrm{f}}$ and $\at_{J,U}^{+,+}$ of the free and monochromatic AT measures $\at_{G_n,J,U}^{\mathrm{f},\mathrm{f}}$ and $\at_{G_n,J,U}^{+,+}$, respectively, as~$G_n\nearrow \Z^2$.
Our first result states that their marginals on the single spin $\tau$ coincide.
  
\begin{proposition}
	Let $0<J<U$ satisfy $\sinh 2J=e^{-2U}$.
	Then, the measures~$\at_{J,U}^{\rmf,\rmf}$ and~$\at_{J,U}^{+,+}$ have the same marginal distribution on~$\tau$.
\end{proposition}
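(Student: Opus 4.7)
My plan is to work with the change of variables $\sigma := \tau\tau'$. The map $(\tau,\tau') \mapsto (\tau,\sigma)$ is a bijection on $\{\pm 1\}^V \times \{\pm 1\}^V$, and a direct computation rewrites the AT Hamiltonian as
\begin{equation*}
H(\tau,\sigma) = \sum_{\{i,j\} \in E} J(1 + \sigma_i \sigma_j)\,\tau_i \tau_j \;+\; U\,\sigma_i \sigma_j.
\end{equation*}
Conditionally on $\sigma$, the field $\tau$ is a ferromagnetic Ising model with coupling $2J$ on the ``agreement subgraph'' $A(\sigma) := \{\{i,j\} \in E : \sigma_i = \sigma_j\}$, and zero coupling on the complementary edges. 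In these variables, the $+,+$ boundary condition becomes $(\tau,\sigma) = (+1,+1)$ on the boundary, while $\rmf,\rmf$ imposes no constraint.

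The key input is that, for every realisation of $\sigma$, the conditional Ising model on $A(\sigma)$ is in its uniqueness regime, so that both the $+$ and the free $\tau$-boundary conditions yield the same infinite-volume Gibbs measure. The self-dual relation $\sinh 2J = e^{-2U}$ places $2J$ below the 2D Ising critical coupling precisely when $\sinh 4J < 1$; for the remaining range of $U>J$ where $2J$ would naively be supercritical on $\Z^2$, one instead extracts the subcriticality from the $\tau$-clusters of the $\atrc$: on the self-dual line with $U>J$ these clusters do not percolate under either free or wired boundary conditions, a consequence of the detailed $\atrc$ analysis developed later in the paper (via the coupling with the six-vertex model and self-duality). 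Granting this, the conditional law of $\tau$ given $\sigma$ is $\tau\to-\tau$ symmetric, independent of the $\tau$-boundary condition, and depends on $\sigma$ only through $A(\sigma)$.

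It remains to show that $A(\sigma)$ has the same law under $\at_{J,U}^{+,+}$ and $\at_{J,U}^{\rmf,\rmf}$. The bulk Hamiltonian in $(\tau,\sigma)$ variables is invariant under the flip $\sigma \to -\sigma$ with $\tau$ fixed, and $A(\sigma) = A(-\sigma)$. The transformation $\tau' \to -\tau'$ maps $\at_{J,U}^{+,+}$ to $\at_{J,U}^{+,-}$ while flipping $\sigma$, so $A(\sigma)$ has identical law under these two measures. Decomposing the infinite-volume $\at_{J,U}^{\rmf,\rmf}$ into its two $\sigma$-polarised extremal Gibbs components (related by $\sigma \to -\sigma$) yields the same $A$-marginal. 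Combining with the previous paragraph, the $\tau$-marginals of $\at_{J,U}^{+,+}$ and $\at_{J,U}^{\rmf,\rmf}$ agree. \textbf{The main obstacle} is the uniform subcriticality of the conditional $\tau$-Ising (equivalently, of the $\atrc$ $\tau$-clusters) on the full self-dual line $U>J>0$; this is the delicate input that ties the proof of the proposition to the broader $\atrc$ analysis of the paper.
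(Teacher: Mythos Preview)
Your proposal has a genuine gap at its central step. You claim that, conditionally on $\sigma=\tau\tau'$, the Ising model for $\tau$ on the agreement graph $A(\sigma)$ with coupling $2J$ is in its uniqueness regime. But on the self-dual line with $U>J$ the variable $\sigma$ is \emph{ordered}: by self-duality $\omega_{\tau\tau'}^*$ has the law of $\omega_\tau$, which decays exponentially, so $\omega_{\tau\tau'}$ percolates and $A(\sigma)$ is almost surely the full lattice minus a sparse family of finite contours. Near the endpoint $U\to J^+$ the self-dual relation forces $e^{4J}\to 3$, hence $\sinh(4J)\to 4/3>1$, so $2J$ exceeds the Ising critical coupling and the quenched Ising on $A(\sigma)$ almost surely has multiple Gibbs measures. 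The non-percolation of $\omega_\tau$ you invoke is an \emph{annealed} statement about the joint law and does not yield quenched uniqueness for a fixed $A(\sigma)$; what the ATRC analysis ultimately implies is that under $\at^{+,+}$ the conditional law of $\tau$ given $\sigma$ selects the symmetric Ising Gibbs state rather than the plus one---but proving this is equivalent to the proposition itself, so your decomposition is circular. Your argument for the equality of $A(\sigma)$-marginals is also incomplete: it relates $\at^{+,+}$ to $\at^{+,-}$ and symmetrises $\at^{\rmf,\rmf}$, but never identifies the $\sigma$-polarised components of $\at^{\rmf,\rmf}$ with $\at^{+,\pm}$ (the $\tau$-boundary still differs).

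The paper avoids both issues by working with the ATRC directly rather than conditioning on $\sigma$. By Corollary~\ref{cor:ATRC_unique} one has $\atrc^{0,0}=\atrc^{1,1}$, and since $\omega_\tau$ has no infinite cluster (Proposition~\ref{prop:exp_decay_omega_tau}), assigning $\pm1$ to $\omega_\tau$-clusters either freely or with a $+$ boundary produces the same $\tau$-law through the Edwards--Sokal-type coupling of~\cite{PfiVel97}. No quenched statement about Ising on a random graph is required.
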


Denote the expectation operators with respect to $\at_{J,U}^{\mathrm{f},\mathrm{f}}$ and $\at_{J,U}^{+,+}$ by $\langle\,\cdot\,\rangle_{J,U}^{\mathrm{f},\mathrm{f}}$ and $\langle\,\cdot\,\rangle_{J,U}^{+,+}$, respectively, and define the inverse correlation length $\nu_{J,U}$ by setting, for $x\in\R^2$,
\begin{equation*}
\nu_{J,U}(x):=-\lim_{n\to\infty}\tfrac{1}{n}\ln\langle\tau_0\tau_{\lfloor nx \rfloor}\rangle_{J,U}^{\mathrm{f},\mathrm{f}}=-\lim_{n\to\infty}\tfrac{1}{n}\ln\langle\tau_0\tau_{\lfloor nx \rfloor}\rangle_{J,U}^{+,+},
\end{equation*}
where $\lfloor\,\cdot\,\rfloor$ is the componentwise integer part.
The existence of the limit is derived in a standard manner from correlation inequalities and a subadditive argument (see Section~\ref{sec:ATRC:RW_infinite_volume} for more details and references).
In~\cite{AouDobGla24}, it was shown that $\at_{\Lambda_n}^{+,+}$ admits exponential decay of correlations in $\tau$, which implies that $\nu>0$.
The following theorem establishes sharp Ornstein--Zernike-type asymptotics for the 2-point function.

\begin{theorem}
\label{thm:oz-at}
Let $0<J<U$ satisfy $\sinh 2J=e^{-2U}$.
The inverse correlation length $\nu_{J,U}$ is a norm on $\R^2$. Furthermore, uniformly in $|x|\to\infty$, 
\begin{equation*}
\langle\tau_0\tau_x\rangle_{J,U}^{\mathrm{f},\mathrm{f}}=\langle\tau_0\tau_x\rangle_{J,U}^{+,+}=\tfrac{g(x/|x|)}{\sqrt{|x|}}\,e^{-\nu(x)}\,(1+o(1)),
\end{equation*}
where $\nu = \nu_{J,U}$, and $g =g_{J,U}$ is a strictly positive analytic function on $\mathbb{S}^1$.
\end{theorem}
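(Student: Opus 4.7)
The strategy is to convert the problem into the Ornstein--Zernike analysis of a long subcritical cluster in the $\atrc$ model, and then run the renewal machinery on it using the ATRC mixing results that the paper develops from the six-vertex height function.

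\emph{Step 1 (reduction to $\atrc$).} By the Edwards--Sokal coupling between $\at$ and $\atrc$, both $\langle\tau_0\tau_x\rangle_{J,U}^{\rmf,\rmf}$ and $\langle\tau_0\tau_x\rangle_{J,U}^{+,+}$ equal the probability that $0$ and $x$ lie in the same $\tau$-cluster of the associated $\atrc$ configuration. The equality of the two sides is exactly the preceding proposition on equality of the $\tau$-marginals. Positivity $\nu_{J,U}>0$ in the regime $U>J$ is the exponential decay result of \cite{AouDobGla24} invoked in the excerpt, so we are looking at the asymptotics of a genuinely subcritical two-point connection probability.

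\emph{Step 2 (renewal picture).} Following the Campanino--Ioffe--Velenik programme \cite{CamIofVel03,CamIofVel08,OttVel18,AouOttVel24}, I would condition on the existence of a $\tau$-cluster connecting $0$ to $x$ and introduce cone/break points on it: vertices at which large forward and backward cones contain the forward and backward parts of the cluster and are empty of cluster mass outside. The ATRC mixing bounds established earlier in the paper (obtained via the coupling with the six-vertex height function) should ensure that such break points occur with uniformly positive conditional density along long clusters and that they approximately decouple past and future. The connecting cluster then decomposes as a concatenation of irreducible pieces with exponential tails in both longitudinal length and transverse fluctuation.

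\emph{Step 3 (tilt and local CLT).} After tilting by the Lagrange multiplier conjugate to the target direction $x/|x|$, the sequence of irreducible displacements becomes an i.i.d.\ random walk with exponential moments and non-degenerate covariance. The rate $\nu_{J,U}$ is recovered as the Legendre transform of the log-moment-generating function of a single step; $1$-homogeneity, strict convexity and positivity of this Legendre transform deliver that $\nu_{J,U}$ is a norm on $\R^2$. Analyticity of the tilt in the direction gives analyticity of the single-step law, and hence of $g$, on $\mathbb{S}^1$. A standard local central limit theorem for the tilted random walk, as used in \cite{CamIofVel08, OttVel18, AouOttVel24}, then produces the $|x|^{-1/2}$ prefactor and identifies $g(x/|x|)$ explicitly in terms of the covariance of the tilted step in the direction $x/|x|$.

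\emph{Main obstacle.} The technical heart is the cone/break-point construction for ATRC. The model is not a pure percolation, and the mixing from the six-vertex height function only couples distant regions up to controlled boundary layers; designing break points that are robust under these surgeries, and showing that long $\tau$-clusters contain them with uniformly positive density, is where the bulk of the work lies. Once this is carried out, the remaining OZ machinery transfers essentially verbatim from the FK setting treated in \cite{CamIofVel03,CamIofVel08,OttVel18,AouOttVel24}, and Theorem~\ref{thm:oz-at} follows.
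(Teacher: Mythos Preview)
Your proposal is correct and matches the paper's approach: the paper reduces to ATRC via the coupling~\eqref{eq:at_atrc_coupling}, packages the entire cone-point/renewal construction (your Step~2 and the setup for Step~3) into Theorem~\ref{thm:OZ_for_ATRC_infinite_vol}, and then derives the asymptotics by applying the local limit theorem to the resulting directed random walk as in~\cite[Section~8.1]{AouOttVel24}. The only refinement worth noting is that the mixing input is not just the weak mixing coming from the height function but also the strong mixing of Theorem~\ref{thm:strong_mixing_atrc} (obtained via~\cite{Ott25}), which is what makes the factorisation of the conditional weights in Section~\ref{subsec:mixing_weights_renewal} go through.
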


\subsection{Six-vertex model}
The six-vertex model~\cite{Pau35,Rys63} is a model of \emph{edge orientations} on a square lattice, which in the setting of this article is taken to be the medial graph of~\(\Z^2\).
While it serves merely as an intermediate step and a tool in the derivation of our main results, Theorems~\ref{thm:potts}-\ref{thm:oz-at}, we also obtain some results on the six-vertex model along the way, which we will \emph{informally} describe in this subsection. We refer to Sections~\ref{sec:combinatorial_mappings}~and~\ref{sec:height-func} for precise definitions.

The six-vertex model admits a representation in terms of \emph{height functions}. In the above setting, these are assignments of integers to the vertices of both~\(\Z^2\) and~\((\Z^2)^*\), satisfying the following property. A six-vertex height function~\(h:\Z^2\cup(\Z^2)^*\to\Z\) takes even values on~\(\Z^2\) and, for all~\(i\in\Z^2\) and~\(u\in(\Z^2)^*\) at a Euclidean distance of~\(1/\sqrt{2}\), it holds that~\(|h(i)-h(u)|=1\).
An edge~\(e=\{i,j\}\in\bbE\) (or its associated tile) with dual~\(e^*=\{u,v\}\) is said to be of type~5-6 if both~\(h(i)=h(j)\) and~\(h(u)=h(v)\), and otherwise of type~1-4.
Given a finite set~\(E\subset\bbE\) and a parameter~\(\svc>0\), each height function is assigned a weight given by 
\begin{equation*}
\mathrm{w}_{E;\svc}(h):=\svc^{|\{e\in E\text{ of type 5-6}\}|}.
\end{equation*}
Through the introduction of boundary conditions, measures on the set of height functions are defined by a proportionality relation to the above weight.
When~\(1\leq \svc\leq 2\), the height function under constant boundary condition is known to \emph{delocalise}~\cite{DumKarManOul20} (see also~\cite{Lis21,GlaLam23}) as the system exhausts the whole lattice, whereas it was shown to \emph{localise}~\cite{DumGagHar21,GlaPel23} when~\(\svc>2\).

In relation to the Potts and FK measures under Dobrushin boundary conditions, we consider six-vertex height function measures with parameter~\(\svc>2\) on square domains and with the following Dobrushin boundary condition (see Fig.~\ref{fig:six-v_hf_dobrushin}):~\(+1\) on~\((\Z^2)^*\) in the upper half-plane,~\(-1\) on~\((\Z^2)^*\) in the lower half-plane, and constant~\(0\) on~\(\Z^2\). 
In analogy to Theorems~\ref{thm:potts}-\ref{thm:order-disorder:FK_loop}, we derive the convergence under these height function measures of the Peierls contour between~\(+1\) and~\(-1\) in the odd heights under diffusive scaling. 
We prove this statement for measures with a modified parameter~\(\svcb\) on the boundary, as they can be coupled with the FK and Potts measures, which allows us to transfer the result to the latter and derive Theorems~\ref{thm:potts}-\ref{thm:order-disorder:FK_loop}. However, it should be noted that this boundary weight in fact complicates the analysis in the related AT model. The derivation of the statement for the measures without the modified boundary weight is simpler and can be undertaken using the same method.

\begin{figure}
	\includegraphics[scale=0.4]{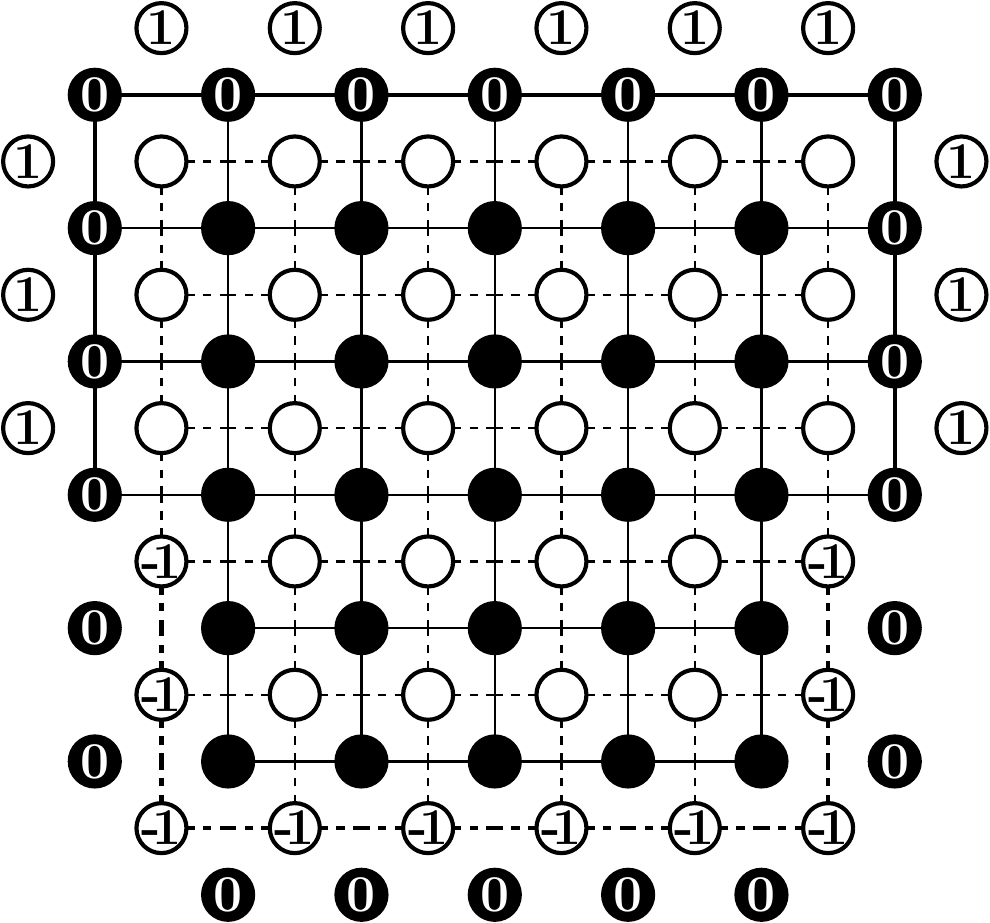}
	\caption{Dobrushin boundary condition for the six-vertex height function.}
	\label{fig:six-v_hf_dobrushin}
\end{figure}

It is also noteworthy that we establish exponential relaxation results for the height function measures with constant boundary conditions and potentially inhomogeneous weights~\(\svc\) and~\(\svcb\) at the boundary (see Section~\ref{sec:height-func-relax}).

\subsection{Summary of the paper: what is new?}

The main novelty of our work is to study the FK percolation and the Potts models via the graphical (or random-cluster) representation of the Ashkin--Teller model (ATRC model) introduced in~\cite{PfiVel97}.
At a first glance, the FK percolation model has been by now much better understood, see the classical book~\cite{Gri06} and more recent lecture notes~\cite{Dum17a}.
Moreover, the FK model has been used to establish some basic properties for the ATRC model~\cite{GlaPel23,AouDobGla24}.
However, compared to the FK percolation at its transition point when~$q>4$, the ATRC model at its self-dual line when~$U>J$ remarkably exhibits a unique Gibbs measure.
This brings more symmetries that play a key role in the study of interfaces.

This comes at some cost: the ATRC model is supported on {\em pairs} of edge configurations.
Thus, the domain Markov property is significantly weaker than in the FK percolation defined on a single edge configuration.
In particular, it is highly non-trivial to prove mixing properties of the ATRC model.
For this we use a coupling of the ATRC model to the six-vertex model, whose height function enjoys additional monotonicity properties.
Using the classical work of Alexander~\cite{Ale98} and a new general mixing result~\cite{Ott25}, we prove, for the regime of parameters considered in the rest of the work, that the ATRC model satisfies
\begin{itemize}
	\item the exponential ratio weak mixing property (Theorem~\ref{thm:ratio_weak_mixing_ATRC}),
	\item a restricted version of the exponential strong mixing property (Theorem~\ref{thm:strong_mixing_atrc}).
\end{itemize}

These mixing properties allow us to extend the arguments of~\cite{CamIofVel03,CamIofVel08,OttVel18, AouOttVel24} to the ATRC model and provide a renewal picture for the long subcritical ATRC clusters in infinite volume (Theorem~\ref{thm:OZ_for_ATRC_infinite_vol}).

Regarding the link with interfaces in FK/Potts, we point out a technical issue that the FK percolation model under standard Dobrushin boundary conditions is directly coupled only to the ATRC with rather involved ``boundary conditions'' (modified cluster weight on the boundary, we call it a modified ATRC model).
Fortunately, this model still satisfies the FKG lattice condition which allows for comparison with normal ATRC measures. Using this, we are able to transfer the results derived for the infinite volume ATRC to the modified, finite volume, measures. This is the content of Section~\ref{sec:invariance_principle}.

\subsubsection*{Organisation of the article.}

Section~\ref{sec:notations}: notation and conventions that will be used throughout the article.  

Section~\ref{sec:atrc}: definition of the ATRC model, its basic properties and statement of the results that we establish for this model, including the uniqueness of the ATRC Gibbs measure and mixing properties.

Section~\ref{sec:coupling:interfaces}: coupling between the FK percolation and modified ATRC models.
The coupling is very sensitive to boundary conditions, eg. note appearance of a different boundary-cluster weight in~\cite{GlaPel23}.
We extend this coupling to standard Dobrushin boundary conditions in the FK percolation at a price of rather inconvenient conditions in the ATRC model. In particular, we obtain a coupling between the FK interface and a crossing cluster under a modified version of a finite volume ATRC measure.

Section~\ref{sec:mixing}: proof of new weak mixing properties (Theorem~\ref{thm:ratio_weak_mixing_ATRC}) for the ATRC model and, in particular, uniqueness of the ATRC Gibbs measure. We adapt the classical works of Alexander~\cite{Ale92,Ale98,Ale04} and use the recent works on the ATRC model~\cite{GlaPel23,AouDobGla24}.

Section~\ref{sec:strong_mixing}: derivation of a restricted version of strong mixing (Theorem~\ref{thm:strong_mixing_atrc}) from the weak mixing of Section~\ref{sec:mixing}, using a general argument from~\cite{Ott25}.

Section~\ref{sec:ATRC:RW_infinite_volume}: development of the ``OZ theory'' (renewal picture) for long subcritical cluster in the ATRC model, in infinite volume, following~\cite{CamIofVel08, AouOttVel24}, and using the mixing established in Sections~\ref{sec:mixing} and~\ref{sec:strong_mixing}.
More precisely, we show that the typical geometry of clusters conditioned on containing two distant points is the same as the one of a directed random walk bridge between these points (Theorem~\ref{thm:OZ_for_ATRC_infinite_vol}).

Section~\ref{sec:invariance_principle}: using stochastic comparison and mixing properties, we couple the ``crossing cluster'' under the finite volume measures of Section~\ref{sec:coupling} with an infinite volume long cluster as studied in Section~\ref{sec:ATRC:RW_infinite_volume} (Theorem~\ref{thm:main_Inv_principle_coupling_with_RW}). From there, an invariance principle follows by importing the results of~\cite{Kov04}.

Section~\ref{sec:proofs_thms}: wrap up of the proofs of Theorems~\ref{thm:potts},~\ref{thm:order-disorder:FK_loop},~\ref{thm:oz-at}, and~\ref{thm:oz-atrc}.

\subsubsection*{Acknowledgements}
We want to say big thanks to Ioan Manolescu for frequent discussions that started already in 2019-20 and for pointing out that it is enough to work with tall rectangles. Parts of the work were accomplished during the visits of some of us to the Universities of Fribourg, Geneva, Innsbruck and Vienna and at the NCCR SwissMAP research station in Les Diablerets. We want to thank these institutions and all people involved for their hospitality and creating excellent working conditions.
This research was partially funded by the Austrian Science Fund (FWF) 10.55776/P34713.

\section{Notations and conventions}
\label{sec:notations}

\vspace{5pt}

\noindent\textbf{General graphs.}
Let $\bbG=(\bbV,\bbE)$ be a graph. We simply write $xy=\{x,y\}$ for an edge $\{x,y\}\in\bbE$.
Given finite subsets $\Lambda\subset \bbV$ and $E\subset\bbE$, define
\begin{gather*}
	\bbE_\Lambda:=\{e\in\bbE :e\subset \Lambda\},\quad
	\bbV_E:=\bigcup_{e\in E}e,\\
	\partialin\Lambda:=\{x\in \Lambda:\exists y\in\bbV\setminus \Lambda,\,xy\in\bbE\},\quad
	\partialex\Lambda:=\{y\in \bbV\setminus \Lambda:\exists x\in \Lambda,\,xy\in\bbE\},\\
	\partialin E:=\{e\in E:\exists f\in\bbE\setminus E, e\cap f\neq\varnothing\},\quad
	\partialex E:=\{e\in\bbE\setminus E:\exists f\in E, e\cap f\neq\varnothing\},\\
	\partialedge\Lambda:=\{xy\in\bbE:x\in\Lambda,y\in\bbV\setminus\Lambda\}.
\end{gather*}
In case of ambiguity, we add \(\bbG\) as a subscript (and write, for example, \(\partialin_\bbG\Lambda\)) to emphasise that the boundary is taken in \(\bbG\).
The \emph{interior} of \(\Lambda\) (in \(\bbG\)) is given by \(\Lambda\setminus\partialin\Lambda\).
The subgraph \emph{induced} by $\Lambda\subset\bbV$ is given by $(\Lambda,\bbE_\Lambda)$, and the subgraph induced by \(E\subset\bbE\) is given by \((\bbV_E,E)\).
\medskip

\noindent\textbf{Lattices.}
We will mainly work on \(\Z^2\) with nearest-neighbour edges, and on its dual. We will denote the \emph{primal lattice} by \(\bbL_{\bullet} = \Z^2\) and its {\em dual} by \(\bbL_{\circ}= (1/2,1/2)+\Z^2\). Denote by \(\bbE^{\bullet}\) the nearest-neighbour edges between sites in \(\bbL_{\bullet}\) (primal edges), by \(\bbE^{\circ}\) the nearest-neighbour edges between sites in \(\bbL_{\circ}\) (dual edges).

\medskip

\noindent\textbf{Duality.}
Each edge~\(e\in\bbE^{\bullet}\) intersects a unique edge of~\(\bbE^{\circ}\), we denote it by~\(e^*\).
For a set of primal (or dual) edges \(E\), define \(*E = \{e^*:\ e\in E\}\).
We say that $E\subset\bbE^\bullet$ is \emph{simply lattice-connected} if both the subgraphs induced by $E$ and by $*(\bbE^\bullet\setminus E)$ are connected.
As a convention, sets of edges or of dual edges will be identified with the corresponding sets of mid-points whenever the meaning is clear from the context.

\medskip

\noindent\textbf{Tiles.}
To each primal-dual pair of edges \(e,e^*\), associate a {\em tile} \(t\) given by the convex hull of their endpoints and define \(e_t:=e\); see Fig.~\ref{fig:midEdgeTiles}.
Define~\(\bbL_{\diamond}\) as the set of all tiles, and let~\(\bbE^\diamond\) be the set of all pairs of adjacent tiles.
Note that~\((\bbL_{\diamond},\bbE^{\diamond})\) is the medial graph of~\((\bbL_{\bullet},\bbE^{\bullet})\), since tiles can be identified with midpoints of edges in~\(\bbE^{\bullet}\).

\medskip

\noindent\textbf{Standard rectangular domains.}
Define the upper and lower half planes by 
\begin{equation*}
	\bbH^+ := \R\times \R_{\geq 0},\quad \bbH^- := \R\times \R_{<0},
\end{equation*}
and, for \(n,m\geq 0\), set (see Fig.~\ref{fig:FK_tilesDomain})
\begin{align*}
	\Lambda_{n,m} &:= \{-n, \dots, n\}\times\{-m, \dots, m\},\\
	\Lambda'_{n,m} &:=\Big(\big([-n-1,n+1]\times [0,m+1]\big)\cup\big([-n,n]\times[-m,0]\big)\Big)\cap\bbL_\circ.
\end{align*}

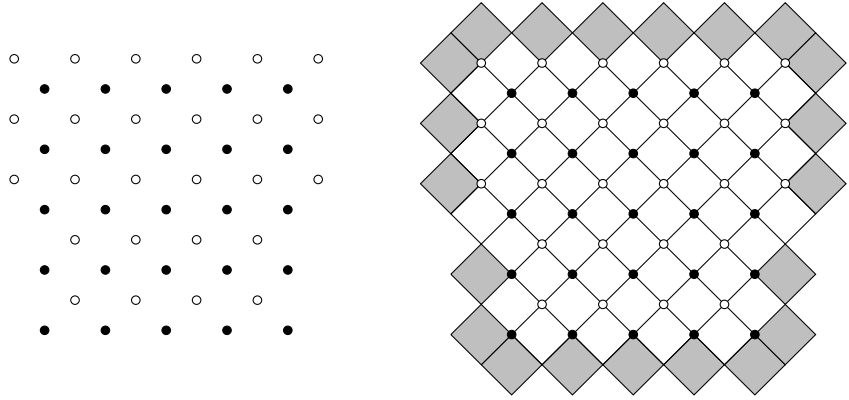
\begin{figure}
	\ifpic
	\begin{tikzpicture}[scale=0.8]
		\foreach \i in {-2,...,2}{
			\foreach \j in {-2,...,2}{
				\filldraw[fill=black] (\i,\j) circle(2pt);
			}
		}
		\foreach \i in {-3,...,2}{
			\foreach \j in {0,...,2}{
				\filldraw[fill=white] ({\i+0.5},{\j+0.5}) circle(2pt);
			}
		}
		\foreach \i in {-2,...,1}{
			\foreach \j in {-2,-1}{
				\filldraw[fill=white] ({\i+0.5},{\j+0.5}) circle(2pt);
			}
		}
		%to align with the right picture:
		{\filldraw[white](0,-3) circle(2pt);}
	\end{tikzpicture}
	\hspace{1cm}
	\begin{tikzpicture}[scale=0.8]
		%boundary tiles
		\foreach \i\j in {-2.5/3, -1.5/3, -0.5/3, 0.5/3, 1.5/3, 2.5/3, -2/-2.5, -1/-2.5, 0/-2.5, 1/-2.5, 2/-2.5, -3/0.5, -3/1.5, -3/2.5, 3/0.5, 3/1.5, 3/2.5, -2.5/-1, -2.5/-2, 2.5/-1, 2.5/-2}{
			\filldraw[fill=lightgray] ({\i-0.5},\j)--(\i,{\j+0.5})--({\i+0.5},\j)--(\i,{\j-0.5})--({\i-0.5},\j);
		}
		
		%inner tiles top
		\foreach \i in {-2.5,...,2.5}{
			\foreach \j in {0,...,2}{
				\draw ({\i-0.5},\j)--(\i,{\j+0.5})--({\i+0.5},\j)--(\i,{\j-0.5})--({\i-0.5},\j);
			}
		}
		
		%inner tiles bottom
		\foreach \i in {-1.5,...,1.5}{
			\foreach \j in {-2,-1}{
				\draw ({\i-0.5},\j)--(\i,{\j+0.5})--({\i+0.5},\j)--(\i,{\j-0.5})--({\i-0.5},\j);
			}
		}

		%primal vertices		
		\foreach \i in {-2,...,2}{
			\foreach \j in {-2,...,2}{
				\filldraw[fill=black] (\i,\j) circle(2pt);
			}
		}
		
		%dual vertices
		\foreach \i in {-3,...,2}{
			\foreach \j in {0,...,2}{
				\filldraw[fill=white] ({\i+0.5},{\j+0.5}) circle(2pt);
			}
		}
		\foreach \i in {-2,...,1}{
			\foreach \j in {-2,-1}{
				\filldraw[fill=white] ({\i+0.5},{\j+0.5}) circle(2pt);
			}
		}
	\end{tikzpicture}
	\fi
	\caption{Left: the sets $\Lambda_{2,2}$ (solid) and $\Lambda'_{2,2}$ (hollow). Right: the inner tiles \(A^\rmi_{2,2}\) (white) and the boundary tiles \(\partial A_{2,2}\) (grey).}
	\label{fig:FK_tilesDomain}
\end{figure}

Define \(\mathcal{D}_{n,m}:=\Lambda_{n,m}\cup\Lambda'_{n,m}\) and the graph \(G_{n,m}=(V_{n,m},E_{n,m})\) by (see Fig.~\ref{fig:WiredFree_FK})
\begin{equation*}
V_{n,m}=\Lambda_{n,m}\cup\big(\partialex\Lambda_{n,m} \cap \bbH^{+}\big),\quad E_{n,m}=\bbE_{V_{n,m}}\setminus\bbE_{\bbL_\bullet\setminus\Lambda_{n,m}}.
\end{equation*}
Let~\(A_{n,m}\subset\bbL_\diamond\) be the set of tiles with at least one corner in $\mathcal{D}_{n,m}$ and~\(\partial A_{n,m}\subset A_{n,m}\) (boundary tiles) be the set of tiles with precisely one corner in \(\mathcal{D}\); see Fig.~\ref{fig:FK_tilesDomain}.
Define also~\(\partial^+ A=\{t\in\partial A:t\subseteq\bbH^+\},\ \partial^- A=\partial A\setminus\partial^+ A\) and~\(A_{n,m}^\rmi:= A_{n,m}\setminus\partial A_{n,m}\).

Define also (see Fig.~\ref{fig:K} and~\ref{fig:bc_FK_loops_spins})
\begin{equation*}
	v_L(n):= (-n-1,0),\quad v_R(n):=(n+1,0).
\end{equation*}

\medskip

\noindent\textbf{Augmented rectangular domains.}
We also introduce~\(E_{n,m}\) augmented by some \emph{upper} boundary edges. First, define the set of boundary edges and its upper and lower parts by
\begin{equation}\label{eq:def-e-b-pm}
	E_{\rmb,n,m}:=\{e_t:t\in\partial A_{n,m}\}
	\qquad\text{and}\qquad
	E_{\rmb,n,m}^\pm=\{e_t:t\in\partial^\pm A_{n,m}\}. 
\end{equation}
Then define the augmented sets of edges and vertices by
\begin{equation*}
\bar{E}_{n,m}=E_{n,m} \cup E_{\rmb,n,m}^+\qquad\text{and}\qquad\bar{V}_{n,m}=\bbV_{\bar{E}_{n,m}}.
\end{equation*}
We now define the augmented domain and its dual (see Fig.~\ref{fig:K}):
\begin{itemize}
	\item Set \(K_{n,m}:=(\bar{V}_{n,m},\bar{E}_{n,m})\),
	\item and let \(K^*_{n,m}\) be the graph obtained from \((\bbV_{*\bar{E}_{n,m}},*\bar{E}_{n,m})\) by identifying the vertices in \(\partialin_{\bbL_\circ}\bbV_{*\bar{E}_{n,m}}\).
\end{itemize}

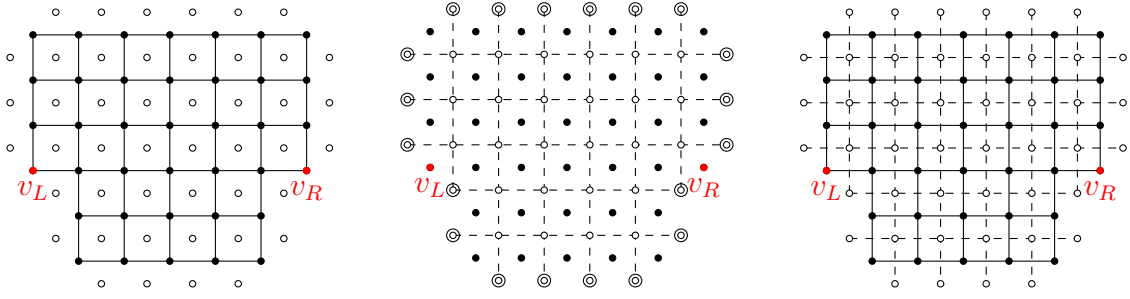
\begin{figure}
	\ifpic
	\begin{tikzpicture}[scale=0.6]
		\foreach \j in {0,...,3}{
			\draw[black] (-3,\j)--(3,\j);
		}
		\foreach \j in {-2,...,-1}{
			\draw[black] (-2,\j)--(2,\j);
		}
		\foreach \i in {-2,...,2}{
			\draw[black] (\i,3)--(\i,-2);
		}
		\draw[black] (-3,3)--(-3,0);
		\draw[black] (3,3)--(3,0);
		
		\foreach \i in {-3,...,3}{
			\foreach \j in {0,...,3}{
				\filldraw[fill=black] (\i,\j) circle(2pt);
			}
		}
		\foreach \i in {-2,...,2}{
			\foreach \j in {-2,...,-1}{
				\filldraw[fill=black] (\i,\j) circle(2pt);
			}
		}
		\foreach \i in {-4,...,3}{
			\foreach \j in {0,1,2}{
				\filldraw[fill=white] ({\i+0.5},{\j+0.5}) circle(2pt);
			}
		}
		\foreach \i in {-3,...,2}{
			\foreach \j in {-1,-2,3}{
				\filldraw[fill=white] ({\i+0.5},{\j+0.5}) circle(2pt);
			}
		}
		\foreach \i in {-2,...,1}{
			\filldraw[fill=white] ({\i+0.5},{-3+0.5}) circle(2pt);
		}
		\filldraw[red] (-3,0) circle(2pt) node[below]{$v_L$};
		\filldraw[red] (3,0) circle(2pt) node[below]{$v_R$};
	\end{tikzpicture}
	\hspace{0.6cm}
	\begin{tikzpicture}[scale=0.6]
		\foreach \i in {-2,...,1}{
			\draw[dashed] ({\i+0.5},3.5)--({\i+0.5},-2.5);
		}
		\draw[dashed] (-2.5,-0.5)--(-2.5,3.5);
		\draw[dashed] (2.5,-0.5)--(2.5,3.5);
		\foreach \j in {0.5,1.5,2.5}{
			\draw[dashed] (-3.5,\j)--(3.5,\j);
		}
		\foreach \j in {-0.5,-1.5}{
			\draw[dashed] (-2.5,\j)--(2.5,\j);
		}
		
		\foreach \i in {-3,...,3}{
			\foreach \j in {0,...,3}{
				\filldraw[fill=black] (\i,\j) circle(2pt);
			}
		}
		\foreach \i in {-2,...,2}{
			\foreach \j in {-2,...,-1}{
				\filldraw[fill=black] (\i,\j) circle(2pt);
			}
		}
		\foreach \i in {-4,...,3}{
			\foreach \j in {0,1,2}{
				\filldraw[fill=white] ({\i+0.5},{\j+0.5}) circle(2pt);
			}
		}
		\foreach \i in {-3,...,2}{
			\foreach \j in {-1,-2,3}{
				\filldraw[fill=white] ({\i+0.5},{\j+0.5}) circle(2pt);
			}
		}
		\foreach \i in {-2,...,1}{
			\filldraw[fill=white] ({\i+0.5},{-3+0.5}) circle(2pt);
		}
		\filldraw[red] (-3,0) circle(2pt) node[below]{$v_L$};
		\filldraw[red] (3,0) circle(2pt) node[below]{$v_R$};
		
		\foreach \i in {-2,...,1}{
			\draw (\i+0.5,3.5) circle(4pt);
			\draw (\i+0.5,-2.5) circle(4pt);
		}
		\foreach \j in {0,...,2}{
			\draw (-3.5,\j+0.5) circle(4pt);
			\draw (3.5,\j+0.5) circle(4pt);
		}
		\foreach \j in {-1.5,-0.5,3.5}{
			\draw (-2.5,\j) circle(4pt);
			\draw (2.5,\j) circle(4pt);
		}
	\end{tikzpicture}
	\hspace{0.6cm}
	\begin{tikzpicture}[scale=0.6]
		\foreach \j in {0,...,3}{
			\draw[black] (-3,\j)--(3,\j);
		}
		\foreach \j in {-2,...,-1}{
			\draw[black] (-2,\j)--(2,\j);
		}
		\foreach \i in {-2,...,2}{
			\draw[black] (\i,3)--(\i,-2);
		}
		\draw[black] (-3,3)--(-3,0);
		\draw[black] (3,3)--(3,0);
		
		\foreach \i in {-2,...,1}{
			\draw[dashed] ({\i+0.5},3.5)--({\i+0.5},-2.5);
		}
		\draw[dashed] (-2.5,-0.5)--(-2.5,3.5);
		\draw[dashed] (2.5,-0.5)--(2.5,3.5);
		\foreach \j in {0.5,1.5,2.5}{
			\draw[dashed] (-3.5,\j)--(3.5,\j);
		}
		\foreach \j in {-0.5,-1.5}{
			\draw[dashed] (-2.5,\j)--(2.5,\j);
		}
		
		\foreach \i in {-3,...,3}{
			\foreach \j in {0,...,3}{
				\filldraw[fill=black] (\i,\j) circle(2pt);
			}
		}
		\foreach \i in {-2,...,2}{
			\foreach \j in {-2,...,-1}{
				\filldraw[fill=black] (\i,\j) circle(2pt);
			}
		}
		\foreach \i in {-4,...,3}{
			\foreach \j in {0,1,2}{
				\filldraw[fill=white] ({\i+0.5},{\j+0.5}) circle(2pt);
			}
		}
		\foreach \i in {-3,...,2}{
			\foreach \j in {-1,-2,3}{
				\filldraw[fill=white] ({\i+0.5},{\j+0.5}) circle(2pt);
			}
		}
		\foreach \i in {-2,...,1}{
			\filldraw[fill=white] ({\i+0.5},{-3+0.5}) circle(2pt);
		}
		\filldraw[red] (-3,0) circle(2pt) node[below]{$v_L$};
		\filldraw[red] (3,0) circle(2pt) node[below]{$v_R$};
	\end{tikzpicture}
	\fi
	\caption{Left: the graph \(K_{2,2}\). Center: the graph \(K^*_{2,2}\) (vertices surrounded by a circle are identified). Right: planar duality relation between their edges.}
	\label{fig:K}
\end{figure}

\medskip

\noindent\textbf{Connectivity events.}
Given $\Lambda,\Delta\subset\bbL_\bullet$ and a percolation configuration~$\omega \in \{0,1\}^{\bbE^\bullet}$, we write $\Lambda\xleftrightarrow{\omega}\Delta$ for the event that~$\Lambda$ and $\Delta$ are connected by a path in the graph~$(\bbL_\bullet,\omega)$.
If~$\Lambda=\{i\}$ and~$\Delta=\{j\}$, we simply write~$i \xleftrightarrow{\omega} j$. 
We omit $\omega$ from the notation when it cannot lead to any confusion. 	

\medskip

\noindent\textbf{Agreements of spins along edges.}
For an edge~$e=ij\in \mathbb{E}^\bullet$ and a spin configuration~$\sigma_\bullet\in \{+1,-1\}^{\bbL_\bullet}$, we write~$\sigma_\bullet \sim e$ if~$\sigma_\bullet(i)=\sigma_\bullet(j)$;
for~$\xi\subset\mathbb{E}^\bullet$, we write~$\sigma_\bullet \sim \xi$ if~$\sigma_\bullet\sim e$ for every~$e\in\xi$ (in other words, $\sigma_\bullet$ is constant on clusters of~$\xi$).
We use similar notation for edges in~$\mathbb{E}^\circ$ and~$\sigma_\circ\in \{+1,-1\}^{\bbL_\circ}$.

\medskip

\noindent\textbf{Parameters.}
The couplings go through standard ``expansion--resummation'' of Boltzmann weights combined with extensive use of planarity. As for each step one will write the weight associated with a given model as a sum of weights for a ``more expanded'' model, several parameters will come into play.
Below we list the parameters, as well as the algebraic relations linking them:
\begin{gather}
	q>4,\quad \beta = \beta_c(q) = \ln(1+\sqrt{q}),\quad \lambda>0,\quad \svc>2,\quad U>J>0,\nonumber\\
	\sqrt{q} = e^{\lambda} + e^{-\lambda},\quad \svc = e^{\lambda/2} + e^{-\lambda/2} = \coth(2J),\label{eq:parameters_bulk}\\
	\sinh(2J) = e^{-2U}.\nonumber
\end{gather}
The parameter \(\svc\) also has a ``boundary version'':
\begin{equation}
	\svcb = e^{\lambda/2}>1.\label{eq:parameters_bnd}
\end{equation}
For the remainder of the article, we fix \( q > 4 \), along with the corresponding parameters above (which are uniquely determined by \( q \)).

\medskip

\noindent\textbf{Constants.}
Constants like \(c,c_1,C,C_1,C',\dots\) are constants which can change from line to line and which can depend on the parameters unless explicitly stated. They are independent of the system size, \(n\), which will be our main ``variable'' quantity.

\medskip

\noindent\textbf{Measures.}
For a probability measure \(P\) and an event \(A\) with \(P(A)>0\), we denote \(P(\,\cdot\, \given A)\) the probability measure \(P\) conditioned on \(A\): \(B\mapsto P(B\given A)\). Also, we denote \(P(\,\cdot\, ; A)\) the measure \(B\mapsto P(B\cap A)\). We also refer the reader to Appendix~\ref{app:tot_var_dist} for reminders about the total variation distance, that we denote \(\tvd\), and some of its basic properties.

\section{Ashkin--Teller random-cluster model}
\label{sec:atrc}

Like the Potts models, the AT model has a random-cluster (RC) representation, called the {\em ATRC model}, and introduced in~\cite{ChaMac97,PfiVel97}. We will first introduce the model and state some of its basic properties. This will be followed by the statement of our results.

\subsection{Definition and basic properties}

Let $\bbG=(\bbV,\bbE)$ be a graph, and let $G=(V,E)$ be a finite subgraph of $\bbG$.
Let~$\eta_\tau,\eta_{\tau\tau'}\in\{0,1\}^{\bbE}$ with $\eta_\tau\subseteq\eta_{\tau\tau'}$.
The ATRC model on~$G$ with parameters~$U>J>0$ under boundary conditions~$(\eta_\tau,\eta_{\tau\tau'})$ is the probability measure on $\{0,1\}^{\bbE}\times\{0,1\}^{\bbE}$ given by 
\begin{multline}
\label{eq:atrc_def}
	\atrc_{G;J,U}^{\eta_\tau,\eta_{\tau\tau'}}(\omega_\tau,\omega_{\tau\tau'})
	= \tfrac{1}{Z} \cdot \ind_{\omega_\tau\subseteq\omega_{\tau\tau'}} \cdot  \rmw_\tau^{\abs{\omega_\tau\cap E}}\, \rmw_{\tau\tau'}^{\abs{(\omega_{\tau\tau'}\setminus\omega_\tau)\cap E}}\,2^{\clusters_{V}(\omega_\tau)+\clusters_{V}(\omega_{\tau\tau'})}\\
	\cdot  \prod_{e\in \bbE\setminus E}\,\ind_{\omega_\tau(e)=\eta_\tau(e)}\ind_{\omega_{\tau\tau'}(e)=\eta_{\tau\tau'}(e)},
\end{multline}
where~$Z=Z_{\atrc}^{\eta_\tau,\eta_{\tau\tau'}}(G,J,U)$ is the partition function, $\clusters_{V}(\cdot)$ is the number of clusters that intersect $V$, and the weights are given by
\begin{equation}\label{eq:atrc_weights}
	\rmw_\tau=e^{2U}(e^{2J}-e^{-2J})\quad\text{and}\quad \rmw_{\tau\tau'}=e^{2(U-J)}-1.
\end{equation}
When $\eta_\tau\equiv 0$ (resp. $\eta_\tau\equiv 1$), we write $\atrcfree$ (resp. $\atrcwired$) instead of $\eta_\tau$ in the superscript, and analogously for $\eta_{\tau\tau'}$.
Given finite subsets $\Lambda\subset\bbV$ and $E\subset\bbE$, we write $\atrc_{\Lambda;J,U}^{\eta_\tau,\eta_{\tau\tau'}}$ and $\atrc_{E;J,U}^{\eta_\tau,\eta_{\tau\tau'}}$ for the measures on the graphs $(\Lambda,\bbE_\Lambda)$ and $(\bbV_E,E)$, respectively.
\medskip

\noindent\textbf{Finite energy.} There exists a constant $c=c(J,U)>0$ such that the following holds. For any finite subgraph $G=(V,E)$, any $e\in E$ and any $a,b\in\{0,1\}^{E}\) with \(a\subseteq b\)
\begin{equation}
	\label{eq:fe_atrc}\tag{FE}
	\atrc_{G;J,U}^{\eta_\tau,\eta_{\tau\tau'}}\big((\omega_\tau(e),\omega_{\tau\tau'}(e))=(a_e,b_e)\bgiven(\omega_\tau,\omega_{\tau\tau'})=(a,b)\text{ on }E\setminus\{e\}\big)>c.
\end{equation}

\noindent\textbf{``DLR'' property.}
Given a subgraph $G'=(V',E')$ of $G$ and boundary conditions $\eta_\tau,\eta_{\tau\tau'},\eta'_\tau,\eta'_{\tau\tau'}$ with $\eta_\tau=\eta'_\tau$ on $\bbE\setminus E'$ and $\eta_{\tau\tau'}=\eta'_{\tau\tau'}$ on $\bbE\setminus E'$, 
\begin{equation}
	\label{eq:smp}\tag{DLR}
	\atrc_{G;J,U}^{\eta_\tau,\eta_{\tau\tau'}}(\,\cdot \given
	(\omega_\tau,\omega_{\tau\tau'}) = (\eta'_\tau,\eta'_{\tau\tau'})\text{ on }\bbE\setminus E')=
	\atrc_{G';J,U}^{\eta'_\tau,\eta'_{\tau\tau'}}( \cdot ).
\end{equation}

\noindent\textbf{Stochastic domination and positive association.}
We first introduce these notions in a general setting.
Given a partially ordered set~\(\mathcal{S}\) (states) and some index set~$I$ (eg. a set of edges or vertices), consider~\({S}^I\): the set of functions from~\(I\) to~\(\mathcal{S}\), i.e. configurations on~\(I\) with values in~\(\mathcal{S}\).
A subset $A\subseteq\mathcal{S}^I$ is called \emph{increasing} if~\(\ind_A\) is increasing with respect to the pointwise order.
Given a $\sigma$-algebra $\mathcal{A}$ on \(\mathcal{S}^I\) and two probability measures $\mu$ and $\nu$ on $\mathcal{A}$, we say that~$\nu$ stochastically dominates $\mu$, and write $\mu\leq_{\mathrm{st}}\nu$ (or $\nu\geq_{\mathrm{st}}\mu$) if~$\mu(A)\leq\nu(A)$ for any increasing event $A\in\mathcal{A}$.
We say that~$\mu$ is \emph{positively associated} or satisfies the \emph{FKG property} (named after the seminal work of Fortuin, Kasteleyn and Ginibre~\cite{ForKasGin71}) if, for all increasing events $A,B\in\calA$, we have
\begin{equation}
	\label{eq:fkg-general}
	\tag{FKG}
	\mu(A\cap B)\geq \mu(A)\mu(B).
\end{equation}
If~$I$ is finite and~$\mu(\omega)>0$ for any $\omega\in\mathcal{S}^I$, we say that $\mu$ satisfies the \emph{strong FKG} property if
\begin{equation}
	\label{eq:strong-fkg}
	\tag{strong-FKG}
	\forall\,J\subset I\ \forall\,\omega_1\leq\omega_2:\quad \mu(\,\cdot\given\omega=\omega_1\text{ on }J)\leq_{\mathrm{st}}\mu(\,\cdot\given\omega=\omega_2\text{ on }J),
\end{equation}
where we wrote \(\omega=\omega_i\text{ on }J\) for the event that \(\omega(j)=\omega_i(j)\) for all \(j\in J\).
We note that this is equivalent to the \emph{FKG lattice condition}; see~\cite[Section~2]{Gri06} and~\cite[Section~4]{GeoHagMae01} for the background.

In order to incorporate~$\atrc_{G;J,U}^{\eta_\tau,\eta_{\tau\tau'}}$ into this general framework, note that this is a positive measure on~\(\{(0,0),(0,1),(1,1)\}^E\). Indeed, \(\omega_\tau\) and \(\omega_{\tau\tau'}\) that coincide respectively with \(\eta_\tau\) and \(\eta_{\tau\tau'}\) on \(\bbE^\bullet\setminus E\) can be viewed as elements of~\(\{0,1\}^E\), and moreover~\(\omega_\tau(e)\leq \omega_{\tau\tau'}(e)\) for every~\(e\in E\) by~\eqref{eq:atrc_def}.

\begin{lemma}[\cite{PfiVel97}]
\label{lem:atrc_strong_fkg}
For any~$U>J>0$, any subgraph $G=(V,E)$ of $\bbG$ and all boundary conditions~$\eta_\tau,\eta_{\tau\tau'}$, the measure $\atrc_{G;J,U}^{\eta_\tau,\eta_{\tau\tau'}}$ satisfies the strong FKG property.
\end{lemma}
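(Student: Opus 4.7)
The plan is to reduce the strong FKG property to the FKG lattice condition via Holley's classical criterion, and then to check the lattice condition factor by factor, exploiting the product structure of $\atrc_{G;J,U}^{\eta_\tau,\eta_{\tau\tau'}}$.

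The natural state space for the pair $(\omega_\tau,\omega_{\tau\tau'})$ is $L := \{(0,0),(0,1),(1,1)\}^E$, a subset of $\{0,1\}^E\times\{0,1\}^E$ with the componentwise order. First I would check that $L$ is a sublattice: if $(\omega_\tau,\omega_{\tau\tau'})$ and $(\omega'_\tau,\omega'_{\tau\tau'})$ both satisfy $\omega_\tau\le\omega_{\tau\tau'}$ pointwise, then monotonicity of $\wedge$ and $\vee$ gives $\omega_\tau\wedge\omega'_\tau\le\omega_{\tau\tau'}\wedge\omega'_{\tau\tau'}$, and likewise for the join. The finite-energy property~\eqref{eq:fe_atrc} ensures that $\atrc_{G;J,U}^{\eta_\tau,\eta_{\tau\tau'}}$ is strictly positive on $L$ (with the boundary conditions fixed), so Holley's theorem reduces strong FKG to verifying the lattice inequality
\begin{equation*}
	\atrc_{G;J,U}^{\eta_\tau,\eta_{\tau\tau'}}(\xi\vee\xi')\,\atrc_{G;J,U}^{\eta_\tau,\eta_{\tau\tau'}}(\xi\wedge\xi')\ge \atrc_{G;J,U}^{\eta_\tau,\eta_{\tau\tau'}}(\xi)\,\atrc_{G;J,U}^{\eta_\tau,\eta_{\tau\tau'}}(\xi')
\end{equation*}
for all $\xi,\xi'\in L$; see the equivalence recorded in~\cite[Section~2]{Gri06}.

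Next, I would split the weight in~\eqref{eq:atrc_def} into three factors whose lattice-supermodularity can be checked independently (since products of log-supermodular functions are log-supermodular): the edge weight $\rmw_\tau^{|\omega_\tau\cap E|}\rmw_{\tau\tau'}^{|(\omega_{\tau\tau'}\setminus\omega_\tau)\cap E|}$, the cluster weight $2^{\clusters_V(\omega_\tau)}$, and the cluster weight $2^{\clusters_V(\omega_{\tau\tau'})}$. The edge weight is a product of local weights at each $e\in E$ depending only on $\xi_e\in\{(0,0),(0,1),(1,1)\}$, a totally ordered three-element chain, so $\{\xi_e,\xi'_e\}=\{\xi_e\wedge\xi'_e,\xi_e\vee\xi'_e\}$ and the local weights trivially match, giving equality in the lattice inequality for this factor.

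The heart of the argument is therefore the cluster factors. Each depends on only one of the two components, and the sublattice meet/join in $L$ coincides on that component with the usual Boolean operations on $\{0,1\}^E$. Thus it suffices to invoke, separately for $\omega_\tau$ and for $\omega_{\tau\tau'}$, the standard supermodularity of the cluster count,
\begin{equation*}
	\clusters_V(\omega\wedge\omega')+\clusters_V(\omega\vee\omega')\ge \clusters_V(\omega)+\clusters_V(\omega'),
\end{equation*}
which is the $q=2$ case of the classical FKG lattice condition for random-cluster measures valid at every $q\ge 1$. This is the one genuinely combinatorial step, proved by adding the edges of $(\omega\vee\omega')\setminus(\omega\wedge\omega')$ one at a time and observing that every merge of clusters occurring in $\omega$ or $\omega'$ already occurs in $\omega\wedge\omega'$. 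Combining the three factors completes the verification of the lattice condition and hence of strong FKG; no obstruction beyond this well-known cluster-count inequality is expected.
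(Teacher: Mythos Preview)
Your proof is correct and follows the standard route: reduce strong FKG to the FKG lattice condition on the chain-valued lattice $\{(0,0),(0,1),(1,1)\}^E$, then check it factor by factor, with the local edge weights trivially log-modular on a chain and the two cluster-count factors handled by the classical random-cluster supermodularity $\clusters_V(\omega\wedge\omega')+\clusters_V(\omega\vee\omega')\ge\clusters_V(\omega)+\clusters_V(\omega')$. The paper does not spell out a proof here, deferring to \cite[Proposition~4.1]{PfiVel97}, but your argument is precisely the one the paper uses later for the analogous Lemma~\ref{lem:fkg_matrc_+} and in the proof of Lemma~\ref{lem:closeness_interfaces_fk_matrc}, so you are on the same track.
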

This is an easy consequence of~\cite[Proposition~4.1]{PfiVel97} and its proof.

By Lemma~\ref{lem:atrc_strong_fkg} and~\eqref{eq:smp}, for any increasing sequence of subgraphs~$G_k \nearrow \bbG$, the measures $\atrc_{G_k;J,U}^{\atrcwired,\atrcwired}$ form a decreasing sequence in the sense of stochastic domination. Consequently, the weak limit exists and is independent~$(G_k)$.
We denote it by~$\atrc_{J,U}^{\atrcwired,\atrcwired}$.
Analogously, we define~$\atrc_{J,U}^{\atrcfree,\atrcfree}$ as the (increasing) limit of $\atrc_{G_k;J,U}^{\atrcfree,\atrcfree}$.

Furthermore, Lemma~\ref{lem:atrc_strong_fkg} and~\eqref{eq:smp} can be used to compare different boundary conditions:
if~$U>J>0$, $\eta_\tau\leq \eta'_\tau$ and~$\eta_{\tau\tau'}\leq \eta'_{\tau\tau'}$, then
\begin{equation}
	\label{eq:cbc} \tag{CBC}
	\atrc_{G;J,U}^{\eta_\tau,\eta_{\tau\tau'}} \leq_{\rm st} \atrc_{G;J,U}^{\eta'_\tau,\eta'_{\tau\tau'}}.
\end{equation}

\noindent\textbf{FK-type relations on the square lattice.}
Recall the infinite-volume AT expectation operators $\langle\,\cdot\,\rangle^{\mathrm{f},\mathrm{f}}$ and $\langle\,\cdot\,\rangle^{+,+}$ defined in Section~\ref{sec:ATdef} and the notation~$\xleftrightarrow{\omega}$ for a connectivity event introduced in Section~\ref{sec:notations}.
The essential feature of the ATRC is that its connection probabilities are related to the correlations in the AT model~\cite[Proposition~3.1]{PfiVel97}:
for~$U> J> 0$ and any ~$i,j\in \bbL_\bullet$,
\begin{equation}\label{eq:at_atrc_coupling}
	\langle\tau_i\tau_j\rangle^{+,+}=\atrc^{\atrcwired,\atrcwired}_{J,U}(i\xleftrightarrow{\omega_\tau}j),
	\qquad
	\langle\tau_i\tau'_i\tau_j\tau'_j\rangle^{+,+}=\atrc^{\atrcwired,\atrcwired}_{J,U}(i\overset{\omega_{\tau\tau'}}{\longleftrightarrow}j).
\end{equation}
The analogous relations for $\langle\cdot\rangle^{\mathrm{f},\mathrm{f}}$ and $\atrc_{J,U}^{\atrcfree,\atrcfree}$ are also valid.

\medskip

\noindent\textbf{Duality on the square lattice.} 
Recall the primal and dual square lattices $(\bbL_\bullet,\bbE^\bullet)$ and $(\bbL_\circ,\bbE^\circ)$ (Section~\ref{sec:notations}) and the notion of a dual percolation configuration~$\eta^*\in\{0,1\}^{\bbE^\circ}$ for each $\eta\in\{0,1\}^{\bbE^\bullet}$ (Section~\ref{sec:intro}).
Choose~\(J,U\) as in~\eqref{eq:parameters_bulk}.
Then, for any finite $E\subset\bbE^\bullet$ and any~$\eta_\tau,\eta_{\tau\tau'}\in\{0,1\}^{\bbE^\bullet}$,  by~\cite[Proposition~$3.2$]{PfiVel97},
\begin{equation}
	\label{eq:atrc_selfdual}
	(\omega_\tau,\omega_{\tau\tau'})\sim\atrc_{E;J,U}^{\eta_\tau,\eta_{\tau\tau'}}
	\quad\text{implies}\quad
	(\omega_{\tau\tau'}^*,\omega_\tau^*)\sim\atrc_{*E;J,U}^{\eta_{\tau\tau'}^*,\eta_\tau^*}.
\end{equation}
Notice the different order in the dual pair.

\subsection{Results on the ATRC}
In this section, we present our main results concerning the self-dual ATRC model on the square lattice $\bbL_\bullet$.
\medskip

\subsubsection*{Mixing properties.}
We establish exponential relaxation for the single edges, which together with~\eqref{eq:strong-fkg} implies \emph{exponential weak mixing} (see Section~\ref{sec:mixing}).

\begin{proposition}
	\label{prop:edge_relax_ATRC}
	Let \(0<J<U\) satisfy \(\sinh 2J=e^{-2U}\). There exists \(c>0\) such that, for any edge \(e\in\bbE^\bullet\) with \(0\in e\), and any \(n\geq 1\),
	\begin{equation*}
		\max_{\sigma\in \{\tau,\tau\tau'\}} \Big|\atrc_{\Lambda_n; J,U}^{1,1}\big(\omega_{\sigma}(e)=1\big) - \atrc_{\Lambda_n; J,U}^{0,0}\big(\omega_{\sigma}(e)=1\big)\Big|\leq e^{-cn}.
	\end{equation*}
\end{proposition}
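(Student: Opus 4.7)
By Lemma~\ref{lem:atrc_strong_fkg} and~\eqref{eq:cbc}, for each $\sigma\in\{\tau,\tau\tau'\}$ we have $\atrc_{\Lambda_n;J,U}^{1,1}(\omega_\sigma(e)=1)\ge\atrc_{\Lambda_n;J,U}^{0,0}(\omega_\sigma(e)=1)$, so the absolute value in the statement equals the nonnegative difference
\begin{equation*}
D_\sigma := \atrc_{\Lambda_n;J,U}^{1,1}(\omega_\sigma(e)=1) - \atrc_{\Lambda_n;J,U}^{0,0}(\omega_\sigma(e)=1).
\end{equation*}
The plan is to bound $D_\sigma$ by an ATRC connection probability from $e$ to $\partial\Lambda_n$, and then to control the latter via the ATRC--AT identity~\eqref{eq:at_atrc_coupling} together with the volume-uniform exponential decay of the Ashkin--Teller two-point function proved in~\cite{AouDobGla24}.

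For the first reduction, I would use strong FKG to construct a Holley monotone coupling $\Pi$ of $\atrc_{\Lambda_n;J,U}^{0,0}$ and $\atrc_{\Lambda_n;J,U}^{1,1}$, producing samples $(X^0_\tau,X^0_{\tau\tau'})\le(X^1_\tau,X^1_{\tau\tau'})$ coordinatewise, so that $D_\sigma\le\Pi(X^0_\sigma(e)\ne X^1_\sigma(e))$. An exploration argument in the spirit of~\cite{Ale98} then reveals the cluster $\mathcal{C}$ of the endpoints of $e$ in the largest coordinate $X^1_{\tau\tau'}$ (which dominates every coordinate since $\omega_\tau\subseteq\omega_{\tau\tau'}$ by definition of the ATRC). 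If $\mathcal{C}$ does not reach $\partial\Lambda_n$, then~\eqref{eq:smp} lets us resample inside $\mathcal{C}$ under either boundary condition and obtain the same conditional law, ruling out a disagreement at $e$. Therefore
\begin{equation*}
D_\sigma \le C\cdot \atrc_{\Lambda_n;J,U}^{1,1}\bigl(e\leftrightarrow \partial\Lambda_n \text{ in } \omega_{\tau\tau'}\bigr),
\end{equation*}
with $C=C(J,U)$. A union bound over $x\in\partial\Lambda_n$, combined with the finite-volume version of~\eqref{eq:at_atrc_coupling}, bounds each summand by the AT four-point correlation $\langle\tau_i\tau'_i\tau_x\tau'_x\rangle^{+,+}_{\Lambda_n}$ with $i\in e$.

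To conclude, one applies the volume-uniform exponential decay of $\langle\tau_0\tau_x\rangle^{+,+}_{\Lambda_n}$ from~\cite{AouDobGla24} (valid on the self-dual line with $U>J$), together with its analogue for the four-point function $\langle\tau_0\tau'_0\tau_x\tau'_x\rangle^{+,+}_{\Lambda_n}$. The latter follows either from the same strategy adapted to the product spin, or from the localisation of the six-vertex height function at $\svc>2$ proved later in the paper via the coupling described in Section~\ref{sec:coupling:interfaces}. Summing the resulting exponential bounds over the $O(n)$ vertices of $\partial\Lambda_n$ yields $D_\sigma\le e^{-cn}$, as required. The principal obstacle is the disagreement-to-connection reduction in the two-layer setting of the ATRC: the constraint $\omega_\tau\subseteq\omega_{\tau\tau'}$ both helps (by identifying $\omega_{\tau\tau'}$ as a single dominating layer) and complicates matters (the two layers cannot be resampled independently), so a careful exploration respecting the nesting, together with the full strength of strong FKG, is used throughout.
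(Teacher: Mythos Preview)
Your approach has a genuine gap that cannot be repaired: on the self-dual line with $U>J$, the layer $\omega_{\tau\tau'}$ does \emph{not} have exponentially decaying connectivity --- it percolates. By the duality relation~\eqref{eq:atrc_selfdual}, the dual $\omega_{\tau\tau'}^*$ under $\atrc^{1,1}$ has the law of $\omega_\tau$ under $\atrc^{0,0}$, and the latter is subcritical by Proposition~\ref{prop:exp_decay_omega_tau}. Equivalently, the product spin $\tau\tau'$ is long-range ordered in this regime, so the four-point function $\langle\tau_0\tau'_0\tau_x\tau'_x\rangle^{+,+}$ does \emph{not} decay. Your bound $D_\sigma\le C\,\atrc_{\Lambda_n}^{1,1}(e\leftrightarrow\partial\Lambda_n\text{ in }\omega_{\tau\tau'})$ is therefore of order one and gives nothing. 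The sentence ``its analogue for the four-point function\ldots follows either from the same strategy\ldots or from the localisation of the six-vertex height function'' is simply false: height-function localisation is consistent with (indeed, reflects) ordering of $\tau\tau'$.

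The paper proceeds quite differently. It inserts the intermediate boundary condition $(0,1)$ and proves two separate inequalities. For $(1,1)$ versus $(0,1)$: exponential decay in $\omega_\tau$ (Proposition~\ref{prop:exp_decay_omega_tau}) yields, with high probability, a dual $\omega_\tau^*$-circuit around $e$; conditioning on the outermost such circuit and using~\eqref{eq:smp} and~\eqref{eq:strong-fkg} reduces to a domain with $(0,1)$ boundary. For $(0,1)$ versus $(0,0)$: self-duality turns this into finding an $\omega_{\tau\tau'}$-circuit, again via decay of $\omega_\tau$. The crucial missing ingredient in your proposal is Proposition~\ref{prop:atrc01_relax}: exponential relaxation of the $(0,1)$-boundary ATRC towards its infinite-volume limit, proved via the six-vertex height-function coupling (Lemma~\ref{lem:sample_atrc_hf}) and the height-function relaxation of Proposition~\ref{prop:hf_exp_relax}. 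Neither circuit argument alone closes the gap between $(1,1)$ and $(0,0)$; the height-function input is what bridges the two intermediate comparisons.
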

As a consequence, the measures $\atrc_{G;J,U}^{\eta_\tau,\eta_{\tau\tau'}}$ converge (as $G\nearrow\bbL_\bullet$) to a limit measure~$\atrc_{J,U}$, which is independent of the choice of boundary conditions~$\eta_\tau,\eta_{\tau\tau'}$. Furthermore, the limit~$\atrc_{J,U}$ is the unique ATRC \emph{Gibbs measure}.
Moreover, applying the classical work of Alexander~\cite{Ale98}, we derive \emph{ratio} weak mixing.
\begin{theorem}
	\label{thm:ratio_weak_mixing_ATRC}
	Let $0<J<U$ satisfy $\sinh 2J=e^{-2U}$. There exist \(C\geq 0,c>0\) such that, for any finite subgraph $G=(V,E)$ of $\bbL_\bullet$, any \(F\subset E\), and any \(F\)-measurable event \(A\) having positive probability,
	\begin{equation*}
		\sup_{\eta_{\tau},\eta_{\tau\tau'},\eta_{\tau}',\eta_{\tau\tau'}'} \Bigg|\frac{\atrc_{G;J,U}^{\eta_{\tau},\eta_{\tau\tau'}}(A)}{\atrc_{G;J,U}^{\eta_{\tau}',\eta_{\tau\tau'}'}(A)}- 1\Bigg| \leq C\sum_{f\in F}\sum_{e\in E^c} e^{-c\,\rmd_{\infty}(e,f)},
	\end{equation*}whenever the right side is strictly less than \(1\).
\end{theorem}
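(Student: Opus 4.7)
Proof proposal. The plan is to first establish exponential \emph{weak} mixing for all $F$-measurable events via Proposition~\ref{prop:edge_relax_ATRC}, strong FKG and finite energy, and then upgrade to the \emph{ratio} form following the strategy of Alexander~\cite{Ale98}.

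First, I would extend Proposition~\ref{prop:edge_relax_ATRC} from boxes $\Lambda_n$ to arbitrary finite subgraphs $G=(V,E)\subset\bbL_\bullet$: for any edge $f\in E$,
\begin{equation*}
\max_{\sigma\in\{\tau,\tau\tau'\}}\big|\atrc^{1,1}_{G;J,U}(\omega_\sigma(f)=1) - \atrc^{0,0}_{G;J,U}(\omega_\sigma(f)=1)\big| \leq C\sum_{e \in E^c} e^{-c\,\rmd_\infty(e,f)}.
\end{equation*}
This follows by embedding $G$ in a large box, using~\eqref{eq:smp} to produce free/wired boundary conditions on $G$, and applying Proposition~\ref{prop:edge_relax_ATRC} together with~\eqref{eq:cbc} on annular shells surrounding $f$ inside $E^c$. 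With this edge-level bound, \eqref{eq:cbc} reduces any comparison $|\atrc^{\eta}_G(A)-\atrc^{\eta'}_G(A)|$ to the extreme $(1,1)$ vs.\ $(0,0)$ case. Strong FKG (Lemma~\ref{lem:atrc_strong_fkg}) provides a monotone coupling of these two measures, and the marginal probability of disagreement on any $f\in F$ is bounded by the display above. A union bound over $f\in F$ then yields exponential weak mixing:
\begin{equation*}
\big|\atrc^{\eta_\tau,\eta_{\tau\tau'}}_G(A) - \atrc^{\eta'_\tau,\eta'_{\tau\tau'}}_G(A)\big| \leq C'\sum_{f\in F}\sum_{e\in E^c} e^{-c\,\rmd_\infty(e,f)}.
\end{equation*}

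Upgrading this to \emph{ratio} weak mixing is a matter of importing Alexander's argument from~\cite{Ale98}. The principle is that, under strong FKG and finite energy, whenever the right-hand side above is bounded away from $1$, the probabilities $\atrc^\eta(A)$ under different boundary conditions all lie in a common multiplicative window. One builds this window by choosing a buffer region between $F$ and $\partial E$, conditioning on the configuration there, using strong FKG to sandwich the conditional measure between its free and wired versions, and invoking~\eqref{eq:fe_atrc} to prevent intermediate probabilities from collapsing. A telescoping/induction scheme over the disagreement sites then converts the absolute weak-mixing bound into a multiplicative one, with the same exponential right-hand side.

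The main obstacle is adapting Alexander's machinery, originally written for single-layer FKG percolation, to the ATRC, whose configurations live in the partially ordered state space $\{(0,0),(0,1),(1,1)\}^E$. The required ingredients, strong FKG on this state space (Lemma~\ref{lem:atrc_strong_fkg}) and uniform finite energy~\eqref{eq:fe_atrc}, are both available, so Alexander's arguments carry over after identifying the correct partial order. Some care is needed to handle the two simultaneous sources of disagreement (in $\omega_\tau$ and in $\omega_{\tau\tau'}$) during the telescoping step, and also to ensure that the condition ``right-hand side $<1$'' is preserved along the induction; but otherwise the proof is an essentially formal transcription of Alexander's.
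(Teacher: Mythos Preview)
Your weak-mixing derivation is fine and matches the paper's Theorem~\ref{thm:weak_mixing_ATRC}. The gap is in the upgrade to ratio mixing: Alexander's argument in~\cite[Section~5]{Ale98} does not run on strong FKG, finite energy and weak mixing alone. It requires the model to admit \emph{exponentially bounded controlling regions}: for each $F$ there must be a random set $H(F)\subset E\setminus F$ such that, conditionally on the configuration on $H(F)$, the restrictions to $F$ and to $E\setminus(F\cup H(F))$ are independent, and $\mathbb{P}(e\in H(F))$ decays exponentially in $\rmd_\infty(e,F)$.

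In the ATRC this is a nontrivial input that you have not supplied. The paper takes
\[
H(F)=\{e\in E\setminus F:\ e\xleftrightarrow{\omega_\tau}\bbV_F\ \text{or}\ e^*\xleftrightarrow{\omega_{\tau\tau'}^*}\bbV_{*F}\},
\]
and the exponential tail of $H(F)$ comes from exponential decay of connections in $\omega_\tau$ (Proposition~\ref{prop:exp_decay_omega_tau}, imported from~\cite{AouDobGla24}) together with self-duality~\eqref{eq:atrc_selfdual} for the $\omega_{\tau\tau'}^*$ part. Your ``buffer region'' and ``telescoping over disagreement sites'' description does not provide this decoupling mechanism: conditioning on a deterministic annular buffer does not make the inside and outside independent in a random-cluster-type model, because $\omega_\tau$- and $\omega_{\tau\tau'}$-clusters can cross the buffer and couple the cluster counts on both sides. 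Without the exponential-decay input (for both $\omega_\tau$ and $\omega_{\tau\tau'}^*$) there is no way to control the probability that conditioning on $A$ forces information across the buffer, and that is exactly what prevents the additive bound from becoming multiplicative.
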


The notion of ATRC Gibbs measures can be defined in an analogous way to the FK model; see~\cite[Chapter 4.4]{Gri06}. The following is a direct consequence of Theorem~\ref{thm:ratio_weak_mixing_ATRC}.

\begin{corollary}
	\label{cor:ATRC_unique}
	Let \(0<J<U\) satisfy \(\sinh 2J=e^{-2U}\). It holds that
	\begin{equation*}
	\atrc_{J,U}^{\atrcfree,\atrcfree}=\atrc_{J,U}^{\atrcwired,\atrcwired}=:\atrc_{J,U}.
	\end{equation*}
	In particular, \(\atrc_{J,U}\) is the unique ATRC Gibbs measure, and for any choice of boundary conditions \(\eta_\tau,\eta_{\tau\tau'}\), the measures \(\atrc_{G;J,U}^{\eta_\tau,\eta_{\tau\tau'}}\) converge to \(\atrc_{J,U}\) as \(G\nearrow\bbL_\bullet\).
\end{corollary}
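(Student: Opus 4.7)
The plan is to derive Corollary~\ref{cor:ATRC_unique} directly from Theorem~\ref{thm:ratio_weak_mixing_ATRC} in three short steps: equality of the extremal free and wired infinite-volume limits on cylinder events, convergence under arbitrary boundary conditions via a stochastic sandwich, and uniqueness of the Gibbs measure via the DLR equations.

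First, I fix an arbitrary cylinder event $A$ depending on finitely many edges $F\subset\bbE^\bullet$ and consider the boxes $G_n=(\Lambda_n,\bbE_{\Lambda_n})$ for $n$ large enough that $F\subset\bbE_{\Lambda_n}$. Iterating~\eqref{eq:fe_atrc} over the edges of $F$ shows that $\atrc_{G_n;J,U}^{\eta_\tau,\eta_{\tau\tau'}}(A)>0$ uniformly in the boundary conditions. Applying Theorem~\ref{thm:ratio_weak_mixing_ATRC} to $G_n$ with $(\eta_\tau,\eta_{\tau\tau'})=(0,0)$ and $(\eta_\tau',\eta_{\tau\tau'}')=(1,1)$ gives
\[
\left|\frac{\atrc_{G_n;J,U}^{1,1}(A)}{\atrc_{G_n;J,U}^{0,0}(A)}-1\right|\leq C\sum_{f\in F}\sum_{e\in\bbE^\bullet\setminus\bbE_{\Lambda_n}}e^{-c\,\rmd_\infty(e,f)},
\]
whenever the right-hand side is less than $1$. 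Since $F$ is fixed, every $e$ in the exterior lies at distance tending to infinity from $F$ as $n\to\infty$, and a standard geometric summation shows that the right-hand side vanishes. Because $\atrc_{G_n;J,U}^{0,0}$ and $\atrc_{G_n;J,U}^{1,1}$ converge weakly, by monotonicity via~\eqref{eq:cbc} and~\eqref{eq:smp}, to $\atrc_{J,U}^{\atrcfree,\atrcfree}$ and $\atrc_{J,U}^{\atrcwired,\atrcwired}$ respectively, passing to the limit yields $\atrc_{J,U}^{\atrcwired,\atrcwired}(A)=\atrc_{J,U}^{\atrcfree,\atrcfree}(A)$. As cylinder events generate the product $\sigma$-algebra, the two measures coincide, proving the first equality.

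For the remaining assertions, I would use~\eqref{eq:cbc}: for any boundary condition $(\eta_\tau,\eta_{\tau\tau'})$ and any finite subgraph $G$,
\[
\atrc_{G;J,U}^{0,0}\leq_{\mathrm{st}}\atrc_{G;J,U}^{\eta_\tau,\eta_{\tau\tau'}}\leq_{\mathrm{st}}\atrc_{G;J,U}^{1,1}.
\]
As $G\nearrow\bbL_\bullet$ both extremes converge to the common limit $\atrc_{J,U}$, and the sandwich forces $\atrc_{G;J,U}^{\eta_\tau,\eta_{\tau\tau'}}$ to converge to $\atrc_{J,U}$ as well, first on increasing cylinder events and then on all cylinder events by inclusion--exclusion. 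Uniqueness is proved analogously: for any ATRC Gibbs measure $\mu$, the DLR condition combined with~\eqref{eq:cbc} shows that, conditionally on the configuration outside any finite box, $\mu$ is stochastically sandwiched between the free and wired finite-volume ATRC measures; averaging and sending the box to infinity identifies $\mu$ with $\atrc_{J,U}$. There is no substantive obstacle---all the work lives inside Theorem~\ref{thm:ratio_weak_mixing_ATRC}---and the only mild technicality, positivity of cylinder event probabilities so that the ratio in the mixing estimate is well defined, is handled by the finite energy property~\eqref{eq:fe_atrc}.
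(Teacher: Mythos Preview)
Your proof is correct and follows exactly the approach the paper indicates: the paper states only that the corollary is ``a direct consequence of Theorem~\ref{thm:ratio_weak_mixing_ATRC}'' without spelling out the argument, and your three steps (ratio weak mixing on cylinder events, stochastic sandwich via~\eqref{eq:cbc}, DLR for uniqueness) are precisely the standard way to unpack that sentence. One very minor point: not every $F$-measurable event has positive probability (e.g.\ those violating $\omega_\tau\subseteq\omega_{\tau\tau'}$), but such events have zero probability under every boundary condition, so they are trivially handled and your finite-energy argument covers the remaining ones.
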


We refer to Section~\ref{sec:strong_mixing}, for our results on \emph{strong} mixing properties of the ATRC.

\subsubsection*{Uniform exponential decay.}
In Section~\ref{sec:mixing}, we prove exponential weak mixing (Theorem~\ref{thm:weak_mixing_ATRC}).
It allows to apply the work of Alexander~\cite{Ale04} to derive the following uniform decay of connection probabilities:
\begin{theorem}
	\label{thm:Ale04_ATRC}
	Let $0<J<U$ satisfy $\sinh 2J=e^{-2U}$. There exists a constant \(c>0\) such that, for any finite simply lattice-connected \(E\subset \bbE^\bullet\), and any \(i,j\in \bbV_{E}\),
	\begin{equation*}
		\atrc_{E;J,U}^{1,1}\big(i\xleftrightarrow{\omega_\tau\cap E} j\big) \leq e^{-c\,\rmd_\infty(i,j)},
	\end{equation*}
	where \(\rmd_\infty\) is the distance induced by the $L^\infty$ norm.
\end{theorem}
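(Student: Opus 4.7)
The plan is to invoke the framework developed by Alexander in~\cite{Ale04}, which converts infinite-volume exponential decay of connection probabilities into uniform finite-volume exponential decay under wired boundary conditions on simply lattice-connected edge sets, provided the measure satisfies strong FKG, finite energy, and weak mixing. The task then reduces to verifying that each of these hypotheses holds for the ATRC, as three of the four ingredients are already collected earlier in the paper.

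Strong FKG is Lemma~\ref{lem:atrc_strong_fkg}, finite energy is~\eqref{eq:fe_atrc}, and ratio weak mixing is Theorem~\ref{thm:ratio_weak_mixing_ATRC} (which is stronger than the usual weak mixing hypothesis of~\cite{Ale04}). The remaining ingredient is infinite-volume exponential decay of $\atrc_{J,U}(i \xleftrightarrow{\omega_\tau} j)$. To derive it, I would apply Theorem~\ref{thm:oz-at}, which yields
\[
\langle \tau_0 \tau_x \rangle^{+,+} \leq C e^{-c|x|}
\]
for some $c>0$ and all $x\in\bbL_\bullet$, and then use the identification~\eqref{eq:at_atrc_coupling} together with Corollary~\ref{cor:ATRC_unique} to rewrite the left-hand side as $\atrc_{J,U}(0 \xleftrightarrow{\omega_\tau} x)$, giving the required bound (with $|\cdot|$ and $\rmd_\infty(0,\cdot)$ equivalent up to constants).

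With all ingredients in place, Alexander's theorem produces the desired uniform bound on $\atrc_{E;J,U}^{1,1}(i \xleftrightarrow{\omega_\tau \cap E} j)$. The simply lattice-connected hypothesis enters because it guarantees that the outer boundary of $E$ is a single, topologically tame set, which is needed for Alexander's geometric ``detachment'' mechanism: starting from an open connection between $i$ and $j$ inside $E$, he uses finite energy to locally detach the cluster from the wired boundary cluster, then invokes weak mixing to compare the resulting measure with the infinite-volume $\atrc_{J,U}$, where the exponential decay from the previous step applies.

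The principal obstacle is the bookkeeping required to transfer Alexander's argument, originally written for the single-edge FK percolation, to the pair-configuration ATRC with the constraint $\omega_\tau \subseteq \omega_{\tau\tau'}$. This should be essentially formal: Alexander's proof uses only generic ingredients (strong FKG, \eqref{eq:cbc}, finite energy, weak mixing, and infinite-volume decay), all of which hold in identical form for the ATRC. The pair structure enters only through the state space; the finite energy property~\eqref{eq:fe_atrc} is explicitly formulated for the pair $(\omega_\tau(e), \omega_{\tau\tau'}(e))$, so local modifications needed for detachment remain controlled. Care is still required, in particular, to ensure that the simply lattice-connected assumption on $E\subset\bbE^\bullet$ is compatible with the bookkeeping on the paired configuration and with the dual connections implicit in the geometric arguments.
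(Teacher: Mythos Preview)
Your overall strategy is correct and matches the paper: invoke Alexander's result~\cite[Theorem~1.1]{Ale04} after checking strong FKG, finite energy, weak mixing, and infinite-volume exponential decay. However, your choice of input for the exponential-decay ingredient introduces a genuine circularity. You propose to obtain $\atrc_{J,U}(0\xleftrightarrow{\omega_\tau}x)\le Ce^{-c|x|}$ from Theorem~\ref{thm:oz-at}, but in the paper Theorem~\ref{thm:oz-at} is derived from Theorem~\ref{thm:OZ_for_ATRC_infinite_vol}, whose proof (specifically Lemma~\ref{lem:mixing_conditonal_chain_infinite_volume} and the verification in Section~\ref{subsec:strMix:HypVerif}) already relies on Theorem~\ref{thm:Ale04_ATRC}. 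The correct, non-circular source for exponential decay is Proposition~\ref{prop:exp_decay_omega_tau} (i.e.\ \cite[Proposition~1.1]{AouDobGla24}), which by~\eqref{eq:cbc} immediately yields decay under the infinite-volume measure~$\atrc_{J,U}$.

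One further point: the paper sidesteps your concern about adapting Alexander's argument to the pair $(\omega_\tau,\omega_{\tau\tau'})$ by applying~\cite{Ale04} directly to the \emph{push-forward} of $\atrc_{J,U}$ by $\omega_\tau$, which is an ordinary single-edge percolation measure. The only thing left to check is that $\atrc_{E;J,U}^{1,1}$ (as a law on $\omega_\tau$) arises by conditioning this push-forward on $\{\omega_\tau(e)=1:e\in\bbE_{\Lambda_n}\setminus E\}$ for some large $n$; this follows from~\eqref{eq:smp} together with the constraint $\omega_\tau\subseteq\omega_{\tau\tau'}$, since forcing $\omega_\tau=1$ automatically forces $\omega_{\tau\tau'}=1$. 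This is cleaner than tracking the pair structure through Alexander's detachment argument.
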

\begin{proof}
	It is enough to check the conditions stated in~\cite[Theorem 1.1]{Ale04}: the push-forward of \(\atrc_{J,U}\) by \(\omega_{\tau}\) is translation invariant, has finite-energy for closing edges, exponential decay of connectivity probabilities~\cite[Proposition 1.1]{AouDobGla24}, and is exponentially weak mixing by Theorem~\ref{thm:weak_mixing_ATRC}. Finally, for \(n\) with \(E\subseteq\bbE_{\Lambda_{n-1}}\), we have by~\eqref{eq:smp} that
	\begin{equation*}
	\atrc_{J,U}(\omega_\tau\in\cdot\given \omega_\tau(e)=1\text{ for }e\in\bbE_{\Lambda_n}\setminus E)=\atrc_{E;J,U}^{1,1}.\qedhere
	\end{equation*}
\end{proof}

\subsubsection*{Ornstein--Zernike asymptotics.}
Recall the definition of the inverse correlation length \(\nu_{J,U}\) in the AT model in Section~\ref{sec:ATdef}. By the FK-type relations~\eqref{eq:at_atrc_coupling} and by Corollary~\ref{cor:ATRC_unique},
\begin{equation*}
\nu_{J,U}(x)=-\lim_{n\to\infty}\tfrac{1}{n}\ln \atrc_{J,U}(0\xleftrightarrow{\omega_\tau}\lfloor nx \rfloor).
\end{equation*}
The following theorem is the analogue of Theorem~\ref{thm:oz-at} for the ATRC model.

\begin{theorem}
\label{thm:oz-atrc}
Let $0<J<U$ satisfy $\sinh 2J=e^{-2U}$.
The inverse correlation length~$\nu_{J,U}$ is a norm on $\R^2$. Furthermore, uniformly in $|x|\to\infty$, 
\begin{equation*}
\atrc_{J,U}(0\xleftrightarrow{\omega_\tau} x)=\tfrac{g(x/|x|)}{\sqrt{|x|}}\,e^{-\nu(x)}\,(1+o(1)),
\end{equation*}
where $\nu=\nu_{J,U}$, and $g=g_{J,U}$ is a strictly positive analytic function on $\mathbb{S}^1$.
\end{theorem}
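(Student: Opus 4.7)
The plan is to derive Theorem~\ref{thm:oz-atrc} by implementing an Ornstein--Zernike-type renewal analysis for the cluster of $0$ in $\omega_\tau$ under the (unique) infinite-volume measure $\atrc_{J,U}$, following and adapting the strategy of \cite{CamIofVel08} for subcritical FK percolation, with the improvements of \cite{OttVel18,AouOttVel24}. The key point is that, by this stage, the paper has established everything that played, in the FK setting, the roles of exponential decay, weak and strong mixing and stochastic domination; only the OZ machinery itself remains to be run.

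First, I would check that $\nu=\nu_{J,U}$ is a norm. Homogeneity is immediate from the definition, and symmetry under the lattice symmetries of $\atrc_{J,U}$ is automatic. Subadditivity on $\Z^2$ (and hence on $\Q^2$ and $\R^2$) follows from~\eqref{eq:fkg-general} and a standard finite-energy gluing at the concatenation vertex, using~\eqref{eq:at_atrc_coupling} and Corollary~\ref{cor:ATRC_unique} to work directly with $\atrc_{J,U}(0\xleftrightarrow{\omega_\tau}\lfloor nx\rfloor)$. Strict positivity on $\R^2\setminus\{0\}$ is the exponential decay of connection probabilities provided by Theorem~\ref{thm:Ale04_ATRC}. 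Continuity (hence the norm property on the whole plane) then follows from subadditivity and homogeneity.

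Second, and this is the technical heart of the argument, I would build a renewal representation of the event $\{0\xleftrightarrow{\omega_\tau} x\}$ as $|x|\to\infty$. Fix a dual direction $\mathbf{n}$ to $x/|x|$ on the boundary of the unit ball of the dual norm $\nu^*$ and tilt by $e^{\langle\mathbf{n},\,\cdot\,\rangle}$, weighting the far endpoint of the cluster $\mathcal{C}_0$ of $0$. Following \cite{CamIofVel08}, introduce forward/backward cone points, i.e.\ break points where $\mathcal{C}_0$ is pinched between two disjoint cones. The role of the ratio weak mixing (Theorem~\ref{thm:ratio_weak_mixing_ATRC}) and of the restricted strong mixing (Theorem~\ref{thm:strong_mixing_atrc}) is to show that, conditionally on a sequence of such cone points, the pieces of $\mathcal{C}_0$ between consecutive break points form a \emph{fast mixing kernel} in the sense of \cite{CamIofVel03}. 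Combined with the strong FKG property (Lemma~\ref{lem:atrc_strong_fkg}) and the comparison~\eqref{eq:cbc}, a coarse-graining and surgery argument then yields that cone points appear with positive density under the tilt and that the corresponding irreducible increments have exponential tails. Applying the reconstruction procedure of \cite{OttVel18} converts this fast-mixing-kernel picture into a genuine i.i.d.\ renewal representation. This is essentially Theorem~\ref{thm:OZ_for_ATRC_infinite_vol} and occupies Section~\ref{sec:ATRC:RW_infinite_volume}.

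Third, given such a renewal representation, Theorem~\ref{thm:oz-atrc} reduces to a local central limit theorem for a random walk with i.i.d.\ increments of strictly positive drift and exponential tails: the $|x|^{-1/2}$ prefactor and the Gaussian constant come from the LCLT, while analyticity of the logarithmic moment generating function of the increments in $\mathbf{n}$ yields the analyticity of $g$ on $\bbS^1$ and, via the Legendre transform, the strict convexity and analyticity of $\nu$. The main obstacle is the second step: because $\atrc_{J,U}$ lives on \emph{pairs} $(\omega_\tau,\omega_{\tau\tau'})$ with the constraint $\omega_\tau\subseteq\omega_{\tau\tau'}$, surgeries on the $\omega_\tau$-cluster cannot rely on a simple domain Markov property as in FK, and must instead be controlled through the mixing theorems derived earlier in the paper via the coupling with the six-vertex height function; this is exactly what makes Theorems~\ref{thm:ratio_weak_mixing_ATRC}--\ref{thm:strong_mixing_atrc} the workhorses of the whole argument.
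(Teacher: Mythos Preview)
Your proposal is correct and follows essentially the same approach as the paper: the norm property of $\nu$ is established exactly as you describe in Section~\ref{subsec:OZ_def}, the renewal picture is precisely Theorem~\ref{thm:OZ_for_ATRC_infinite_vol} (built in Section~\ref{sec:ATRC:RW_infinite_volume} via the coarse-graining and mixing inputs you list), and the paper's actual proof of Theorem~\ref{thm:oz-atrc} in Section~\ref{sec:proofs_thms} then reduces to applying the local limit theorem as in~\cite[Section~8.1]{AouOttVel24} to the random-walk Green function coming from that renewal structure. Your identification of Theorems~\ref{thm:ratio_weak_mixing_ATRC} and~\ref{thm:strong_mixing_atrc} as the substitutes for the FK domain Markov property is also exactly how the paper uses them.
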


\section{Couplings and interfaces}
\label{sec:coupling:interfaces}

This section is concerned with the construction of a coupling of the FK-percolation and the Ashkin--Teller models, via the six-vertex model, and the derivation of its basic properties. 
The coupling of the FK and six-vertex measures is an adaptation of the Baxter--Kelland--Wu (BKW) coupling to the Dobrushin boundary conditions~\cite{BaxKelWu76}.
The relation of the six-vertex and AT measures has first been noticed in~\cite{Fan72} comparing their critical properties, and it was made explicit in~\cite{Fan72b,Weg72} on a level of partition functions.
We build on~\cite{GlaPel23}, where a coupling of the six-vertex model and a graphical representation of the AT model (a marginal of ATRC) was constructed.

\subsection{Different models and combinatorial mappings}
\label{sec:combinatorial_mappings}

This section provides an overview of the combinatorial objects that will be encountered, as well as a description of their relations.
We first discuss oriented loop configurations, which serve as an intermediate step in the BKW coupling of the FK-percolation and the six-vertex models.
This is followed by a description of two of the representations of the six-vertex model: edge orientations and spin configurations.
The height function representation appears only in Section~\ref{sec:mixing}, where its monotonicity properties are crucial to derive mixing and relaxation properties of the ATRC measures.

In Section~\ref{sec:intro}, we saw that percolation configurations are in bijection with (unoriented) loop configurations. To make the correspondence \(\omega\leftrightarrow \omega^* \leftrightarrow \ell=\loops(\omega)\) explicit, we can regard each of these models as an assignment of a local piece of drawing of an edge and two \emph{arcs} to lozenge tiles centred at the mid-edges as depicted in Fig.~\ref{fig:midEdgeTiles}. Clearly, retaining only either the primal or dual edges, or the arcs, provides complete information about all three.

\medskip

\noindent\textbf{Oriented loop configurations} are obtained from unoriented ones by assigning an orientation to each loop; formally, this is done by means of a sequence of independent uniform random variables on \([0,1]\) indexed by the set \(\mathcal{L}\) of all loops.
Alternatively, one can assign orientations to the loop arcs on each tile, subject to the constraint that the orientations of neighbouring arcs match. The eight local configurations that can occur at a tile are referred to as \emph{types}, see Fig.~\ref{fig:oriented_loop_arcs}.

\medskip

\noindent\textbf{The edge orientations of the six-vertex model}~\cite{Pau35,Rys63} are assignments of orientations to the edges in \(\bbE^\diamond\), obtained from oriented loop configurations via the natural  surjection; see Fig.~\ref{fig:oriented_loop_arcs}.
These edge orientations satisfy the {\em ice rule}: at every vertex, there are two incoming and two outgoing edges of~\(\bbE^\diamond\).
This constraint permits six possible local configurations at a tile, which are also called types; see Fig.~\ref{fig:oriented_loop_arcs}.
The local inverse operation can be considered as \emph{splitting} the oriented edges into two oriented loop arcs.
While tiles of types 1-4 permit a unique reconstruction of the loop arcs based on the edge orientations, there are two possibilities for tiles of types 5-6, giving the latter a special role. 

\begin{figure}
\centering
\ifpic
\begin{tikzpicture}
	\draw (-1.5,1.2) node[below]{type};
	\draw (0,1.2) node[below]{1};
	\draw (1.5,1.2) node[below]{2};
	\draw (3,1.2) node[below]{3};
	\draw (4.5,1.2) node[below]{4};
	\draw (6,1.2) node[below]{5A};
	\draw (7.5,1.2) node[below]{5B};
	\draw (9,1.2) node[below]{6A};
	\draw (10.5,1.2) node[below]{6B};

	\foreach \i in {0,1.5,3,4.5,6,7.5,9,10.5}{
		\draw (\i,-0.5)--(\i+0.5,0)--(\i,0.5)--(\i-0.5,0)--(\i,-0.5);
	}
	\foreach \i in {0,1.5,6,9}{
		\Hloop{\i}{0}
	}
	\foreach \i in {3,4.5,7.5,10.5}{
		\Vloop{\i}{0}
	}
		
	\RIGHTarrow{0}{0.5-1/(2*sqrt(2))}{blue}
	\RIGHTarrow{0}{1/(2*sqrt(2))-0.5}{blue}
	\LEFTarrow{1.5}{0.5-1/(2*sqrt(2))}{blue}
	\LEFTarrow{1.5}{1/(2*sqrt(2))-0.5}{blue}
	
	\UParrow{2.5+1/(2*sqrt(2))}{0}{blue}
	\UParrow{3.5-1/(2*sqrt(2))}{0}{blue}
	\DOWNarrow{4+1/(2*sqrt(2))}{0}{blue}
	\DOWNarrow{5-1/(2*sqrt(2))}{0}{blue}
	
	\LEFTarrow{6}{1/(2*sqrt(2))-0.5}{blue}
	\RIGHTarrow{6}{0.5-1/(2*sqrt(2))}{blue}	
	\DOWNarrow{7+1/(2*sqrt(2))}{0}{blue}
	\UParrow{8-1/(2*sqrt(2))}{0}{blue}
			
	\RIGHTarrow{9}{1/(2*sqrt(2))-0.5}{blue}
	\LEFTarrow{9}{0.5-1/(2*sqrt(2))}{blue}
	\UParrow{10+1/(2*sqrt(2))}{0}{blue}
	\DOWNarrow{11-1/(2*sqrt(2))}{0}{blue}	
	
	\foreach \i in {0,1.5,3,4.5,6.75,9.75}{
		\DrawTile{\i}{-2}{}
	}
	
	%% mapping
	\foreach \i in {0,1.5,3,4.5}{
		\draw[->,thick] (\i,-0.6)--(\i,-1.4);
	}
	\draw[->,thick] (6,-0.6)--(6.7,-1.4);
	\draw[->,thick] (7.5,-0.6)--(6.8,-1.4);
	\draw[->,thick] (9,-0.6)--(9.7,-1.4);
	\draw[->,thick] (10.5,-0.6)--(9.8,-1.4);
	
	%%types	
	
	\draw (-1.5,-2.6) node[below]{type};
	\draw (0,-2.6) node[below]{1};
	\draw (1.5,-2.6) node[below]{2};
	\draw (3,-2.6) node[below]{3};
	\draw (4.5,-2.6) node[below]{4};
	\draw (6.75,-2.6) node[below]{5};
	\draw (9.75,-2.6) node[below]{6};
	
	%% 6V arrows
	\foreach \i in {0,3,9.75}{
		\draw[thick] (\i,-2)--({\i-0.1},{-2-0.1});
		\draw[thick, <-] (\i-0.1,-2-0.1)--({\i-0.25},{-2-0.25});
	}

	\foreach \i in {1.5,4.5,6.75}{
		\draw[thick] (\i-0.15,-2-0.15)--({\i-0.25},{-2-0.25});
		\draw[thick, ->] (\i,-2)--({\i-0.15},{-2-0.15});
	}
	
	\foreach \i in {0,4.5,6.75}{
		\draw[thick] (\i,-2)--({\i-0.1},{-2+0.1});		
		\draw[thick, <-] (\i-0.1,-2+0.1)--({\i-0.25},{-2+0.25});
	}

	\foreach \i in {1.5,3,9.75}{
		\draw[thick] (\i-0.15,-2+0.15)--({\i-0.25},{-2+0.25});
		\draw[thick, ->] (\i,-2)--({\i-0.15},{-2+0.15});
	}
	
	\foreach \i in {1.5,3,6.75}{
		\draw[thick] (\i,-2)--({\i+0.1},{-2-0.1});
		\draw[thick, <-] (\i+0.1,-2-0.1)--({\i+0.25},{-2-0.25});
	}

	\foreach \i in {0,4.5,9.75}{
		\draw[thick] (\i+0.15,-2-0.15)--({\i+0.25},{-2-0.25});
		\draw[thick, ->] (\i,-2)--({\i+0.15},{-2-0.15});
	}
	
	\foreach \i in {1.5,4.5,9.75}{
		\draw[thick] (\i,-2)--({\i+0.1},{-2+0.1});
		\draw[thick, <-] (\i+0.1,-2+0.1)--({\i+0.25},{-2+0.25});
	}

	\foreach \i in {0,3,6.75}{
		\draw[thick] (\i+0.15,-2+0.15)--({\i+0.25},{-2+0.25});
		\draw[thick, ->] (\i,-2)--({\i+0.15},{-2+0.15});
	}
\end{tikzpicture}
\fi
\caption{Tiles of the oriented loop model and their types and weights, and the mapping from oriented loop arcs to six-vertex edge orientations.}
\label{fig:oriented_loop_arcs}
\end{figure}
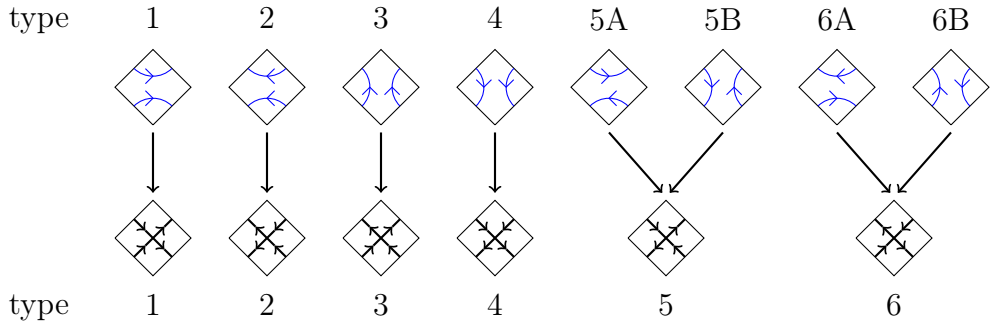
\medskip

\noindent\textbf{The six-vertex spin representation.}
The six-vertex model may be represented by pairs of spin-configurations \((\sigma_\bullet,\sigma_\circ)\in\{\pm1\}^{\bbL_\bullet}\times\{\pm1\}^{\bbL_\circ}\)~\cite{Wu71,KadWeg71,Lis22,GlaPel23}, obtained from the edge orientations via the following two-valued mapping.
Fix the value of \(\sigma_\bullet\) or \(\sigma_\circ\) at some arbitrary fixed vertex and proceed iteratively: given an edge \(e\in\bbE^\diamond\), denote the vertices that it separates by~$i\in\bbL_\bullet$ from a vertex~$u\in\bbL_\circ$; we impose~\(\sigma_\bullet(i)=\sigma_\circ(u)\) if~\(i\) is to the left side of \(e\) (with respect to its assigned orientation) and we impose~\(\sigma_\bullet(i)=-\sigma_\circ(u)\) otherwise; see Fig.~\ref{fig:six-vertex_arrows_spins_heights}.
The ice rule ensures that this mapping is well-defined.

This mapping is two-valued due to the liberty to choose the value of \(\sigma_\bullet\) or \(\sigma_\circ\) at one vertex, and the two images are related to each other by a global spin flip.
Furthermore, it is injective, meaning that the edge orientations can be reconstructed from the spins.
The type of a tile with respect to \((\sigma_\bullet,\sigma_\circ)\) is given by the type of the corresponding edge orientations; see Fig.~\ref{fig:six-vertex_arrows_spins_heights}.
It should also be noted that the ice rule can be translated as follows: for any tile \(t\in\bbL_\diamond\), either~\(\sigma_\bullet\) is constant on the endpoints of \(e_t\) or \(\sigma_\circ\) is constant on the endpoints of \(e_t^*\). Formally,
\begin{equation}\label{eq:ice-rule_spins}
\big(\sigma_\bullet(i)-\sigma_\bullet(j)\big)\big(\sigma_\circ(u)-\sigma_\circ(v)\big)=0\qquad\text{for any }t\in\bbL_\diamond\text{ with }e_t=ij,\,e_t^*=uv.
\end{equation}
In the context of spins, this property will henceforth be referred to as the ice rule. 

\begin{figure}
\centering
\includegraphics[scale=0.7]{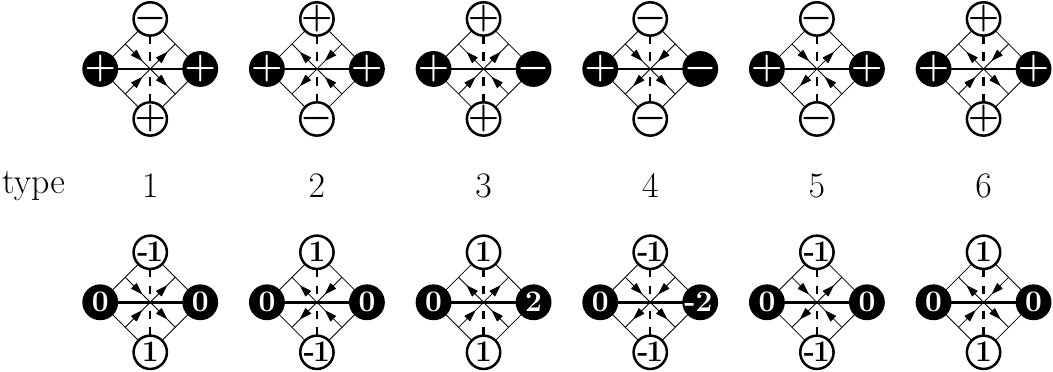}
\caption{The six-vertex types for all representations at a tile corresponding to a horizontal primal edge \(e\). Top: the spin at the left endpoint of \(e\) is fixed to be $+$. Bottom: the height at the left endpoint of \(e\) is fixed to be $0$.}
\label{fig:six-vertex_arrows_spins_heights}
\end{figure}

\medskip

\noindent\textbf{Baxter--Kelland--Wu correspondence~\cite{BaxKelWu76}.}
Given a measure on oriented loops, taking its pushforwards with respect to the above mappings, one obtains measures on the six-vertex edge orientations and spin configurations.

\subsection{Coupling under Dobrushin conditions}
\label{sec:coupling}

The idea is to take the FK measure with Dobrushin boundary conditions and construct from it first a measure on pairs of spin configurations on \(\bbL_\bullet\) and \(\bbL_\circ\) that satisfy the ice rule and then a measure on ATRC configurations. We will then identify these two measures as the six-vertex and the ATRC measures, respectively, under suitable versions of Dobrushin boundary conditions. This gives a coupling between the FK and a modified ATRC, which will allow us to transfer the study of the former to the study of the latter.
See also Section~\ref{sec:bkw} for the BKW coupling~\cite{BaxKelWu76} without Dobrushin boundary conditions and in the context of height functions.

Recall the subgraphs \(G_{n,m}=(V_{n,m},E_{n,m})\) of \((\bbL_{\bullet},\bbE^{\bullet} )\) given in Section~\ref{sec:notations}.
As \(n,m\) will be fixed in this section, we will omit them in the notation and simply write \(G=(V,E)\).
Consider~\(\fk_G^{1/0}\): the FK measure on~\(G\) under Dobrushin boundary conditions defined in Section~\ref{sec:intro}; see Fig.~\ref{fig:WiredFree_FK}).

\subsubsection*{Coupling measure.}
To couple \(\fk_G^{1/0}\) with both a six-vertex spin measure and a modified version of the ATRC measure, we augment our probability space to incorporate independent uniform \([0,1]\) random variables assigned to every loop and every tile.
Formally, let~\(\mathcal{L}\) be the set of all unoriented loops drawn on the tiles of~\(\bbL_{\diamond}\) (see the right of Fig.~\ref{fig:midEdgeTiles}).
Define~\(\Omega^{1/0} := \{0,1\}^{\bbE\bullet}\times[0,1]^{\mathcal{L}}\times[0,1]^{\bbL_{\diamond}}\) equipped with the product of Borel sigma algebras.
Define \(Q\) and \(Q'\) respectively as the product measures on~\([0,1]^{\mathcal{L}}\) and~\([0,1]^{\bbL_{\diamond}}\).
Finally, the coupling measure is defined by
\[
	\Psi_G^{1/0} := \fk_G^{1/0}\otimes Q\otimes Q'.
\]
In Lemmata~\ref{lem:6V_spins_to_01FK} and~\ref{lem:6V_spins_to_AT} below, we describe how to obtain the following two measures as marginals of~\(\Psi_G^{1/0}\): the six-vertex spin measure and the modified ATRC measure, both under suitable Dobrushin boundary conditions (Definitions~\ref{def:six-vertex} and~\ref{def:mATRC}).

\begin{figure}
	\centering
	\ifpic
	\begin{tikzpicture}[scale=0.9]
		%% vertical upper primal
		\foreach \i\j in {-4/0.5, -4/1.5, -4/2.5, -4/3.5, -3/0.5, -3/1.5, -3/2.5, -3/3.5, -2/3.5, -1/3.5, 0/3.5, 1/3.5, 2/3.5, 3/0.5, 3/1.5, 3/2.5, 3/3.5, 4/0.5, 4/1.5, 4/2.5, 4/3.5}{
			\Vedge{\i}{\j}{very thick}
			\DrawTile{\i}{\j}{gray!50}
			\Vloop{\i}{\j}
		}
		%% horizontal upper primal
		\foreach \i\j in {-3.5/0, -3.5/1, -3.5/2, -3.5/3, -3.5/4, 3.5/0, 3.5/1, 3.5/2, 3.5/3, 3.5/4, -2.5/3, -2.5/4, -1.5/3, -1.5/4, -0.5/3, -0.5/4, 0.5/3, 0.5/4, 1.5/3, 1.5/4, 2.5/3, 2.5/4}{
			\Hedge{\i}{\j}{very thick}
			\DrawTile{\i}{\j}{gray!50}
			\Hloop{\i}{\j}
		}
		%% vertical lower dual
		\foreach \i\j in {-3.5/-3, -3.5/-2, -3.5/-1, -2.5/-3, -2.5/-2, -2.5/-1, 2.5/-3, 2.5/-2, 2.5/-1, 3.5/-3, 3.5/-2, 3.5/-1, -1.5/-3, -0.5/-3, 1.5/-3, 0.5/-3}{
			\Vedge{\i}{\j}{dashed, very thick}
			\DrawTile{\i}{\j}{gray!50}
			\Vloop{\i}{\j}
		}
		%% horizontal lower dual
		\foreach \i\j in {-3/-3.5, -3/-2.5, -3/-1.5, -3/-0.5, 3/-3.5, 3/-2.5, 3/-1.5, 3/-0.5, -2/-3.5, -2/-2.5, -1/-3.5, -1/-2.5, 0/-3.5, 0/-2.5, 1/-3.5, 1/-2.5, 2/-3.5, 2/-2.5}{
			\Hedge{\i}{\j}{dashed, very thick}
			\DrawTile{\i}{\j}{gray!50}
			\Hloop{\i}{\j}
		}
		%%%%% Orientations bc: full out
		% horizontal
		\foreach \i\j in {-3.5/1, -3.5/2, -3.5/3, -3.5/4, 3.5/1, 3.5/2, 3.5/3, 3.5/4, -2.5/4, -1.5/4, -0.5/4, 0.5/4, 1.5/4, 2.5/4, -3/-3.5, -3/-2.5, -3/-1.5, -3/-0.5, 3/-3.5, 3/-2.5, 3/-1.5, 3/-0.5, -2/-3.5, -1/-3.5, 0/-3.5, 1/-3.5, 2/-3.5}{
			\RIGHTarrow{\i}{\j-0.5+1/(2*sqrt(2))}{blue}
			\LEFTarrow{\i}{\j+0.5-1/(2*sqrt(2))}{blue}
		}
		% vertical
		\foreach \i\j in {-3.5/-3, -3.5/-2, -3.5/-1, -2.5/-3, 2.5/-3, 3.5/-3, 3.5/-2, 3.5/-1, -1.5/-3, -0.5/-3, 1.5/-3, 0.5/-3, -4/0.5, -4/1.5, -4/2.5, -4/3.5, -3/3.5, -2/3.5, -1/3.5, 0/3.5, 1/3.5, 2/3.5, 3/3.5, 4/0.5, 4/1.5, 4/2.5, 4/3.5}{
			\DOWNarrow{\i-0.5+1/(2*sqrt(2))}{\j}{blue}
			\UParrow{\i+0.5-1/(2*sqrt(2))}{\j}{blue}
		}
		
		%%%%% Orientations bc: half in
		% horizontal upper side
		\foreach \i\j in {-3.5/0,-2.5/3, -1.5/3, -0.5/3, 0.5/3, 1.5/3, 2.5/3, 3.5/0}{
			\LEFTarrow{\i}{\j+0.5-1/(2*sqrt(2))}{blue}
		}
		% horizontal lower side
		\foreach \i\j in {-2/-2.5, -1/-2.5, 0/-2.5, 1/-2.5, 2/-2.5}{
			\RIGHTarrow{\i}{\j-0.5+1/(2*sqrt(2))}{blue}
		}
		% vertical upper side
		\foreach \i in {0,1,2}{
			\DOWNarrow{-3-0.5+1/(2*sqrt(2))}{\i+0.5}{blue}
			\UParrow{3+0.5-1/(2*sqrt(2))}{\i+0.5}{blue}
		}
		% vertical lower side
		\DOWNarrow{-2.5-0.5+1/(2*sqrt(2))}{-2}{blue}
		\DOWNarrow{-2.5-0.5+1/(2*sqrt(2))}{-1}{blue}
		\UParrow{2.5+0.5-1/(2*sqrt(2))}{-2}{blue}
		\UParrow{2.5+0.5-1/(2*sqrt(2))}{-1}{blue}
		
		%% interface insertion
		\LEFTarrow{-3.5}{-0.5+1/(2*sqrt(2))}{blue};
		\LEFTarrow{3.5}{-0.5+1/(2*sqrt(2))}{blue};
		\draw (-3,0) node[right]{$v_L$};
		\draw (3,0) node[left]{$v_R$};
		
		%% primal sites
		\foreach \i\j in {-4/-4, -4/-3, -4/-2, -4/-1, -4/0, -4/1, -4/2, -4/3, -4/4, -3/-4, -3/-3, -3/-2, -3/-1, -3/0, -3/1, -3/2, -3/3, -3/4, -2/-4, -2/-3, -2/-2, -2/-1, -2/0, -2/1, -2/2, -2/3, -2/4, -1/-4, -1/-3, -1/-2, -1/-1, -1/0, -1/1, -1/2, -1/3, -1/4, 0/-4, 0/-3, 0/-2, 0/-1, 0/0, 0/1, 0/2, 0/3, 0/4, 1/-4, 1/-3, 1/-2, 1/-1, 1/0, 1/1, 1/2, 1/3, 1/4, 2/-4, 2/-3, 2/-2, 2/-1, 2/0, 2/1, 2/2, 2/3, 2/4, 3/-4, 3/-3, 3/-2, 3/-1, 3/0, 3/1, 3/2, 3/3, 3/4, 4/-4, 4/-3, 4/-2, 4/-1, 4/0, 4/1, 4/2, 4/3, 4/4}{
			\filldraw[black] (\i,\j) circle (2pt);
		}
		%% dual sites
		\foreach \i\j in {-3.5/-3.5, -3.5/-2.5, -3.5/-1.5, -3.5/-0.5, -3.5/0.5, -3.5/1.5, -3.5/2.5, -3.5/3.5, -2.5/-3.5, -2.5/-2.5, -2.5/-1.5, -2.5/-0.5, -2.5/0.5, -2.5/1.5, -2.5/2.5, -2.5/3.5, -1.5/-3.5, -1.5/-2.5, -1.5/-1.5, -1.5/-0.5, -1.5/0.5, -1.5/1.5, -1.5/2.5, -1.5/3.5, -0.5/-3.5, -0.5/-2.5, -0.5/-1.5, -0.5/-0.5, -0.5/0.5, -0.5/1.5, -0.5/2.5, -0.5/3.5, 0.5/-3.5, 0.5/-2.5, 0.5/-1.5, 0.5/-0.5, 0.5/0.5, 0.5/1.5, 0.5/2.5, 0.5/3.5, 1.5/-3.5, 1.5/-2.5, 1.5/-1.5, 1.5/-0.5, 1.5/0.5, 1.5/1.5, 1.5/2.5, 1.5/3.5, 2.5/-3.5, 2.5/-2.5, 2.5/-1.5, 2.5/-0.5, 2.5/0.5, 2.5/1.5, 2.5/2.5, 2.5/3.5, 3.5/-3.5, 3.5/-2.5, 3.5/-1.5, 3.5/-0.5, 3.5/0.5, 3.5/1.5, 3.5/2.5, 3.5/3.5}{
			\filldraw[black, fill=white] (\i,\j) circle (2pt);
		}
		
		%% spins
		\draw (-4,1) node[left]{$\sigma_{\bullet} =1$};
		\filldraw[black, fill=white] (-4.5,0.5) circle (2pt);
		\draw (-4.5,0.5) node[left]{$\sigma_{\circ} =1$};
		\draw (-4,-1) node[left]{$\sigma_{\bullet} =1$};
		\draw (-3.5,-1.5) node[left]{$\sigma_{\circ} =-1$};

	\end{tikzpicture}
	\fi
	\caption{Boundary conditions on oriented loops and on six-vertex spins. The edges drawn are those induced by the oriented loop boundary conditions.}
	\label{fig:bc_FK_loops_spins}
\end{figure}
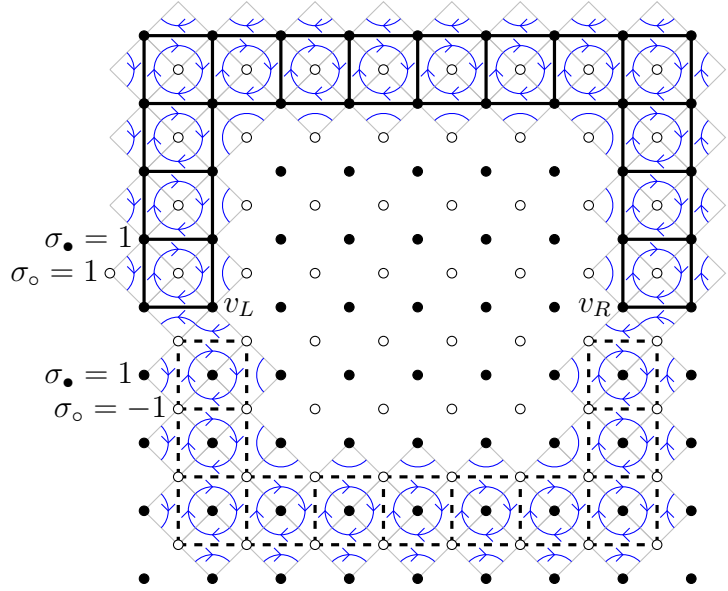

\subsubsection*{From FK to six-vertex.}
Recall the parameters~\(\svc\), \(\svcb\), and~\(\lambda\) from~\eqref{eq:parameters_bulk} and the standard rectangular domains from Section~\ref{sec:notations}. Below we omit~\(n,m\) everywhere.
\begin{definition}\label{def:six-vertex}
	The six-vertex spin model under Dobrushin conditions is a probability measure on \(\sigma=(\sigma_\bullet,\sigma_\circ)\in \{\pm 1\}^{\bbL_{\bullet}}\times \{\pm 1\}^{\bbL_{\circ}}\) defined by
	\begin{equation}
		\label{eq:def_6v-spin_dobrushin}
		\spin_\mathcal{D}^{+,+-}(\sigma) \propto
		\svc^{|T_{5,6}^{\rmi}(\sigma)|}\,\svcb^{|T_{5,6}^{\rmb}(\sigma)|}\,\mathds{1}_{\Sigma_\Lambda^{+}\times\Sigma_{\Lambda'}^{+-}}(\sigma)\,\mathds{1}_{\mathrm{ice}}(\sigma),
	\end{equation}
	where $\svc,\svcb$ are defined by~\eqref{eq:parameters_bulk}-\eqref{eq:parameters_bnd}, \(T_{5,6}^{\rmi}\) and \(T_{5,6}^{\rmb}\) are the sets of tiles of types 5-6 in \(A^\rmi\) and \(\partial A\), respectively (see Fig.~\ref{fig:six-vertex_arrows_spins_heights}), \(\mathds{1}_{\mathrm{ice}}\) is the indicator imposing the ice rule~\eqref{eq:ice-rule_spins}, and~\(\Sigma_{\Lambda}^{+}\) and~\(\Sigma_{\Lambda'}^{+-}\) are the boundary conditions defined by
	\begin{align*}
		\Sigma_{\Lambda}^{+} &:= \{\sigma_\bullet\in\{\pm1\}^{\bbL_\bullet}:\sigma_\bullet(i) = 1\,\forall i\in \bbL_\bullet\setminus\Lambda\},\\
		\Sigma_{\Lambda'}^{+-} &:= \{\sigma_\circ\in\{\pm1\}^{\bbL_\circ}:\sigma_\circ(u) = \mathds{1}_{\bbH^+}(u)-\mathds{1}_{\bbH^-}(u)\,\forall u\in \bbL_\circ\setminus\Lambda'\}.
	\end{align*}
\end{definition}
We now describe how to obtain~\(\spin_\mathcal{D}^{+,+-}\) as a marginal of the coupling measure~\(\Psi_G^{1/0}\).

\begin{lemma}
\label{lem:6V_spins_to_01FK}
	Let \((\omega,U,U')\) be distributed according to \(\Psi_G^{1/0}\) and let~\(\ell\) be the unoriented loop configuration associated to~$\omega$.
	Orient the loops of~\(\ell\) as follows (see Fig.~\ref{fig:bc_FK_loops_spins}):
	\begin{itemize}
		\item each loop~\(l\in \ell\) outside of~\(G\) (i.e. surrounding a vertex \((\bbL_\circ\cap\bbH^+)\setminus\Lambda'\) or in \((\bbL_\bullet\cap\bbH^-)\setminus\Lambda\) is oriented clockwise;
		\item the unique bi-infinite path is oriented from right to left;
		\item each loop~\(l\in \ell\) inside of~\(G\) (i.e. surrounding a vertex in \(\Lambda\cup\Lambda'\)) is oriented 
		clockwise if \(U_l<e^{\lambda}/\svc\) and counter-clockwise otherwise.
	\end{itemize}
	Denote by \(\ell_\shortrightarrow\) the obtained oriented loop configuration.
	Recall the combinatorial mappings introduced above and let \((\sigma_\bullet,\sigma_\circ)\) be the associated six-vertex spin configurations with \(\sigma_\bullet((n+1,0))=+1\).
	Then, the law of \((\sigma_\bullet,\sigma_\circ)\) is given by \(\spin_\mathcal{D}^{+,+-}\).
\end{lemma}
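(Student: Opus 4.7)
The plan is a Baxter-Kelland-Wu-type (BKW) calculation~\cite{BaxKelWu76} adapted to Dobrushin boundary conditions, in the spirit of~\cite{GlaPel23}. I would split the argument into identifying the support of the pushforward of $\Psi^{1/0}_G$ on $(\sigma_\bullet,\sigma_\circ)$, and then matching the weights on that support.

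For the support, the ice rule~\eqref{eq:ice-rule_spins} is intrinsic to the six-vertex spin construction from edge orientations. The boundary conditions $\Sigma_\Lambda^+\times\Sigma_{\Lambda'}^{+-}$ are forced by the deterministic parts of the orientation rule: the clockwise loops outside $G$ pin $\sigma_\circ=+1$ on $(\bbL_\circ\cap\bbH^+)\setminus\Lambda'$ and $\sigma_\circ=-1$ on $(\bbL_\circ\cap\bbH^-)\setminus\Lambda'$, while the right-to-left orientation of the bi-infinite path, together with the pinning $\sigma_\bullet((n+1,0))=+1$, propagates $\sigma_\bullet\equiv+1$ on $\bbL_\bullet\setminus\Lambda$.

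For the weights, fix $\sigma$ in the support. The associated six-vertex edge orientations are determined by $\sigma$, and the preimage of $\sigma$ under the oriented-loop-to-spin map is parametrised by independent resolutions (5A/5B or 6A/6B) at each tile of type 5-6. By the loop expansion~\eqref{eq:FK_loop_expression}, $\fk^{1/0}_G(\omega)\propto\sqrt{q}^{\#\mathrm{loops}}$. Combined with the orientation probabilities, which are chosen (using $\sqrt{q}=e^\lambda+e^{-\lambda}$) so that each inside loop effectively carries a weight $e^{+\lambda}$ or $e^{-\lambda}$ according to its orientation, the joint weight of $(\omega,\ell_\shortrightarrow)$ mapping to $\sigma$ reduces to $\prod_{l\text{ inside}} e^{\epsilon_l\lambda}$ up to a configuration-independent constant from the deterministic outside loops and the bi-infinite path. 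Invoking the BKW identity, each factor $e^{\pm\lambda}$ further decomposes as $\prod_{\mathrm{corners\ of\ }l}e^{\pm\lambda/4}$ (since every simple loop has total winding $\pm2\pi$, i.e.\ a net of four extra turns of one chirality), where the sign at a corner is determined by the local oriented-loop type at that tile. The resulting expression is a product of \emph{local} per-tile factors.

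Summing over the two resolutions at each type 5-6 tile then factorises tile-by-tile. At an inner tile in $A^\rmi$ both resolutions are admissible, and the sum of their local contributions equals $e^{\lambda/2}+e^{-\lambda/2}=\svc$; at a boundary tile in $\partial A$, the prescribed orientations along the wired/free boundary and of the bi-infinite path leave exactly one admissible resolution, whose local weight after redistribution is $e^{\lambda/2}=\svcb$. Multiplying over all tiles yields the weight $\svc^{|T_{5,6}^\rmi(\sigma)|}\svcb^{|T_{5,6}^\rmb(\sigma)|}$, matching $\spin_\mathcal{D}^{+,+-}$ up to a global normalisation fixed by being a probability measure. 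The main obstacle I anticipate is the boundary case analysis: one must verify, in each geometry of a type 5-6 tile in $\partial A$, that exactly one of the two resolutions is compatible with the fixed outer-loop and bi-infinite-path orientations, and that its local BKW weight is precisely $e^{\lambda/2}$; this is where the Dobrushin asymmetry produces $\svcb$ instead of $\svc$.
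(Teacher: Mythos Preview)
Your proposal is correct and follows essentially the same BKW-type computation as the paper: pass from the loop-weighted expression~\eqref{eq:FK_loop_expression} to oriented-loop weights $e^{\lambda(|\loops_{\circlearrowright}|-|\loops_{\circlearrowleft}|)}$, localise via the winding identity into per-tile turn contributions, and then sum over the two splittings at each type 5--6 tile. The paper carries this out in exactly the order you describe, arriving at $\svc^{|T_{5,6}^\rmi|}\svcb^{|T_{5,6}^\rmb|}$.

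One point worth sharpening: your boundary claim that ``exactly one admissible resolution'' survives with local weight $e^{\lambda/2}$ is the right conclusion but slightly compresses the mechanism. The paper's accounting is that, due to the fixed orientations imposed on boundary arcs by the outside clockwise loops and the right-to-left interface, a boundary tile of type 5--6 contributes a right quarter-turn while a boundary tile of type 1--4 contributes a left one; the factor $e^{\lambda/2}=\svcb$ then emerges only \emph{after} absorbing the configuration-independent contribution of the type 1--4 boundary tiles into the proportionality constant (since $|\partial A|$ is fixed). So the verification you flag as the ``main obstacle'' is not that one of two $e^{\pm\lambda/2}$-weighted resolutions is selected, but rather that the boundary orientations force the \emph{same chirality} at every type 5--6 boundary tile; this is what the paper reads off from Fig.~\ref{fig:bc_FK_loops_spins}.
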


\begin{proof}
We follow the ideas of~\cite{BaxKelWu76}.
One has to examine which values of \((\omega, U)\) result in a given \((\sigma_\bullet,\sigma_\circ)\in \{\pm1\}^{\bbL_\bullet}\times\{\pm1\}^{\bbL_\circ}\).
The probability to obtain \((\sigma_\bullet,\sigma_\circ)\in\Sigma_\Lambda^{+}\times\Sigma_{\Lambda'}^{+-}\) is the sum of the probabilities of all oriented loop configurations \(\ell_\shortrightarrow\) that induce the edge orientations corresponding to \((\sigma_\bullet,\sigma_\circ)\). The probability of a given oriented loop configuration \(\ell_\shortrightarrow\) satisyfing the boundary conditions in Fig.~\ref{fig:bc_FK_loops_spins} is proportional to
\begin{equation}\label{eq:prf:bkw1}
\sqrt{q}^{\,|\loops(\ell_\shortrightarrow)|}\cdot\Big(\tfrac{e^\lambda}{e^\lambda+e^{-\lambda}}\Big)^{|\loops_{\circlearrowright}(\ell_\shortrightarrow)|- |\loops_{\circlearrowleft}(\ell_\shortrightarrow)|}
=e^{\lambda(|\loops_{\circlearrowright}(\ell_\shortrightarrow)|- |\loops_{\circlearrowleft}(\ell_\shortrightarrow)|)},
\end{equation}
where \(\loops_{\circlearrowright}(\ell_\shortrightarrow)\) and \(\loops_{\circlearrowleft}(\ell_\shortrightarrow)\) are respectively the sets of clockwise and counter-clockwise oriented loops in \(\ell_\shortrightarrow\) not imposed by boundary conditions, and \(\loops(\ell_\shortrightarrow)\) is their union.
Indeed, by~\ref{eq:FK_loop_expression}, the first factor on the left side is proportional to \(\fk_G^{1/0}(\omega(\ell))\), where \(\omega(\ell)\in\{0,1\}^{\bbE^\bullet}\) is the percolation configuration associated to the unoriented loop configuration \(\ell\) corresponding to \(\ell_\shortrightarrow\). The second factor comes from the values of the uniforms necessary to obtain correct orientations of loops in~\(\loops(\ell_\shortrightarrow)\). The equality holds since \(\sqrt{q}=e^\lambda+e^{-\lambda}\) due to the choice of \(\lambda\).

Notice that each loop which is oriented clockwise does 4 more right quarter-turns than left quarter-turns, that the converse holds for counter-clockwise oriented loops, and that the number of left and right quarter-turns in the left-right interface differ by a universal constant.
Thus, the expression on the right side of~\eqref{eq:prf:bkw1} is proportional to
\begin{equation*}
\exp(\lambda (\#_{\curvearrowright}(\ell_\shortrightarrow)- \#_{\curvearrowleft}(\ell_\shortrightarrow))/4),
\end{equation*}
where \(\#_{\curvearrowright}(\ell_\shortrightarrow)\) and \(\#_{\curvearrowleft}(\ell_\shortrightarrow)\) are respectively the number of right and left quarter-turns in \(\ell_\shortrightarrow\).
The key idea is to count these oriented loop arcs locally at each tile in \(A=A^\rmi\cup A^\rmb\).
Observe that, for tiles in \(A^\rmi\), types 5B,6A correspond to a pair of right-oriented loop arcs and types 5A,6B to a pair of left-oriented loop arcs, whereas types 1-4 correspond to one right-oriented and one left-oriented loop arc each.
Moreover, due to the boundary conditions (see Fig.~\ref{fig:bc_FK_loops_spins}), a tile in \(A^\rmb\) contains a right turn precisely if it is of type 5,6, and it contains a left turn otherwise.
We deduce that the probability of \(\ell_\shortrightarrow\) is proportional to
\begin{equation}\label{eq:prf:bkw2}
\prod_{t\in T_{5,6}(\ell_\shortrightarrow)\cap A^\rmi} \big(e^{\lambda/2}\,\mathds{1}_{T_{5B,6A}(\ell_\shortrightarrow)}(t) + e^{-\lambda/2}\,\mathds{1}_{T_{5A,6B}(\ell_\shortrightarrow)}(t)\big)
\cdot \big(e^{\lambda/2}\big)^{|T_{5,6}(\ell_\shortrightarrow)\cap \partial A|},
\end{equation}
where \(T_{5B,6A}(\ell_\shortrightarrow)\) and \(T_{5A,6B}(\ell_\shortrightarrow)\) are respectively the sets of tiles of types 5B and 6A and the set of tiles of types 5A and 6B in \(\ell_\shortrightarrow\), and \(T_{5,6}(\ell_\shortrightarrow)\) is their union.

Finally, fix a pair \((\sigma_\bullet,\sigma_\circ)\in\Sigma_\Lambda^{+}\times\Sigma_{\Lambda'}^{+-}\) that satisfies the ice-rule, and consider its associated edge orientations.
It remains to identify all oriented loop configurations \(\ell_\shortrightarrow\) that satisfy the boundary conditions in Fig.~\ref{fig:bc_FK_loops_spins} and that induce these edge orientations. Observe that the boundary conditions and the spins \((\sigma_\bullet,\sigma_\circ)\) uniquely determine the oriented loop arcs at tiles in \((\bbL_\diamond\setminus A)\cup A^\rmb\) and at tiles in \(A^\rmi\setminus T_{5,6}^\rmi(\sigma_\bullet,\sigma_\circ)\). For a tile in \(T_{5,6}^\rmi(\sigma_\bullet,\sigma_\circ)\), one can split the oriented edges either into a pair of right-oriented loops arcs (types 5B,6A) or into a pair of left-oriented loop arcs (types 5A,6B). Summing the probabilities~\eqref{eq:prf:bkw2} of all oriented loop configurations obtained in that way, we obtain that the probability of \((\sigma_\bullet,\sigma_\circ)\) is proportional to
\begin{equation*}
\big(e^{\lambda/2}+e^{-\lambda/2}\big)^{|T_{5,6}^\rmi(\sigma_\bullet,\sigma_\circ)|}\cdot\big(e^{\lambda/2}\big)^{|T_{5,6}^\rmb(\sigma_\bullet,\sigma_\circ)|}.
\end{equation*}
Recalling that \(\svc=e^{\lambda/2}+e^{-\lambda/2}\) and \(\svcb=e^{\lambda/2}\) finishes the proof. 
\end{proof}

\subsubsection*{From six-vertex to modified ATRC.}
We now adapt the coupling described in~\cite{GlaPel23} to the Dobrushin boundary conditions.
In particular, we need to define modified the ATRC measure.
Recall the parameters from~\eqref{eq:parameters_bulk} and the (augmented) rectangular domains from Section~\ref{sec:notations}. Again, we omit~\(n,m\) from the notation. 

\begin{definition}
	\label{def:mATRC}
	The modified ATRC model \(\matrc_{n,m} \equiv \matrc_K\) is a probability measure on \(\{0,1\}^{\bar{E}}\times\{0,1\}^{\bar{E}}\) defined by
	\begin{multline*}
		\matrc_{K}(\omega_\tau,\omega_{\tau\tau'})
		\propto
		\mathds{1}_{\omega_\tau\subseteq \omega_{\tau\tau'}}\mathds{1}_{\omega_{\tau\tau'}\setminus \omega_\tau\subseteq E}2^{|\omega_\tau\cap E|}\big(\tfrac{2}{\svcb-1} \big)^{|\omega_\tau\cap E_\rmb^+|}(\svc-2)^{|\omega_{\tau\tau'}\setminus \omega_\tau|}\\
		\cdot 2^{\kappa_{K}(\omega_\tau)}\prod_{\calC\in \clusterSet_{\Lambda}(\omega_{\tau\tau'})} \big(\mathds{1}_{\calC\subseteq\Lambda} + \svcb^{I(\calC)}\big),
	\end{multline*}	
	where~\(\clusters_{K}(\omega_\tau)\) is the number of clusters of~\(\omega_\tau\), \(\clusterSet_{\Lambda}(\omega_{\tau\tau'})\) is the set of clusters of~\(\omega_{\tau\tau'}\) that intersect~$\Lambda$ (both~\(\omega_\tau\) and~\(\omega_{\tau\tau'}\) are viewed as spanning subgraph of~$K$), $E_\rmb^+$ and $E_\rmb^-$ are defined in~\eqref{eq:def-e-b-pm}, and~\(I(\mathcal{C}):=|E_\rmb^-\cap\partialedge_{\bbL_\bullet} \mathcal{C}|\).
	We say that a cluster of~\(\omega_\tau\) or~\(\omega_{\tau\tau'}\) is an {\em inner cluster} if it is entirely contained in~$\Lambda$ and a {\em boundary cluster} otherwise. 
\end{definition}

We now describe how to obtain~\(\matrc_K\) as a marginal of the coupling measure~\(\Psi^{1/0}_G\). 

\begin{lemma}
\label{lem:6V_spins_to_AT}
	Let~\((\omega,U,U')\) be distributed according to \(\Psi^{1/0}\).
	Define~\((\sigma_\bullet,\sigma_\circ)\) as described in Lemma~\ref{lem:6V_spins_to_01FK}.
	Now define \(\omega_\tau,\omega_{\tau\tau'}\in \{0,1\}^{\bar{E}}\) as follows using the notation~\(e_t=ij\in \bar{E}\) and \(e_t^*=uv\) for each tile \(t\in A^\rmi\cup\partial^+A\):
	\begin{itemize}
		\item if \(\sigma_{\circ}(u)\neq \sigma_{\circ}(v)\), set \(\omega_\tau(e_t) = \omega_{\tau\tau'}(e_t) = 1\);
		\item if \(\sigma_{\bullet}(i) \neq \sigma_{\bullet}(j)\), set \(\omega_\tau(e_t) = \omega_{\tau\tau'}(e_t) = 0\) (never happens for $t\in\partial^+ A$);
		\item if both \(\sigma_{\circ}(u)= \sigma_{\circ}(v)\) and \(\sigma_{\bullet}(i) = \sigma_{\bullet}(j)\) hold (types 5-6), let
		\[(\omega_\tau(e_t),\omega_{\tau\tau'}(e_t))= \begin{cases}
			\mathds{1}_{[0,\frac{1}{\svc})}(U_t')\,{\cdot}\,(1,1)+\mathds{1}_{[\frac{1}{\svc},\frac{2}{\svc})}(U_t')\,{\cdot}\,(0,0)+\mathds{1}_{[\frac{2}{\svc},1]}(U_t')\,{\cdot}\,(0,1) & \text{if }t\in A^\rmi,\\
			\mathds{1}_{[0,\frac{1}{\svcb})}(U_t')\,{\cdot}\,(1,1)+\mathds{1}_{[\frac{1}{\svcb},1]}(U_t')\,{\cdot}\,(0,0)& \text{if }t\in\partial^+ A.
		\end{cases}
		\]
	\end{itemize}
	Then, the law of~\((\omega_\tau,\omega_{\tau\tau'})\) is~$\matrc_{K}(\, \cdot \, | \, v_L\xleftrightarrow{\omega_\tau} v_R)$.
	In particular, we have \(\omega_\tau\subseteq\omega_{\tau\tau'}\) and \(\omega_{\tau\tau'}\setminus\omega_\tau\subseteq \{e_t:t\in A^\rmi\}=E\); also~\(\sigma_\bullet\sim \omega_{\tau\tau'}\) and \(\sigma_\circ \sim \omega_\tau^*\).
\end{lemma}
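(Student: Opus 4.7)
The plan is to compute the joint law of $(\omega_\tau,\omega_{\tau\tau'})$ under $\Psi_G^{1/0}$ by first invoking Lemma~\ref{lem:6V_spins_to_01FK} to identify the marginal of $(\sigma_\bullet,\sigma_\circ)$ as $\spin_\mathcal{D}^{+,+-}$, and then, for a fixed target pair $(a,b)$ with $a\subseteq b\subseteq \bar E$ and $b\setminus a\subseteq E$, summing the contributions of all compatible spin configurations together with the integrated-out uniforms $U'$. The local assignment rule at each tile $t$ translates into two compatibility constraints: $\sigma_\bullet$ must be constant on each cluster of $b$ in $K$, and $\sigma_\circ$ must be constant on each cluster of $a^*$ in $K^*$. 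Both fit with the ice rule automatically, since at every tile at least one of the two constraints holds.

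For each compatible $(\sigma_\bullet,\sigma_\circ)$, I would multiply the spin weight $\svc^{|T_{5,6}^\rmi|}\svcb^{|T_{5,6}^\rmb|}$ by the per-tile $U'_t$-probability. For a tile in $A^\rmi$ of type 5-6, the factor $\svc$ combines with $1/\svc$ (when $(a_t,b_t)\in\{(1,1),(0,0)\}$) or with $(\svc-2)/\svc$ (when $(a_t,b_t)=(0,1)$), yielding $1$ or $\svc-2$; non-type 5-6 tiles always produce a factor $1$. Since $(a_t,b_t)=(0,1)$ can only occur at a type 5-6 tile, the factor $(\svc-2)^{|b\setminus a|}$ emerges directly. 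On boundary tiles in $\partial^+ A$ of type 5-6, the factor $\svcb$ combines with $1/\svcb$ or $(\svcb-1)/\svcb$, giving $1$ for $(1,1)$ edges and $\svcb-1$ for $(0,0)$ edges; whether such a boundary tile with $a_t=0$ is of type 1-2 or 5-6 is entirely determined by the spin at its unique interior corner together with the Dobrushin boundary conditions. Bookkeeping this carefully to recover the boundary factor $\big(\tfrac{2}{\svcb-1}\big)^{|a\cap E_\rmb^+|}\prod_{\mathcal{C}\in\clusterSet_\Lambda(b)}\big(\ind_{\mathcal{C}\subseteq\Lambda}+\svcb^{I(\mathcal{C})}\big)$ appearing in the definition of $\matrc_K$ is the main technical obstacle; the heuristic is that $\svcb^{I(\mathcal{C})}$ records the $\svcb$-weighted choice of type at each boundary tile across which the cluster $\mathcal{C}$ is exposed to the free upper boundary.

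Once the per-tile weights are collected, the remaining combinatorics is the count of spin configurations consistent with $(a,b)$. The number of $\sigma_\bullet\in\Sigma_\Lambda^+$ constant on clusters of $b$ equals $2$ to the number of clusters of $b$ not touching $\partialex\Lambda$; the number of $\sigma_\circ\in\Sigma_{\Lambda'}^{+-}$ constant on dual clusters of $a$ equals $2$ to the number of inner dual clusters if no dual cluster of $a$ connects $\bbH^+\cap(\bbL_\circ\setminus\Lambda')$ to $\bbH^-\cap(\bbL_\circ\setminus\Lambda')$, and vanishes otherwise. Planar duality on $K^*$ identifies this ``no dual crossing'' condition with the event $\{v_L\xleftrightarrow{a}v_R\}$, producing the desired conditioning. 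Finally, Euler's formula on $K$ rewrites the dual cluster count as $\kappa_K(a)+|a\cap E|$ up to a constant depending only on the domain, which together with the tile weights reproduces $\matrc_K(a,b)\cdot\ind_{v_L\leftrightarrow v_R}$ up to a global normalisation, as claimed. The ``in particular'' assertions follow directly from the assignment rule, since $\omega_{\tau\tau'}(e_t)=1$ forces $\sigma_\bullet(i)=\sigma_\bullet(j)$ and $\omega_\tau(e_t)=0$ forces $\sigma_\circ(u)=\sigma_\circ(v)$.
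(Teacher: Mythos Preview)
Your overall strategy matches the paper's proof: start from the spin law \(\spin_\mathcal{D}^{+,+-}\), integrate out the uniforms \(U'\) tile by tile, then sum over all compatible \((\sigma_\bullet,\sigma_\circ)\), and convert the dual-cluster count via Euler's formula. The treatment of interior tiles, of the \(\partial^+ A\) tiles, and of the \(\sigma_\circ\)-sum (producing the conditioning \(v_L\leftrightarrow v_R\)) are all correct.

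There is, however, a genuine gap concerning the factor \(\prod_{\calC\in\clusterSet_\Lambda(b)}\big(\mathds{1}_{\calC\subseteq\Lambda}+\svcb^{I(\calC)}\big)\). You never discuss the tiles in \(\partial^- A\). These tiles carry no edge in \(\bar E\) (so no \(U'\)-sampling occurs there), yet they still contribute to the spin weight through \(\svcb^{|T_{5,6}^{\rmb,-}|}\). Since on \(\partial^- A\) the dual endpoints both lie outside \(\Lambda'\) with \(\sigma_\circ=-1\), a tile \(t\in\partial^- A\) is of type 5--6 precisely when \(\sigma_\bullet\sim e_t\); hence \(\svcb^{|T_{5,6}^{\rmb,-}|}=\prod_{e\in E_\rmb^-}\svcb^{\mathds{1}_{\sigma_\bullet\sim e}}\) depends on \(\sigma_\bullet\). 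Consequently the sum over \(\sigma_\bullet\) is \emph{not} a pure count as you write (``\(2\) to the number of inner clusters''), but a weighted sum. It is exactly this weighted sum over the two possible signs on each inner cluster of \(b\) that produces \(\prod_{\calC}\big(\mathds{1}_{\calC\subseteq\Lambda}+\svcb^{I(\calC)}\big)\), where \(I(\calC)=|E_\rmb^-\cap\partialedge_{\bbL_\bullet}\calC|\). Your heuristic attributing this factor to the ``free upper boundary'' is therefore mislocated: it comes from the \emph{lower} boundary \(\partial^- A\) (indeed \(I(\calC)\) is defined via \(E_\rmb^-\)), and enters through the \(\sigma_\bullet\)-sum rather than through any tile-by-tile edge bookkeeping. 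A smaller point: Euler's formula on \(K\) yields \(\kappa_{K^*}(a^*)-1=\kappa_K(a)+|a|-|\bbV_{\bar E}|\), with \(|a|\) (all of \(\bar E\)) rather than \(|a\cap E|\); the residual \(2^{|a\cap E_\rmb^+|}\) combines with the upper-boundary factor \((\svcb-1)^{|E_\rmb^+\setminus a|}\) to produce \(\big(\tfrac{2}{\svcb-1}\big)^{|a\cap E_\rmb^+|}\) up to a constant.
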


\begin{remark}
The above sampling rule for \(t\in A^\rmi\) applied to a six-vertex spin or height measure without modified boundary weight~\(\svcb\) yields an ATRC measure, as defined in Secton~\ref{sec:atrc}. See Section~\ref{sec:sixv_at_duality_coupling} for details.
\end{remark}

\begin{proof}
	We build on~\cite[Proof of Lemma~7.1]{GlaPel23}.
	One has to examine which values of \((\sigma_{\bullet},\sigma_{\circ}, U)\) result in a given pair \(\omega_\tau,\omega_{\tau\tau'} \in \{0,1\}^{\bar{E}}\). Recall that \(\sigma_{\bullet} \sim \omega_{\tau\tau'}\) and~\(\sigma_{\circ}\sim \omega_\tau^*\).
	By~\eqref{eq:def_6v-spin_dobrushin}, the probability of a quadruplet \((\sigma_{\bullet},\sigma_{\circ},\omega_\tau,\omega_{\tau\tau'})\) is
	\begin{align}
		\label{eq:prf:6Vspins_to_AT:proba}
		\nonumber
		&\spin_{\mathcal{D}}^{+,+-}(\sigma_\bullet,\sigma_\circ)\,\mathds{1}_{\sigma_{\bullet}\sim \omega_{\tau\tau'}}\,\mathds{1}_{\sigma_{\circ}\sim \omega_\tau^*}\,\mathds{1}_{\omega_\tau\subseteq\omega_{\tau\tau'}}\,\mathds{1}_{\omega_{\tau\tau'}\setminus\omega_\tau\subseteq E}
		\\
		\nonumber 
		&\qquad\cdot\prod_{t\in T_{5,6}^{\rmi}}\big(\tfrac{1}{\svc}\,(\mathds{1}_{t\in\omega_\tau}+\mathds{1}_{t\in\omega_{\tau\tau'}^*})+\tfrac{\svc-2}{\svc}\,\mathds{1}_{t\in\omega_{\tau\tau'}\setminus\omega_\tau}\big)
		\prod_{t\in T_{5,6}^{\rmb,+}}\big(\tfrac{1}{\svcb}\,\mathds{1}_{t\in\omega_\tau}+\tfrac{\svcb-1}{\svcb}\,\mathds{1}_{t\in\omega_{\tau}^*}\big)
		\\
		\nonumber &\propto\mathds{1}_{\Sigma_\Lambda^+}(\sigma_{\bullet})\mathds{1}_{\Sigma_{\Lambda'}^{+-}}(\sigma_{\circ})\,\mathds{1}_{\sigma_{\bullet}\sim \omega_{\tau\tau'}}\,\mathds{1}_{\sigma_{\circ}\sim \omega_\tau^*}\,\mathds{1}_{\omega_\tau\subseteq\omega_{\tau\tau'}}\,\mathds{1}_{\omega_{\tau\tau'}\setminus\omega_\tau\subseteq E}\,\svcb^{|T_{5,6}^{\rmb,-}|}
		\\
		&\qquad\cdot\prod_{t\in T_{5,6}^{\rmi}}\big(\mathds{1}_{t\in\omega_\tau}+\mathds{1}_{t\in\omega_{\tau\tau'}^*}+(\svc-2)\mathds{1}_{t\in\omega_{\tau\tau'}\setminus\omega_\tau}\big)
		\prod_{t\in T_{5,6}^{\rmb,+}}\big(\mathds{1}_{t\in\omega_\tau}+(\svcb-1)\mathds{1}_{t\in\omega_{\tau}^*}\big),
	\end{align}
	where \(T_{5,6}^{\rmb,\pm}(\sigma_{\bullet},\sigma_{\circ})\) is the set of tiles of types 5-6 in \(\partial^\pm A\), and where we used the shorthand \(T_{5,6}^{\#}\equiv T_{5,6}^{\#}(\sigma_{\bullet},\sigma_{\circ})\) and the fact that, for~\(\sigma_{\bullet}\in\Sigma_\Lambda^+\) and~\(\sigma_{\circ}\in\Sigma_{\Lambda'}^{+-}\),
	\begin{equation*}
		\mathds{1}_{\mathrm{ice}}(\sigma_{\bullet},\sigma_{\circ})\,\mathds{1}_{\sigma_{\bullet}\sim \omega_{\tau\tau'}}\,\mathds{1}_{\sigma_{\circ}\sim \omega_\tau^*}\,\mathds{1}_{\omega_\tau\subseteq\omega_{\tau\tau'}} = \mathds{1}_{\sigma_{\bullet}\sim \omega_{\tau\tau'}}\,\mathds{1}_{\sigma_{\circ}\sim \omega_\tau^*}\,\mathds{1}_{\omega_\tau\subseteq\omega_{\tau\tau'}}.
	\end{equation*}
	Observe that, on the event \(\{\sigma_\bullet\sim\omega_{\tau\tau'},\,\sigma_\circ\sim\omega_\tau^*,\,\omega_\tau\subseteq\omega_{\tau\tau'}\}\),
	\begin{equation*}
		\mathds{1}_{T_{5,6}^{\rmi}}(t)=\mathds{1}_{\omega_\tau}(t)\mathds{1}_{\sigma_{\circ}\sim e_t^*}+\mathds{1}_{\omega_{\tau\tau'}^*}(t)\mathds{1}_{\sigma_{\bullet}\sim e_t}+\mathds{1}_{\omega_{\tau\tau'}\setminus\omega_\tau}(t).
	\end{equation*}
	Moreover, on the same event intersected with \(\{\sigma_\bullet\in\Sigma_\Lambda^+,\,\sigma_\circ\in\Sigma_{\Lambda'}^{+-},\,\omega_{\tau\tau'}\setminus\omega_\tau\subseteq E\}\),
	\begin{equation*}
		\mathds{1}_{T_{5,6}^{\rmb,-}}(t)=\mathds{1}_{\sigma_{\bullet}\sim e_t}
		\qquad\text{and}\qquad
		\mathds{1}_{T_{5,6}^{\rmb^+}}(t)=\mathds{1}_{\omega_\tau}(t)\mathds{1}_{\sigma_{\circ}\sim e_t^*}+\mathds{1}_{\omega_{\tau}^*}(t).
	\end{equation*}
	Using this,~\eqref{eq:prf:6Vspins_to_AT:proba} becomes 
	\begin{align*}
		&\mathds{1}_{\Sigma_\Lambda^+}(\sigma_{\bullet})\mathds{1}_{\Sigma_{\Lambda'}^{+-}}(\sigma_{\circ})\mathds{1}_{\sigma_{\bullet}\sim \omega_{\tau\tau'}}\mathds{1}_{\sigma_{\circ}\sim \omega_\tau^*}\mathds{1}_{\omega_\tau\subseteq\omega_{\tau\tau'}}\mathds{1}_{\omega_{\tau\tau'}\setminus\omega_\tau\subseteq E}\prod_{t\in \partial^- A}\svcb^{\mathds{1}_{T_{5,6}^{\rmb,-}}(t)}\\
		&\cdot\prod_{t\in A^{\rmi}}\big(\mathds{1}_{t\in\omega_\tau}+\mathds{1}_{t\in\omega_{\tau\tau'}^*}+(\svc-2)\mathds{1}_{t\in\omega_{\tau\tau'}\setminus\omega_\tau}\big)^{\mathds{1}_{T_{5,6}^{\rmi}}(t)}
		\prod_{t\in \partial^+ A}\big(\mathds{1}_{t\in\omega_\tau}+(\svcb-1)\mathds{1}_{t\in\omega_{\tau}^*}\big)^{\mathds{1}_{T_{5,6}^{\rmb,+}}(t)}
		\\
		=\ 
		&\mathds{1}_{\Sigma_\Lambda^+}(\sigma_{\bullet})\mathds{1}_{\Sigma_{\Lambda'}^{+-}}(\sigma_{\circ})\mathds{1}_{\sigma_{\bullet}\sim \omega_{\tau\tau'}}\mathds{1}_{\sigma_{\circ}\sim \omega_\tau^*}\mathds{1}_{\omega_\tau\subseteq\omega_{\tau\tau'}}\mathds{1}_{\omega_{\tau\tau'}\setminus\omega_\tau\subseteq E}\prod_{e\in E_\rmb^-}\svcb^{\mathds{1}_{\sigma_\bullet\sim e}}\\
		&\cdot(\svc-2)^{|\omega_{\tau\tau'}\setminus\omega_\tau|}		
		(\svcb-1)^{|E_\rmb^+\setminus\omega_{\tau}|}.
	\end{align*}
	Observe that \(\sigma_\circ\in\Sigma_{\Lambda'}^{+-}\) and \(\sigma_\circ\sim\omega_{\tau}^*\) imply that \(v_L\) and \(v_R\) are connected in \(\omega_\tau\) (see Fig.~\ref{fig:bc_FK_loops_spins}).
	To obtain the probability of a pair \((\omega_{\tau},\omega_{\tau\tau'})\) satisfying \(v_L\xleftrightarrow{\omega_{\tau}} v_R\), we need to sum the last expression over \((\sigma_\bullet,\sigma_\circ)\).
	
	The configurations \(\sigma_\circ\in\Sigma_{\Lambda'}^{+-}\) with \(\sigma_\circ\sim\omega_\tau^*\) are in bijective correspondence with assignments of $\pm1$ to the clusters of \(\omega_\tau^*\) in \(K^*\) that are contained in \(\Lambda'\), whence there exist \(2^{\clusters_{K^*}(\omega_\tau^*)-1}\) of them. By Euler's formula,
	\begin{equation*}
		\clusters_{K^*}(\omega_\tau^*)-1 = \clusters_{K}(\omega_\tau) + |\omega_{\tau}| - |\bbV_{\bar{E}}|.
	\end{equation*}In particular, the probability of a triplet \((\omega_{\tau},\omega_{\tau\tau'},\sigma_{\bullet})\) with \(v_L\xleftrightarrow{\omega_{\tau}} v_R\) is proportional to
	\begin{multline}
		\label{eq:prf:6Vspins_to_AT:proba2}
		\mathds{1}_{\Sigma_\Lambda^+}(\sigma_{\bullet})
		\mathds{1}_{\sigma_{\bullet}\sim \omega_{\tau\tau'}} \mathds{1}_{\omega_\tau\subseteq\omega_{\tau\tau'}}\mathds{1}_{\omega_{\tau\tau'}\setminus\omega_\tau\subseteq E}
		2^{\clusters_{K}(\omega_\tau)}2^{|\omega_{\tau}|}(\svc-2)^{|\omega_{\tau\tau'}\setminus\omega_\tau|}(\svcb-1)^{|E_\rmb^+\setminus\omega_{\tau}|}\prod_{e\in E_\rmb^-}\svcb^{\mathds{1}_{\sigma_\bullet\sim e}}.
	\end{multline}
	We now sum this expression over \(\sigma_{\bullet}\):
	since~\(\sigma_\bullet\sim\omega_{\tau\tau'}\), the value of~\(\sigma_\bullet\) is constant at each cluster of~\(\omega_{\tau\tau'}\); since~\(\sigma_\bullet\in\Sigma_\Lambda^+\), this value is fixed to be \(+1\) on boundary clusters.
	Denote the sets of inner and boundary clusters of~\(\omega_{\tau\tau'}\) by~\(\clusterSet^{\rmi}(\omega_{\tau\tau'})\) and~\(\clusterSet^{\rmb}(\omega_{\tau\tau'})\), respectively.
	If~\(e\in E_\rmb^-\setminus \omega_{\tau \tau'}\), then~\(\sigma_\bullet\sim e\) in precisely two cases: (i) if \(e\in\partialedge_{\bbL_\bullet}\calC\) for some~\(\calC\in \clusterSet^{\rmb}(\omega_{\tau\tau'})\); (ii) if \(e\in\partialedge_{\bbL_\bullet}\calC\) for some~\(\calC\in \clusterSet^{\rmi}(\omega_{\tau\tau'})\) on which \(\sigma_\bullet=+1\).
	Therefore,
	\begin{equation*}
		\sum_{\substack{\sigma_\bullet\in\Sigma_\Lambda^+\\ \sigma_{\bullet}\sim \omega_{\tau\tau'}}}\prod_{e\in E_\rmb^-}\svcb^{\mathds{1}_{\sigma_\bullet\sim e}}
		= \prod_{\calC\in \clusterSet^{\rmb}(\omega_{\tau\tau'})} \svcb^{|E_\rmb^-\cap\partialedge_{\bbL_\bullet} \calC|}\cdot \prod_{\calC\in \clusterSet^{\rmi}(\omega_{\tau\tau'})}\big(1+\svcb^{|E_\rmb^-\cap\partialedge_{\bbL_\bullet} \calC|}\big).
	\end{equation*}
	Substituting the last display in the sum of~\eqref{eq:prf:6Vspins_to_AT:proba2} over~\(\sigma_{\bullet}\), we get~$\matrc_K$.
\end{proof}

\subsection{Properties of the modified ATRC}
\label{sec:mATRC_properties}
In this section, we will derive basic properties of the modified ATRC measure that will be instrumental in its analysis in Section~\ref{sec:invariance_principle} and in the proof of Theorem~\ref{thm:order-disorder:FK_loop}.
We continue in the setting of the previous section.

\subsubsection*{Positive association.}
The motivation for sampling the modified ATRC only on~\(\bar{E}=E\cup E_\rmb^+\) (rather than on \(E\cup E_\rmb\)) is that this (marginal) distribution satisfies~\eqref{eq:strong-fkg}.
This allows to ``sandwich'' the associated modified ATRC measure between unmodified ATRC measures and deduce convergence to the unique infinite-volume \emph{Gibbs measure}; see Sections~\ref{sec:atrc}~and~\ref{sec:mixing}.
\begin{lemma}\label{lem:fkg_matrc_+}
The measure \(\matrc_{K}\) satisfies~\eqref{eq:strong-fkg}.
\end{lemma}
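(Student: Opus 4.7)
The plan is to verify the FKG lattice condition for $\matrc_K$, which, since the per-edge state space is totally ordered ($(0,0)<(0,1)<(1,1)$ on $E$, and $(0,0)<(1,1)$ on $E_\rmb^+$, the middle value being forbidden by $\omega_{\tau\tau'}\setminus\omega_\tau\subseteq E$), is equivalent to~\eqref{eq:strong-fkg}. By the standard reduction, it suffices to show
\begin{equation*}
\matrc_K(\omega\vee\omega')\,\matrc_K(\omega\wedge\omega')\geq\matrc_K(\omega)\,\matrc_K(\omega')
\end{equation*}
for pairs of configurations $\omega,\omega'$ differing on at most two edges.

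I would split the density of $\matrc_K$ into three structural blocks. The per-edge monomial prefactors involving $2$, $2/(\svcb-1)$ and $\svc-2$ are log-modular and cancel from the lattice inequality. The factor $2^{\kappa_K(\omega_\tau)}$ is log-supermodular in $\omega_\tau\in\{0,1\}^{\bar{E}}$ by the classical supermodularity of $\kappa_K$ with $q=2\geq 1$ (as used in Lemma~\ref{lem:atrc_strong_fkg}). The only genuinely new piece is the modified-boundary factor
\begin{equation*}
\Phi(\omega_{\tau\tau'}):=\prod_{\calC\in\clusterSet_\Lambda(\omega_{\tau\tau'})}\bigl(\ind_{\calC\subseteq\Lambda}+\svcb^{I(\calC)}\bigr).
\end{equation*}
To show that $\Phi$ is log-supermodular in $\omega_{\tau\tau'}$, I would reintroduce the spin variable whose summation produced it in the proof of Lemma~\ref{lem:6V_spins_to_AT}: with $\sigma_\bullet\equiv+1$ fixed outside $\Lambda$,
\begin{equation*}
\Phi(\omega_{\tau\tau'})=\sum_{\sigma_\bullet:\,\sigma_\bullet\sim\omega_{\tau\tau'}}\prod_{e\in E_\rmb^-}\svcb^{\ind_{\sigma_\bullet\sim e}}.
\end{equation*}
Thus $\matrc_K$ is the $(\omega_\tau,\omega_{\tau\tau'})$-marginal of a joint measure $\tilde{\mu}$ on $(\omega_\tau,\omega_{\tau\tau'},\sigma_\bullet)$ whose density is a product of log-modular per-edge terms, the log-supermodular $2^{\kappa_K(\omega_\tau)}$, the Edwards--Sokal-type coupling indicator $\ind_{\sigma_\bullet\sim\omega_{\tau\tau'}}$, and the Ising boundary weight $\prod_{e\in E_\rmb^-}\svcb^{\ind_{\sigma_\bullet\sim e}}$. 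Ordering $\sigma_\bullet$ with $-1<+1$ and pairing it with the edge order as in the usual Edwards--Sokal coupling, the FKG lattice condition for $\tilde{\mu}$ reduces to the same two-edge/two-spin computation carried out for the unmodified ATRC in~\cite[Proposition~4.1]{PfiVel97}, the Ising boundary weight being log-modular once the edge coordinates and $\sigma_\bullet$ are both fixed. Strong FKG for $\matrc_K$ then follows by projecting out $\sigma_\bullet$, which preserves log-supermodularity.

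The main obstacle is the constraint $\ind_{\sigma_\bullet\sim\omega_{\tau\tau'}}$: opening an edge of $\omega_{\tau\tau'}$ can merge two inner clusters carrying opposite spin values, killing that joint configuration while both spin choices survive in the meet. One has to verify that the FK supermodularity of $2^{\kappa_K}$ together with the Ising boundary contribution compensates for the lost mass, which works because $\svcb>1$ (from~\eqref{eq:parameters_bnd}) and because cluster additions to $\omega_{\tau\tau'}$ interact monotonically with $\clusterSet_\Lambda$. As an alternative that bypasses the joint representation, the lattice condition can be checked directly on $\matrc_K$ through a finite case analysis on pairs of edges (split according to whether neither, one, or both edges trigger a merger in $\clusterSet_\Lambda(\omega_{\tau\tau'})$, and whether they lie in $E$ or $E_\rmb^+$); the only non-trivial scalar inequality that arises is $1+\svcb^{a+b}\geq\svcb^a+\svcb^b$ for $a,b\geq 0$, which follows from the identity $1+\svcb^{a+b}-\svcb^a-\svcb^b=(\svcb^a-1)(\svcb^b-1)\geq 0$ valid since $\svcb>1$.
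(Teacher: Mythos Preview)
Your primary approach via the joint measure $\tilde\mu$ on $(\omega_\tau,\omega_{\tau\tau'},\sigma_\bullet)$ has a genuine gap. The Edwards--Sokal-type measure $\tilde\mu$ does \emph{not} satisfy strong FKG with the ordering $-1<+1$ on spins: if the $\sigma_\bullet$-neighbours of a vertex $i$ are all $-1$, then raising $\sigma_\bullet(i)$ from $-1$ to $+1$ forces all $\omega_{\tau\tau'}$-edges incident to $i$ to close, so the conditional law of $\omega_{\tau\tau'}$ becomes stochastically \emph{smaller}, not larger. (Symmetrically, when neighbours are $+1$, the opposite monotonicity holds.) Hence the lattice condition fails for $\tilde\mu$, and the projection argument cannot be used. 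The reference to \cite[Proposition~4.1]{PfiVel97} is also misplaced: that result is proved on the ATRC bond marginal directly, not on the joint spin--bond system.

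Your alternative route---a direct two-edge lattice check on $\matrc_K$---is correct in spirit and does isolate the relevant scalar inequality $(\svcb^a-1)(\svcb^b-1)\geq 0$. This is closer to what the paper does, though the paper takes the equivalent Holley-criterion formulation: it computes the single-edge conditional probabilities $\matrc_K\big((\omega_\tau(e),\omega_{\tau\tau'}(e))=\cdot\,\big|\,\text{outside}\big)$ explicitly, packaging the $\omega_{\tau\tau'}$-cluster contribution into a function $f(b)$ involving $\alpha=\svcb^{I_i}/(1+\svcb^{I_i})$ and $\beta=(1+\svcb^{I_i+I_j})/((1+\svcb^{I_i})(1+\svcb^{I_j}))$, and then verifies $\beta\leq\alpha\leq 1$ together with monotonicity of $\alpha,\beta$ in the boundary exponents $I_i,I_j$. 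Your inequality and the paper's $\beta\leq\alpha$ are essentially the same observation; the paper's single-edge formulation just avoids enumerating the two-edge merge cases. If you keep the direct approach, you should carry out the case analysis fully (distinguishing inner versus boundary clusters for each endpoint), since more than one inequality appears once boundary clusters with weight $\svcb^{I}$ rather than $1+\svcb^{I}$ enter.
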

\begin{proof}	
	Recall that \(\svcb = e^{\lambda}\) with \(\lambda>0\).
	We verify the Holley criterion~\cite{Hol74}, see also~\cite[Section~4]{GeoHagMae01}. 
	To shorten notation, given \(a,b\in\{0,1\}^{\bar{E}}\) with \(a\subseteq b\) and \(a=b\) on \(E_b^+\), we write \(\matrc_{K}(a_e,b_e \given a_{e^c}, b_{e^c})\) for the conditional probability
	\begin{equation*}
	\matrc_{K}\big((\omega_\tau(e),\omega_{\tau\tau'}(e))=(a_e,b_e)\bgiven(\omega_\tau,\omega_{\tau\tau'})=(a,b)\text{ on }(\bar{E}\setminus\{e\}\big).
	\end{equation*}	
		
	{\em Case 1: \(e=ij\in E\).}
	Denote by \(C_i^a,C_j^a\) respectively the clusters of \(a_{e^c}\) containing~\(i,j\), and by \(\widetilde{C}_i^b,\widetilde{C}_j^b\) the clusters of~\(b_{e^c}\cup E_\rmb^+\) containing~\(i,j\). Observe that \(\widetilde{C}_i^b\neq\widetilde{C}_j^b\) implies that \(\widetilde{C}_i^b\subseteq\Lambda\) or \(\widetilde{C}_j^b\subseteq\Lambda\).
	Then, for any \(a,b\in\{0,1\}^{E\cup E_\rmb^+}\) with \(a\subseteq b\) and \(a=b\) on \(E_b^+\),
	\begin{equation}\label{eq:prf:fkg_matrc_+_proba1}
	\matrc_{K}(a_e,b_e \given a_{e^c}, b_{e^c}) = \big(2^{a_e}\tfrac{\svc}{\svc+1}\big)^{\mathds{1}_{C_i^a=C_j^a}}(\svc-2)^{b_e-a_e}\tfrac{f(b)^{b_e}}{1+(\svc-1)f(b)},
	\end{equation}
	where
	\begin{equation*}
	f(b)=\mathds{1}_{\widetilde{C}_i^b=\widetilde{C}_j^b}+\alpha\big(\mathds{1}_{\widetilde{C}_i^b\neq\widetilde{C}_j^b\not\subseteq\Lambda}+\mathds{1}_{\Lambda\not\supseteq\widetilde{C}_i^b\neq\widetilde{C}_j^b}\big)+\beta\mathds{1}_{\Lambda\supseteq\widetilde{C}_i^b\neq\widetilde{C}_j^b\subseteq\Lambda}
	\end{equation*}
	and
	\begin{equation*}
	\alpha=\frac{e^{\lambda I(\widetilde{C}_i^b)}}{1+e^{\lambda I(\widetilde{C}_i^b)}},\qquad \beta = \frac{1+e^{\lambda I(\widetilde{C}_i^b)+\lambda I(\widetilde{C}_j^b)}}{(1+e^{\lambda I(\widetilde{C}_i^b)})(1+e^{\lambda I(\widetilde{C}_j^b)})}.
	\end{equation*}
	Note first that \(\svc\geq 1\) (in fact \(\svc>2\)) and \(\mathds{1}_{C_i^a=C_j^a}\) is increasing in \(a\). Moreover, \(\beta\leq\alpha\leq 1\) and both \(\alpha\) and \(\beta\) are increasing in \(I(\widetilde{C}_i^b)\) and \(I(\widetilde{C}_j^b)\) and hence in \(b\). Therefore \(f\geq 1\) is increasing in \(b\). This implies that \(\matrc_{K}(1,1 \given a_{e^c}, b_{e^c})\) is increasing in \(a,b\) and \(\matrc_{K}(0,0 \given a_{e^c}, b_{e^c})\) is decreasing in \(a,b\).
	
	{\em Case 2: \(e=\{i,j\}\in E_b^+\).} If~\(a_e=b_e\), one has
	\begin{equation}\label{eq:prf:fkg_matrc_+_proba2}
		\matrc_{K}(a_e,b_e \given a_{e^c}, b_{e^c}) = \big(2^{a_e}\tfrac{\svcb}{\svcb+1}\big)^{\mathds{1}_{C_i^a=C_j^a}}\Big(\tfrac{f(b)}{\svcb-1}\Big)^{a_e}\tfrac{\svcb-1}{\svcb-1+f(b)},
	\end{equation}
	where \(f(b)\) is as above.
	Recall that \(\svcb=e^\lambda>1\) and that \(\mathds{1}_{C_i^a=C_j^a}\) and \(f\) are increasing in \(a\) and \(b\), respectively. This implies that \(\matrc_{K}(1,1 \given a_{e^c}, b_{e^c})\) is increasing in \(a,b\), and hence \(\matrc_{K}(0,0 \given a_{e^c}, b_{e^c})\) is decreasing in \(a,b\), and the proof is complete.
\end{proof}

\subsubsection*{Finite energy.} 
The explicit expressions~\eqref{eq:prf:fkg_matrc_+_proba1}-\eqref{eq:prf:fkg_matrc_+_proba2} of the conditional probabilities readily imply that~\(\matrc_{K}\) satisfies the finite-energy property:

\begin{lemma}\label{lem:fe_matrc_+}
The exists a constant \(c>0\) such that, for any \(e\in E\) and any \(a,b\in\{0,1\}^{E\cup E_\rmb^+}\) with \(a\subseteq b\) and \(b\setminus a\subseteq E\),
\begin{equation*}
\matrc_{K}\big((\omega_\tau(e),\omega_{\tau\tau'}(e))=(a_e,b_e)\bgiven(\omega_\tau,\omega_{\tau\tau'})=(a,b)\text{ on }\bar{E}\setminus\{e\}\big)>c.
\end{equation*}
The statement also holds for \(e\in E_\rmb^+\), provided that \(a_e=b_e\).
\end{lemma}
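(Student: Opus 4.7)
The essential observation is that the proof of Lemma~\ref{lem:fkg_matrc_+} has already produced closed-form expressions \eqref{eq:prf:fkg_matrc_+_proba1}--\eqref{eq:prf:fkg_matrc_+_proba2} for the single-edge conditional probabilities of \(\matrc_K\). The finite-energy claim therefore reduces to verifying that every factor appearing in these formulas is uniformly bounded below by a positive constant depending only on the fixed parameters \(\svc, \svcb, \lambda\).

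For an inner edge \(e\in E\), formula \eqref{eq:prf:fkg_matrc_+_proba1} is a product of three terms. The factor \(\bigl(2^{a_e}\svc/(\svc+1)\bigr)^{\mathds{1}_{C_i^a=C_j^a}}\) lies in \([\svc/(\svc+1),2]\); the factor \((\svc-2)^{b_e-a_e}\) lies in \(\{1,\svc-2\}\), positive since \(\svc>2\); and for the third factor, \(f(b)^{b_e}/(1+(\svc-1)f(b))\), one only needs to know that \(f(b)\) lies in a bounded positive interval depending only on \(\lambda\). I would establish this by plain inspection of the definitions of \(\alpha\) and \(\beta\): writing \(x=e^{\lambda I(\widetilde C_i^b)}\geq 1\) and \(y=e^{\lambda I(\widetilde C_j^b)}\geq 1\), the saturating form \(\alpha=x/(1+x)\) immediately gives \(\alpha\in[1/2,1)\), while \(\beta=(1+xy)/((1+x)(1+y))\in[1/2,1]\) follows from the elementary bound \((x-1)(y-1)\geq 0\). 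Hence \(f(b)\) is a sum of at most two such terms plus an indicator and is uniformly bounded in a positive interval, yielding the required bound on the third factor.

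For a boundary edge \(e\in E_\rmb^+\), the assumption \(a_e=b_e\) of the statement places us exactly in the regime of \eqref{eq:prf:fkg_matrc_+_proba2}, whose three factors are handled by the same inspection, using in addition \(\svcb>1\). Taking the minimum over these finitely many uniform lower bounds produces the claimed constant \(c>0\). I do not anticipate any real obstacle: the only mildly nontrivial point is the uniform control of \(f(b)\), which is a one-line consequence of the saturating form of \(\alpha\) and \(\beta\); once \eqref{eq:prf:fkg_matrc_+_proba1}--\eqref{eq:prf:fkg_matrc_+_proba2} are in hand, the result is essentially a bookkeeping exercise.
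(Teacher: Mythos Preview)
Your proposal is correct and follows exactly the approach the paper itself indicates: the paper's entire proof is the one-line remark that the explicit expressions \eqref{eq:prf:fkg_matrc_+_proba1}--\eqref{eq:prf:fkg_matrc_+_proba2} ``readily imply'' finite energy, and you have supplied the routine verification that every factor in those formulas is bounded away from zero by constants depending only on \(\svc,\svcb,\lambda\). One minor cosmetic point: the indicators in the definition of \(f(b)\) are in fact mutually exclusive (by the observation in the paper that \(\widetilde C_i^b\neq\widetilde C_j^b\) forces at least one cluster to lie in \(\Lambda\)), so \(f(b)\in\{1,\alpha,\beta\}\subset[1/2,1]\) rather than being a sum of several terms---but this only sharpens your bound and does not affect the argument.
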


\subsubsection*{Decoupling property.}
The following decoupling property (and analogues of it), which also holds for the standard ATRC measures, will be crucial in the derivation of mixing properties in Sections~\ref{sec:mixing}-\ref{sec:strong_mixing} and the invariance principle in Section~\ref{sec:invariance_principle}.

Let us first introduce some terminology.
\begin{definition}
\label{def:within_circuits}
We identify circuits in $\bbL_\bullet$ and $\bbL_\circ$ with their sets of edges as well as with their planar embedding given by the union of line-segments between consecutive vertices. 
Given a simple circuit $\gamma$ in $\bbL_\bullet$ or $\bbL_\circ$, we say that a subset $R\subset\R^2$ is \emph{within} $\gamma$ if it is contained in the topological closure of the bounded connected component of $\R^2\setminus \gamma$. In this case,~\(\gamma\) is said to \emph{surround}~\(R\), or~\(R\) is surrounded by~\(\gamma\). 
\end{definition}
\begin{lemma}
\label{lem:decoupling_paths_mATRC}
Let \((\omega_\tau,\omega_{\tau\tau'})\) be distributed according to \(\matrc_K\). Let \(F_1\subset F_2\subset \bar{E}\), and let \(a,b\in\{0,1\}^{F_2\setminus F_1}\) with \(a\subseteq b\) and~\(b\setminus a\subseteq E\) be such that there exist circuits in \(a^*\) and in \(b\) that both surround the edges in \(F_1\). Then, conditionally on~\((\omega_\tau|_{F_2\setminus F_1},\omega_{\tau\tau'}|_{F_2\setminus F_1})=(a,b)\), the restrictions
\begin{equation*}
(\omega_\tau|_{F_1},\omega_{\tau\tau'}|_{F_1})\quad\text{and}\quad(\omega_\tau|_{\bar{E}\setminus F_2},\omega_{\tau\tau'}|_{\bar{E}\setminus F_2})
\end{equation*}
are independent.
\end{lemma}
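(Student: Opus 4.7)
The plan is to verify that, conditionally on $(\omega_\tau,\omega_{\tau\tau'})|_{F_2\setminus F_1}=(a,b)$, the Boltzmann weight $\matrc_K(\omega_\tau,\omega_{\tau\tau'})$ factorises as a product of a function of $(\omega_\tau,\omega_{\tau\tau'})|_{F_1}$ and one of $(\omega_\tau,\omega_{\tau\tau'})|_{\bar E\setminus F_2}$. I will inspect each factor in the definition of $\matrc_K$ in turn. The ``local'' factors --- the indicators $\mathds{1}_{\omega_\tau\subseteq\omega_{\tau\tau'}}\mathds{1}_{\omega_{\tau\tau'}\setminus\omega_\tau\subseteq E}$ together with the edge-products $2^{|\omega_\tau\cap E|}$, $(2/(\svcb-1))^{|\omega_\tau\cap E_\rmb^+|}$ and $(\svc-2)^{|\omega_{\tau\tau'}\setminus\omega_\tau|}$ --- decompose trivially as products over the three disjoint edge sets $F_1$, $F_2\setminus F_1$ and $\bar E\setminus F_2$, with the $F_2\setminus F_1$ contribution constant on the conditioning event and absorbed into the normalisation.

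For the cluster-count $2^{\kappa_K(\omega_\tau)}$, I will use the dual circuit $\gamma_\tau^*\subseteq a^*$: its primal edges lie in $F_2\setminus F_1$, are $\omega_\tau$-closed on the conditioning event, and form a cycle surrounding $F_1$ through which no $\omega_\tau$-open path can pass. Every $\omega_\tau$-cluster in $K$ is then entirely interior or entirely exterior to $\gamma_\tau^*$, so $\kappa_K(\omega_\tau)$ splits additively (up to a constant depending only on $a$) into an interior and an exterior contribution, and $2^{\kappa_K(\omega_\tau)}$ factorises accordingly.

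The remaining step --- and the main obstacle --- is the cluster-product $\prod_{\mathcal{C}\in\clusterSet_\Lambda(\omega_{\tau\tau'})}(\mathds{1}_{\mathcal{C}\subseteq\Lambda}+\svcb^{I(\mathcal{C})})$. The primal circuit $\gamma_{\tau\tau'}\subseteq b$ around $F_1$ ensures, by planarity, that at most one $\omega_{\tau\tau'}$-cluster $\mathcal{C}^*$ can contain vertices on both sides of $\gamma_{\tau\tau'}$, and such a cluster must contain all of $\gamma_{\tau\tau'}$. All other clusters in $\clusterSet_\Lambda(\omega_{\tau\tau'})$ are strictly interior or strictly exterior, and since $I(\cdot)$ depends only on the vertex set of the cluster, the corresponding factors split cleanly. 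The difficulty is the $\mathcal{C}^*$-factor: although $I(\mathcal{C}^*)$ splits additively as $I_{\mathrm{on}}+I_{\mathrm{in}}+I_{\mathrm{out}}$, the sum $\mathds{1}_{\mathcal{C}^*\subseteq\Lambda}+\svcb^{I(\mathcal{C}^*)}$ does not factorise naively into interior and exterior pieces. To resolve this, I will undo the Ising-spin summation producing the cluster-product in the proof of Lemma~\ref{lem:6V_spins_to_AT} and work in the extended measure on $(\omega_\tau,\omega_{\tau\tau'},\sigma_\bullet)$, where $\sigma_\bullet\sim\omega_{\tau\tau'}$ and $\sigma_\bullet\in\Sigma_\Lambda^+$; these constraints force $\sigma_\bullet$ to take a single value $s$ on $\mathcal{C}^*$, either pinned to $+1$ by the boundary condition when $\mathcal{C}^*$ reaches $\bbL_\bullet\setminus\Lambda$, or summed over $\{\pm 1\}$ otherwise. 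Conditionally on $s$, the extended weight is manifestly local and factorises into interior and exterior pieces; carrying out the remaining summation over the shared variable $s$ --- attached to data carried by $\gamma_{\tau\tau'}\subseteq F_2\setminus F_1$ --- then yields the required decomposition of the conditional measure as a product, completing the proof of independence.
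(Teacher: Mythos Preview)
Your approach is workable but more complicated than the paper's and incomplete at the final step. The paper handles the cluster-product $\prod_{\mathcal{C}}(\mathds{1}_{\mathcal{C}\subseteq\Lambda}+\svcb^{I(\mathcal{C})})$ directly, without reintroducing $\sigma_\bullet$, via a geometric observation you overlook: since $\gamma_{\tau\tau'}\subseteq b\subseteq\bar E$ and every vertex of $\bar V\setminus\Lambda$ or of $\partial_-\Lambda$ lies on the outer boundary of the planar graph $K$, no such vertex can be strictly enclosed by $\gamma_{\tau\tau'}$. Hence every $\omega_{\tau\tau'}$-cluster $\mathcal{C}$ strictly inside $\gamma_{\tau\tau'}$ satisfies $\mathcal{C}\subseteq\Lambda$ and $I(\mathcal{C})=0$, so its factor is exactly $2$; and the factor for the straddling cluster $\mathcal{C}^*$ depends only on its part on or outside $\gamma_{\tau\tau'}$, since neither $\mathds{1}_{\mathcal{C}^*\subseteq\Lambda}$ nor $I(\mathcal{C}^*)$ can receive any interior contribution. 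The product therefore factorises immediately, with the interior contribution equal to $2$ raised to the number of interior clusters.

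Your spin-representation route has a genuine gap at the last step: $\sum_{s\in\{\pm1\}}W_{\mathrm{in}}(s)\,W_{\mathrm{out}}(s)$ is not in general a product of a function of the interior configuration and one of the exterior configuration, and saying that $s$ is ``attached to data carried by $\gamma_{\tau\tau'}\subseteq F_2\setminus F_1$'' does not resolve this (the value of $s$ is not determined by $(a,b)$). What would close the gap is $W_{\mathrm{in}}(+1)=W_{\mathrm{in}}(-1)$, which holds because no pinning from $\Sigma_\Lambda^+$ and no $E_\rmb^-$-factor touches a strictly-interior spin---but that is precisely the geometric observation above, and once you have it the paper's direct argument is shorter.
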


\begin{proof}
Take any~\(a_1,b_1\in\{0,1\}^{F_1}\) and~\(a_2,b_2\in\{0,1\}^{\bar{E}\setminus F_2}\), and write
\begin{equation*}
\bar{a}=a_1\sqcup a\sqcup a_2\quad\text{and}\quad\bar{b}=b_1\sqcup b\sqcup b_2.
\end{equation*}
By Definition~\ref{def:mATRC}, 
\begin{align}
	&\matrc_{K}\big((\bar{a},\bar{b})\bgiven(\omega_\tau|_{F_2\setminus F_1},\omega_{\tau\tau'}|_{F_2\setminus F_1})=(a,b)\big)\label{eq:prf:decoupling_prop}\\
	&\propto \bigg(\prod_{i=1}^2\mathds{1}_{a_i\subseteq b_i}\mathds{1}_{b_i\setminus a_i\subseteq E} 2^{|a_i\cap E|}\big(\tfrac{2}{\svcb-1} \big)^{|a_i\cap E_\rmb^+|}(\svc-2)^{|b_i\setminus a_i|}\bigg) 2^{\kappa_{K}(\bar{a})}\prod_{\calC\in \clusterSet_{\Lambda}(\bar{b})} \big(\mathds{1}_{\calC\subseteq\Lambda} + \svcb^{I(\calC)}\big),	\nonumber
\end{align}
where the constant of proportionality depends on~\(a,b\). 

Let~\(\gamma'\subset a^*\subset\bbE^\circ\) be a circuit that surrounds~\(F_1\). 
Let~\(E_{\gamma'}^\mathrm{in}\subset\bar{E}\) be the set of edges within~\(\gamma'\), and set~\(E_{\gamma'}^\mathrm{ex}=\bar{E}\setminus (E_{\gamma'}^\mathrm{in}\cup*\gamma')\). For~\(\#\in\{\mathrm{in},\mathrm{ex}\}\), let~\(K_a^\#\) be the graph obtained from~\((\bbV_{E_{\gamma'}^\#},E_{\gamma'}^\#)\) by identifying endpoints of edges in~\(a\). As the circuit~\(\gamma'\) is open in~\(a^*\) and separates~\(K_a^\mathrm{in}\) from~\(K_a^\mathrm{ex}\), we clearly have
\begin{equation*}
\kappa_{K}(\bar{a})=\kappa_{K_a^\mathrm{in}}(a_1)+\kappa_{K_a^\mathrm{ex}}(a_2).
\end{equation*}
Similarly, let~\(\gamma\subset b\subset\bbE^\bullet\) be a circuit that surrounds~\(F_1\). Let~\(E_{\gamma}^\mathrm{in}\subset\bar{E}\) be the set of edges within~\(\gamma\), and set~\(E_{\gamma}^\mathrm{ex}=\bar{E}\setminus E_{\gamma}^\mathrm{in}\). For~\(\#\in\{\mathrm{in},\mathrm{ex}\}\), define~\(K_b^\#\) as the graph obtained from~\((\bbV_{E_{\gamma}^\#},E_{\gamma}^\#)\) by identifying endpoints of edges in~\(b\). 
Let~\(\clusterSet_{K_b^\mathrm{in}}(b_1)\) and~\(\clusterSet_{K_b^\mathrm{ex}}(b_2)\) be the sets of clusters of~\(b_1\) in~\(K_b^\mathrm{in}\) and~\(b_2\) in~\(K_b^\mathrm{ex}\), respectively. 
Then, every cluster in~\(\clusterSet_{\Lambda}(\bar{b})\) that does not contain~\(\bbV_\gamma\) belongs to either~\(\clusterSet_{K_b^\mathrm{in}}(b_1)\) or~\(\clusterSet_{K_b^\mathrm{ex}}(b_2)\).
Moreover, every cluster~\(\calC\in\clusterSet_{K_b^\mathrm{in}}(b_1)\) except that of the vertex corresponding to~\(\gamma\) satisfies~\(\calC\subseteq\Lambda\) and~\(I(\calC)=0\). Therefore,
\begin{equation*}
\prod_{\calC\in \clusterSet_{\Lambda}(\bar{b})} \big(\mathds{1}_{\calC\subseteq\Lambda} + \svcb^{I(\calC)}\big)=2^{|\clusterSet_{K_b^\mathrm{in}}(b_1)|-1}\prod_{\calC\in \clusterSet_{K_b^\mathrm{ex}}(b_2)} \big(\mathds{1}_{\calC\subseteq\Lambda} + \svcb^{I(\calC)}\big).
\end{equation*}
Therefore, the expression in~\eqref{eq:prf:decoupling_prop} factorises, and the proof is complete. 
\end{proof}

\subsection{Interface in the modified ATRC}
\label{sec:interface_matrc}
Recall the coupling measure \(\Psi^{1/0}\) of the six-vertex spin random variable \((\sigma_\bullet,\sigma_\circ)\sim\spin_{\mathcal{D}}^{+,+-}\) and the modified ATRC pair \((\omega_\tau,\omega_{\tau\tau'})\sim\matrc_{K}(\cdot\given v_L\xleftrightarrow{\omega_\tau}v_R)\) (Lemmata~\ref{lem:6V_spins_to_01FK} and~\ref{lem:6V_spins_to_AT}).

Since \(\sigma_\circ\) is constant on edges in \(\omega_\tau^*\), the Dobrushin boundary conditions (\(\sigma_\circ\in\Sigma_{\Lambda'}^{+-}\)) impose the existence of a path in~\(\omega_\tau\) that connects~\(v_L\) and~\(v_R\).

\begin{definition}
	Denote by~\(\calC=\calC_{v_L}\subseteq\bar{V}\) the cluster of \(v_L\) in \(\omega_\tau\); see Fig.~\ref{fig:interface_matrc}.
	Let~\(\gamma\subset\R^2\) be the unique closed curve formed by the line-segments between endpoints of edges in \(*\partialedge_{\bbL_\bullet} \calC\subset\bbE^\circ\) such that~\(\calC\) is contained in the bounded connected component of $\R^2\setminus\gamma$.
	Denote the closure of this component by~\(\mathcal{P}\), and define the {\em interface}~\(\Gamma=\Gamma_{\atrc}^{n,m}\) by
	\[
		\Gamma:=\mathcal{P}\cap\bbL_\bullet\subseteq \bar{V}.
	\]
\end{definition}

\begin{figure}
\includegraphics[scale=0.45,page=1]{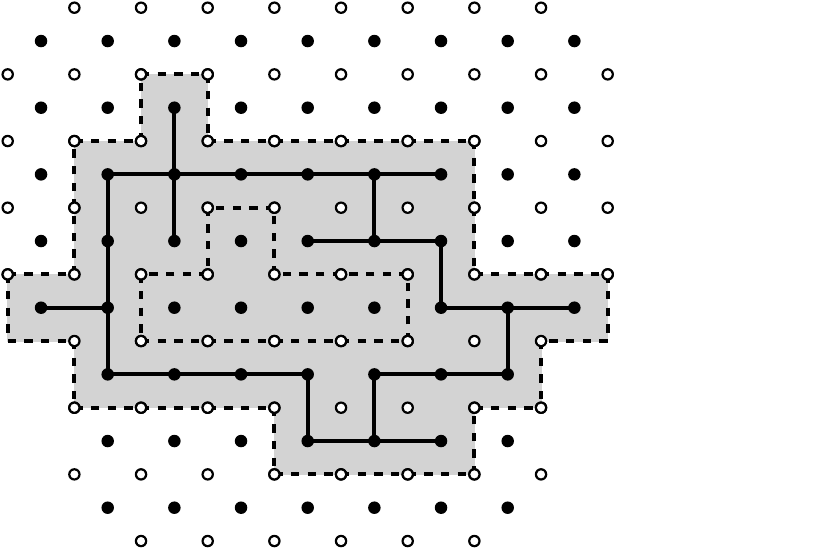}
\qquad
\includegraphics[scale=0.45,page=2]{at_interface_cluster}
\caption{Left: A realisation of $\calC=\calC_{v_L}$ (endpoints of solid edges) in $K$ for $n=m=3$, the dual $*\partialedge_{\bbL_\bullet}\calC$ of its edge-boundary in $\bbL_\bullet$ (dashed edges), and the surrounding polygon $\mathcal{P}$ (grey). Right: a path in $*\partialedge_{K}\calC$ connecting \((-n-\tfrac{3}{2},\tfrac{1}{2})\) and \((n+\tfrac{3}{2},\tfrac{1}{2})\) on which~\(\sigma_\circ=+1\) (red dashed edges), and a path in $*\partialedge_{K}\calC$ connecting \((-n-\tfrac{1}{2},-\tfrac{1}{2})\) and \((n+\tfrac{1}{2},-\tfrac{1}{2})\) on which~\(\sigma_\circ=-1\) (green dashed edges).}
\label{fig:interface_matrc}
\end{figure}

\section{Weak mixing in the ATRC}
\label{sec:mixing}

The main results proved in this section are the single-edge relaxation of the ATRC, Proposition~\ref{prop:edge_relax_ATRC}, and the \emph{ratio} weak mixing property for the ATRC, Theorem~\ref{thm:ratio_weak_mixing_ATRC}.

In Subsections~\ref{sec:height-func} and~\ref{sec:height-func-relax} we define the height function of the six-vertex model and use~\cite{GlaPel23} to show that it relaxes exponentially to its infinite-volume limit.
In Subsection~\ref{sec:atrc_relax}, we show that this implies exponential relaxation for the ATRC measure with ``\(\atrcfree,\atrcwired\)'' boundary conditions, stated in Proposition~\ref{prop:atrc01_relax}.
In Subsection~\ref{sec:proof_edge_relax_atrc}, we derive Proposition~\ref{prop:edge_relax_ATRC} from Proposition~\ref{prop:atrc01_relax} and an input from~\cite{AouDobGla24}, stated in Proposition~\ref{prop:exp_decay_omega_tau}. 
In Subsection~\ref{sec:ratio-mixing}, we show how to derive from~Proposition~\ref{prop:edge_relax_ATRC} the \emph{exponential} weak mixing property of the ATRC, stated in Theorem~\ref{thm:weak_mixing_ATRC}.
Finally, we prove Theorem~\ref{thm:ratio_weak_mixing_ATRC} using the classical work of Alexander~\cite{Ale98}, Theorem~\ref{thm:weak_mixing_ATRC} and the imput, Proposition~\ref{prop:exp_decay_omega_tau}, from~\cite{AouDobGla24}.

\subsection{The six-vertex height function}\label{sec:height-func}

The proof of Proposition~\ref{prop:edge_relax_ATRC} relies on exponential relaxation of the ATRC measures with ``$\atrcfree,\atrcwired$'' boundary conditions. The latter will be established via the height function representation of the six-vertex model and the BKW coupling~\cite{BaxKelWu76} with FK-percolation, while it also requires some input from~\cite{GlaPel23}. 
We first introduce the six-vertex height function and provide the necessary graph definitions.

\subsubsection{Six-vertex height function}
\label{sec:def_height_function_measures}
Recall the edge orientations and spin representation of the six-vertex model introduced in Section~\ref{sec:combinatorial_mappings}.
A six-vertex \emph{height function} is an assignment of integers, called \emph{heights}, to the vertices in both \(\bbL_\bullet\) and \(\bbL_\circ\). 
Given edge orientations that satisfy the ice rule, define \(h\) on \(\bbL_{\bullet}\cup \bbL_{\circ}\) as follows. Fix an integer height at some arbitrary fixed vertex. Then iteratively define the heights at other vertices by increasing the height by 1 when traversing an edge \(e\in\bbE^\diamond\) from its left to its right (with respect to its assigned orientation) and decreasing the height by 1 when traversing an arrow from its right to its left; see Fig.~\ref{fig:six-vertex_arrows_spins_heights}.
As with the spins, the procedure is self-consistent due to the ice rule.
Note that the heights on \(\bbL_{\bullet}\) and on \( \bbL_{\circ}\) automatically have different parity.
By convention, we set the parity on \(\bbL_{\bullet}\) to be even.
The gradient of \(h\) is in bijective correspondence with the edge orientations and hence with the spin representation, up to a global spin flip, as demonstrated by the following relation (see Fig.~\ref{fig:six-vertex_arrows_spins_heights}):
\begin{equation*}
h(u)-h(i)=\sigma_\bullet(i)\sigma_\circ(u)\qquad\text{for any }t\in\bbL_\diamond\text{ with }i\in e_t,\,u\in e_t^*.
\end{equation*}
On the other hand, up to a global spin flip, the spins are obtained from the height function by setting, for all \(i\in \bbL_{\bullet}\) and \(u\in \bbL_{\circ}\),
\begin{equation}\label{eq:spins_of_hf}
	\sigma_{\bullet}(i) = \begin{cases}
		+1 & \text{ if } h(i) = 0 \mod{4}\\
		-1 & \text{ if } h(i) = 2 \mod{4}
	\end{cases},\quad \sigma_{\circ}(u) = \begin{cases}
		+1 & \text{ if } h(u) = 1 \mod{4}\\
		-1 & \text{ if } h(u) = 3 \mod{4}
	\end{cases}.
\end{equation}
This motivates the following definition.
\begin{definition}
A function ${h:\bbL_\bullet\cup\bbL_\circ\to\Z}$ is called a (six-vertex) height function if it satisfies the following:
\begin{itemize}
	\item for any $t\in\bbL_\diamond$, $i\in t\cap\bbL_\bullet$, and~$u\in t\cap\bbL_\circ$, one has $|h(i)-h(u)|=1$,
	\item for any $i\in\bbL_\bullet$, one has $h(i)\in 2\Z$.
\end{itemize}
Denote the set of all height functions by $\Omega_{\mathrm{hf}}$. 
The type of a tile $t\in\bbL_\diamond$ in a height function $h$ is given by the type of its gradient function; see Fig.~\ref{fig:six-vertex_arrows_spins_heights}.
Fix $g\in\Omega_{\mathrm{hf}}$, parameters $\svc,\svcb>0$ and a finite subset $\Delta\subset\bbL_\bullet\cup\bbL_\circ$.
The set $A:=A_\Delta\subset\bbL_\diamond$ of tiles of $\Delta$ is given by the tiles with at least one corner in $\Delta$. The set $\partial A\subseteq A$ of boundary tiles of $\Delta$ is given by the tiles with precisely one corner in $\Delta$.
Let $\delta\subseteq\partial A$. 
The corresponding height function measure on $\Z^{\bbL_\bullet\cup\bbL_\circ}$ is defined by 
\begin{equation}\label{eq:hf_def}
	\hf_{\Delta,\delta;\svc,\svcb}^{g}(h)\propto\svc^{|T_{5,6}^{A\setminus \delta}(h)|}\,\svcb^{|T_{5,6}^{\delta}(h)|}\,\ind_{\Omega_\mathrm{hf}}(h)\,\ind_{h(x)=g(x)\,\forall x\in (\bbL_\bullet\cup\bbL_\circ)\setminus\Delta}\,,
\end{equation}
where $T_{5,6}^{A\setminus \delta}(h)$ (resp. $T_{5,6}^{\delta}(h)$) is the set of tiles $t\in A\setminus \delta$ (resp. $t\in \delta$) of type 5-6 in $h$.
\end{definition}
If $g$ is constant $n$ on $\bbL_\bullet$ and constant $m$ on $\bbL_\circ$ ($n$ even and $m=n\pm1$), we simply write $\hf_{\Delta,\delta;\svc,\svcb}^{n,m}$.
When $\delta=\partial A$, we omit $\delta$ from the subscript. When $\delta=\varnothing$, we omit $\delta$ and $\svcb$ from the subscript.

\begin{figure}
\includegraphics[scale=0.45,page=1]{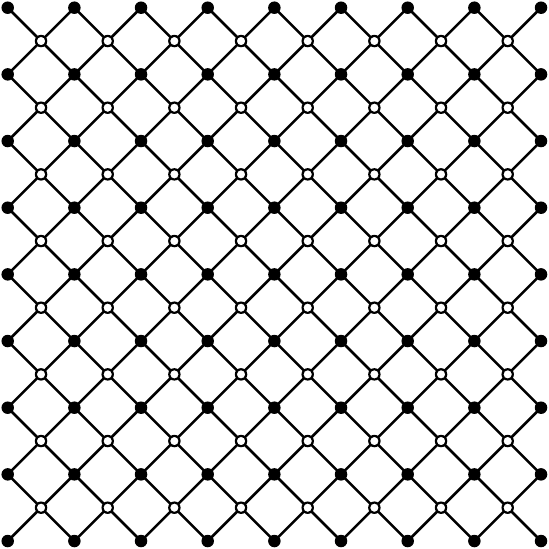}
\quad
\includegraphics[scale=0.45,page=2]{rotated_lattice_domains}
\quad
\includegraphics[scale=0.45,page=3]{rotated_lattice_domains}
\caption{Left: part of the rotated square lattice $\bbL$. Its faces are the tiles in $\bbL_\diamond$. Center: part of the augmented lattice $\overline{\bbL}$. Right: $\bbL$-domains given by the vertices strictly within simple circuits in $\overline{\bbL},\,\bbL_\bullet,\,\bbL_\circ$, respectively. The lower left and right $\bbL$-domains are even and odd, respectively.}
\label{fig:rotated_lattice}
\end{figure}

\subsubsection{Rotated lattice and domains}
Consider the rotated square lattice $\bbL$ with vertex-set $\bbL_\bullet\cup\bbL_\circ$ and edges between nearest neighbours, that is, between vertices of Euclidean distance $1/\sqrt{2}$.
Given $\Delta\subseteq\bbL$, we write $\Delta_\bullet=\Delta\cap\bbL_\bullet$ and $\Delta_\circ=\Delta\cap\bbL_\circ$.
The augmented graph $\overline{\bbL}$ has the same vertex-set as $\bbL$ and all edges of $\bbL,\,\bbL_\bullet$ and $\bbL_\circ$, see Fig.~\ref{fig:rotated_lattice}.
We restrict the notion of simple circuits in $\overline{\bbL}$ to those that do not traverse both $e$ and $e^*$ for any $e\in\bbE^\bullet$, so that they can be embedded in $\R^2$.
We identify such circuits with their planar embedding. 

\begin{definition}[$\bbL$-domains]
A finite subset $\mathcal{D}\subset\bbL$ is called an $\bbL$-domain if there exists a simple circuit $C$ in $\overline{\bbL}$ such that $\mathcal{D}$ is given by the vertices of $\bbL$ strictly within $C$, that is, in the bounded connected component of $\R^2\setminus C$.
It is called \emph{even} (respectively, \emph{odd}) if $\partialex_{\bbL}\mathcal{D}\subset\bbL_\bullet$ (respectively, $\partialex_{\bbL}\mathcal{D}\subset\bbL_\circ$); see Fig.~\ref{fig:rotated_lattice}.
\end{definition}

\begin{definition}[$\bbL_\bullet$- and $\bbL_\circ$-domains]
A subgraph $G=(V,E)$ of $\bbL_\bullet$ is called an $\bbL_\bullet$-domain of the \emph{first kind} if $E=\bbE_V$ and $V$ is given by the vertices within (see Definition~\ref{def:within_circuits}) a simple circuit in $\bbL_\bullet$.
We define $\bbL_\circ$-domains of the first kind in the same manner.
We say that a subgraph $G=(V,E)$ of $\bbL_\bullet$ is a domain of the \emph{second kind} if its dual $(\bbV_{*E},*E)$ is an $\bbL_\circ$-domain of the first kind.
It is called an $\bbL_\bullet$-domain if it is an $\bbL_\bullet$-domain of the first or second kind.
We define $\bbL_\circ$-domains (of the second kind) analogously; see Fig.~\ref{fig:domains}.
\end{definition}
Given a subgraph $G=(V,E)$ of $\bbL_\bullet$ with dual $G^*=(\bbV_{*E},*E)$, define $\Delta_G$ as the set of vertices of degree $4$ in $G$ and $G^*$, that is, 
\begin{equation*}
\Delta_G:=\{i\in V:|\partialedge_{G}\{i\}|=4\}\cup\{u\in\bbV_{*E}: |\partialedge_{G^*}\{u\}|=4\}.
\end{equation*}
Observe that, if $G$ is an $\bbL_\bullet$-domain, then $\mathcal{D}_G:=\Delta_G$ is an $\bbL$-domain. In this case, any tile $t\in\bbL_\diamond$ intersects $\mathcal{D}_G$ precisely if its associated edge $e_t\in\bbE^\bullet$ belongs to $E$, that is, $\{e_t:t\in A_{\mathcal{D}_G}\}=E$; see Section~\ref{sec:def_height_function_measures} and Fig.~\ref{fig:domains}.

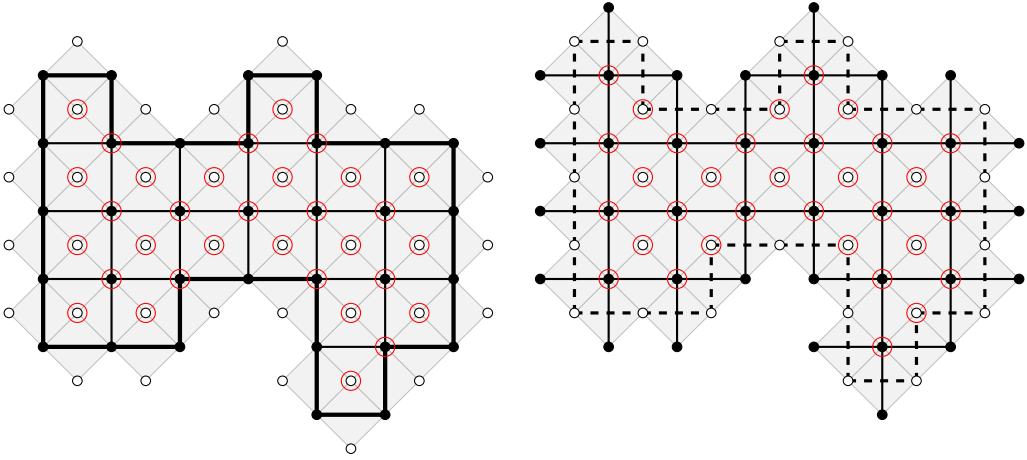
\begin{figure}
\begin{tikzpicture}[scale=0.9]
%% defining circuit
		\foreach \i\j in {0/0, 1/0, 2/1, 3/1, 4/-1, 5/0, 5/3, 4/3, 3/4, 2/3, 1/3, 0/4}{
			\filldraw[ultra thin, lightgray, fill=lightgray!20] ({\i},\j)--(\i+0.5,{\j+0.5})--({\i+1},\j)--(\i+0.5,{\j-0.5})--({\i},\j);
			\Hedge{\i+0.5}{\j}{ultra thick};
		}

		\foreach \i\j in {2/0, 4/0, 4/-1, 5/-1, 6/0, 6/1, 6/2, 4/3, 3/3, 1/3, 0/0, 0/1, 0/2, 0/3}{
			\filldraw[ultra thin, lightgray, fill=lightgray!20] ({\i-0.5},\j+0.5)--(\i,{\j+1})--({\i+0.5},\j+0.5)--(\i,{\j})--({\i-0.5},\j+0.5);
			\Vedge{\i}{\j+0.5}{ultra thick};
		}

%% horizontal primal edges
		\foreach \i\j in {4/0, 0/1, 1/1, 4/1, 5/1, 0/2, 1/2, 2/2, 3/2, 4/2, 5/2, 0/3, 3/3}{
			\filldraw[ultra thin, lightgray, fill=lightgray!20] ({\i},\j)--(\i+0.5,{\j+0.5})--({\i+1},\j)--(\i+0.5,{\j-0.5})--({\i},\j);
			\Hedge{\i+0.5}{\j}{thick};
		}
		
%% vertical primal edges 
		\foreach \i\j in {1/0, 5/0, 1/1, 2/1, 3/1, 4/1, 5/1, 1/2, 2/2, 3/2, 4/2, 5/2}{
			\filldraw[ultra thin, lightgray, fill=lightgray!20] ({\i-0.5},\j+0.5)--(\i,{\j+1})--({\i+0.5},\j+0.5)--(\i,{\j})--({\i-0.5},\j+0.5);
			\Vedge{\i}{\j+0.5}{thick};
		}
		
%% primal sites 
		\foreach \i\j in {4/-1, 5/-1, 0/0, 1/0, 2/0, 4/0, 5/0, 6/0, 0/1, 1/1, 2/1, 3/1, 4/1, 5/1, 6/1, 0/2, 1/2, 2/2, 3/2, 4/2, 5/2, 6/2, 0/3, 1/3, 2/3, 3/3, 4/3, 5/3, 6/3, 0/4, 1/4, 3/4, 4/4}{
			\filldraw[black] (\i,\j) circle (2pt);
		}
		
%% dual sites
		\foreach \i\j in {4/-2, 0/-1, 1/-1, 3/-1, 4/-1, 5/-1, -1/0, 0/0, 1/0, 2/0, 3/0, 4/0, 5/0, 6/0, -1/1, 0/1, 1/1, 2/1, 3/1, 4/1, 5/1, 6/1, -1/2, 0/2, 1/2, 2/2, 3/2, 4/2, 5/2, 6/2, -1/3, 0/3, 1/3, 2/3, 3/3, 4/3, 5/3, 0/4, 3/4}{
			\filldraw[black, fill=white] (\i+0.5,\j+0.5) circle (2pt);
		}
		
%% L-domain
		\foreach \i\j in {5/0, 1/1, 2/1, 4/1, 5/1, 1/2, 2/2, 3/2, 4/2, 5/2, 1/3, 3/3, 4/3}{
			\draw[red] (\i,\j) circle(4pt);
		}
		
		\foreach \i\j in {4/-1, 0/0, 1/0, 4/0, 5/0, 0/1, 1/1, 2/1, 3/1, 4/1, 5/1, 0/2, 1/2, 2/2, 3/2, 4/2, 5/2, 0/3, 3/3}{
			\draw[red] (\i+0.5,\j+0.5) circle(4pt);
		}
\end{tikzpicture}
\quad
\begin{tikzpicture}[scale=0.9]
%% defining circuit
		\foreach \i\j in {0/0, 1/0, 2/1, 3/1, 4/-1, 5/0, 5/3, 4/3, 3/4, 2/3, 1/3, 0/4}{
			\filldraw[ultra thin, lightgray, fill=lightgray!20] ({\i},\j)--(\i+0.5,{\j+0.5})--({\i+1},\j)--(\i+0.5,{\j-0.5})--({\i},\j);
			\Hedge{\i+0.5}{\j}{very thick, dashed};
		}

		\foreach \i\j in {2/0, 4/0, 4/-1, 5/-1, 6/0, 6/1, 6/2, 4/3, 3/3, 1/3, 0/0, 0/1, 0/2, 0/3}{
			\filldraw[ultra thin, lightgray, fill=lightgray!20] ({\i-0.5},\j+0.5)--(\i,{\j+1})--({\i+0.5},\j+0.5)--(\i,{\j})--({\i-0.5},\j+0.5);
			\Vedge{\i}{\j+0.5}{very thick, dashed};
		}

%% horizontal primal edges 
		\foreach \i\j in {1/0, 5/0, 1/1, 2/1, 3/1, 4/1, 5/1, 1/2, 2/2, 3/2, 4/2, 5/2}{
			\filldraw[ultra thin, lightgray, fill=lightgray!20] ({\i-0.5},\j+0.5)--(\i,{\j+1})--({\i+0.5},\j+0.5)--(\i,{\j})--({\i-0.5},\j+0.5);
			\Hedge{\i}{\j+0.5}{thick};
		}
		
		\foreach \i\j in {2/0, 4/0, 4/-1, 5/-1, 6/0, 6/1, 6/2, 4/3, 3/3, 1/3, 0/0, 0/1, 0/2, 0/3}{
			\Hedge{\i}{\j+0.5}{thick};
		}		

%% vertical primal edges
		\foreach \i\j in {4/0, 0/1, 1/1, 4/1, 5/1, 0/2, 1/2, 2/2, 3/2, 4/2, 5/2, 0/3, 3/3}{
			\filldraw[ultra thin, lightgray, fill=lightgray!20] ({\i},\j)--(\i+0.5,{\j+0.5})--({\i+1},\j)--(\i+0.5,{\j-0.5})--({\i},\j);
			\Vedge{\i+0.5}{\j}{thick};
		}
		
		\foreach \i\j in {0/0, 1/0, 2/1, 3/1, 4/-1, 5/0, 5/3, 4/3, 3/4, 2/3, 1/3, 0/4}{
			\Vedge{\i+0.5}{\j}{thick};
		}

%% primal sites
		\foreach \i\j in {4/-2, 0/-1, 1/-1, 3/-1, 4/-1, 5/-1, -1/0, 0/0, 1/0, 2/0, 3/0, 4/0, 5/0, 6/0, -1/1, 0/1, 1/1, 2/1, 3/1, 4/1, 5/1, 6/1, -1/2, 0/2, 1/2, 2/2, 3/2, 4/2, 5/2, 6/2, -1/3, 0/3, 1/3, 2/3, 3/3, 4/3, 5/3, 0/4, 3/4}{
			\filldraw[black] (\i+0.5,\j+0.5) circle (2pt);
		}	
						
%% dual sites 
		\foreach \i\j in {4/-1, 5/-1, 0/0, 1/0, 2/0, 4/0, 5/0, 6/0, 0/1, 1/1, 2/1, 3/1, 4/1, 5/1, 6/1, 0/2, 1/2, 2/2, 3/2, 4/2, 5/2, 6/2, 0/3, 1/3, 2/3, 3/3, 4/3, 5/3, 6/3, 0/4, 1/4, 3/4, 4/4}{
			\filldraw[black, fill=white] (\i,\j) circle (2pt);
		}

%% L-domain
		\foreach \i\j in {4/-1, 0/0, 1/0, 4/0, 5/0, 0/1, 1/1, 2/1, 3/1, 4/1, 5/1, 0/2, 1/2, 2/2, 3/2, 4/2, 5/2, 0/3, 3/3}{
			\draw[red] (\i+0.5,\j+0.5) circle(4pt);
		}
		
		\foreach \i\j in {5/0, 1/1, 2/1, 4/1, 5/1, 1/2, 2/2, 3/2, 4/2, 5/2, 1/3, 3/3, 4/3}{
			\draw[red] (\i,\j) circle(4pt);
		}		
%shift 
		\draw[white] (0,-2) circle(2pt);
\end{tikzpicture}
\caption{Left: a circuit in $\bbL_\bullet$ (thick black edges) and the corresponding $\bbL_\bullet$-domain of the first kind (thin and thick black edges).
Right: a circuit in $\bbL_\circ$ (dashed edges) obtained by shifting the left one by $(1/2,1/2)$ and the corresponding $\bbL_\bullet$-domain of the second kind  (black edges).
The tiles corresponding to the edges of the domains are shaded grey, and the vertices of the corresponding $\bbL$-domains are surrounded by red circles.}
\label{fig:domains}
\end{figure}

\subsubsection{Duality coupling with the AT model}
\label{sec:sixv_at_duality_coupling}
The ATRC measures can be sampled locally from the HF measures, which is the content of the following lemma.
We build on~\cite{GlaPel23}, where a marginal of the ATRC measure was sampled. 
Given $\svc>2$, a height function $h\in\Z^{\bbL}$, an edge $e=ij\in \bbE^\bullet$ with $e^*=uv$, and $x\in [0,1]$, define $\mathcal{X}_\svc(h,e,x)\in\{(0,0),(0,1),(1,1)\}$ as follows:
\begin{equation}\label{eq:sample_atrc_hf}
\begin{aligned}
&\bullet\text{ if }h(u)\neq h(v),\text{ set }\mathcal{X}_\svc(h,e,x)=(1,1),\\
&\bullet\text{ if }h(i)\neq h(j),\text{ set }\mathcal{X}_\svc(h,e,x)=(0,0),\\
&\bullet\text{ if }h(u)=h(v)\text{ and }h(i)=h(j),\text{ set }
\\
&\qquad\mathcal{X}_\svc(h,e,x)=\mathds{1}_{[0,\frac{1}{\svc})}(x)\cdot(1,1)+\mathds{1}_{[\frac{1}{\svc},\frac{2}{\svc})}(x)\cdot(0,0)+\mathds{1}_{[\frac{2}{\svc},1]}(x)\cdot(0,1).
\end{aligned}
\end{equation}
Observe that, in order to define $\mathcal{X}_\svc(h,e,x)$, it suffices to know the values of the corresponding six-vertex spins $\sigma_\bullet(i),\sigma_\bullet(j),\sigma_\circ(u),\sigma_\circ(v)$, defined in~\eqref{eq:spins_of_hf}.

\begin{lemma}\label{lem:sample_atrc_hf}
Let $0<J<U$ satisfy $\sinh 2J=e^{-2U}$ and take~$\svc:=\coth 2J$.
Let $G=(V,E)$ be an $\bbL_\bullet$-domain, and let $\mathcal{D}:=\mathcal{D}_G$ be its associated $\bbL$-domain.
Let $h$ be distributed according to $\hf_{\mathcal{D};\svc}^{0,1}$, and let $(U_e)_{e\in E}$ be a sequence of i.i.d. uniform random variables on $[0,1]$, independent of $h$.
Define $\omega_\tau,\omega_{\tau\tau'}\in \{0,1\}^{\bbE^\bullet}$ by setting, for~$e\in\bbE^\bullet$,
\begin{equation}\label{eq:sample_pair_atrc_from_hf}
(\omega_\tau(e),\omega_{\tau\tau'}(e)):=
\begin{cases}
(0,1) & \text{if }e\in\bbE^\bullet\setminus E,\\
\mathcal{X}_\svc(h,e,U_e) & \text{if }e\in E.
\end{cases}
\end{equation}
Then, the law of $(\omega_\tau,\omega_{\tau\tau'})$ is given by $\atrc_{G;J,U}^{\atrcfree,\atrcwired}$.
\end{lemma}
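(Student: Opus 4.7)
The plan is to compute the distribution of $(\omega_\tau,\omega_{\tau\tau'})$ produced by the sampling procedure and match it against~\eqref{eq:atrc_def} with the $(\atrcfree,\atrcwired)$ boundary. Fix a target $(a,b)\in\{0,1\}^{\bbE^\bullet}\times\{0,1\}^{\bbE^\bullet}$ satisfying $a\subseteq b$, $b\setminus a\subseteq E$, and $(a,b)\equiv (0,1)$ on $\bbE^\bullet\setminus E$ (otherwise both sides vanish by~\eqref{eq:sample_pair_atrc_from_hf}). Since the $U_e$ are i.i.d.~conditionally on $h$,
\begin{equation*}
	\bbP\big[(\omega_\tau,\omega_{\tau\tau'})=(a,b)\big]=\sum_{h}\hf^{0,1}_{\mathcal{D};\svc}(h)\prod_{e\in E}\bbP_{U_e}\big[\mathcal{X}_\svc(h,e,U_e)=(a_e,b_e)\big].
\end{equation*}

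The first step is to analyse the edge factors. Recall from Section~\ref{sec:def_height_function_measures} that a tile with $e_t=ij,\,e_t^*=uv$ is of type 1 or 3 iff $h(u)\neq h(v),\,h(i)=h(j)$; of type 2 or 4 iff $h(u)=h(v),\,h(i)\neq h(j)$; and of type 5 or 6 iff both equalities hold. Call $h$ \emph{consistent} with $(a,b)$ if, for every $e\in E$, $b_e=1\Rightarrow h(i)=h(j)$ and $a_e=0\Rightarrow h(u)=h(v)$; inconsistent $h$ contribute zero. For consistent $h$, the edge factor equals $1$ (types 1--4) or $1/\svc$ (types 5--6) when $(a_e,b_e)\in\{(1,1),(0,0)\}$, and equals $(\svc-2)/\svc$ (necessarily type 5--6) when $(a_e,b_e)=(0,1)$. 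Pairing each type 5--6 tile with its $\svc$ from the HF weight $\svc^{|T_{5,6}(h)|}$ cancels all $\svc$-factors at $(1,1)$ and $(0,0)$ edges and converts the $(0,1)$ factor into $\svc-2$. The resulting weight per consistent $h$ is the constant $(\svc-2)^{|(b\setminus a)\cap E|}$, so
\begin{equation*}
	\bbP\big[(\omega_\tau,\omega_{\tau\tau'})=(a,b)\big]\propto(\svc-2)^{|(b\setminus a)\cap E|}\cdot|\mathcal{H}(a,b)|,
\end{equation*}
where $\mathcal{H}(a,b)$ is the set of consistent height functions obeying the boundary condition.

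The second step is to count $|\mathcal{H}(a,b)|$ through the spin representation~\eqref{eq:spins_of_hf}: as the $(0,1)$ boundary fixes the additive constant, each consistent $h$ corresponds bijectively to a pair $(\sigma_\bullet,\sigma_\circ)$ satisfying $\sigma_\bullet\equiv+1$ on $\bbL_\bullet\setminus V$, $\sigma_\circ\equiv+1$ on $\bbL_\circ\setminus\bbV_{*E}$, $\sigma_\bullet\sim b$, $\sigma_\circ\sim a^*$, and the ice rule~\eqref{eq:ice-rule_spins}. The inclusion $a\subseteq b$ makes the ice rule automatic at every tile $t\in A_\mathcal{D}$ with $e_t\in E$ (either $\sigma_\bullet\sim e_t$ is forced by $b_{e_t}=1$, or $\sigma_\circ\sim e_t^*$ is forced by $a_{e_t}=0$), while tiles outside $A_\mathcal{D}$ have all four corner spins fixed to $+1$. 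The unique boundary cluster of $b$ absorbs $\sigma_\bullet\equiv+1$, leaving $2^{\clusters_V(b)-1}$ choices on the interior $b$-clusters; symmetrically there are $2^{n_\circ(a^*)}$ choices for $\sigma_\circ$, where $n_\circ(a^*)$ counts the $a^*$-clusters in $\bbL_\circ$ not reaching the exterior.

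The final step is to identify $n_\circ(a^*)$ via planar duality. Since all dual edges in $\bbE^\circ\setminus{*E}$ are open in $a^*$, an $a^*$-cluster fails to reach the exterior precisely when it is trapped inside a bounded face of the planar graph $(V,a\cap E)$; this yields a bijection $n_\circ(a^*)=F(a)$ with the bounded faces of $(V,a\cap E)$. Euler's formula then gives $F(a)=\clusters_V(a)+|a\cap E|-|V|$, so the probability becomes $\propto 2^{|a\cap E|}(\svc-2)^{|(b\setminus a)\cap E|}\,2^{\clusters_V(a)+\clusters_V(b)}$, matching~\eqref{eq:atrc_def} with weights $\rmw_\tau=2$ and $\rmw_{\tau\tau'}=\svc-2$. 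These identifications are a direct computation using the self-dual relation $\sinh 2J=e^{-2U}$: indeed $\rmw_\tau=e^{2U}(e^{2J}-e^{-2J})=2e^{2U}\sinh 2J=2$, and $\rmw_{\tau\tau'}=e^{2(U-J)}-1=\coth 2J-2=\svc-2$. The main obstacle I anticipate is the bijection step $n_\circ(a^*)=F(a)$, which requires careful handling of the geometry of $\bbV_{*E}$ (especially its ``boundary'' dual vertices just outside $V$) and must be made uniform across $\bbL_\bullet$-domains of both the first and the second kind.
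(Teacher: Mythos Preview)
Your proof is correct and follows essentially the same route as the paper's: both compute the joint law of the spins (or equivalently the height function) together with $(\omega_\tau,\omega_{\tau\tau'})$, observe that the $\svc$-weight of type 5--6 tiles cancels against the sampling probabilities to leave only the factor $(\svc-2)^{|(b\setminus a)\cap E|}$ times compatibility indicators, then sum out the spins to obtain $2^{\clusters_V(b)+\clusters_{V'}(a^*)}$ and finally invoke planar duality/Euler's formula. Your worry about the step $n_\circ(a^*)=F(a)$ is exactly the one-line planar duality identity $\clusters_{V'}(\omega_\tau^*)=\clusters_V(\omega_\tau)+|\omega_\tau\cap E|+\mathrm{const}(G)$ that the paper invokes without further comment; it holds uniformly for $\bbL_\bullet$-domains of either kind since in both cases $(\bbV_{*E},*E)$ is the planar dual of $(V,E)$ with a single exterior face.
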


\begin{proof}
Denote the underlying probability measure by $\mathbf{P}$.
Recall that a tile $t\in\bbL_\diamond$ intersects $\mathcal{D}$ precisely if its associated edge $e_t\in\bbE^\bullet$ belongs to $E$, see Fig.~\ref{fig:domains}.
Let $(\sigma_\bullet,\sigma_\circ)$ with values in $\{\pm 1\}^{\bbL_\bullet}\times\{\pm 1\}^{\bbL_\circ}$ be the spin configurations obtained from $h$ by~\eqref{eq:spins_of_hf}.
For ease of notation, we use the same symbols for the random variables and their realisations.
For $\sigma_\bullet\in\{\pm 1\}^{\bbL_\bullet},\,\sigma_\circ\in\{\pm 1\}^{\bbL_\circ},\,\omega\in\{0,1\}^{\bbE^\bullet}$ and $\#\in\{0,1\}$, define
\begin{equation*}
\Sigma_{\mathcal{D}_\bullet}^+(\sigma_\bullet):=\ind_{\sigma_\bullet\equiv 1 \text{ on } \bbL_\bullet\setminus\mathcal{D}_\bullet},\quad
\Sigma_{\mathcal{D}_\circ}^+(\sigma_\circ):=\ind_{\sigma_\circ\equiv 1 \text{ on } \bbL_\circ\setminus\mathcal{D}_\circ},\quad
\Omega_E^{\#}(\omega)=\ind_{\omega\equiv\# \text{ on } \bbE^\bullet\setminus E}.
\end{equation*}
Then, the law of the quadruple~$(\sigma_\bullet,\sigma_\circ,\omega_\tau,\omega_{\tau\tau'})$ can be written as follows:
\begin{multline*}
	\mathbf{P}(\sigma_\bullet,\sigma_\circ,\omega_\tau,\omega_{\tau\tau'}) \propto
	\ind_{\mathrm{ice}}(\sigma_\bullet,\sigma_\circ)\,\Sigma_{\mathcal{D}_\bullet}^+(\sigma_\bullet)\,\Sigma_{\mathcal{D}_\circ}^+(\sigma_\circ)\,\ind_{\sigma_\bullet\sim\omega_{\tau\tau'}}\,\ind_{\sigma_\circ\sim\omega_\tau^*}\\\cdot(\svc-2)^{\abs{(\omega_{\tau\tau'}\setminus\omega_\tau)\cap E}}\,\ind_{\omega_\tau\subseteq\omega_{\tau\tau'}}\,\Omega_E^0(\omega_\tau)\,\Omega_E^1(\omega_{\tau\tau'}).
\end{multline*}
Observe that $\sigma_\bullet\sim\omega_{\tau\tau'},\,\sigma_\circ\sim\omega_\tau^*$ and $\omega_\tau\subseteq\omega_{\tau\tau'}$ imply that $(\sigma_\bullet,\sigma_\circ)$ satisfies the ice rule. 
Summing over $\sigma_\bullet$ and $\sigma_\circ$, we obtain the law of~$(\omega_\tau,\omega_{\tau\tau'})$:
\begin{equation*}
\mathbf{P}(\omega_\tau,\omega_{\tau\tau'})\propto
2^{\clusters_V(\omega_{\tau\tau'})+\clusters_{V'}(\omega_\tau^*)}\,\Omega_E^0(\omega_\tau)\,\Omega_E^1(\omega_{\tau\tau'})\,\ind_{\omega_\tau\subseteq\omega_{\tau\tau'}}\,(\svc-2)^{\abs{(\omega_{\tau\tau'}\setminus\omega_\tau)\cap E}}, 
\end{equation*}
where $V':=\bbV_{*E}$, and where we used that there exist $2^{\clusters_V(\omega_{\tau\tau'})-1}$ spin configurations $\sigma_\bullet\sim\omega_{\tau\tau'}$ with $\Sigma_{\mathcal{D}_\bullet}^+(\sigma_\bullet)=1$, and similarly for $\sigma_\circ\sim\omega_\tau^*$.
By planar duality, $\clusters_{V'}(\omega_\tau^*)=\clusters_{V}(\omega_\tau)+\abs{\omega_\tau\cap E}+\mathrm{const}(G)$.
Since $\rmw_\tau,\rmw_{\tau\tau'}$ from~\eqref{eq:atrc_weights} satisfy $\rmw_\tau=2$ and $\rmw_{\tau\tau'}=\svc-2$, the proof is complete.
\end{proof}

\subsubsection{Baxter--Kelland--Wu coupling}\label{sec:bkw}

In Section~\ref{sec:coupling}, we described the BKW coupling by sampling the FK-percolation from the six-vertex model.
We now focus on the reverse direction of the BKW.
Fix $q>4$ and $\beta = \beta_c(q) = \ln(1+\sqrt{q})$, and let $\lambda,\svc,\svcb$ be as in~\eqref{eq:parameters_bulk} and~\eqref{eq:parameters_bnd}.
Recall the loop representation of FK-percolation and the representations of the six-vertex model introduced in Section~\ref{sec:combinatorial_mappings}.

Let $\mathcal{D}$ be an even $\bbL$-domain, and set
\begin{equation}\label{eq:bkw_even_graph}
E:=*\bbE_{\mathcal{D}_\circ}\subset\bbE^\bullet,\quad
\Lambda:=\bbV_E\subset\bbL_\bullet,\quad
\text{and}\quad G:=(\Lambda,E).
\end{equation}
The following result~\cite{BaxKelWu76} is classical. It may be proved through the arguments presented in the proof of Lemma~\ref{lem:6V_spins_to_01FK}; see~\cite[Theorem 7]{GlaPel23} for a proof in a similar setting.
We will construct a height function from a loop configuration by increasing or decreasing the height whenever we cross a loop, which is the same as orienting the loops as in Section~\ref{sec:coupling}. We chose the first option for the sake of brevity.  
\begin{proposition}\label{prop:bkw_cb}
Let $\omega$ be a random element of~$\{0,1\}^{\bbE^\bullet}$ distributed according to $\fk_{G;\beta,q}^\fkwired$. Consider the corresponding loop configuration $\loops(\omega)$, and define a height function $h$ as follows:
\begin{itemize}
\item[\textbf{H1}] Set $h(i)=0$ for $i\in\bbL_\bullet\setminus\mathcal{D}$, and $h(u)=1$ for $u\in\bbL_\circ\setminus\mathcal{D}$.
\item[\textbf{H2}] Assign constant values to clusters of $\omega$ and $\omega^*$ in $\mathcal{D}$ by \textbf{decreasing} the height by 1 with probability $e^{\lambda}/\sqrt{q}$ and \textbf{increasing} the height by 1 with probability $e^{-\lambda}/\sqrt{q}$ when crossing a loop from outside, independently for every loop. 
\end{itemize}
The height function $h$ is distributed according to $\hf_{\mathcal{D};\svc,\svcb}^{0,1}$.
\end{proposition}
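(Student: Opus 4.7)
The plan is to compute the joint law of $(\omega,h)$ produced by H1--H2 and then marginalise over $\omega$ to identify the induced law of $h$ with $\hf_{\mathcal{D};\svc,\svcb}^{0,1}$. By construction, for a fixed $\omega \sim \fk_{G;\beta,q}^\fkwired$ and a height function $h$ obtainable from $\omega$ via H2, the joint weight factorises as
\begin{equation*}
	\mathbb{P}(\omega,h) \propto \fk_{G;\beta,q}^\fkwired(\omega)\cdot\prod_{\ell\in \loops(\omega)} p_\ell,
\end{equation*}
where $p_\ell\in\{e^\lambda/\sqrt q,\,e^{-\lambda}/\sqrt q\}$ is the H2-probability assigned to loop $\ell$. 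Using~\eqref{eq:FK_loop_expression}, namely $\fk_{G;\beta,q}^\fkwired(\omega)\propto\sqrt q^{|\loops(\omega)|}$, the $\sqrt q$-factors cancel and the joint weight reduces to $\exp\bigl[\lambda\bigl(\#_{\circlearrowright}(\ell_{\shortrightarrow})-\#_{\circlearrowleft}(\ell_{\shortrightarrow})\bigr)\bigr]$, where $\ell_{\shortrightarrow}$ is the oriented loop configuration induced by $h$.

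Following exactly the same counting argument as in the proof of Lemma~\ref{lem:6V_spins_to_01FK}, each clockwise (resp.\ counter-clockwise) internal loop has four more right (resp.\ left) quarter-turns than left (resp.\ right). Hence the joint weight can be rewritten as a local product over tiles,
\begin{equation*}
	\mathbb{P}(\omega,h)\propto \exp\!\Bigl[\tfrac{\lambda}{4}\bigl(\#_{\curvearrowright}(\ell_{\shortrightarrow})-\#_{\curvearrowleft}(\ell_{\shortrightarrow})\bigr)\Bigr]=\prod_{t\in A}w_t(\omega,h),
\end{equation*}
with $w_t=1$ at tiles of type $1$--$4$ (one right and one left arc), $w_t=e^{\lambda/2}$ at type 5B/6A, and $w_t=e^{-\lambda/2}$ at type 5A/6B.

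The next step is to marginalise over $\omega$ at fixed $h$. The height function $h$ determines the six-vertex type $1$--$6$ of each tile but, at each tile of type 5 or 6, it does not distinguish between the two local arc patterns A and B; this is precisely the FK-level ambiguity, corresponding to whether the primal or the dual edge is open at that tile. At every inner tile of type 5--6, both arc patterns are admissible and summing their weights gives the factor $e^{\lambda/2}+e^{-\lambda/2}=\svc$. At every inner tile of type 1--4 the single admissible pattern contributes $1$.

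The heart of the argument, and the main obstacle, is the boundary contribution. At a tile of $\partial A$ exactly one corner lies in $\mathcal{D}$ and the remaining three carry fixed heights dictated by H1. A direct case check (analogous to the one carried out in Lemma~\ref{lem:6V_spins_to_01FK} and visible in Fig.~\ref{fig:bc_FK_loops_spins} after adaptation to the all-wired boundary considered here) shows that, given such a tile is of type 5--6 in $h$, only one of the two arc patterns is compatible with the wired outer loops prescribed by $\fkwired$, and that this admissible pattern produces a right quarter-turn, contributing $e^{\lambda/2}=\svcb$; a boundary tile of type $1$--$4$ contributes $1$. Collecting the local contributions yields
\begin{equation*}
	\mathbb{P}(h)\propto \svc^{|T_{5,6}^{A\setminus\partial A}(h)|}\,\svcb^{|T_{5,6}^{\partial A}(h)|}\,\ind_{\Omega_{\mathrm{hf}}}(h)\,\ind_{h(x)=g(x)\,\forall x\notin\mathcal{D}},
\end{equation*}
where $g$ encodes the boundary values $0$ on $\bbL_\bullet$ and $1$ on $\bbL_\circ$ imposed by H1; by~\eqref{eq:hf_def} this is exactly $\hf_{\mathcal{D};\svc,\svcb}^{0,1}(h)$, completing the proof.
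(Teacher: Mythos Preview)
Your proof is correct and takes exactly the approach the paper indicates: the paper does not spell out a proof of this proposition but states that it ``may be proved through the arguments presented in the proof of Lemma~\ref{lem:6V_spins_to_01FK}'' and refers to~\cite{BaxKelWu76} and~\cite{GlaPel23}. Your argument is precisely that adaptation---joint weight via the loop representation, cancellation of $\sqrt q$ against the H2 orientation probabilities, localisation of $e^{\lambda(\#_\circlearrowright-\#_\circlearrowleft)}$ to quarter-turn weights on tiles, and summation over the two splittings at inner type 5--6 tiles to produce $\svc$, with the boundary case yielding $\svcb$ by the same case analysis as in Lemma~\ref{lem:6V_spins_to_01FK}.
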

The above procedure also works for odd $\bbL$-domains with the difference that one has to take $\omega\sim\fk_{\mathcal{D}_\bullet;p,q}^\fkfree$, and \textbf{H2} must be replaced by 
\begin{itemize}
\item[\textbf{H2'}] Assign constant values to clusters of $\omega$ and $\omega^*$ in $\mathcal{D}$ by \textbf{decreasing} the height by 1 with probability $e^{\lambda}/\sqrt{q}$ and \textbf{increasing} the height by 1 with probability $e^{-\lambda}/\sqrt{q}$ when crossing a loop from outside, independently for every loop. 
\end{itemize}

\subsubsection{Input from~\cite{GlaPel23}}
For the whole section, fix $q>4$ and $\beta = \beta_c(q) = \ln(1+\sqrt{q})$, and let $\lambda,\svc,\svcb$ be as in~\eqref{eq:parameters_bulk} and~\eqref{eq:parameters_bnd}.
The following proposition is a consequence of~\cite[Proposition 6.1 and Lemma 6.2]{GlaPel23} and their proofs.
\begin{proposition}\label{prop:hf_cb_conv}
For any sequence of even or odd $\bbL$-domains $\mathcal{D}_k\nearrow\bbL$, the measures $\hf_{\mathcal{D}_k;\svc,\svcb}^{0,1}$ converge to some $\hf_\svc^{0,1}$ which is independent of the sequence $(\mathcal{D}_k)$. Moreover, the limiting measure $\hf_\svc^{0,1}$ can be constructed in either of the following two ways:
\begin{enumerate}[label=(\roman*)]
\item Sample $\omega\in\{0,1\}^{\bbE^\bullet}$ according to $\fk_{\beta,q}^{\fkwired}$. Set $h=0$ on the unique infinite cluster of $\omega$, and sample $h$ elsewhere according to \textbf{\emph{H2}} in Section~\ref{sec:bkw}.
\item Sample $\omega\in\{0,1\}^{\bbE^\bullet}$ according to $\fk_{\beta,q}^{\fkfree}$. Set $h=1$ on the unique infinite cluster of $\omega^*$, and sample $h$ elsewhere according to \textbf{\emph{H2'}} in Section~\ref{sec:bkw}.
\end{enumerate}
\end{proposition}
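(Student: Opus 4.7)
\emph{Proof plan.} The plan is to use the BKW coupling (Proposition~\ref{prop:bkw_cb}) to reduce convergence of $\hf_{\mathcal{D}_k;\svc,\svcb}^{0,1}$ to the well-understood infinite-volume convergence of the critical wired FK measure, and then to leverage the structural consequences of the discontinuous phase transition for~$q>4$.

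First, for even $\mathcal{D}_k$ with associated graph~$G_k=(\Lambda_k,E_k)$ from~\eqref{eq:bkw_even_graph}, Proposition~\ref{prop:bkw_cb} realises $\hf_{\mathcal{D}_k;\svc,\svcb}^{0,1}$ as the pushforward of the joint law of $\omega\sim\fk_{G_k;\beta,q}^{\fkwired}$ and independent marks $(\epsilon_L)_{L\in\loops(\omega)}\in\{\pm 1\}$ with $\bbP(\epsilon_L=-1)=e^{\lambda}/\sqrt{q}$, $\bbP(\epsilon_L=+1)=e^{-\lambda}/\sqrt{q}$: set $h\equiv 0$ on $\bbL_\bullet\setminus\mathcal{D}_k$, $h\equiv 1$ on $\bbL_\circ\setminus\mathcal{D}_k$, and propagate inward by incrementing by~$\epsilon_L$ whenever a loop~$L$ is crossed from outside. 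By monotonicity of wired boundary conditions, $\fk_{G_k;\beta,q}^{\fkwired}\to\fk_{\beta,q}^{\fkwired}$ weakly along the exhaustion. Since $q>4$, the discontinuous-transition results~\cite{DumGagHar21,RaySpi20} ensure that $\fk_{\beta,q}^{\fkwired}$ admits almost surely a unique infinite cluster $\calC_\infty$ and that the diameters of the remaining primal and dual clusters (equivalently, of the finite loops of $\loops(\omega)$) have exponential tails.

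This allows an intrinsic definition of the candidate limit $\hf_{\svc}^{0,1}$: sample $\omega\sim\fk_{\beta,q}^{\fkwired}$ and independent marks $(\epsilon_L)$ on the finite loops; set $h\equiv 0$ on $\calC_\infty$; and, for every $x\in\bbL$, let $h(x):=\ind_{x\in\bbL_\circ}+\sum_L \epsilon_L$, where the sum runs over the almost surely finite collection of loops of $\loops(\omega)$ that strictly separate $x$ from $\calC_\infty$. Convergence $\hf_{\mathcal{D}_k;\svc,\svcb}^{0,1}\to\hf_{\svc}^{0,1}$ now follows cylinder by cylinder: for any finite $F\subset\bbL$, both $h|_F$ under the finite-volume measure and under the candidate limit are measurable with respect to the FK configuration and the marks in a random neighbourhood of $F$ whose radius has uniform exponential tails, and independence of the marks from $\omega$ transports the weak convergence of the FK marginals to weak convergence of the HF laws. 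This establishes construction~(i).

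Construction~(ii) is obtained analogously for odd $\mathcal{D}_k$ by invoking the odd-domain variant of Proposition~\ref{prop:bkw_cb} (sampling via $\fk^{\fkfree}$ on $\mathcal{D}_{k,\bullet}$ and applying rule~\textbf{H2'}), together with self-duality of the critical FK measure, which in the limit provides a unique infinite dual cluster on which $h\equiv 1$. The same argument gives weak convergence; independence of the sequence and equivalence of the two descriptions then follow from the uniqueness of the weak limit of the intrinsically defined measures~\eqref{eq:hf_def}. The main obstacle is the joint control behind the cylinder-convergence step: one must show that, with high probability uniform in the domain, only finitely many loops of controlled diameter separate any fixed vertex from $\calC_\infty$, which is precisely where the exponential decay afforded by $q>4$ is used. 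A secondary subtlety is to identify the even- and odd-domain limits cleanly; rather than couple the wired and free FK pictures directly, the cleanest route is via the uniqueness of the weak limit of the finite-volume specifications~\eqref{eq:hf_def}.
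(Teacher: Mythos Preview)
The paper does not prove this proposition; it imports it wholesale from \cite[Proposition~6.1 and Lemma~6.2]{GlaPel23}. Your sketch is precisely the natural route (and presumably the one taken there): realise $\hf_{\mathcal{D}_k;\svc,\svcb}^{0,1}$ via BKW as a functional of $\fk_{G_k}^{\fkwired}$ (or $\fk^{\fkfree}$) together with i.i.d.\ loop marks, pass to the infinite-volume FK limit, and use the discontinuous transition for $q>4$ to guarantee a unique infinite cluster and exponential tails for the finite loops, so that $h|_F$ depends only on a local, uniformly controlled neighbourhood. The convergence along purely even sequences to description~(i), and along purely odd sequences to description~(ii), is correctly argued.

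There is, however, a genuine gap in your last step. You write that ``equivalence of the two descriptions then follow from the uniqueness of the weak limit of the intrinsically defined measures~\eqref{eq:hf_def}.'' This is circular: at this stage you have only shown that even subsequences converge to the measure built from $\fk^{\fkwired}$ and odd subsequences to the measure built from $\fk^{\fkfree}$; uniqueness of the limit along arbitrary (mixed) sequences is exactly what remains to be proved, and it is equivalent to showing (i) and (ii) coincide. The two constructions live on \emph{different} FK samples---under $\fk^{\fkwired}$ there is an infinite primal cluster and all dual clusters are finite, while under $\fk^{\fkfree}$ the situation is reversed---so self-duality does not identify them on a common probability space. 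A separate argument is needed: in \cite{GlaPel23} this is the content of Lemma~6.2, which uses the stochastic ordering of height-function measures (the mechanism behind Lemma~\ref{lem:hf_st_dom} here) to sandwich the odd-domain measures between even-domain ones and vice versa, forcing the two monotone limits to agree. Your sketch would be complete once you supply this comparison; as written, the identification of~(i) with~(ii) is asserted rather than proved.
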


The following lemma is a slight generalisation of~\cite[Eq. (28)]{GlaPel23} and can be proved in exactly the same manner. 

\begin{lemma}\label{lem:hf_st_dom}
Let $\mathcal{D}$ be an $\bbL$-domain, and let $\delta$ be a set of boundary tiles of $\mathcal{D}$. Define $\mathcal{D}^\mathrm{even}=\mathcal{D}\setminus(\partialin\mathcal{D})_\bullet$ and $\mathcal{D}^\mathrm{odd}=\mathcal{D}\setminus(\partialin\mathcal{D})_\circ$, where the boundaries are taken in $\bbL$. Then, $\mathcal{D}^\mathrm{even}$ and $\mathcal{D}^\mathrm{odd}$ are disjoint unions of even and odd domains in $\bbL$, respectively. Moreover, the following stochastic ordering of measures holds:
\[
\hf_{\mathcal{D}^{\mathrm{even}};\svc,\svcb}^{0,1}\leq_{\mathrm{st}}\hf_{\mathcal{D},\delta;\svc,\svcb}^{0,1}\leq_{\mathrm{st}}\hf_{\mathcal{D}^{\mathrm{odd}};\svc,\svcb}^{0,1}.
\]
\end{lemma}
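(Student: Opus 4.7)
My plan is as follows.

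First, I would verify the combinatorial claim that $\mathcal{D}^\mathrm{even}$ (resp.\ $\mathcal{D}^\mathrm{odd}$) is a disjoint union of even (resp.\ odd) $\bbL$-domains, using the bipartiteness of $\bbL$. If $u\in\bbL_\circ$ were an external $\bbL$-neighbor of $\mathcal{D}^\mathrm{even}=\mathcal{D}\setminus(\partialin\mathcal{D})_\bullet$, then $u$ would be $\bbL$-adjacent to some $v\in\mathcal{D}^\mathrm{even}$, and bipartiteness forces $v\in\bbL_\bullet$; but then $v$ has an external $\bbL$-neighbor (namely $u$), so $v\in(\partialin\mathcal{D})_\bullet$, contradicting $v\in\mathcal{D}^\mathrm{even}$. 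Hence $\partialex_\bbL\mathcal{D}^\mathrm{even}\subset\bbL_\bullet$, and each connected component is bounded in $\overline{\bbL}$ by a simple circuit whose external vertices are all primal, i.e.\ an even $\bbL$-domain. The odd case is symmetric.

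For the stochastic ordering, I would follow the strategy of~\cite{GlaPel23}. The central observation is that, under the $(0,1)$-boundary convention, every primal inner-boundary height $h(i)$ for $i\in(\partialin\mathcal{D})_\bullet$ is constrained to $\{0,2\}$ (via the constraint $|h(i)-1|=1$ coming from some external dual neighbor at height $1$), with $0$ pointwise minimal; symmetrically, every dual inner-boundary height $h(u)$ for $u\in(\partialin\mathcal{D})_\circ$ lies in $\{-1,1\}$, with $1$ pointwise maximal. I would then argue that $\hf^{0,1}_{\mathcal{D}^\mathrm{even};\svc,\svcb}$ (resp.\ $\hf^{0,1}_{\mathcal{D}^\mathrm{odd};\svc,\svcb}$) coincides --- after an algebraic rearrangement of tile weights --- with the conditional measure obtained from $\hf^{0,1}_{\mathcal{D},\delta;\svc,\svcb}$ by freezing the primal inner boundary heights at their minimum value $0$ (resp.\ the dual inner boundary heights at their maximum value $1$). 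The stochastic ordering then follows from the strong FKG property of $\hf^{0,1}_{\mathcal{D},\delta;\svc,\svcb}$ --- available in the ferroelectric regime $\svc>2$ as exploited in~\cite{GlaPel23} --- since conditioning on a pointwise minimal (resp.\ maximal) boundary configuration yields a stochastic lower (resp.\ upper) bound for the unconditional measure.

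The delicate step is this algebraic rearrangement. Reducing the domain from $\mathcal{D}$ to $\mathcal{D}^\mathrm{even}$ promotes some interior tiles (bulk weight $\svc$) to boundary tiles of $\mathcal{D}^\mathrm{even}$ (weight $\svcb$), and also removes some boundary tiles of $\mathcal{D}$ whose only corner in $\mathcal{D}$ lies in $(\partialin\mathcal{D})_\bullet$. The reconciliation hinges on the arithmetic identity $\svc=\svcb+\svcb^{-1}$, a direct consequence of~\eqref{eq:parameters_bulk}--\eqref{eq:parameters_bnd}: summing the bulk weight of an affected tile over the two admissible extensions $h(i)\in\{0,2\}$ of a removed primal corner --- exactly one of which keeps the tile of type 5-6 --- produces a factor that, after pulling out an overall $\svcb$, matches the boundary weight of the corresponding tile in $\hf^{0,1}_{\mathcal{D}^\mathrm{even};\svc,\svcb}$. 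A symmetric computation handles $\mathcal{D}^\mathrm{odd}$, with the roles of primal and dual exchanged. I expect the main technical challenge to be this careful bookkeeping of tile-status changes under the domain reduction; once it is in place, the conclusion is immediate from FKG monotonicity, and the computation is carried out in exactly the same way as in~\cite[Eq.~(28)]{GlaPel23}.
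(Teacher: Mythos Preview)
Your approach matches the paper's exactly: the paper gives no proof and simply states that the lemma ``is a slight generalisation of~\cite[Eq.~(28)]{GlaPel23} and can be proved in exactly the same manner,'' which is precisely the route you outline (freeze the primal or dual inner-boundary heights at their extremal value, then apply strong FKG).

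One expositional point: your description of the ``algebraic rearrangement'' conflates conditioning with summing. If you condition $\hf^{0,1}_{\mathcal{D},\delta;\svc,\svcb}$ on $h(i)=0$ for all $i\in(\partialin\mathcal{D})_\bullet$, there is no sum over $h(i)\in\{0,2\}$ and the identity $\svc=\svcb+\svcb^{-1}$ plays no role at this step. What actually happens is that the conditioned measure is of the form $\hf^{0,1}_{\mathcal{D}^{\mathrm{even}},\delta';\svc,\svcb}$ for some $\delta'\subseteq\partial A_{\mathcal{D}^{\mathrm{even}}}$ (tiles that were interior to $A_\mathcal{D}$ but become boundary tiles of $\mathcal{D}^{\mathrm{even}}$ retain weight $\svc$, not $\svcb$). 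The passage to $\delta'=\partial A_{\mathcal{D}^{\mathrm{even}}}$ is the part that needs the structure of even domains and the specific relation between $\svc$ and $\svcb$; this is handled in~\cite{GlaPel23}, and since you defer the computation there anyway, your overall plan is sound.
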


This lemma readily implies that $\hf_{\mathcal{D}_k,\delta_k;\svc,\svcb}^{0,1}$ (in particular $\hf_{\mathcal{D}_k;\svc}^{0,1}$) converges to the same limit, no matter which sequences of~$\mathcal{D}_k$ and~$\delta_k$ we chose.

\subsection{Relaxation of height function measures}\label{sec:height-func-relax}
The aim of this section is to prove a relaxation statement for the six-vertex height function measures with modified weight $\svcb$ on arbitrary boundary tiles, in particular for the measure with unmodified weight $\svc$ on all tiles.
For the whole section, fix $q>4$ and $\beta = \beta_c(q) = \ln(1+\sqrt{q})$, and let $\lambda,\svc,\svcb$ be as in~\eqref{eq:parameters_bulk} and \eqref{eq:parameters_bnd}.

Given a measure $\hf$ on $\Z^\bbL$ and $\Delta\subset\bbL$, define $\hf|_\Delta$ as its marginal on $\Z^\Delta$.
\begin{proposition}\label{prop:hf_exp_relax}
There exist constants $c,\alpha>0$ such that, for every finite $\Delta\subset\bbL$, every $\bbL$-domain $\mathcal{D}$ that contains $\Delta$, and every set $\delta\subset\bbL_\diamond$ of boundary tiles of $\mathcal{D}$,
\begin{equation*}
\tvd\left(\hf_{\mathcal{D},\delta;\svc,\svcb}^{0,1}\vert _{\Delta},\hf_{\svc}^{0,1}\vert_{\Delta})\right)
< c\,\abs{\Delta}\,(\diameter (\Delta)+\rmd_\infty(\Delta,\mathcal{D}^c))^2\,e^{-\alpha\,\rmd_\infty(\Delta,\mathcal{D}^c)}.
\end{equation*}
\end{proposition}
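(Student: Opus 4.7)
The plan proceeds in three steps: a sandwich reduction, a monotone coupling, and a circuit argument exploiting exponential FK decay at $q>4$.

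\emph{Step 1 (sandwich reduction).} By Lemma~\ref{lem:hf_st_dom}, both $\hf_{\mathcal{D},\delta;\svc,\svcb}^{0,1}$ and, passing to the limit along exhausting sequences, $\hf_{\svc}^{0,1}$ are stochastically sandwiched between $\hf_{\mathcal{D}^{\mathrm{even}};\svc,\svcb}^{0,1}$ and $\hf_{\mathcal{D}^{\mathrm{odd}};\svc,\svcb}^{0,1}$. It therefore suffices to prove the claimed bound for the total variation between these two extremes restricted to $\Delta$.

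\emph{Step 2 (monotone coupling and union bound).} Construct a monotone coupling $(h^-,h^+)$ of the two extremes with $h^-\leq h^+$ everywhere, either via Strassen's theorem or, more concretely, by lifting a Holley coupling of the underlying FK-percolation measures to the height functions through the shared \textbf{H2}/\textbf{H2'} randomness from Proposition~\ref{prop:bkw_cb} (and its odd-domain analogue). The union bound then yields
\begin{equation*}
\tvd\bigl(h^-|_\Delta,\,h^+|_\Delta\bigr)\;\leq\;\sum_{x\in\Delta}\mathbf{P}\bigl(h^-(x)\neq h^+(x)\bigr).
\end{equation*}

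\emph{Step 3 (circuit coincidence and exponential decay).} Fix $x\in\Delta$ and set $k_x:=\rmd_\infty(x,\mathcal{D}^c)\geq\rmd_\infty(\Delta,\mathcal{D}^c)$. Let $\mathcal{E}_x$ be the event that, in the FK-percolation configuration underlying $h^+$, there exists an open primal circuit $\gamma$ around $x$ contained in the $\rmd_\infty$-ball $B(x,k_x/2)$. On $\mathcal{E}_x$ the circuit $\gamma$ lies in the cluster of the boundary and thus carries BKW-height $0$ in $h^+$; using $h^-\leq h^+$ and the primal-openness of $\gamma$ in the coupled configuration for $h^-$, one verifies that the same holds for $h^-$, so that the BKW randomness inside $\gamma$ (which can be made common to both coupled constructions) produces $h^-(x)=h^+(x)$. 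The event $\mathcal{E}_x^c$ is contained in the event of an FK dual-open crossing from $x$ to distance $k_x/2$, whose probability is at most $C\,k_x\,e^{-\alpha k_x}$ by the sharp exponential decay of dual connectivities at the $q>4$ FK critical point~\cite{DumGagHar21,RaySpi20}. Summing over $x\in\Delta$ and using $k_x\leq \diameter(\Delta)+\rmd_\infty(\Delta,\mathcal{D}^c)$ gives the bound, with the extra geometric factor absorbed into the polynomial prefactor.

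\emph{Main obstacle.} The delicate point is the ``shared reference'' claim inside $\mathcal{E}_x$: one must verify that a common open primal circuit in the coupled FK configurations truly pins both BKW heights to the same value on $\gamma$, despite the fact that $h^-$ and $h^+$ are produced from different domain parities (and hence from either \textbf{H2} or \textbf{H2'}). A clean workaround, avoiding a direct coupling of the two different BKW constructions, is to compare each extreme separately to $\hf_{\svc}^{0,1}$: there the comparison uses a single FK representation (wired for $h^+$, free for $h^-$), and the exponential decay (of dual-connectivities for wired, of primal-connectivities for free) applies directly, after which the triangle inequality yields the conclusion.
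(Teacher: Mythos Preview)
Your Step~1 reduction contains a genuine gap. Knowing that both $\hf_{\mathcal{D},\delta;\svc,\svcb}^{0,1}$ and $\hf_\svc^{0,1}$ are stochastically sandwiched between $\mu^-:=\hf_{\mathcal{D}^{\mathrm{even}};\svc,\svcb}^{0,1}$ and $\mu^+:=\hf_{\mathcal{D}^{\mathrm{odd}};\svc,\svcb}^{0,1}$ does \emph{not} imply that their total variation distance is controlled by $\tvd(\mu^-|_\Delta,\mu^+|_\Delta)$, nor by $\sum_{x}\mathbf{P}(h^-(x)\neq h^+(x))$ under your particular FK-based monotone coupling of $(\mu^-,\mu^+)$. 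A toy counterexample on $\{0,1,2\}$: take $\mu^-=\tfrac12(\delta_0+\delta_1)$, $\mu^+=\tfrac12(\delta_1+\delta_2)$, $\mu=\delta_1$, $\nu=\tfrac12(\delta_0+\delta_2)$; all orderings hold, $\tvd(\mu^-,\mu^+)=\tfrac12$, yet $\tvd(\mu,\nu)=1$, and under the optimal monotone coupling one even has $\mathbf{P}(X^-\neq X^+)=\tfrac12<\tvd(\mu,\nu)$. The underlying issue is that the measure $\hf_{\mathcal{D},\delta;\svc,\svcb}^{0,1}$ (for general $\delta$) has no BKW/FK representation, so your FK-circuit coupling of $(h^-,h^+)$ cannot be extended to a four-way monotone coupling that also carries a sample of it.

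The paper closes this gap via Lemma~\ref{lem:stoch_dom_tv_bound}: for \emph{any} monotone coupling of $h^-(v)$ and $h^+(v)$ on the totally ordered range $\{-m,\dots,m\}$ (in particular, the one induced by an abstract Strassen coupling $h^-\leq h\leq h^+$ with $h\sim\hf_{\mathcal{D},\delta;\svc,\svcb}^{0,1}$), one has $\mathbf{P}(h^-(v)\neq h^+(v))\leq 2m\cdot\tvd(\mu^-|_{\{v\}},\mu^+|_{\{v\}})$. This converts the coupling probability into an honest total variation, at the price of the factor $m\sim\diameter(\Delta)+\rmd_\infty(\Delta,\mathcal D^c)$; the latter TV is then bounded via Lemma~\ref{lem:hf_cb_exp_relax} (which is essentially your Step~3 workaround, comparing each extreme separately to $\hf_\svc^{0,1}$ through its own FK representation). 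The extra factor from Lemma~\ref{lem:stoch_dom_tv_bound} is exactly what produces the square on the polynomial prefactor. Your direct Step~3 argument, had it worked, would have avoided this square; but your own ``Main obstacle'' correctly identifies why it does not, and the workaround you propose only bounds $\tvd(\mu^\pm|_\Delta,\hf_\svc^{0,1}|_\Delta)$ --- it still needs Lemma~\ref{lem:stoch_dom_tv_bound} (or an equivalent device) to return from the extremes to the general-$\delta$ measure. Minor aside: $h^-$ (even domain) corresponds to \emph{wired} FK and $h^+$ (odd domain) to \emph{free} FK, opposite to what you wrote.
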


The statement on even or odd $\bbL$-domains and with modified weight $\svcb$ on all boundary tiles can be proven in the same way as~\cite[Proposition 4.2]{AouDobGla24}, which is slightly less general. We provide only the statement. 

\begin{lemma}\label{lem:hf_cb_exp_relax}
There exist constants $c,\alpha>0$ such that, for every finite $\Delta\subset\bbL$ and every even or odd $\bbL$-domain $\mathcal{D}$ that contains $\Delta$,
\begin{equation*}
    \tvd\left(\hf_{\mathcal{D};\svc,\svcb}^{0,1}\vert _{\Delta},\hf_{\svc}^{0,1}\vert _{\Delta}\right)< c\,(\diameter (\Delta)+\rmd_\infty(\Delta,\mathcal{D}^c))\,e^{-\alpha\,\rmd_\infty(\Delta,\mathcal{D}^c)}.
\end{equation*}
\end{lemma}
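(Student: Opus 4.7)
The plan is to prove this lemma via the Baxter--Kelland--Wu correspondence (Proposition~\ref{prop:bkw_cb} and Proposition~\ref{prop:hf_cb_conv}(i)), which transports the question to one about FK-percolation, where sharp exponential decay of dual connectivities at $\beta_c(q)$ for $q>4$ is available. Assume $\mathcal{D}$ is an even $\bbL$-domain (the odd case is completely symmetric, using Proposition~\ref{prop:hf_cb_conv}(ii)). Set $E := *\bbE_{\mathcal{D}_\circ}$, $\Lambda := \bbV_E$, $G := (\Lambda,E)$, $D := \diameter(\Delta)$, and $d := \rmd_\infty(\Delta,\mathcal{D}^c)$. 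By Proposition~\ref{prop:bkw_cb}, the measure $\hf_{\mathcal{D};\svc,\svcb}^{0,1}$ is realised by sampling $\omega_\mathrm{fin} \sim \fk_{G;\beta,q}^\fkwired$ together with independent Bernoulli variables indexed by the clusters of $\omega_\mathrm{fin}$ and $\omega_\mathrm{fin}^*$ inside $\mathcal{D}$; by Proposition~\ref{prop:hf_cb_conv}(i), $\hf_\svc^{0,1}$ is realised by the analogous procedure from $\omega_\infty \sim \fk_{\beta,q}^\fkwired$, with the role of the outer wired boundary replaced by the unique infinite primal cluster serving as the height-$0$ anchor.

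The first step is a circuit argument at distance $\sim d/2$ from $\Delta$. Because $q>4$, the measure $\fk_{\beta,q}^\fkwired$ has a unique infinite primal cluster and exhibits exponential decay of dual connectivities. A union bound over the $O(D+d)$ potential dual ``arms'' in the annulus $A := B(\Delta,d/2)\setminus B(\Delta,d/4)$ shows that, with probability at least $1 - c(D+d)e^{-\alpha d}$, $A$ contains a primal circuit $\gamma$ around $\Delta$ that lies in the infinite cluster of $\omega_\infty$. By (CBC) applied to the same dual-arms event (or directly by duality between dual disconnection and primal circuits), $\fk_{G;\beta,q}^\fkwired$ gives the same lower bound on the probability that $A$ contains a primal circuit $\gamma$ connected through $\omega_\mathrm{fin}$ to the outer wired boundary of $G$.

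The second step is to promote these circuit events into a coupling. Using Strassen and monotonicity, couple $\omega_\mathrm{fin}\leq_{\mathrm{st}}\omega_\infty$ on $E$. Conditionally on the \emph{innermost} qualifying circuit $\gamma$ in $A$ (which is measurable with respect to the configuration outside $\gamma$), the restriction to the interior of $\gamma$ is, in \emph{both} measures, given by $\fk^\fkwired$ on the subgraph bounded by $\gamma$: the DLR property applies directly in the finite case, and in the infinite case the fact that $\gamma$ lies in the infinite cluster acts as wired conditioning on $\gamma$. Hence the two FK measures can be coupled to coincide inside $\gamma$ on this joint good event. Assigning the same independent Bernoulli variables to shared clusters in both samples (and observing that $\gamma$ carries height $0$ in both sampling procedures, since it is wired to the boundary in $\omega_\mathrm{fin}$ and belongs to the infinite cluster in $\omega_\infty$), step~\textbf{H2} produces identical height functions on $\Delta$. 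Summing the probabilities of the two bad events yields
\begin{equation*}
\tvd\bigl(\hf_{\mathcal{D};\svc,\svcb}^{0,1}\vert_\Delta,\hf_\svc^{0,1}\vert_\Delta\bigr)\leq c(D+d)e^{-\alpha d},
\end{equation*}
which is the claimed bound.

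The main obstacle is the input of sharp dual exponential decay of $\fk_{\beta_c,q}^\fkwired$ at $q>4$: this is not elementary and ultimately relies on the discontinuity-of-transition results of \cite{DumGagHar21}. Once this decay and the uniqueness of the infinite cluster are taken as given, the rest is a standard coupling/circuit argument, structurally identical to \cite[Proposition~4.2]{AouDobGla24}; the only mild novelty here is the presence of the boundary weight $\svcb\neq\svc$, but this is already encoded in the reweighting $e^{\pm\lambda}/\sqrt q$ of step~\textbf{H2} at boundary-adjacent loops, and therefore requires no modification of the coupling.
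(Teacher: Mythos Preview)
Your approach is essentially the same as the paper's: the paper does not give a self-contained proof but refers to \cite[Proposition~4.2]{AouDobGla24}, and your sketch follows exactly that BKW-plus-circuit strategy. Two small slips worth flagging: the stochastic domination goes the other way ($\fk_G^\fkwired \geq_{\mathrm{st}} \fk^\fkwired|_E$, since wired FK is decreasing in the domain), and for the domain Markov step you want the \emph{outermost} qualifying circuit (that is the one measurable with respect to the configuration on its exterior), not the innermost; neither affects the overall correctness of the argument.
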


Before deducing Proposition~\ref{prop:hf_exp_relax} from the above and the stochastic ordering of height function measures, Lemma~\ref{lem:hf_st_dom}, we need a general lemma.

\begin{lemma}{\cite[Lemma 2.16]{HarSpi22}}\label{lem:stoch_dom_tv_bound}
Let $\mu$ and $\nu$ be two probability measures on a finite \emph{totally ordered} set $S$ with $\mu\leq_{\mathrm{st}}\nu$. Let $\mathbf{P}$ be a monotone coupling of $X\sim\mu$ and $Y\sim\nu$. Then,
\[
\mathbf{P}(X\neq Y)\leq (\abs{S}-1)\,\tvd(\mu,\nu).
\]
\end{lemma}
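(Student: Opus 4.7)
The plan is to exploit the total ordering of $S$ to reduce the event $\{X\neq Y\}$ to a union of ``threshold crossing'' events, each of which can be controlled by $\tvd(\mu,\nu)$.

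First I would write $S=\{s_1<s_2<\dots<s_n\}$ with $n=|S|$. Since the coupling is monotone we have $X\leq Y$ almost surely, so $\{X\neq Y\}=\{X<Y\}$. The key observation is that $X<Y$ forces some $s_i$ to separate them: more precisely,
\begin{equation*}
\{X<Y\}\subseteq \bigcup_{i=1}^{n-1}\{X\leq s_i<Y\}.
\end{equation*}
Indeed, on $\{X<Y\}$, we may take $i$ to be the index of $X$ in the enumeration of $S$.

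Next, for each fixed $i$ I would rewrite the threshold event using monotonicity: since $X\leq Y$ we have $\{Y\leq s_i\}\subseteq\{X\leq s_i\}$, which gives
\begin{equation*}
\mathbf{P}(X\leq s_i<Y)=\mathbf{P}(X\leq s_i)-\mathbf{P}(Y\leq s_i)=\mu(\{s_1,\dots,s_i\})-\nu(\{s_1,\dots,s_i\}).
\end{equation*}
This difference is non-negative (consistent with $\mu\leq_{\mathrm{st}}\nu$) and, being the gap on a single event, is bounded by $\tvd(\mu,\nu)$.

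Finally, I would apply the union bound and sum:
\begin{equation*}
\mathbf{P}(X\neq Y)\leq \sum_{i=1}^{n-1}\mathbf{P}(X\leq s_i<Y)\leq (n-1)\,\tvd(\mu,\nu),
\end{equation*}
which is the claim. There is no genuine obstacle here; the entire content is the telescoping-style reduction to threshold events, which is made possible by the total ordering of $S$. The only place where a choice has to be made is the decomposition of $\{X<Y\}$, but any enumeration of $S$ works since we only need the union to cover the event, not to partition it.
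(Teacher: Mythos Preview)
Your proof is correct. The paper takes a different route: it identifies $S$ with $\{0,\dots,|S|-1\}$, applies Markov's inequality to the nonnegative integer $Y-X$ to get $\mathbf{P}(X\neq Y)=\mathbf{P}(Y-X\geq 1)\leq \mathbf{E}[Y-X]$, and then evaluates $\mathbf{E}[Y-X]$ under an \emph{optimal} coupling $\mathbf{P}_{\mathrm{op}}$ (which is allowed since the expectation depends only on the marginals), obtaining $\mathbf{E}_{\mathrm{op}}[Y-X]\leq (|S|-1)\,\mathbf{P}_{\mathrm{op}}(X\neq Y)=(|S|-1)\,\tvd(\mu,\nu)$. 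The two arguments are in fact closely related: your sum $\sum_{i=1}^{n-1}\mathbf{P}(X\leq s_i<Y)$ is exactly $\mathbf{E}[Y-X]$ after the integer identification, since the number of thresholds separating $X$ from $Y$ equals their integer difference. Where the approaches diverge is in how this quantity is bounded: you bound each CDF gap by $\tvd(\mu,\nu)$ directly from the definition $\tvd=\sup_A|\mu(A)-\nu(A)|$, whereas the paper invokes the coupling characterisation of total variation. Your argument is slightly more elementary in that it never needs to introduce a second coupling; the paper's is a touch slicker once one has the optimal-coupling description of $\tvd$ in hand.
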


\begin{proof}
Identify $S$ with $\{0,\dots,\abs{S}-1\}$.
Let $\mathbf{P}_{\mathrm{op}}$ be an optimal coupling of $X$ and $Y$, that is, $\mathbf{P}_{\mathrm{op}}(X\neq Y)=\tvd(\mu,\nu)$. Since $\mathbf{P}(X\leq Y)=1$, we have
\begin{align*}
\mathbf{P}(X\neq Y)=\mathbf{P}(Y-X\geq 1)&\leq\mathbf{E}[Y-X]\\
&=\mathbf{E}_{\mathrm{op}}[Y-X]\leq (\abs{S}-1)\,\mathbf{P}_{\mathrm{op}}(X\neq Y),
\end{align*}
where we applied Markov's inequality.
\end{proof}

\begin{proof}[Proof of Proposition~\ref{prop:hf_exp_relax}]
Let $\Delta\subset\bbL$ be finite, let $\mathcal{D}$ be an $\bbL$-domain, and let $\delta\subset\bbL_\diamond$ be a set of boundary tiles of $\mathcal{D}$.
Assume without loss of generality that $\Delta\subseteq\mathcal{D}\setminus\partialin\mathcal{D}$. 
Let $\mathcal{D}^{\mathrm{even}}$ and $\mathcal{D}^{\mathrm{odd}}$ be as in Lemma~\ref{lem:hf_st_dom}, and recall the stochastic domination statement therein. Consider a monotone coupling $\mathbf{P}$ of $h^-\sim\hf_{\mathcal{D}^{\mathrm{even}};\svc,\svcb}^{0,1},\,h\sim\hf_{\mathcal{D},\delta;\svc,\svcb}^{0,1}$ and $h^+\sim\hf_{\mathcal{D}^{\mathrm{odd}};\svc,\svcb}^{0,1}$, that is,
\[
	\mathbf{P}(h^-\leq h\leq h^+)=1.
\]

Take any vertex $v\in\Delta$ and note that, deterministically, the height functions at $v$ take values in $[-m,m]$, where $m:=2(\diameter (\Delta)+\rmd_\infty(\Delta,\mathcal{D}^c))$.
By Lemma~\ref{lem:stoch_dom_tv_bound}, we have
\begin{align}
\mathbf{P}(h^-(v)\neq h(v))&\leq \mathbf{P}(h^-(v)\neq h^+(v))\leq 2m\,\tvd\big(\hf_{\mathcal{D}^{\mathrm{even}};\svc,\svcb}^{0,1}\vert_\Delta,\hf_{\mathcal{D}^{\mathrm{odd}};\svc,\svcb}^{0,1}\vert_\Delta\big)\label{eq:d_tv_estimate}\\
&\leq 2m\,\big(\tvd\big(\hf_{\mathcal{D}^{\mathrm{even}};\svc,\svcb}^{0,1}\vert_\Delta,\hf_{\svc}^{0,1}\vert_\Delta\big)+\tvd\big(\hf_{\svc}^{0,1}\vert_\Delta,\hf_{\mathcal{D}^{\mathrm{odd}};\svc,\svcb}^{0,1}\vert_\Delta\big)\big).\nonumber
\end{align}
Now, $\mathcal{D}^{\mathrm{even}}$ is a disjoint union of even $\bbL$-domains $\mathcal{D}_1,\dots,\mathcal{D}_n$ with $\partialex\mathcal{D}_i\cap\mathcal{D}_j=\varnothing$ for any $i\neq j$. There exists an $i$ with $\Delta\subseteq\mathcal{D}_i$ and $\rmd_\infty(\Delta,\mathcal{D}_i^c)\geq\rmd_\infty(\Delta,\mathcal{D}^c)-1$. Moreover, the definition of the height function measures~\eqref{eq:hf_def} readily implies that, for any event $A\subset\Z^\bbL$ depending only on values in $\Delta$, we have $\hf_{\mathcal{D}^{\mathrm{even}};\svc,\svcb}^{0,1}(A)=\hf_{\mathcal{D}_i;\svc,\svcb}^{0,1}(A)$. Similar reasoning applies to $\mathcal{D}^{\mathrm{odd}}$. By Lemma~\ref{lem:hf_cb_exp_relax}, there exist $c,\alpha>0$, such that the right-hand side of~\eqref{eq:d_tv_estimate} is bounded by $c\,m^2\,e^{-\alpha\,\rmd_\infty(\Delta,\mathcal{D}^c)}$.
The union bound gives
\[
\tvd\left(\hf_{\mathcal{D}^{\mathrm{even}};\svc,\svcb}^{0,1}\vert _{\Delta},\hf_{\mathcal{D},\delta;\svc,\svcb}^{0,1}\vert_{\Delta}\right)\leq\mathbf{P}(h^-\vert_\Delta\neq h\vert_\Delta)\leq \sum_{v\in\Delta}\mathbf{P}(h^-(v)\neq h(v)).
\]
Another application of the triangle inequality and of Lemma~\ref{lem:hf_cb_exp_relax} finishes the proof.
\end{proof}

\begin{remark}\label{rem:opt_hf_exp_relax}
Using that $h^-,h^+$ have uniformly bounded second moment (localisation) and adapting Lemma~\ref{lem:stoch_dom_tv_bound} allow to remove the square of $(\diameter (\Delta)+\rmd_\infty(\Delta,\mathcal{D}^c))^2$ in the bound in Proposition~\ref{prop:hf_exp_relax}.
\end{remark}

\subsection{Relaxation of ATRC measures}
\label{sec:atrc_relax}
The proof of Proposition~\ref{prop:edge_relax_ATRC} is based on Proposition~\ref{prop:exp_decay_omega_tau} and another one below, which is a consequence of Proposition~\ref{prop:hf_exp_relax}.

The following proposition establishes exponential relaxation of the ATRC measures on domains and with ``$\atrcfree,\atrcwired$'' boundary conditions. Once Proposition~\ref{prop:edge_relax_ATRC} is proven, it can be concluded that the limit coincides with the unique ATRC Gibbs measure. Recall the definition~\eqref{eq:sample_atrc_hf} of $\mathcal{X}_\svc$.

\begin{proposition}\label{prop:atrc01_relax}
Let $0<J<U$ satisfy $\sinh 2J=e^{-2U}$. There exists a measure $\atrc_{J,U}$ on $\{0,1\}^{\bbE^\bullet}\times\{0,1\}^{\bbE^\bullet}$ and constants $c,\alpha>0$ such that, for any $\bbL_\bullet$-domain $G=(V,E)$ and any $F\subseteq E$, 
\begin{multline*}
\tvd\left(\atrc_{G;J,U}^{\atrcfree,\atrcwired}[(\omega_\tau|_F,\omega_{\tau\tau'}|_F)\in\cdot\,],\atrc_{J,U}[(\omega_\tau|_F,\omega_{\tau\tau'}|_F)\in\cdot\,]\right)\\
<c\,\abs{\bbV_F}\,(\diameter (\bbV_F)+\rmd_\infty (\bbV_F,V^c))^2\,e^{-\alpha\,\rmd_\infty(\bbV_F,V^c)},
\end{multline*}
where \(\diameter\) denotes the diameter with respect to \(\rmd_\infty\).
Furthermore, the measure $\atrc_{J,U}$ is constructed as follows.
Let $\svc=\coth 2J$, let $h$ be distributed according to $\hf_\svc^{0,1}$, and let $(U_e)_{e\in\bbE^\bullet}$ be a sequence of i.i.d. uniform random variables on $[0,1]$, independent of $h$. 
The measure $\atrc_{J,U}$ is given by the law of $(\omega_\tau,\omega_{\tau\tau'})\in\{0,1\}^{\bbE^\bullet}\times\{0,1\}^{\bbE^\bullet}$ defined by 
\begin{equation*}
(\omega_\tau(e),\omega_{\tau\tau'}(e)):=\mathcal{X}_\svc(h,e,U_e),\ e\in\bbE^\bullet.
\end{equation*}
\end{proposition}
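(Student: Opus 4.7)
The plan is to \emph{define} $\atrc_{J,U}$ exactly as described in the statement and to derive the relaxation estimate by combining Lemma~\ref{lem:sample_atrc_hf} with the height-function relaxation of Proposition~\ref{prop:hf_exp_relax}, exploiting the fact that the edge-sampling rule $\mathcal{X}_\svc$ is local. More precisely, let $h\sim\hf_\svc^{0,1}$ (which exists by Proposition~\ref{prop:hf_cb_conv}) and $(U_e)_{e\in\bbE^\bullet}$ be i.i.d.\ uniform on $[0,1]$ and independent of $h$, and set $(\omega_\tau(e),\omega_{\tau\tau'}(e)):=\mathcal{X}_\svc(h,e,U_e)$ for every $e\in\bbE^\bullet$. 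Since $\mathcal{X}_\svc(h,e,\cdot)$ depends only on the heights at the four corners of the tile of $e$, this defines a probability measure on $\{0,1\}^{\bbE^\bullet}\times\{0,1\}^{\bbE^\bullet}$, which we call $\atrc_{J,U}$.

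Now fix an $\bbL_\bullet$-domain $G=(V,E)$ and $F\subseteq E$. By Lemma~\ref{lem:sample_atrc_hf}, the measure $\atrc_{G;J,U}^{\atrcfree,\atrcwired}$ is obtained by the same recipe but with $\hf_{\mathcal{D}_G;\svc}^{0,1}$ in place of $\hf_\svc^{0,1}$. Define the local dependence set
\[
\Delta_F:=\bigcup_{e\in F}\{i,j,u,v:\, e=ij,\ e^*=uv\}\subset \mathcal{D}_G\cap\bbL.
\]
For both the finite- and infinite-volume measures, $(\omega_\tau|_F,\omega_{\tau\tau'}|_F)$ is the same measurable function of $(h|_{\Delta_F},(U_e)_{e\in F})$. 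Using that total variation distance is weakly contractive under pushforwards and invariant under tensorisation with a common independent distribution, we obtain
\[
\tvd\!\left(\atrc_{G;J,U}^{\atrcfree,\atrcwired}[(\omega_\tau|_F,\omega_{\tau\tau'}|_F)\in\,\cdot\,],\ \atrc_{J,U}[(\omega_\tau|_F,\omega_{\tau\tau'}|_F)\in\,\cdot\,]\right)\ \leq\ \tvd\!\left(\hf_{\mathcal{D}_G;\svc}^{0,1}|_{\Delta_F},\ \hf_\svc^{0,1}|_{\Delta_F}\right).
\]

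To conclude, I would apply Proposition~\ref{prop:hf_exp_relax} with $\delta=\varnothing$ to the $\bbL$-domain $\mathcal{D}_G$ and the finite set $\Delta_F$, which bounds the right-hand side by
\[
c'\,|\Delta_F|\,\bigl(\diameter(\Delta_F)+\rmd_\infty(\Delta_F,\mathcal{D}_G^c)\bigr)^2\,e^{-\alpha\,\rmd_\infty(\Delta_F,\mathcal{D}_G^c)}.
\]
The stated estimate then follows after observing that $|\Delta_F|\leq 4|\bbV_F|$, $\diameter(\Delta_F)\leq \diameter(\bbV_F)+O(1)$ and $\rmd_\infty(\Delta_F,\mathcal{D}_G^c)\geq \rmd_\infty(\bbV_F,V^c)-O(1)$, and absorbing the additive constants into $c$ and $\alpha$. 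There is no genuine obstacle once Lemma~\ref{lem:sample_atrc_hf} and Proposition~\ref{prop:hf_exp_relax} are in hand; the only mild care required is the geometric translation between $\Delta_F\subset\bbL$ and $\bbV_F\subset\bbL_\bullet$ together with the check that choosing $\delta=\varnothing$ in Proposition~\ref{prop:hf_exp_relax} produces precisely the measure $\hf_{\mathcal{D}_G;\svc}^{0,1}$ used in the finite-volume sampling of Lemma~\ref{lem:sample_atrc_hf}.
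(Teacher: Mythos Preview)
Your proof is correct and follows essentially the same approach as the paper: both use Lemma~\ref{lem:sample_atrc_hf} to express the finite-volume ATRC measure as a local functional of $\hf_{\mathcal{D}_G;\svc}^{0,1}$, observe that the restriction to $F$ depends only on the heights on $\Delta_F=\bbV_F\cup\bbV_{*F}$, and then invoke Proposition~\ref{prop:hf_exp_relax}. The only cosmetic difference is that the paper phrases the reduction via an explicit optimal coupling of $h|_{\Delta_F}$ and $h'|_{\Delta_F}$ with shared uniforms $(U_e)$, whereas you invoke contractivity of total variation under pushforward and tensorisation; these are equivalent.
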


\begin{remark}
It is possible to improve the bound in Proposition~\ref{prop:atrc01_relax} and remove the square in $(\diameter (\bbV_F)+\rmd_\infty (\bbV_F,V^c))^2$, see Remark~\ref{rem:opt_hf_exp_relax}.
\end{remark}

The above proposition follows from the analogous statement for the height function measures, Proposition~\ref{prop:hf_exp_relax}, and the fact that the ATRC measures can be sampled locally from these measures, which is the content of Lemma~\ref{lem:sample_atrc_hf}.

\begin{proof}[Proof of Proposition~\ref{prop:atrc01_relax}]
Let $0<J<U$ satisfy $\sinh 2J=e^{-2U}$ and $\svc = \coth 2J$.
Let $G=(V,E)$ be an $\bbL_\bullet$-domain and $\mathcal{D}:=\mathcal{D}_G$ be its associated $\bbL$-domain.
Recall that a tile $t\in\bbL_\diamond$ intersects $\mathcal{D}$ precisely if its associated primal edge $e_t\in\bbE^\bullet$ belongs to $E$, see Fig.~\ref{fig:domains}.
Let $F\subset E$, and set $\Delta:=\bbV_F\cup\bbV_{*F}$.

Take $h\sim \hf_{\mathcal{D};\svc}^{0,1}$ and $h'\sim\hf_{\svc}^{0,1}$, and consider an optimal coupling of $h|_\Delta$ and $h'|_\Delta$. Let $(U_e)_{e\in\bbE^\bullet}$ be a sequence of i.i.d. uniform random variables on $[0,1]$, independent of $(h,h')$. Denote the corresponding probability measure by $\mathbf{P}$.
Define $\omega_\tau,\omega_{\tau\tau'}\in \{0,1\}^{\bbE^\bullet}$ from $h$ and $(U_e)$ as in~\eqref{eq:sample_pair_atrc_from_hf}. Define $\omega'_\tau,\omega'_{\tau\tau'}$ from $h'$ and $(U_e)$ by setting $(\omega'_\tau(e),\omega'_{\tau\tau'}(e))=\mathcal{X}_\svc(h',e,U_e)$ for $e\in\bbE^\bullet$.
Then, clearly
\begin{equation*}
(\omega_\tau\vert_F,\omega_{\tau\tau'}\vert_F)\neq(\omega'_\tau\vert_F,\omega'_{\tau\tau'}\vert_F)\quad\text{implies}\quad h\vert_\Delta\neq h'\vert_\Delta.
\end{equation*}
Therefore, by the definition of an optimal coupling and by Proposition~\ref{prop:hf_exp_relax},
\begin{align*}
\mathbf{P}\big((\omega_\tau\vert_F,\omega_{\tau\tau'}\vert_F)\neq(\omega'_\tau\vert_F,\omega'_{\tau\tau'}\vert_F)\big)&<c\,\abs{\Delta}\,(\diameter (\Delta)+\rmd_\infty(\Delta,\mathcal{D}^c))^2\,e^{-\alpha\,\rmd_\infty(\Delta,\mathcal{D}^c)}\\
&<c'\,\abs{F}\,(\diameter (\bbV_F)+\rmd_\infty (\bbV_F,V^c))^2\,e^{-\alpha\,\rmd_\infty(\bbV_F,V^c)},
\end{align*}
for some $c,c',\alpha>0$.
By Lemma~\ref{lem:sample_atrc_hf}, the law of $(\omega_\tau,\omega_{\tau\tau'})$ is given by $\atrc_{G;J,U}^{\atrcfree,\atrcwired}$. Finally, denote the law of $(\omega'_\tau,\omega'_{\tau\tau'})$ by $\atrc_{J,U}$, and the proof is complete.
\end{proof}

\subsection{Proof of Proposition~\ref{prop:edge_relax_ATRC}}
\label{sec:proof_edge_relax_atrc}
Finally, we derive Proposition~\ref{prop:edge_relax_ATRC} from Proposition~\ref{prop:atrc01_relax} and the following one that we import from~\cite{AouDobGla24}.

\begin{proposition}{\cite[Proposition~1.1]{AouDobGla24}}\label{prop:exp_decay_omega_tau}
Let $0<J<U$ satisfy $\sinh 2J=e^{-2U}$. There exists $c>0$ such that, for every $n\geq 1$,
\begin{equation*}
\atrc_{\Lambda_n;J,U}^{\atrcwired,\atrcwired}(0\xleftrightarrow{\omega_\tau}\partialex\Lambda_n)<e^{-c n}.
\end{equation*}
\end{proposition}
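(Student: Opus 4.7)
The plan is to exploit the six-vertex height function representation of the ATRC model. On the self-dual line with $U > J$, the parameter $\svc = \coth(2J)$ satisfies $\svc > 2$, placing the model in the height function \emph{localization} regime~\cite{DumGagHar21,GlaPel23}, which is the ultimate source of the exponential decay.

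First, I would reformulate the statement in terms of Ashkin--Teller correlations. By a finite-volume analogue of the FK-type relation~\eqref{eq:at_atrc_coupling}, the quantity $\atrc_{\Lambda_n;J,U}^{1,1}(0 \xleftrightarrow{\omega_\tau} \partialex\Lambda_n)$ equals the probability that $\tau_0$ is connected (in $\omega_\tau$) to the plus boundary, i.e.\ a truncated two-point function. A union bound over boundary vertices then reduces matters to showing
\[
\langle \tau_0 \tau_x \rangle_{\Lambda_n;J,U}^{+,+} \leq e^{-c\,\rmd_\infty(0,x)}
\]
uniformly in $x\in \Lambda_n$.

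Next, I would couple the Ashkin--Teller spins with the six-vertex height function. Adapting Lemma~\ref{lem:sample_atrc_hf} (stated for $\atrcfree,\atrcwired$ BC on $\bbL_\bullet$-domains) to the wired-wired setting, one samples $(\omega_\tau,\omega_{\tau\tau'})$ under $\atrc_{\Lambda_n;J,U}^{1,1}$ from a six-vertex height function $h$ on the associated $\bbL$-domain together with i.i.d.\ uniforms on tiles; via~\eqref{eq:spins_of_hf}, the Ashkin--Teller spin $\tau$ is recovered as a deterministic function of $h \bmod 4$, so its correlations inherit the structure of $h$. At $\svc > 2$, the localization of $h$ under constant boundary conditions (see~\cite{DumGagHar21},~\cite{GlaPel23}, and compare Lemma~\ref{lem:hf_cb_exp_relax}) yields that dual Peierls contours of $h$—circuits along which $h$ changes value—are exponentially suppressed: any such contour of length $R$ surrounding the origin has probability at most $e^{-cR}$. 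In the coupling, an $\omega_\tau$-path from $0$ to $\partialex\Lambda_n$ of length $\gtrsim n$ uses either type $3,4$ tiles (which trace level lines of $h$, forcing a long contour) or type $5,6$ tiles on which the auxiliary uniform lies below $1/\svc<1/2$. A standard Peierls-type counting, combined with the Bernoulli-like sampling on the flat tiles, forces the existence of a dual contour of length $\gtrsim n$, and thus gives the $e^{-cn}$ bound.

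The main obstacle is the adaptation of the coupling to $\atrcwired,\atrcwired$ boundary conditions in finite volume, since Lemma~\ref{lem:sample_atrc_hf} applies only to $\atrcfree,\atrcwired$ BC. Two natural routes present themselves: (a) via the duality relation~\eqref{eq:atrc_selfdual}, which swaps wired and free between $\omega_\tau$ and $\omega_{\tau\tau'}^*$, permitting the sampling to be run on the dual lattice; and (b) via the six-vertex spin representation on augmented graphs as in Section~\ref{sec:coupling}, with appropriately modified boundary weights. A secondary technical point is to control the numerical dependence of the Peierls estimate on $\svc - 2$ (which degenerates as $q \to 4^+$); since we work at fixed $q>4$ this is not a real issue, but the argument must be organised so that all constants can be taken finite for any fixed $\svc > 2$.
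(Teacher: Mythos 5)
First, a point of reference: the paper does not prove this statement at all — it is imported verbatim as \cite[Proposition~1.1]{AouDobGla24}, and the entire mixing machinery of Section~\ref{sec:mixing} treats it as an external input (note that Proposition~\ref{prop:edge_relax_ATRC} is deduced \emph{from} it together with the height-function relaxation, not the other way around). So what you are proposing is a re-proof of the cited result, and your reduction in the first step is essentially circular: under wired boundary conditions all of $\partialex\Lambda_n$ lies in one cluster, so $\atrc_{\Lambda_n}^{1,1}(0\xleftrightarrow{\omega_\tau}\partialex\Lambda_n)=\langle\tau_0\tau_x\rangle_{\Lambda_n}^{+,+}$ for any boundary vertex $x$ (no union bound needed), and exponential decay of that two-point function \emph{is} the content of \cite[Proposition~1.1]{AouDobGla24}. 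Nothing has been gained by the reformulation.

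The genuine gap is the Peierls step. A path-counting argument of the form ``each open $\omega_\tau$-path of length $n$ has probability $\le\lambda^n$, sum over paths'' only yields exponential decay when $\lambda$ beats the connective constant, i.e.\ perturbatively deep in a subcritical phase; it cannot work at a fixed non-perturbative point such as the self-dual line. More importantly, localization of $h$ at $\svc>2$ cannot by itself imply the bound: the \emph{same} localized height function couples, via Proposition~\ref{prop:bkw_cb}, to the critical wired FK measure with $q>4$, for which $\fk^{\fkwired}_{\Lambda_n}(0\leftrightarrow\partialex\Lambda_n)$ does \emph{not} decay (the wired measure percolates). Any valid argument must therefore use the specific way $\omega_\tau$ sits inside the height function, and your key geometric claim — that a long open $\omega_\tau$-path ``forces the existence of a dual contour of length $\gtrsim n$'' — is false as stated: by~\eqref{eq:sample_atrc_hf} an open $\omega_\tau$-edge requires either $h(u)\neq h(v)$ or a type 5--6 tile with $U_e<1/\svc$, and a path of length $n$ can cross $n$ \emph{distinct short} level-line loops of $h$ (or use few 5--6 tiles), producing no single long contour and no macroscopic product of Bernoulli factors. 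Upgrading ``$\tau$ has no long-range order'' (which does follow from localization, since $\sigma_\circ$ orders) to ``exponential decay of $\omega_\tau$-connections'' is a sharpness statement requiring either OSSS/renormalization-type arguments or a careful transport of the exponential decay of the free FK measure \cite[Theorem~1.2]{DumGagHar21} through the couplings; this is precisely what \cite{AouDobGla24} carries out, and it is not recoverable by the sketch given. If you want this proposition in a self-contained way, cite it; if you want to prove it, the argument must be built around sharpness, not around a contour count.
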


\begin{proof}[Proof of Proposition~\ref{prop:edge_relax_ATRC}]
Let $\sigma\in \{\tau,\tau\tau'\}$. Assume without loss of generality that $n=2k$ is even (the other case is treated similarly). We proceed in two steps.

\noindent\textbf{Step 1.} There exists $\alpha>0$ such that 
	\[
		\atrc_{\Lambda_n;J,U}^{\atrcwired,\atrcwired}[\omega_\sigma(e)]-\atrc_{\Lambda_n;J,U}^{\atrcfree,\atrcwired}[\omega_\sigma(e)]\leq e^{-\alpha n}.
	\]

Let $(\omega_\tau,\omega_{\tau\tau'})$ be distributed according to $\atrc_{\Lambda_n;J,U}^{\atrcwired,\atrcwired}$.
Define $\mathcal{C}$ to be the outermost (dual) circuit in $\omega_\tau^*$ that surrounds $\Lambda_k$ and is contained in $\Lambda_{2k}$ if it exists, otherwise set $\mathcal{C}=\varnothing$.
By exponential decay of connection probabilities in $\omega_\tau$, Proposition~\ref{prop:exp_decay_omega_tau}, there exist $c,\alpha>0$ such that 
\begin{align*}
\atrc_{\Lambda_n;J,U}^{\atrcwired,\atrcwired}[\omega_\sigma(e)]&\leq \atrc_{\Lambda_n;J,U}^{\atrcwired,\atrcwired}\left[\omega_\sigma(e)\,\big|\,\mathcal{C}\neq\varnothing\right]+\atrc_{\Lambda_n;J,U}^{\atrcwired,\atrcwired}[\Lambda_k\xleftrightarrow{\omega_\tau}\partialin \Lambda_{2k}]\\
&\leq \atrc_{\Lambda_n;J,U}^{\atrcwired,\atrcwired}\left[\omega_\sigma(e)\,\big|\,\mathcal{C}\neq\varnothing\right]+ cke^{-\alpha k},
\end{align*}
where we also used the strong FKG and DLR properties of the ATRC measures (Lemma~\ref{lem:atrc_strong_fkg} and~\eqref{eq:smp} in Section~\ref{sec:atrc}).
Now, again by the strong FKG and~\eqref{eq:smp} and by exponential relaxation, Proposition~\ref{prop:atrc01_relax}, there exist $c',\alpha'>0$ such that
\begin{align*}
\atrc_{\Lambda_n;J,U}^{\atrcwired,\atrcwired}&\left[\omega_\sigma(e)\,\big|\,\mathcal{C}\neq\varnothing\right]\\
&=\sum_{C}\atrc_{\Lambda_n;J,U}^{\atrcwired,\atrcwired}\left[\omega_\sigma(e)\,\big|\,\mathcal{C}=C\right]\atrc_{\Lambda_n;J,U}^{\atrcwired,\atrcwired}\left[\mathcal{C}=C\,\big|\,\mathcal{C}\neq\varnothing\right]\\
&\leq \sum_{C}\atrc_{G_C;J,U}^{\atrcfree,\atrcwired}[\omega_\sigma(e)]\ \atrc_{\Lambda_n;J,U}^{\atrcwired,\atrcwired}\left[\mathcal{C}=C\,\big|\,\mathcal{C}\neq\varnothing\right]\\
&\leq \atrc_{\Lambda_n;J,U}^{\atrcfree,\atrcwired}\left[\omega_\sigma(e)\right]+c'k^2e^{-\alpha' k},
\end{align*}
where the summation is over all realisations $C$ of $\mathcal{C}$, and where $G_C$ is the largest $\bbL_\bullet$-domain of the first kind within $C$ that contains $\Lambda_k$. This proves Step 1.

\noindent\textbf{Step 2.} There exists $\alpha>0$ such that
	\[
		\atrc_{\Lambda_n;J,U}^{\atrcfree,\atrcwired}[\omega_\sigma(e)]-\atrc_{\Lambda_n;J,U}^{\atrcfree,\atrcfree}[\omega_\sigma(e)]\leq e^{-\alpha n}.
	\]

We use the same strategy as in Step 1. Indeed, let $(\tilde{\omega}_\tau,\tilde{\omega}_{\tau\tau'})$ be distributed according to $\atrc_{\Lambda_n;J,U}^{\atrcfree,\atrcfree}$. Since, by self-duality~\eqref{eq:atrc_selfdual}, $\tilde{\omega}_{\tau\tau'}^*$ has the same law as $\omega_\tau$ (but on the dual graph of $\Lambda_n$) from Step 1, Proposition~\ref{prop:exp_decay_omega_tau} allows to find a circuit in $\tilde{\omega}_{\tau\tau'}$ that surrounds $\Lambda_k$ and is contained in $\Lambda_{2k}$. For a realisation $C$ of an outermost such circuit, consider the largest $\bbL_\bullet$-domain of the second kind within $C$ that contains $\Lambda_k$. The remainder of the argument is analogous to Step 1.
\end{proof}

\subsection{Ratio weak mixing: proof of Theorem~\ref{thm:ratio_weak_mixing_ATRC}}
\label{sec:ratio-mixing}

We first derive the \emph{exponential} weak mixing property from the single edge relaxation, Proposition~\ref{prop:edge_relax_ATRC}.
Given $F\subset\bbE^\bullet$ and a measure $\atrc$ on $\{0,1\}^{\bbE^\bullet}\times\{0,1\}^{\bbE^\bullet}$, we write $\atrc|_F$ for its marginal on $\{0,1\}^F\times\{0,1\}^F$.

\begin{theorem}
	\label{thm:weak_mixing_ATRC}
	Let $0<J<U$ satisfy $\sinh 2J=e^{-2U}$. There exists \(c>0\) such that, for any finite subgraph $G=(V,E)$ of $\bbL_\bullet$ and any \(F\subset E\),
	\begin{equation*}
		\sup_{\eta_{\tau},\eta_{\tau\tau'},\eta_{\tau}',\eta_{\tau\tau'}'} \tvd\Big(\atrc_{G;J,U}^{\eta_{\tau},\eta_{\tau\tau'}}|_{F},\atrc_{G;J,U}^{\eta_{\tau}',\eta_{\tau\tau'}'}|_{F}\Big) \leq 2\sum_{e\in F} e^{-c\,\rmd_{\infty}(e,V^c)},
	\end{equation*}where \(\rmd_{\infty}\) is the distance induced by the \(L^{\infty}\) norm.
\end{theorem}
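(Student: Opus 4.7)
The plan is a standard sandwich-and-localise argument. First, by the strong FKG property (Lemma~\ref{lem:atrc_strong_fkg}) together with the comparison between boundary conditions~\eqref{eq:cbc}, for any two sets of boundary conditions $(\eta_\tau,\eta_{\tau\tau'})$ and $(\eta_\tau',\eta_{\tau\tau'}')$ I would construct a global monotone coupling $\mathbf{P}$ of four ATRC pairs $\omega_{-} \le \omega, \omega' \le \omega_{+}$ with marginal laws $\atrc_G^{0,0}$, $\atrc_G^{\eta_\tau,\eta_{\tau\tau'}}$, $\atrc_G^{\eta_\tau',\eta_{\tau\tau'}'}$, $\atrc_G^{1,1}$, respectively. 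Since any edge discrepancy $\omega(e) \ne \omega'(e)$ forces $\omega_{-}(e) \ne \omega_{+}(e)$, a union bound reduces matters to
\begin{equation*}
\tvd\bigl(\atrc_G^{\eta_\tau,\eta_{\tau\tau'}}|_F,\, \atrc_G^{\eta_\tau',\eta_{\tau\tau'}'}|_F\bigr) \le \sum_{e \in F} \mathbf{P}\bigl(\omega_{-}(e) \ne \omega_{+}(e)\bigr).
\end{equation*}
Restricting the monotone coupling to a single edge, whose state space $\{(0,0),(0,1),(1,1)\}$ is totally ordered with three elements, Lemma~\ref{lem:stoch_dom_tv_bound} yields $\mathbf{P}(\omega_{-}(e)\neq\omega_{+}(e)) \le 2\,\tvd(\atrc_G^{0,0}|_e, \atrc_G^{1,1}|_e)$, which accounts for the prefactor $2$ in the target bound.

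Next, I would localise to show that this single-edge TV distance decays exponentially in $\rmd_\infty(e,V^c)$. Setting $d := \rmd_\infty(e,V^c)$ and translating so that one endpoint of $e$ sits at the origin, the box $\Lambda := \Lambda_{\lfloor d/2\rfloor}$ contains $e$ and satisfies $\Lambda \subset V$. Conditioning $\atrc_G^{0,0}$ or $\atrc_G^{1,1}$ on the configuration outside $\Lambda$, the DLR property~\eqref{eq:smp} realises the conditional law inside $\Lambda$ as $\atrc_\Lambda^{\xi_\tau,\xi_{\tau\tau'}}$ for some induced boundary data, and by~\eqref{eq:cbc} that law is sandwiched between $\atrc_\Lambda^{0,0}$ and $\atrc_\Lambda^{1,1}$. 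Consequently, for each $\sigma \in \{\tau,\tau\tau'\}$ the marginal probability $\atrc_G^{\cdot,\cdot}(\omega_\sigma(e)=1)$ lies in the interval $[\atrc_\Lambda^{0,0}(\omega_\sigma(e)=1),\,\atrc_\Lambda^{1,1}(\omega_\sigma(e)=1)]$, whose length is at most $e^{-c_0\lfloor d/2\rfloor}$ by Proposition~\ref{prop:edge_relax_ATRC}. Because the single-edge state space has only three elements, the full single-edge TV is then bounded by a constant multiple of the maximum of the two marginal gaps (for $\sigma=\tau$ and $\sigma=\tau\tau'$); absorbing that constant and the factor $1/2$ into the exponent yields $\tvd(\atrc_G^{0,0}|_e, \atrc_G^{1,1}|_e) \le e^{-c\,\rmd_\infty(e,V^c)}$ for a suitable $c > 0$.

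I do not anticipate serious obstacles: the steps assemble cleanly into the claimed bound after summing over $e \in F$. The one delicate point is the last step, where two small matters should be checked carefully. First, the translation used to transport Proposition~\ref{prop:edge_relax_ATRC} (stated at the origin in $\Lambda_n$) to an arbitrary edge at distance $d$ from $V^c$; this is harmless since all ingredients entering that proposition (BKW coupling and the height-function relaxation Proposition~\ref{prop:hf_exp_relax}) are translation invariant on $\bbE^\bullet$. Second, the elementary step of passing from a difference of single marginal probabilities at $e$ to the total variation on the three-element state space; this requires only a finite combinatorial argument but should be written out to confirm that no hidden $|F|$ or $|V|$ factors leak into the final bound.
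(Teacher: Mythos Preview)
Your proposal is correct and follows essentially the same strategy as the paper's proof: monotone coupling between the extremal boundary conditions, a union bound over edges in $F$, and localisation via DLR/CBC to invoke Proposition~\ref{prop:edge_relax_ATRC}. The only cosmetic difference is that the paper bypasses Lemma~\ref{lem:stoch_dom_tv_bound} by splitting directly into the two binary marginals $\omega_\tau,\omega_{\tau\tau'}$ (so that under the monotone coupling the single-edge discrepancy probability in each component equals the difference of expectations), reaching the same bound slightly more directly.
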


As a consequence, the measures $\atrc_{G;J,U}^{\eta_\tau,\eta_{\tau\tau'}}$ converge (as $G\nearrow\bbL_\bullet$) to a limit measure~$\atrc_{J,U}$, which is independent of the choice of boundary conditions~$\eta_\tau,\eta_{\tau\tau'}$. Furthermore, the limit~$\atrc_{J,U}$ is the unique ATRC \emph{Gibbs} measure.

\begin{proof}
	By the strong FKG property, Lemma~\ref{lem:atrc_strong_fkg}, the supremum is attained for \(\eta_{\tau}=\eta_{\tau\tau'} = 1\) and \(\eta_{\tau}'=\eta_{\tau\tau'}' = 0\). Let \(\Psi\) be a monotone coupling of \(\atrc_{G;J,U}^{1,1}\) and \(\atrc_{G;J,U}^{0,0}\), and let \((X,Y) =\big((X_{\tau},X_{\tau\tau'}),(Y_{\tau},Y_{\tau\tau'})\big)\sim \Psi\). Then,
	\begin{align*}
		\tvd\Big(\atrc_{G;J,U}^{1,1}|_{F},\atrc_{G;J,U}^{0,0}|_{F}\Big)
		&\leq
		\Psi(X|_{F}\neq Y|_{F})\\
		&\leq
		\Psi(X_{\tau}|_{F}\neq Y_{\tau}|_{F})+\Psi(X_{\tau\tau'}|_{F}\neq Y_{\tau\tau'}|_{F}).
	\end{align*}Now, for \(\sigma\in \{\tau,\tau\tau'\}\),
	\begin{align*}
		\Psi(X_{\sigma}|_{F}\neq Y_{\sigma}|_{F})
		&\leq
		\sum_{e\in F} \Psi(X_{\sigma}(e)\neq Y_{\sigma}(e))
		=
		\sum_{e\in F} \Psi(X_{\sigma}(e)> Y_{\sigma}(e))\\
		&=
		\sum_{e\in F} \left(\Psi(X_{\sigma}(e)=1)- \Psi(Y_{\sigma}(e)=1) \right)
		\leq
		\sum_{e\in F} e^{-c\rmd_{\infty}(e,V^c)}
	\end{align*}for some \(c>0\) by Proposition~\ref{prop:edge_relax_ATRC}, where we also used strong positive association and the DLR property of the ATRC measures.
\end{proof}

A less trivial consequence is the \emph{ratio} weak mixing property, Theorem~\ref{thm:ratio_weak_mixing_ATRC}. This follows from the study performed in~\cite{Ale98}, relying on Theorem~\ref{thm:weak_mixing_ATRC} and Proposition~\ref{prop:exp_decay_omega_tau}. 

\begin{proof}[Proof of Theorem~\ref{thm:ratio_weak_mixing_ATRC}]
This is a direct application of~\cite[Section 5]{Ale98}. The reasoning there requires two properties of the measure. The first is exponential weak mixing, which is the content of Theorem~\ref{thm:weak_mixing_ATRC}. The second is the admittance of exponentially bounded controlling regions in the sense of~\cite{Ale98}, and we argue its validity as follows.

Let \((\omega_\tau,\omega_{\tau\tau'})\sim\atrc_{G;J,U}^{\eta_{\tau},\eta_{\tau\tau'}}\), fix \(F\subset E\), and define
\begin{equation*}
H(F)=\{e\in E\setminus F: e\xleftrightarrow{\omega_\tau}\bbV_F\text{ or }e^*\xleftrightarrow{\omega_{\tau\tau'}^*}\bbV_{*F}\}.
\end{equation*} 
Conditionally on the states of the edges in \(H(F)\), the values of the field in \(F\) and \(E\setminus(F\cup H(F))\) are independent. Moreover, the probability that \(e\in E\setminus F\) belongs to \(H(F)\) decays exponentially in \(\rmd_{\infty}(e,\bbV_F)\) by uniform exponential decay in \(\omega_{\tau}\) and in the dual of \(\omega_{\tau\tau'}\). The former is the content of Proposition~\ref{prop:exp_decay_omega_tau} and the latter follows from self-duality.
One can then apply~\cite[Section 5]{Ale98} to obtain the desired claim.
\end{proof}

\section{Strong mixing}
\label{sec:strong_mixing}

In this section, we use~\cite{Ott25} to push the mixing properties derived in the previous section to strong mixing properties for finite volume ATRC measures.

\subsection{Setup}
\label{subsec:strMix:setup}

For \(k\geq 1\), let \(\Gamma_k = ((2k+1)\Z)^2\).
Given~\(x\in \Z^2\) or~\(V\subset \Z^2\), denote~\([x]_k:= x+\{-k,\dots,k\}^2\) or~\([V]_k:= \cup_{x\in V} (x+\{-k,\dots,k\}^2)\) respectively.

\noindent{\bf Path decoupling property.}
Let \(k\geq 1\). For a simple closed nearest-neighbour path \(\gamma\) on \(\Gamma_k\), denote by \(F_{\mathrm{in}}(\gamma)\), \(F_{\mathrm{out}}(\gamma)\) the sets of edges in \(\bbE^{\bullet}\) that are, respectively, in the finite connected component of \(\bbE^{\bullet}\setminus \bbE_{[\gamma]_k}\), in the infinite connected component of \(\bbE^{\bullet} \setminus \bbE_{[\gamma]_k}\).

Let \((a,b)\in \{(0,0),(0,1),(1,1)\}^{\bbE^{\bullet}}\), and \(F\subset \bbE^{\bullet}\) finite.
Say that \(\atrc_{F}^{a,b}\) has the \emph{\(k\)-decoupling path property} if for any simple closed nearest-neighbour path \(\gamma\) in \(\Gamma_k\), and any configuration \((\xi,\xi')\in \{(0,0),(0,1),(1,1)\}^{F\cap \bbE_{[\gamma]_k}}\) such that
\begin{itemize}
	\item the dual of the configuration \(\xi_{F}0_{F^c}\) (\(\xi\) on \(F\) and constant \(0\) on \(F^c\)) contains a simple closed path of open dual edges surrounding \([\mathring{\gamma}]_k\), where \(\mathring{\gamma}\) is the set of sites in \(\Gamma_k\) surrounded by \(\gamma\),
	\item the configuration \(\xi_{F}' 1_{F^c}\) (\(\xi'\) on \(F\) and constant \(1\) on \(F^c\)) contains a simple closed path of open edges surrounding \([\mathring{\gamma}]_k\),
\end{itemize}one has that if
\begin{equation*}
	(\omega_{\tau},\omega_{\tau\tau'})\sim \atrc_{F}^{a,b}\big(\ \bgiven \omega_{\tau}(e) = \xi(e),\, \omega_{\tau\tau'}(e) = \xi'(e)\ \forall e\in \bbE_{[\gamma]_k}\big),
\end{equation*}then the restriction of \((\omega_{\tau},\omega_{\tau\tau'})\) to \(F\cap F_{\mathrm{in}}(\gamma)\) is independent from the restriction of \((\omega_{\tau},\omega_{\tau\tau'})\) to \(F\cap F_{\mathrm{out}}(\gamma)\). A configuration \((\xi,\xi')\) as above is said to contain \emph{a pair of decoupling paths for \(\gamma\) in \(F\)}.

One has this property in, for example, the case of constant boundary conditions \((0,1)\), or in the case of \(F\) simply connected and constant \((0,0)\) or constant \((1,1)\) boundary conditions. More generally, to check if the property holds, it suffices to check that cluster count is factorized by the presence of the required paths in \(\bbE_{[\gamma]_k}\), as in Lemma~\ref{lem:decoupling_paths_mATRC}.

\noindent{\bf ATRC as a model on sites.}
The setup of~\cite{Ott25} is a collection of site models indexed by finite volumes and some index set \(T\). Cast \(\atrc\) as a model on sites by taking the spin space at site \(x\) to be
\begin{equation*}
	\Omega_x = \{(0,0),(0,1),(1,1)\}^{\{\{x,x+\rme_1\}, \{x,x+\rme_2\}\}}.
\end{equation*}I.e. store the value of the field at a given edge in its left/bottom endpoint. Let
\begin{equation*}
	\psi: \{(0,0),(0,1),(1,1)\}^{\bbE^{\bullet}} \to \bigtimes_{x\in \Z^2} \Omega_x
\end{equation*}be the natural bijection induced by the above. Let
\begin{equation*}
	\bbE_{BL}(\Lambda) = \big\{\{x,x+\rme_1\},\{x,x+\rme_2\}:\ x\in \Lambda\big\}.
\end{equation*}

\noindent{\bf Bloc Percolation.}
For \(p,q\in [0,1]\), let \(X_i,Y_i\), \(i\in \Z^2\) be an independent family of Bernoulli random variables with parameters
\begin{equation*}
	P(X_i = 1) = p,\quad P(Y_i = 1) = q \quad \forall i\in \Z^2.
\end{equation*}For \(k\geq 1\), \(F\subset \bbE^{\bullet}\), \(x\in \Z^2\), define \(i_x\in \Gamma_k\) to be the unique point with \(x\in [i_x]_k\), and
\begin{equation*}
	\eta_x =
	\begin{cases}
		X_{i_x} & \text{ if } \bbE_{BL}([i_x]_{3k+1})\subset F,\\
		Y_{i_x} & \text{ if } x\in \bbV_{F},\text{ and } \bbE_{BL}([i_x]_{3k+1})\cap F^c \neq \varnothing,\\
		0 & \text{ else}.
	\end{cases}
\end{equation*}Let \(P_{F;k,p,q}\) be the law of \((\eta_x)_{x\in \Z^2}\).

\subsection{Strong mixing: main Theorem}
\label{subsec:strMix:MainThm}

Then, the main result of the section is the next Theorem. To get it, we will apply the main theorem of~\cite{Ott25} (see Section~\ref{subsec:strMix:HypVerif}). For \(F_-,F_+\subset F\subset \bbE^{\bullet}\), \((a,b)\in \{(0,0),(0,1),(1,1)\}^{\bbE^{\bullet}}\), define
\begin{equation}
	\label{eq:def:atrc_cond_versions}
	\nu_{F,F_-,F_+}^{a,b}(\cdot) = \atrc_{F}^{a,b}\big(\cdot \given \omega_{\tau}(e) = 0 \ \forall e\in F_-,\ \omega_{\tau\tau'}(e) = 1 \ \forall e\in F_+ \big).
\end{equation}
\begin{theorem}
	\label{thm:strong_mixing_atrc}
	Let \(0\leq J<U\) be such that \(\sinh(2J) = e^{-2U}\). There is \(l\geq 1\) such that: for any \(p>0\), there are \(L<\infty\) and \(q<1\) such that for any \(F\subset \bbE^{\bullet}\), any \((a,b)\in \{(0,0),(0,1),(1,1)\}^{\bbE^{\bullet}}\) such that \(\atrc_F^{a,b}\) has the \(l\)-decoupling path property, any \(F_-,F_+\subset \partialin F\), any \(F_1,F_2\subset F\) disjoint, and any \((f,g),(f',g')\in \{(0,0),(0,1),(1,1)\}^{F_1}\) having positive probability under \(\nu_{F,F_-,F_+}^{a,b}\) (defined in~\eqref{eq:def:atrc_cond_versions})
	\begin{multline*}
		\tvd\big(\nu_{F,F_-,F_+}^{a,b}\big(W|_{F_2}\in \cdot \bgiven W|_{F_1} = (f,g) \big), \nu_{F,F_-,F_+}^{a,b}\big( W|_{F_2}\in\cdot \bgiven W|_{F_1} = (f',g') \big) \big)
		\\
		\leq
		P_{F\setminus (F_+\cup F_-); L,p,q}\big( [\bbV_{F_1}]_L\leftrightarrow_* [\bbV_{F_2}]_L \big),
	\end{multline*}where \(W = (\omega_{\tau},\omega_{\tau\tau'})\), and \(\leftrightarrow_*\) means \(*\)-connections (connections with \(\norm{\ }_{\infty}\) nearest-neighbours).
\end{theorem}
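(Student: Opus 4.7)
The plan is to cast Theorem~\ref{thm:strong_mixing_atrc} as a direct application of the general framework of~\cite{Ott25}, after verifying that the ATRC conditional measures $\nu_{F,F_-,F_+}^{a,b}$ from~\eqref{eq:def:atrc_cond_versions} satisfy the required hypotheses. The role of this section is thus largely bookkeeping: we translate the ATRC into the site model of Section~\ref{subsec:strMix:setup} via $\psi$, identify the ``inputs'' that the abstract theorem of~\cite{Ott25} consumes, and verify each one uniformly in $F$, $(a,b)$, $F_\pm$.

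First, I would fix the scale $l$ so that the \(l\)-decoupling path hypothesis is meaningful: this is the parameter used by~\cite{Ott25} to localize the boundary influence inside \(\bbE_{[\gamma]_l}\). The finite-energy property~\eqref{eq:fe_atrc}, together with its counterpart Lemma~\ref{lem:fe_matrc_+} and the explicit conditional formulas derived there, carries over to the conditional measures $\nu_{F,F_-,F_+}^{a,b}$: conditioning on boundary edges only removes the degrees of freedom located in $F_-\cup F_+\subset\partialin F$, and on the remaining edges the single-edge conditionals remain bounded below by a universal constant depending only on $J,U$. Positive association (Lemma~\ref{lem:atrc_strong_fkg}) is preserved under conditioning on values of the field on a sub-$\sigma$-algebra, so $\nu_{F,F_-,F_+}^{a,b}$ is strong FKG.

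Next, I would verify the two probabilistic inputs. Exponential ratio weak mixing (Theorem~\ref{thm:ratio_weak_mixing_ATRC}) gives the quantitative decoupling of the bulk from boundary conditions. Exponential decay of $\omega_\tau$-connections is supplied by Proposition~\ref{prop:exp_decay_omega_tau}, and the corresponding decay for $\omega_{\tau\tau'}^*$ connections follows by the self-duality~\eqref{eq:atrc_selfdual}. Combined with the \(l\)-decoupling path property, these two facts guarantee that around any site $x\in\bbV_F$ with $\mathbb{E}_{BL}([i_x]_{3l+1})\subset F$ one finds, with probability as close to $1$ as desired (provided $l$ is large enough and the ``good-box scale'' $L$ of~\cite{Ott25} is taken correspondingly large), an open dual circuit in $\omega_\tau^*$ and an open primal circuit in $\omega_{\tau\tau'}$ around the $l$-box of $i_x$, that is, a pair of decoupling paths for some $\gamma\subset\Gamma_l$ surrounding $x$. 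This is precisely the ``good block'' event in the framework of~\cite{Ott25}, and its probability tends to $1$ as $L\to\infty$.

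With these ingredients in place, the main theorem of~\cite{Ott25} produces parameters $L<\infty$ and $q<1$ (depending on $p$ and on $J,U$) such that the total variation distance between the conditional laws on $F_2$ is dominated by the probability of a $*$-connection in the inhomogeneous block percolation $P_{F\setminus(F_-\cup F_+);L,p,q}$ between $[\bbV_{F_1}]_L$ and $[\bbV_{F_2}]_L$; the ``bad'' sites near $\partial F$ carry the larger parameter $q$ (close to $1$) to absorb the lack of decoupling near the boundary, while the interior sites carry parameter $p$. This is exactly the bound asserted by the theorem. The principal obstacle is the uniformity of all constants with respect to $F$, $(a,b)$ and $F_\pm$; uniformity is obtained by noting that the ingredients we feed into~\cite{Ott25}---finite energy, ratio weak mixing, and exponential decay under the \(l\)-decoupling hypothesis---are themselves uniform in these data, and that the conditioning on $F_-\cup F_+$ only affects the \emph{boundary} block-percolation parameter $q$, not the interior parameter $p$.
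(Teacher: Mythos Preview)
Your proposal is correct and follows essentially the same route as the paper: both cast the ATRC conditional measures into the site-model framework of~\cite{Ott25} and verify its hypotheses (Mix1--2 via ratio weak mixing, Mar1--3 via the decoupling circuits in \(\omega_{\tau\tau'}\) and \(\omega_\tau^*\), uniform exponential decay, and finite energy). Two small sharpenings: the decay input actually needed is the \emph{uniform} finite-volume statement Theorem~\ref{thm:Ale04_ATRC} rather than Proposition~\ref{prop:exp_decay_omega_tau}, and the paper handles the boundary blocks by taking the local event \(\mathrm{Ma}_{i,\Lambda}^t\) and the reference configuration \(\mathrm{a}_i\) to be the constant \((0,1)\) configuration---this concrete choice is what makes Mar1 and Mar3 of~\cite{Ott25} go through near \(\partial F\), and is worth stating explicitly rather than leaving implicit.
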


\begin{remark}
	\label{rem:oneDperco}
	We will systematically apply this result in cases where \(\{x\in \bbV_{F}:\ \bbE_{BL}([i_x]_{3L+1})\cap F^c \neq \varnothing\}\) is a very elongated one dimensional object, so exponential decay of the connection probability can be obtained by a coarse-graining argument as in~\cite[Lemma 3.2]{OttVel18}.
\end{remark}

\subsection{Verifications of the mixing and decoupling hypotheses}
\label{subsec:strMix:HypVerif}

In this section, we apply~\cite{Ott25} to obtain Theorem~\ref{thm:strong_mixing_atrc}. The setup of~\cite{Ott25} is a family of site models satisfying a collection of hypotheses, denoted Mix1, Mix2, Mar1, Mar2, Mar3. We cast our model in that setup, and check that these hypotheses hold. Theorem~\ref{thm:strong_mixing_atrc} then follows from~\cite[Theorem 1.1]{Ott25}

\noindent\textbf{Family of site measures.}
Let \(l\geq 1\) be some number to be fixed large later (it will depend only on \(J,U\)). In~\cite{Ott25}, a family \((\nu_{\Lambda}^t)_{\Lambda\subset \Z^2, t\in T}\) is considered, with \(T\) some index set. We take \(T\) to be the set of quintuplets \((F,a,b,F_-,F_+)\) such that \((a,b)\in \{(0,0),(0,1),(1,1)\}^{\bbE^{\bullet}}\), \(F\subset \bbE^{\bullet}\) finite such that \(\atrc_F^{a,b}\) has the \(l\)-decoupling path property, and \(F_-,F_+\subset \partialin F\).

The family \((\nu_{\Lambda}^t)_{\Lambda\subset \Z^2, t\in T}\) is then given by
\begin{multline}
	\label{eq:atrc_cond_versions}
	\nu_{\Lambda}^{a,b,F,F_+,F_-}(\cdot) :=
	\\
	\begin{cases}
		\atrc_{F}^{a,b}\big(\psi^{-1}(\cdot) \bgiven \omega_{\tau}(e) = 0\,\forall e\in F_-,\ \omega_{\tau\tau'}(e) = 1\, \forall e\in F_+\big) & \text{ if } \Lambda = \bbV_{F},\\
		\atrc_{\bbE_{\Lambda}}^{0,1}\big(\psi^{-1}(\cdot)\big) & \text{ else }.
	\end{cases}
\end{multline}Note the small abuse of notation: we used the same notation as in~\eqref{eq:def:atrc_cond_versions} for the measure on sites rather than on edges. We are only interested in the first case, the second is only there to fit the setup of~\cite{Ott25}.

\noindent\textbf{Mixing hypotheses.}
The hypotheses Mix1 and Mix2 are exponential mixing hypotheses which are weak versions of the ratio weak mixing property (Theorem~\ref{thm:ratio_weak_mixing_ATRC}).

\noindent\textbf{Markov-type hypotheses. }
To avoid lengthy displays, introduce the shorthand
\begin{equation*}
	F' =
	\begin{cases}
		F & \text{ if } \bbV_F = \Lambda,\\
		\bbE_{\Lambda} & \text{ else}.
	\end{cases}
\end{equation*}
Hypotheses Mar1, Mar2, Mar3 ask for
\begin{itemize}
	\item a family of local events, \(\mathrm{Ma}_i\), \(i\in \Gamma_l\), supported on \(\bbE_{BL}([i]_{3l+1})\);
	\item a family of local events, \(\mathrm{Ma}_{i;\Lambda}^t\), \(i\in \Gamma_l\), \(t=(F,a,b,F_-,F_+)\in T\), \(\Lambda\subset \Z^2\), supported on \(\bbE_{BL}([i]_{3l+1})\cap F'\);
	\item configurations \(\mathrm{a}_{i}\in \{(0,0),(0,1),(1,1)\}^{\bbE_{BL}([i]_l)}\), \(i\in \Gamma_l\);
	\item configurations \(\mathrm{a}_{i,\Lambda}^t\in \{(0,0),(0,1),(1,1)\}^{\bbE_{BL}([i]_l) \cap F'}\), \(i\in \Gamma_l\).
\end{itemize}These should satisfy:
\begin{enumerate}
	\item if \(\gamma\) is a simple closed nearest-neighbour path in \(\Gamma_l\), and \((\xi,\xi')\) is a configuration on \(F'\cap \bbE_{[\gamma]_{3l+1}}\) such that for any \(i\in \gamma\) with \(\bbE_{[i]_{5l+2}}\subset F'\) one has
	\begin{equation*}
		(\xi,\xi')|_{\bbE_{[i]_{3l+1}}} \in \mathrm{Ma}_i,
	\end{equation*}and for any \(i\in \gamma\) with \(\bbE_{[i]_{5l+2}}\not\subset F'\), \(\bbE_{[i]_{3l+1}}\cap F'\neq \varnothing\), one has
	\begin{equation*}
		(\xi,\xi')|_{\bbE_{[i]_{3l+1}}\cap F' } \in \mathrm{Ma}_{i;\Lambda}^t,
	\end{equation*}then \((\xi,\xi')\) contains a pair of decoupling paths for \(\gamma\) in \(F'\). This is Mar1.
	\item The probability of \(\mathrm{Ma}_i\) conditionally on the configuration outside of \(\bbE_{[i]_{3l+1}}\) is lower bounded by \(p_0<1\) fixed universal, uniformly over the configuration outside of \(\bbE_{[i]_{3l+1}}\). This is Mar2.
	\item If \(w\in \mathrm{Ma}_i\), and \(j\in \Gamma_l\) with \(\norm{i-j}_{\infty} \leq 2l+1\), then the configuration obtained by swapping the value of \(w|_{\bbE_{BL}([j]_{l}) }\) to \(\mathrm{a}_{j}\) is still in \(\mathrm{Ma}_i\), and similarly for \(\mathrm{a}_{i,\Lambda}^t\). Moreover, the probability of \(\mathrm{a}_{i}\), \(\mathrm{a}_{i,\Lambda}^t\) conditionally on the configuration outside of their respective supports is greater than \(\theta>0\) for some \(\theta >0\) uniform over the the configuration outside of the support. This is Mar3.
\end{enumerate}

Take the events/configurations (\(l\) is taken large enough):
\begin{itemize}
	\item \(\mathrm{Ma}_i\) is the event that \(\bbE_{[i]_{3l/2}}\setminus \bbE_{[i]_{l+1}}\) contains both a simple closed path of edges open in \(\omega_{\tau\tau'}\) surrounding \([i]_{l+1}\), and a simple closed path of dual edges open in \(\omega_{\tau}^*\) surrounding \([i]_{l+1}\).
	\item \(\mathrm{a}_{i,\Lambda}^t\), and \(\mathrm{a}_{i}\) are the constant \((0,1)\) configurations.
	\item \(\mathrm{Ma}_{i,\Lambda}^t \) contains only the constant \((0,1)\) configuration.
\end{itemize}See Fig.~\ref{Fig:StrMix:path_and_decoupling_event}. Mar1 follows by Jordan's curve Theorem. Mar2 follows (once \(l\) is fixed large enough) from exponential mixing (Theorem~\ref{thm:weak_mixing_ATRC}) and uniform exponential decay of connectivities in \(\omega_{\tau}\), and \(\omega_{\tau\tau'}^*\) (Theorem~\ref{thm:Ale04_ATRC}). Finally, Mar3 follows from finite energy~\eqref{eq:fe_atrc}.

\begin{figure}
	\centering
	\includegraphics[scale=0.7]{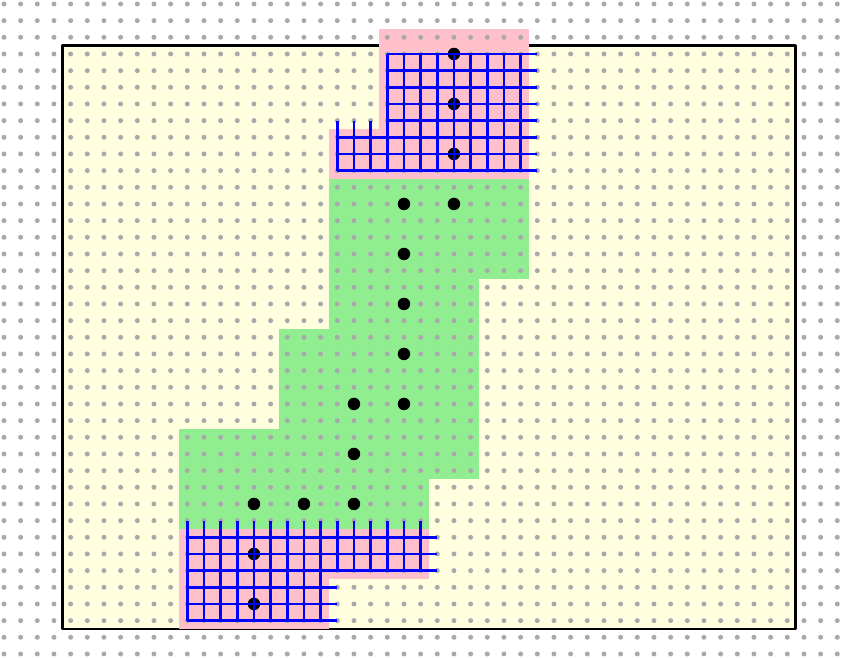}
	\hspace*{1cm}
	\includegraphics[scale=1]{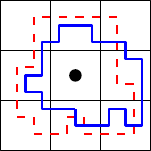}
	\caption{Left: a path in \(\Gamma_1\) (black disks), the blue edges have their state forced to be \((0,1)\), the black disks in a green square imply the realization of \(\mathrm{Ma}_i\) at that place. Right: a realization of \(\mathrm{Ma}_i\) (blue edges are open in \(\omega_{\tau\tau'}\), while the dashed red path represent the dual of edges closed in \(\omega_{\tau}\)).}%
	\label{Fig:StrMix:path_and_decoupling_event}%
\end{figure}

\section{Random Walk picture in the ATRC}
\label{sec:ATRC:RW_infinite_volume}

We start this section by introducing the objects necessary to the development of the ``Ornstein-Zernike theory'' (renewal picture of connection probabilities). We follow the general strategy used in~\cite{CamIof02, CamIofVel03, CamIofVel08, OttVel18, AouOttVel24}, most results will be imported when their proof is a repetition of existing arguments.
To keep notations readable, we will use the following short-hand throughout this section:
\begin{equation*}
	\Phi \equiv \atrc_{J,U}, \qquad \Phi_{\Lambda}^{a,b} \equiv \atrc_{\Lambda;J,U}^{a,b},
\end{equation*}
where~$\atrc_{J,U}$ is the unique infinite-volume measure for the~$\atrc$ model (recall Proposition~\ref{prop:atrc01_relax}).
The theorems that will be used in other sections will be stated with the notation matching the rest of the paper. Moreover, again to lighten notations,
\begin{equation*}
	\text{connections are in } \omega_{\tau} \text{ unless explicitly stated otherwise.}
\end{equation*}
Define \(\calC\) as the cluster of \(0\) in \(\omega_{\tau}\).

\subsection{Decay rate, Cones, and Diamonds}
\label{subsec:OZ_def}

Start by introducing some objects.

\noindent{\bf Norm induced by the decay rates and associated sets.}
For \(s\in \bbS^1\), define
\begin{equation}
	\label{eq:decay_rate}
	\nu(s) = -\lim_{n\to\infty} \tfrac{1}{n}\ln \Phi(0\leftrightarrow ns).
\end{equation}
Existence of the limit follows in the standard fashion by Fekete's Lemma, using sub-additivity which follows from the FKG inequality.
We now extend~$\nu$ to~$\R^2$ by positive homogeneity of order one: for any~$s\in \bbS^1$ and~$r\geq 0$, define
\[
	\nu(r s) := r \nu(s).
\]
Using existence of the limit and the FKG inequality, one can show that~$\nu$ is a non-degenerate norm as soon as it is non-zero: 
$\nu$ is positive homogeneous by definition; 
exponential decay of connection probabilities in~$\Phi$ (the ATRC model) implies positivity; 
the FKG inequality for~$\Phi$ implies the triangular inequality for~$\nu$. See~\cite[Section 2]{Ale01} for details.
Clearly, \(\nu\) inherits the symmetries of the lattice, that is axial and diagonal reflections and rotations by \(\pi/2\). 
From these symmetry considerations, one has that for any \(s\in \bbS^1\),
\begin{equation*}
	\tfrac{1}{\sqrt{2}}\nu(\rme_1)\leq \nu(s)\leq \sqrt{2}\nu(\rme_1),
\end{equation*}with \(\rme_1=(1,0)\). Moreover, one has
\begin{equation}
	\label{eq:perco_order_one_decay_rate}
	\Phi(0\leftrightarrow x) = e^{-\nu(x)(1+o(1))}.
\end{equation}
As \(\nu\) is a norm, there are two convex sets naturally associated to it: the equi-decay set \(\calU\), and the convex set of which \(\nu\) is the support function, \(\calW\).
\begin{equation}
	\label{eq:equi_decay_Wulff}
	\calU = \{x\in \R^d:\ \nu(x)\leq 1\},\qquad 
	\calW = \bigcap_{s\in \bbS^1}\{x\in \R^d:\ x\cdot s\leq \nu(s)\}.
\end{equation}
It is easy to see that \(\calU\) and \(\calW\) are dual to each other, that is: \(\calW\) is the equi-decay set of the norm dual to \(\nu\), and the norm dual to \(\nu\) is the support function of \(\calU\).
We say that $(s,t)\in \bbS^1\times \partial\calW$ is a \emph{dual pair} if \(\nu(s) = t\cdot s\).
From the definitions, one has, for any~$x\in \R^d$,
\begin{equation}
	\nu(x) = \max_{t\in \partial \calW} t\cdot x.
\end{equation}
We refer to~\cite{Roc70} for details on convex duality. From the last display, it is also easy to see that \(\calW\) is the closure of the convergence domain of
\begin{equation}
	\label{eq:generating_fct_connections}
	\bbG(t) = \sum_{x\in \Z^2}\Phi(0\leftrightarrow x) e^{t\cdot x}.
\end{equation}

\noindent{\bf Cones and Diamonds.}
Let \(t\in \partial \calW, \delta\in (0,1)\). Let us introduce the geometric objects used in the interface study. We first define the cones and the associated diamonds (see Fig.~\ref{Fig:OZ_cones_diamonds}):
\begin{gather*}
	\fcone_{t,\delta} := \big\{ x\in \R^2:\ x\cdot  t\geq (1-\delta)\nu(x) \big\}, \quad \bcone_{t,\delta} := -\fcone_{t,\delta},\\
	\diam_{t,\delta}(u,v) := (u+\fcone_{t,\delta})\cap (v+\bcone_{t,\delta}).
\end{gather*}

\begin{figure}
	\includegraphics[scale=0.8]{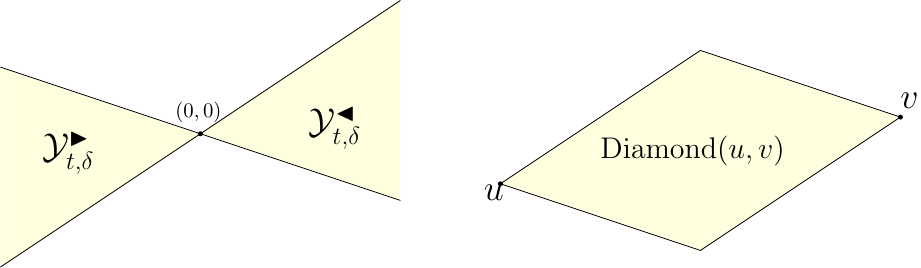}
	\caption{Forward and backward cones, and the associated diamond.}
	\label{Fig:OZ_cones_diamonds}
\end{figure}

As \(\delta\) goes to \(1\), the cone \(\fcone_{t,\delta}\) converges to the half-plane that contains~$t$ and whose boundary is orthogonal to~$t$. As \(\delta\) goes to \(0\), the cone \(\fcone_{t,\delta}\) converges to the convex cone generated by the directions dual to \(t\).
The latter set is a line when \(\nu\) is strictly convex.

Let \(V\subset \Z^2\). We will say that \(V\) is:
\begin{itemize}
	\item \((t,\delta)\)-\emph{forward-confined} if there exists \(u\in V\) such that \(V\subset u+\fcone_{t,\delta}\). When it exists, such a \(u\) is unique; we denote it by \(\fend({V})\).
	\item \((t,\delta)\)-\emph{backward-confined} if there exists \(v\in V\) such that \(V\subset v+\bcone_{t,\delta}\). When it exists, such a \(v\) is unique; we denote it by \(\bend({V})\).
	\item \((t,\delta)\)-\emph{diamond-confined} if it is both forward- and backward-confined.
\end{itemize}

We will say that \(v\in V\) is a \emph{\((t,\delta)\)-cone-point} of \(V\) if
\begin{equation*}
	V\subset v+ (\bcone_{t,\delta}\cup \fcone_{t,\delta}).
\end{equation*}
Define~\(\CPts_{t,\delta}(V)\) as the set of cone-points of \(V\). When speaking about cone-points of graphs, we mean cone-points of their vertex set.

We call a graph with a distinguished vertex a \emph{marked graph}. The distinguished vertex is denoted \(v^*\). Define the following objects (see Fig.~\ref{Fig:OZ_confined_graphs_displacements}):
\begin{itemize}
	\item The sets of confined pieces are the following sets of finite connected subgraphs of \((\Z^2,\bbE)\):
	\begin{align*}
		\SetRootMarkBackCont(t,\delta) &= \{(\gamma,0) \text{ marked backward-confined } \},\\
		\SetRootMarkForwCont(t,\delta) &= \{(\gamma,v^*) \text{ marked forward-confined with } \fend(\gamma) =0 \},\\
		\SetRootDiaCont(t,\delta) &= \{\gamma \text{ diamond-confined with } \fend(\gamma) =0 \}.
	\end{align*}
	To fix ideas we shall, unless stated otherwise, think of $\SetRootDiaCont$ as of a subset of $\SetRootMarkBackCont$, that is, by default the vertex $\fend (\gamma) = 0$ is marked for any $\gamma\in \SetRootDiaCont$. 
	Note that $\SetRootDiaCont$ can alternatively be viewed as subset of~\(\SetRootMarkForwCont\) by marking~\(\bend(\gamma)\). We will usually omit the marked vertex from the notation as it will be clear from the context.
	
	\item The displacement along a piece:
	\begin{equation}
		\label{eq:displacement}
		\displace(\gamma) :=
		\begin{cases}
			\bend(\gamma)	& \text{ if } \gamma\in
			\SetRootMarkBackCont,\ \text{in particular, if $\gamma\in \SetRootDiaCont$,} \\
			v^* 			& \text{ if } \gamma\in\SetRootMarkForwCont.
		\end{cases}
	\end{equation}
	\item The \emph{concatenation} operation (see Fig.~\ref{Fig:OZ_concatenation_displacements}): for \(\gamma_1\in \SetRootMarkBackCont\) and \(\gamma_2\in \SetRootMarkForwCont\) define the concatenation of $\gamma_2$ to $\gamma_1$ as
	\begin{equation*}
		\gamma_1\concatenate \gamma_2 = \gamma_1\cup (\displace(\gamma_1) + \gamma_2).
	\end{equation*}
	The concatenation of two graphs in \(\SetRootDiaCont\) is an element of \(\SetRootDiaCont\), the concatenation of
	a graph in \(\SetRootDiaCont\) to an element of \(\SetRootMarkBackCont\) is an element of \(\SetRootMarkBackCont\), and the concatenation of a element in \(\SetRootMarkForwCont\) to an graph in \(\SetRootDiaCont\) is an element of \(\SetRootMarkForwCont\). The displacement along a concatenation is the sum of the displacements along the pieces.
\end{itemize}

\begin{figure}
	\includegraphics[scale=0.8]{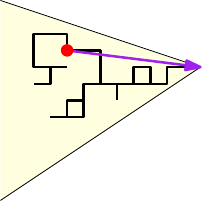}
	\hspace*{1cm}
	\includegraphics[scale=0.8]{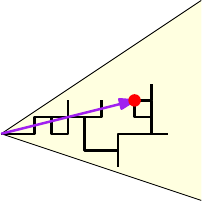}
	\hspace*{1cm}
	\includegraphics[scale=0.8]{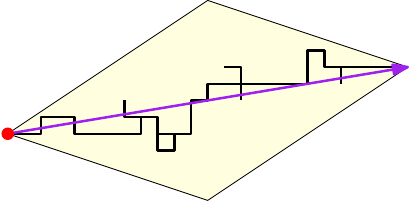}
	\caption{Backward, forward, and diamond confined marked graphs,with their displacement.}
	\label{Fig:OZ_confined_graphs_displacements}
\end{figure}

\begin{figure}
	\includegraphics[scale=0.8]{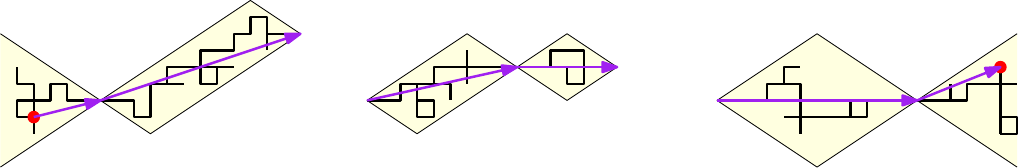}
	\caption{Concatenation of confined graphs. Red dots are the marked vertices of the marked graphs.}
	\label{Fig:OZ_concatenation_displacements}
\end{figure}

These sets can be seen as equivalence classes of general marked forward, backward, or diamond confined graphs modulo translations. A general such graph can then be recovered uniquely from an element of $\SetRootMarkBackCont$, $\SetRootMarkForwCont$, or~$\SetRootDiaCont$ by specifying the translation vector.

\subsection{Main result of the section}

Our main goal is to prove the following ``coupling with random walk'' in infinite volume for long connections in the ATRC model.
Recall that~$\rme_1= (1,0), \rme_2=(0,1)$ are the canonical basis vectors and~$\calC$ denotes the cluster of~$0$ in~$\omega_\tau$.

\begin{theorem}
	\label{thm:OZ_for_ATRC_infinite_vol}
	Let \(0\leq J<U\) be such that \(\sinh(2J) = e^{-2U}\). Let \(s_0\in \bbS^1\), \(t\in \partial\calW\) be dual to \(s_0\) and \(\delta\in (0,1)\) be such that the interior of \(\fcone_{t,\delta}\) contains an element of $\{\pm \rme_1,\pm\rme_2\}$. Then, there exist constants~\(\rmC,C_1,C_2\geq 0, c_1,c_2>0, \epsilon>0\) and probability measures \(p_L\), \(p_R\), and~\(p\) on \(\SetRootMarkBackCont(t,\delta)\), \(\SetRootMarkForwCont(t,\delta)\), and~\(\SetRootDiaCont(t,\delta)\) respectively such that
	\begin{enumerate}
		\item \emph{renewal structure:} for any \(x\in \Z^2\) such that \(x\cdot s_0 \geq (1-\epsilon)|x|\), and any \(f\) real valued function of the cluster of \(0\),
		\begin{multline}
			\Big| \rmC\sum_{\gamma_{L},\gamma_R}p_L(\gamma_L)p_R(\gamma_R)\sum_{k\geq 0}\sum_{\gamma_1,\dots,\gamma_k}  f(\bar{\gamma})\mathds{1}_{\displace(\bar{\gamma})= x} \prod_{i=1}^k p(\gamma_k) \\
			- e^{t\cdot x}\atrc_{J,U}\big(f(\calC)\mathds{1}_{0\xleftrightarrow{\omega_{\tau}} x}\big)\Big|\leq C_1\norm{f}_{\infty}e^{-c_1|x|},
		\end{multline}where \(\bar{\gamma} = \gamma_L\concatenate \gamma_1\concatenate\dots\concatenate \gamma_k\concatenate \gamma_R\), and the sums are over \(\gamma_L\in \SetRootMarkBackCont(t,\delta)\), \(\gamma_R\in \SetRootMarkForwCont(t,\delta)\), and \(\gamma_1,\dots,\gamma_k\in \SetRootDiaCont(t,\delta)\);
		\item \emph{exponential tails and ``finite energy'' for the steps:} for \linebreak
		\((p_*, D_*)\in \{(p_L,\SetRootMarkBackCont(t,\delta)),(p_R,\SetRootMarkForwCont(t,\delta)),(p,\SetRootDiaCont(t,\delta))\}\),
		\begin{equation}
			\sum_{\gamma \in D_*} p_*(\gamma) \mathds{1}_{\norm{\displace(\gamma)} \geq \ell}
			\leq C_2 e^{-c_2\ell} \quad \forall \ell\geq 0,
		\end{equation}
		\begin{equation}
			p_*\big(\gamma\big) \geq \alpha^{|\gamma|+1}\quad \forall \gamma\in D_*:\ \CPts_{t,\delta}(\gamma)\cap \diam_{t,\delta}(0,\displace(\gamma)) \subset \{0,\displace(\gamma)\},
		\end{equation}
		for some \(\alpha>0\) depending on \(J,U\) only;
		\item \emph{mean value of a step is proportional to \(s_0\):} there exists \(a>0\) such that
		\begin{equation}
			\sum_{\gamma\in \SetRootDiaCont(t,\delta)} p(\gamma)\displace(\gamma) = a s_0.
		\end{equation}
	\end{enumerate}
	
	Moreover, \(\partial\calU\), \(\partial \calW\) are analytic manifolds and the norm \(\nu\) is uniformly strictly convex, that is \(\calU\) is strictly convex and \(\partial\calU\) has uniformly lower bounded curvature.
	In particular, each direction \(s\in \bbS^1\) has a unique dual vector~\(t_s\in \partial\calW\) and there exist \(\kappa>0\) such that the following sharp triangle inequality holds:
	\begin{equation*}
		\nu(x)+\nu(y) -\nu(x+y) \geq  \kappa(|x|+|y|-|x+y|).
	\end{equation*}
\end{theorem}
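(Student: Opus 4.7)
The plan is to implement the Campanino--Ioffe--Velenik (CIV) renewal picture of~\cite{CamIof02,CamIofVel03,CamIofVel08,OttVel18,AouOttVel24} for the cluster of \(0\) in \(\omega_\tau\) under \(\atrc_{J,U}\), using the mixing inputs already established (Theorem~\ref{thm:ratio_weak_mixing_ATRC} and Theorem~\ref{thm:strong_mixing_atrc}), the uniform exponential decay (Theorem~\ref{thm:Ale04_ATRC}), the \emph{a priori} order-one rate~\eqref{eq:perco_order_one_decay_rate}, together with FKG~\eqref{eq:fkg-general} and finite energy~\eqref{eq:fe_atrc}. The first step is to fix \(s_0,t\in\partial\calW,\delta\) as in the hypothesis and to set up a coarse-grained \emph{skeleton} of \(\calC_0\) along slabs orthogonal to \(t\) at a mesoscopic scale. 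Using FKG together with the dual exponential decay (self-duality~\eqref{eq:atrc_selfdual} and Theorem~\ref{thm:Ale04_ATRC}) and a Peierls/BK-style argument as in~\cite[\S 2--3]{CamIofVel08}, one shows that, conditionally on \(\{0\leftrightarrow x\}\) with \(x/|x|\) close to \(s_0\), the cluster is confined to the slab \(\diam_{t,\delta/2}(0,x)\) up to an exponentially small error. This is the standard ``no backtracking'' estimate and only uses \(\nu(x)=t\cdot x\) along the dual direction.

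Next I would introduce irreducible cone-point decompositions. Using the restricted strong mixing of Theorem~\ref{thm:strong_mixing_atrc}, which applies to the mATRC-type conditioning needed to plant decoupling paths around each putative cone point, I would prove that the random set \(\CPts_{t,\delta}(\calC_0)\cap\diam_{t,\delta}(0,x)\) has positive asymptotic density in~\(x\cdot s_0\). The key mechanism is the surgery of~\cite{CamIofVel08,OttVel18}: at every mesoscopic slab where the skeleton touches only a small set, finite energy plus decoupling lets one create (and detect) a cone point with positive conditional probability. The main obstacle is precisely here: the ATRC is a measure on \emph{pairs} \((\omega_\tau,\omega_{\tau\tau'})\) with \(\omega_\tau\subseteq\omega_{\tau\tau'}\), so the usual single-configuration DLR cut along a dual circuit must be replaced by the two-sided decoupling furnished by simultaneous open/dual circuits in \(\omega_{\tau\tau'}\) and \(\omega_\tau^*\); this is exactly what Theorem~\ref{thm:strong_mixing_atrc} is designed to provide, and what makes the CIV machinery applicable (cf.\ Lemma~\ref{lem:decoupling_paths_mATRC}). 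Ordering the cone points and taking the differences of successive ones defines the irreducible pieces \(\gamma_i\in\SetRootDiaCont(t,\delta)\), while the initial and terminal remainders define \(\gamma_L\in\SetRootMarkBackCont(t,\delta)\) and \(\gamma_R\in\SetRootMarkForwCont(t,\delta)\).

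Once the decomposition is set up, I would proceed to the generating function analysis. For \(\tilde t\) in a complex neighbourhood of a real \(t_\star\in\partial\calW\) dual to \(s_0\), define tilted (signed) weights
\begin{equation*}
\hat p(\gamma;\tilde t) := \Phi_{\mathrm{irr}}(\gamma)\, e^{\tilde t\cdot\displace(\gamma)},\qquad \gamma\in\SetRootDiaCont(t,\delta),
\end{equation*}
where \(\Phi_{\mathrm{irr}}\) is the weight of an irreducible piece derived from the decomposition above, and similarly for \(\SetRootMarkBackCont\) and \(\SetRootMarkForwCont\). Using the exponential mixing Theorem~\ref{thm:ratio_weak_mixing_ATRC} to control the correlation between successive pieces, one shows that the two-point function
\(\Phi(0\leftrightarrow x)\) equals, up to an exponentially negligible error, a convolution of these tilted weights along displacements summing to \(x\). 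The standard implicit-function/Cramér-type argument (as in~\cite[\S 4--5]{CamIofVel08} and~\cite{OttVel18}) then produces a unique real \(t=t(s_0)\) for which \(\hat p\) is a bona fide probability measure on \(\SetRootDiaCont(t,\delta)\) with mean proportional to \(s_0\), giving items (1)--(3). The exponential tails come from analyticity of the generating function in a complex strip, while the ``finite energy'' lower bound on individual steps is a direct consequence of~\eqref{eq:fe_atrc} applied in the mesoscopic slab containing \(\gamma\).

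Finally, the analyticity and uniform strict convexity of \(\partial\calU,\partial\calW\), and the sharp triangle inequality, are standard consequences of the renewal representation: the pressure \(\tilde t\mapsto\log\sum_\gamma\hat p(\gamma;\tilde t)\) is analytic and strictly convex on a neighbourhood of the critical manifold by non-degeneracy of the covariance of \(\displace\) under the reference probability measure (ensured by the finite-energy lower bound combined with the two-dimensional geometry of \(\SetRootDiaCont(t,\delta)\)). Applying this on a neighbourhood of \(s_0\) in \(\bbS^1\) gives analyticity of \(\nu\) and positive lower-bounded curvature of \(\partial\calU\) on that neighbourhood; since \(s_0\) was arbitrary (modulo the mild condition on \(\fcone_{t,\delta}\) which can be bypassed by rotating the coordinate axes, i.e.\ applying the argument in each of a finite number of overlapping sectors), the uniform statement follows, and Ornstein's sharp triangle inequality is then an elementary consequence of uniform strict convexity, see~\cite[Proposition~2.2]{CamIofVel08}.
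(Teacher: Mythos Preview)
Your plan follows the same overall CIV route as the paper (coarse-graining \(\to\) cone-point density \(\to\) irreducible decomposition \(\to\) factorised renewal measure), and the inputs you list are the right ones. Two points where your description diverges from what is actually done, and where the real work sits, are worth flagging.

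First, the surgery producing a linear density of cone points (your third paragraph) does \emph{not} require strong mixing in the paper. It is obtained purely from the energy/entropy bounds on the coarse-grained skeleton (Lemmata~\ref{lem:energy_of_a_skeleton}--\ref{lem:entropy_of_skeleton_sets}, which need only ratio weak mixing via Lemma~\ref{lem:finite_volume_exp_decay}) together with raw finite energy~\eqref{eq:fe_atrc}; see Lemmata~\ref{lem:spine_skeleton_CPts}--\ref{lem:Cluster_CPts} and Theorem~\ref{thm:cone_points}. The decoupling circuits you invoke are not used at this stage.

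Second, and more substantively, your ``generating function / implicit-function / Cram\'er'' step hides the main technical difficulty. The paper does not tilt irreducible weights and invoke analyticity directly; instead it writes the cluster probability as a product of \emph{dependent} conditional kernels \(q(\tilde\eta\mid\tilde\eta_L,\dots,\tilde\eta_{k-1})\) (see~\eqref{eq:def:dep_kernels}--\eqref{eq:condition_proba_decomposition}, using the edge-state swapping of Claim~\ref{claim:chain_of_events} at regular cone points), and then proves these kernels satisfy exponential loss of memory (Lemma~\ref{lem:mixing_conditonal_chain_infinite_volume}). This is precisely where the restricted \emph{strong} mixing (Theorem~\ref{thm:strong_mixing_atrc}) enters, combined with Lemma~\ref{lem:app:mixing_to_ratioMixing} to upgrade TV-mixing to ratio-mixing of conditional laws. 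The kernels are not monotone in the conditioning, so one cannot quote~\cite{CamIofVel08} directly; the paper defines \(p_k\) via infima over the far past (the \(a_l\)'s) to force nonnegativity and then plugs into the machinery of~\cite[Section~7]{AouOttVel24}. Your sentence ``using Theorem~\ref{thm:ratio_weak_mixing_ATRC} to control the correlation between successive pieces'' understates this: ratio weak mixing alone does not see the geometry of the conditioned diamond chain, and without the bottleneck argument of Claim~\ref{prf:ratio_mix_weights:str_mix_diam_meas} (which uses the cone-point structure and Theorem~\ref{thm:strong_mixing_atrc}) the factorisation would not go through.
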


The rest of the section is devoted to the proof of Theorem~\ref{thm:OZ_for_ATRC_infinite_vol}. Subsections~\ref{subsec:OZ_CG},~\ref{subsec:OZ_Ene_Entro}, and~\ref{sec:importations} are preparations for the core of the proof. The proof itself is then divided into two main steps, presented respectively in Subsections~\ref{subsec:CPs_pre_renewal} and~\ref{subsec:mixing_weights_renewal}.

\begin{remark}
	We use the value of \(J,U\) through only two inputs: the exponential decay in \(\omega_{\tau}\) and \(\omega_{\tau\tau'}^*\), and the mixing of Theorems~\ref{thm:ratio_weak_mixing_ATRC} and~\ref{thm:strong_mixing_atrc} (which follows from edge relaxation, Proposition~\ref{prop:edge_relax_ATRC}, and exponential decay in \(\omega_{\tau}\) and \(\omega_{\tau\tau'}^*\)). In particular, if one can extend these properties beyond the self-dual line (part of which is done in~\cite{AouDobGla24}), Theorem~\ref{thm:OZ_for_ATRC_infinite_vol} extends directly.
\end{remark}

\subsection{The coarse-graining procedure}
\label{subsec:OZ_CG}

The first step is to analyse the typical geometry of long connections. The analysis of~\cite{CamIofVel08,AouOttVel24} is based on a coarse-graining of the cluster of \(0\). 
Recall the equi-decay set~$\calU$ defined in~\eqref{eq:equi_decay_Wulff}.
Introduce the cells: for \(K,k\geq 1\) and \(A\subset \Z^2\),
\begin{gather*}
	[A]_{k} = \bigcup_{x\in A} (x+\{-k,\dots,k\}^2),\\
	\Delta_K = K\calU\cap \Z^2,\quad \Delta_K' = [\Delta_{K}]_{\ln(K)^2}.
\end{gather*}
The scale parameter \(K\) will be picked large enough in the course of the proof.

We then coarse-grain the cluster of \(0\) using the same algorithm as in~\cite{CamIofVel08}. Denote \(\Delta \equiv \Delta_K\) and \(\Delta' \equiv \Delta_K'\). For \(C\) a realization of \(\calC\) define \(\Skel(C)\) via the next algorithm (see also Fig.~\ref{Fig:OZ_CG}).

\begin{algorithm}[H]
	\label{alg:CGpercolation}
	Set \(v_0=0\), \(\Skel_V=\{v_0\}\), \(\Skel_E=\varnothing\), \(V= \Delta'\), \(i=1\)\;
	\While{\(A=\big\{ z\in\partialex V:\ z\xleftrightarrow{(z+\Delta)\setminus V }\partialin(z+\Delta)\big\}\neq \varnothing \)}{
		Set \(v_{i}= \min A \)\;
		Let \(v^*\) be the smallest \(v \in \Skel_V\) such that \(v_i\in \partialin (\Delta + v^*)\)\;
		Update \(\Skel_V=\Skel_V\cup\{v_{i}\}\), \(\Skel_E=\Skel_E\cup\{\{v^*,v_i\}\}\), \(V= V\cup(v_i + \Delta')\), \(i=i+1\)\;
	}
	\Return \((\Skel_V,\Skel_E)\)\;
	\caption{Coarse graining of a cluster containing \(0\).}
\end{algorithm}

The output \(\Skel(C) = (\Skel_V(C),\Skel_E(C))\) of Algorithm~\ref{alg:CGpercolation} is a tree with \(\Skel_V(C)\subset \Z^2\); we use the shorthand \(|\Skel(C)| := |\Skel_V(C)|\) for its size.

\begin{figure}
	\includegraphics[scale=0.9]{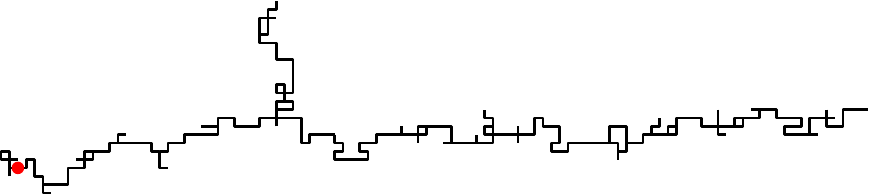}
	\includegraphics[scale=0.9]{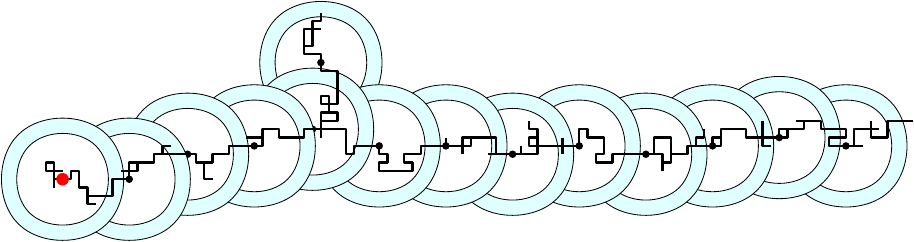}
	\includegraphics[scale=0.9]{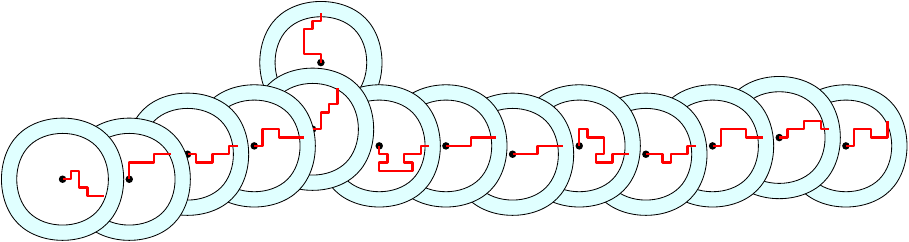}
	\includegraphics[scale=0.9]{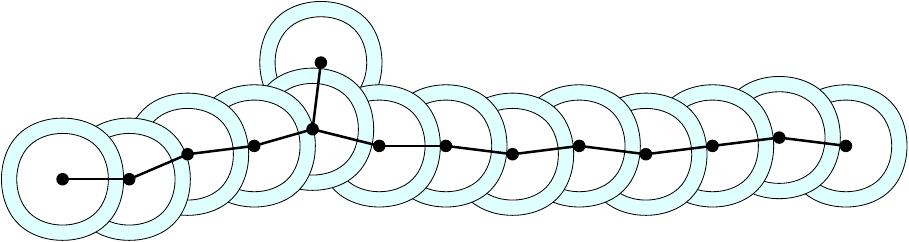}
	\caption{From top to bottom: a realization of \(\calC\) (\(0\) is in red); the cells of the associated coarse-graining; the connections implied by the presence of the cells; the skeleton (in black).}
	\label{Fig:OZ_CG}
\end{figure}

\subsection{Energy-Entropy estimates}
\label{subsec:OZ_Ene_Entro}

The next step is to establish energy bounds which control the probability to see a given tree as the skeleton of a cluster.
\begin{lemma}
	\label{lem:finite_volume_exp_decay}
	For any \(0\in A\subset \Z^2\) and \(K\geq 1\),
	\begin{equation}
		\label{eq:connection_decay_bulk}
		\Phi_{[\Delta_{K}\cap A]_{\ln(K)^2}}^{1,1}(0 \xleftrightarrow{\Delta_K\cap A} \partialin\Delta_K) \leq e^{-K(1+o_K(1))},
	\end{equation}where \(o_K(1)\) is a quantity that goes to \(0\) as \(K\) goes to \(\infty\).
\end{lemma}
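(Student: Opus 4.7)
My plan is a union bound over possible endpoints in $\partialin\Delta_K$ combined with the sharp-rate uniform finite-volume exponential decay. First, I would observe that every $x \in \partialin \Delta_K$ has a neighbour $y \notin K\calU$, so $\nu(x) \geq \nu(y) - \max_{|e| = 1}\nu(e) \geq K - C_0$ for some constant $C_0$ depending only on $\nu$, and that $|\partialin\Delta_K| = O(K)$. Next, using the DLR property~\eqref{eq:smp} together with the comparison of boundary conditions~\eqref{eq:cbc}, I would dominate the measure on $[\Delta_K\cap A]_{\ln(K)^2}$ by the one on $\Delta_K\cap A$: conditioning $\Phi_{[\Delta_K\cap A]_{\ln(K)^2}}^{1,1}$ on the configuration in the buffer produces a boundary condition on $\bbE_{\Delta_K\cap A}$ stochastically dominated by $(1,1)$, and hence
\[
\Phi_{[\Delta_K\cap A]_{\ln(K)^2}}^{1,1}\big(0\xleftrightarrow{\Delta_K\cap A}\partialin\Delta_K\big)
\leq
\Phi_{\Delta_K\cap A}^{1,1}\big(0\leftrightarrow\partialin\Delta_K\big).
\]

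The core input is a sharp-rate version of Theorem~\ref{thm:Ale04_ATRC}, which I would extract from Alexander's construction in~\cite{Ale04} using the hypotheses already verified for the ATRC (translation invariance, finite energy, infinite-volume exponential decay by Proposition~\ref{prop:exp_decay_omega_tau}, and exponential weak mixing by Theorem~\ref{thm:weak_mixing_ATRC}). Concretely, for every $\epsilon > 0$ there exists $C_\epsilon$ such that, for every finite simply lattice-connected $F \subset \bbE^\bullet$ and every $x \in \bbV_F$,
\[
\Phi_{F}^{1,1}\big(0 \leftrightarrow x\big) \leq C_\epsilon\, e^{-(1-\epsilon)\nu(x)}.
\]
Applying this with $F = \Delta_K\cap A$, combining with the union bound and the bound $\nu(x) \geq K - C_0$ on endpoints, I would obtain
\[
\Phi_{[\Delta_K\cap A]_{\ln(K)^2}}^{1,1}\big(0\xleftrightarrow{\Delta_K\cap A}\partialin\Delta_K\big)
\leq O(K)\cdot C_\epsilon\, e^{-(1-\epsilon)(K-C_0)}
\leq e^{-K(1-2\epsilon)}
\]
for $K$ large enough. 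Since $\epsilon > 0$ is arbitrary, this yields $-K^{-1}\ln(\mathrm{l.h.s.}) \to 1$, which is the claim.

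The main obstacle is securing the sharp $\nu$-rate in finite volume. A direct application of weak mixing with the buffer $\ln(K)^2$ only compares $\Phi_{[\Delta_K\cap A]_{\ln(K)^2}}^{1,1}$ to $\Phi$ up to an additive error of order $|\bbE_{\Delta_K\cap A}|\, e^{-c\ln(K)^2}$, which is much larger than the target $e^{-K}$ and therefore insufficient to recover the correct rate. The sharp rate must instead be propagated directly into finite volume, which is precisely the content of Alexander's multi-scale renormalisation argument in~\cite{Ale04}, whose conclusion is the strengthened form of Theorem~\ref{thm:Ale04_ATRC} invoked above.
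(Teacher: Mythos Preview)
Your overall plan would likely work, but it takes a detour around the very tool the buffer $\ln(K)^2$ was designed for, and it rests on two claims that are not established in the paper: a sharp-rate upgrade of Theorem~\ref{thm:Ale04_ATRC} (the paper only records \emph{some} $c>0$, not $(1-\epsilon)\nu$), and the applicability of that upgrade to $F=\bbE_{\Delta_K\cap A}$, which for arbitrary $A$ need not be simply lattice-connected. Both are plausible but would require going back into Alexander's argument, so as written these are gaps.

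The paper's proof is shorter and avoids both issues, and the reason is precisely the point you dismiss in your last paragraph. You object that weak mixing with buffer $\ln(K)^2$ only yields an \emph{additive} error of order $K^2 e^{-c\ln(K)^2}$, which swamps $e^{-K}$. That is true for total-variation weak mixing, but the paper uses \emph{ratio} weak mixing (Theorem~\ref{thm:ratio_weak_mixing_ATRC}), which gives a \emph{multiplicative} comparison: since the event $\{0\xleftrightarrow{\Delta_K\cap A}\partialin\Delta_K\}$ is supported on edges at distance at least $\ln(K)^2$ from the complement of $[\Delta_K\cap A]_{\ln(K)^2}$, one gets
\[
\Phi_{[\Delta_K\cap A]_{\ln(K)^2}}^{1,1}(\cdot)
\;\leq\; \big(1+CK^2e^{-c'\ln(K)^2}\big)\,\Phi_{[\Delta_K\cap A]_{\ln(K)^2}}^{0,0}(\cdot)
\;\leq\; 2\,\Phi(\cdot),
\]
the last step by monotonicity of $(0,0)$ boundary conditions towards the infinite-volume measure~$\Phi$. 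From there the union bound over $x\in\partialin\Delta_K$ together with $\Phi(0\leftrightarrow x)=e^{-\nu(x)(1+o(1))}$ finishes the proof directly. No sharp-rate finite-volume input is needed, and the shape of $A$ is irrelevant. This is exactly why the cells $\Delta_K'=[\Delta_K]_{\ln(K)^2}$ carry the $\ln(K)^2$ buffer in the first place.
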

Note that the wanted probabilities are zero whenever \(A\cap \Delta_K\) does not contain a path going from \(0\) to \(\partialin \Delta_K\).
\begin{proof}
	By the definition of \(\calU\), the ratio weak mixing property (Theorem~\ref{thm:ratio_weak_mixing_ATRC}) and monotonicity:
	\begin{align*}
		\Phi_{[\Delta_{K}\cap A]_{\ln(K)^2}}^{1,1}(0 \xleftrightarrow{\Delta_K\cap A} \partialin \Delta_K) &
		\leq (1+ CK^2e^{-c' \ln(K)^2})\Phi_{[\Delta_{K}\cap A]_{\ln(K)^2}}^{0,0}(0 \xleftrightarrow{\Delta_K\cap A} \partialin \Delta_K)\\ 
		&\leq 2 \Phi(0 \xleftrightarrow{\Delta_K} \partialin \Delta_K)\\
		&\leq C\sum_{x\in \partialin \Delta_K} e^{-\nu(x)(1+o_K(1))} = C|\partialin \Delta_K|e^{-K(1+o_K(1))}
	\end{align*}as soon as \(K\) is large enough.
\end{proof}

As a direct consequence of Lemma~\ref{lem:finite_volume_exp_decay} and of the definition of the coarse-graining procedure, one gets the following:
\begin{lemma}[Energy bound]
	\label{lem:energy_of_a_skeleton}
	There exists \(K_0\geq 0\) such that, for any \(K\geq K_0\),
	and any \(\calT \subset \Z^2\),
	\begin{equation*}
		\Phi\big(\Skel_V(\calC) = \calT \big) \leq e^{-K|\calT|(1+o_K(1))},
	\end{equation*}where the \(o_K\) is uniform over \(\calT\)
\end{lemma}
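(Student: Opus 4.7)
The argument will be an iterative conditioning procedure along the order in which the coarse-graining algorithm of Subsection~\ref{subsec:OZ_CG} adds vertices to the skeleton: each addition forces a long connection event in a fresh cell, which is uniformly small by Lemma~\ref{lem:finite_volume_exp_decay}, and the conditionings decouple thanks to strong FKG and the DLR property of the ATRC.

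More precisely, fix $\calT$ and suppose the event $\{\Skel_V(\calC_0)=\calT\}$ occurs. The algorithm determines an order $v_0=0, v_1,\dots,v_n$ of $\calT$ (with $n=|\calT|-1$). Set $V_i:=\bigcup_{j\le i}(v_j+\Delta')$. By construction, for every $i\ge 1$ the increasing event
\[
A_i:=\Big\{v_i\xleftrightarrow{\omega_\tau,\,(v_i+\Delta)\setminus V_{i-1}}\partialin(v_i+\Delta)\Big\}
\]
occurs. The key geometric feature is that, because $\Delta'=[\Delta]_{\ln(K)^2}$, the ``fresh'' regions $R_i:=(v_i+\Delta)\setminus V_{i-1}$ are pairwise disjoint (since $R_j\subseteq v_j+\Delta\subseteq v_j+\Delta'\subseteq V_i$ for $j\leq i$) and each $R_i$ is separated from $V_{i-1}$ by a buffer of width $\ln(K)^2$ on its side facing previously processed cells.

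I will then condition iteratively on the configuration on $V_{n-1},V_{n-2},\dots,V_0$. At step $i$, the DLR property~\eqref{eq:smp} together with the strong FKG property (Lemma~\ref{lem:atrc_strong_fkg}) and the comparison~\eqref{eq:cbc} imply that
\[
\Phi\big(A_i\bgiven \calF_{V_{i-1}}\big)\ \leq\ \Phi^{1,1}_{[R_i]_{\ln(K)^2}}(A_i),
\]
since $A_i$ is increasing and only depends on edges of $R_i$. Translating $v_i$ to the origin and applying Lemma~\ref{lem:finite_volume_exp_decay} with $A=\Z^2\setminus(V_{i-1}-v_i)$ (which contains the origin since $v_i\notin V_{i-1}$) yields $\Phi(A_i\mid \calF_{V_{i-1}})\leq e^{-K(1+o_K(1))}$ uniformly in the conditioning configuration. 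Chaining these bounds via the tower property gives
\[
\Phi\big(\{\Skel_V=\calT\}\cap\{\text{algorithm order}=\pi\}\big)\leq e^{-Kn(1+o_K(1))}=e^{-K|\calT|(1+o_K(1))},
\]
where the $-1$ in $n=|\calT|-1$ is absorbed into $o_K(1)$, uniformly in $\calT$ and in $\pi$.

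Finally I will sum over the possible algorithmic orders compatible with $\calT$; as the number of such orders is at most $|\calT|!\le e^{|\calT|\log|\calT|}$, this combinatorial factor is absorbed into the $o_K(1)$ in the regime $\log|\calT|=o(K)$ (outside of which the bound is applied in a trivial range in Section~\ref{sec:ATRC:RW_infinite_volume}). The main subtlety, and the point where the $\Delta'=[\Delta]_{\ln(K)^2}$ buffer is indispensable, is in the conditional wired bound: without the buffer one could not apply Lemma~\ref{lem:finite_volume_exp_decay} with the full rate $K$ at every step, since the enlargement $[\cdot]_{\ln(K)^2}$ appearing in that lemma must remain inside the complement of the already-revealed region $V_{i-1}$.
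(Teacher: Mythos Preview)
Your approach matches the paper's sketch (iteratively peel off the connection events $A_i$ and bound each via Lemma~\ref{lem:finite_volume_exp_decay}), but two points need correction.

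First, the sum over algorithmic orders is unnecessary and, via the factor $|\calT|!$, destroys the uniformity in $\calT$ that the lemma asserts. In fact the order is determined by $\calT$ alone: if $w\in\calT\cap\partialex V_{i-1}$ is added by the algorithm at some step $k\ge i$, then the connection through $(w+\Delta)\setminus V_{k-1}\subseteq (w+\Delta)\setminus V_{i-1}$ already places $w\in A_i$, whence $v_i=\min A_i=\min(\calT\cap\partialex V_{i-1})$. There is exactly one order and no combinatorial loss.

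Second, your buffer claim is misplaced: since $v_i\in R_i\cap\partialex V_{i-1}$, the enlargement $[R_i]_{\ln(K)^2}$ meets $V_{i-1}$, and the bound $\Phi(A_i\mid\calF_{V_{i-1}})\le\Phi^{1,1}_{[R_i]_{\ln(K)^2}}(A_i)$ does not follow from~\eqref{eq:cbc} (wiring the edges in $V_{i-1}\cap[R_i]_{\ln(K)^2}$ goes the wrong way). Condition instead on the \emph{cores} $W_{i-1}:=\bigcup_{j<i}(v_j+\Delta)$: the events $A_1,\dots,A_{i-1}$ remain measurable (each $R_j\subseteq v_j+\Delta\subseteq W_{i-1}$), and since $R_i\cap V_{i-1}=\emptyset$ with $V_{i-1}=[W_{i-1}]_{\ln(K)^2}$, one has $[R_i]_{\ln(K)^2}\cap W_{i-1}=\emptyset$. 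Now~\eqref{eq:cbc} yields $\Phi(A_i\mid\calF_{\bbE_{W_{i-1}}})\le\Phi^{1,1}_{[R_i]_{\ln(K)^2}}(A_i)$, and Lemma~\ref{lem:finite_volume_exp_decay} applies. With these two fixes your argument goes through.
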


Indeed, every new vertex of the tree away from the boundary induces a connection of the form~\eqref{eq:connection_decay_bulk} in the complement of the neighbourhood of the previously explored vertices (See Fig.~\ref{Fig:OZ_CG}). We do not provide further details,
see for example~\cite[Display (2.2)]{CamIofVel08} for the implementation of the bound.

Recall that~$\Skel(C)$ is a tree rooted at~$0$.
Denote by~$\mathrm{Tree}_N$ the set of all possible values of~$\Skel(C)$ if~$|\Skel(C)|=N$.
We now state a general combinatorial lemma that bounds the size of~$\mathrm{Tree}_N$.

\begin{lemma}[Entropy bound]
	\label{lem:entropy_of_skeleton_sets}
	There exists a universal \(c>0\) such that
	\begin{equation*}
		|\mathrm{Tree}_N| \leq e^{c\ln(K)N} = K^{cN}.
	\end{equation*}
\end{lemma}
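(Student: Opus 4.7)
The plan is a standard tree-encoding entropy bound, essentially in the spirit of~\cite{CamIofVel08, AouOttVel24}, and purely combinatorial in nature.

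First, I would extract from Algorithm~\ref{alg:CGpercolation} the key range bound: every non-root vertex \(v_i\in \Skel_V\) is inserted as a child of some previously explored vertex \(v^*\), and satisfies \(v_i\in \partialin(v^*+\Delta_K)\). In particular, the relative displacement \(v_i-v^*\) lies in \(\Delta_K = K\calU\cap\Z^2\). Since \(\calU\) is a compact convex set, there is a universal constant \(C>0\) with \(|\Delta_K|\leq CK^2\) for every \(K\geq 1\). Thus at each new vertex of the skeleton there are at most \(CK^2\) possible ``local shapes'' for the edge joining it to its parent.

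Next, I would encode every \(\calT\in \mathrm{Tree}_N\) canonically by running a depth-first exploration starting from the root \(0\), with siblings visited in, say, lexicographic order. Such an exploration produces (i) a Dyck-type word of \(2(N-1)\) up/down moves, determining the \emph{abstract} rooted plane tree, and (ii) for each of the \(N-1\) ``down'' moves, the displacement of the newly visited vertex with respect to its parent. The number of admissible Dyck words on \(N\) vertices is bounded by \(4^N\) (standard Catalan-type bound), while each displacement belongs to \(\Delta_K\), so has at most \(CK^2\) possible values. This gives
\begin{equation*}
|\mathrm{Tree}_N|\ \leq\ 4^N\cdot (CK^2)^{N-1}\ \leq\ \exp\!\bigl((2\ln K + \ln(4C))\,N\bigr),
\end{equation*}
which, upon enlarging \(K_0\) from Lemma~\ref{lem:finite_volume_exp_decay} if needed, is bounded above by \(K^{cN}\) with, say, \(c=3\).

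I do not anticipate any real obstacle: the argument relies only on the deterministic range constraint built into the coarse-graining algorithm and on the standard plane-tree counting, and it is entirely independent of the probabilistic/mixing inputs used for the energy bound of Lemma~\ref{lem:energy_of_a_skeleton}. The only point deserving care is to fix the canonical traversal so that the encoding map \(\calT\mapsto(\text{Dyck word},\,\text{displacement list})\) is injective, which the lexicographic ordering of children ensures.
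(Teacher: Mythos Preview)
Your proof is correct and is essentially the paper's approach made explicit: the paper simply observes that \(\mathrm{Tree}_N\) injects into the set of \(N\)-vertex subtrees of a \(CK^2\)-regular tree containing the root and then cites Kesten's classical bound, whereas you spell out that bound via the standard DFS/Dyck-word encoding. One small remark: rather than enlarging \(K_0\), you can absorb the constant \(4C\) into \(c\) by noting that for all \(K\geq 2\) one has \(4CK^2\leq K^{c}\) with \(c=2+\log_2(4C)\), which keeps \(c\) universal and independent of any lower bound on \(K\).
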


This Lemma follows from the fact that, for some \(C\geq 0\), the size of~$\mathrm{Tree}_N$ is smaller or equal to the number of \(N\)-vertex connected sub-trees of the \(CK^2\)-regular tree containing \(0\).
The latter is bounded by~\(e^{c\ln(K)N}\) for some \(c>0\) by Kesten's argument~\cite[page 85]{Kes82}.

\subsection{Input from~\cite{CamIofVel08,CamIofVel03}: skeleton and cluster cone-points}
\label{sec:importations}
 
The main result that we import from~\cite{CamIofVel08,CamIofVel03} is~\cite[Theorem 2.1]{CamIofVel08} that describes a typical geometry of skeletons (\cite{CamIofVel08} builds on~\cite{CamIofVel03}).
The result in~\cite{CamIofVel08} is stated for the FK-percolation, but the proof is general and relies only on Lemmata~\ref{lem:energy_of_a_skeleton} and~\ref{lem:entropy_of_skeleton_sets}.

\begin{lemma}[Cone-points of a skeleton]
	\label{lem:spine_skeleton_CPts}
	Let \((\Omega,\calF,P)\) be a probability space, \(v\in \Z^2\), and let \(\calC\) be a random finite connected subset of \(\Z^2\) containing \(v\) defined on \((\Omega,\calF,P)\). Suppose that for any tree \(\calT\) in the image of \(\Skel\), \(K\geq 0\),
	\begin{equation*}
		P(\Skel(\calC-v) = \calT)\leq e^{-K |\calT|(1+o_K(1))}.
	\end{equation*}
	Then, for any \(\delta>0\), there exist \(c_1,c_2>0\), \(K_0\geq 0\) such that, for any \(K\geq K_0\), \(t\in \partial\calW\), \(w\in \Z^2\),
	\begin{equation*}
		e^{t\cdot (w-v)}P\big(w\in \calC, |\CPts_{t,\delta}(\Skel(\calC-v))|\leq c_1 |w-v|/K\big)\leq e^{-c_2|w-v|}.
	\end{equation*}
	Moreover, by monotonicity, \(c_1,c_2,K_0\) can be taken uniform over \(\delta\geq \delta_0>0\).
\end{lemma}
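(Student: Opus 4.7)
The approach is the classical cone-point surgery of Campanino--Ioffe--Velenik, which applies almost verbatim once the energy and entropy bounds of Lemmata~\ref{lem:energy_of_a_skeleton} and~\ref{lem:entropy_of_skeleton_sets} are in hand. By translation assume \(v=0\). Since consecutive vertices of \(\Skel(\calC)\) sit within cells of diameter \(O(K\ln^2K)\), the event \(w\in \calC\) forces the existence of \(x\in\Skel\) with \(|x-w|\leq CK\ln^2K\), so the skeleton reaches a neighbourhood of \(w\); in particular \(|\Skel|\geq |w|/(CK)\) for some \(C=C(\delta)\).

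Given a realisation \(\calT\) of the skeleton, I would split it at its \((t,\delta)\)-cone-points ordered along the direction \(t\). Writing \(\CPts_{t,\delta}(\calT)=\{x_1,\dots,x_M\}\) and including the endpoints \(x_0=0\) and \(x_{M+1}\) close to \(w\), the skeleton decomposes into a backward-confined initial piece \(\gamma_L\in\SetRootMarkBackCont(t,\delta)\), \(M-1\) diamond-confined middle pieces \(\gamma_i\in\SetRootDiaCont(t,\delta)\), and a forward-confined terminal piece \(\gamma_R\in\SetRootMarkForwCont(t,\delta)\), with
\[
\displace(\gamma_L)+\sum_{i=1}^{M-1}\displace(\gamma_i)+\displace(\gamma_R)=x_{M+1}.
\]

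The core estimate I would establish is the tilted single-piece bound
\[
\sum_{\gamma\in \SetRootDiaCont(t,\delta)} e^{t\cdot \displace(\gamma)}\,\Phi(\gamma\subset\Skel)\leq 1+o_K(1),
\]
together with the exponential-tail bounds \(\sum_{|\displace(\gamma)|\geq\ell}e^{t\cdot \displace(\gamma)}\Phi(\gamma\subset\Skel)\leq Ce^{-c\ell}\), and analogous inequalities for \(\SetRootMarkBackCont\) and \(\SetRootMarkForwCont\). These follow by combining the energy bound \(\Phi(\Skel=\calT)\leq e^{-K|\calT|(1+o_K(1))}\) (Lemma~\ref{lem:energy_of_a_skeleton}) with the tree count \(K^{c|\calT|}\) (Lemma~\ref{lem:entropy_of_skeleton_sets}), noting that at \(t\in\partial\calW\) the bias \(e^{t\cdot y}\leq e^{\nu(y)}\) is essentially tight against \(e^{-\nu(y)}\) by~\eqref{eq:perco_order_one_decay_rate}. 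Once these are in place, summing over all skeletons with at most \(M\) cone-points yields
\[
e^{t\cdot w}\Phi\big(w\in\calC,\,|\CPts_{t,\delta}|\leq M\big)\leq C\sum_{k\leq M}(1+o_K(1))^k\,\mathbf{P}\big(S_1+\dots+S_{k+1}\in w+O(K\ln^2K)\big),
\]
where \(S_1,\dots,S_{k+1}\) are independent with the tilted single-piece distributions (extremal ones for \(S_1\) and \(S_{k+1}\)). Because the \(S_i\) have exponential tails at scale \(O(K)\), reaching a point at distance \(|w|\) in \(k+1\leq c_1|w|/K\) steps costs at most \(e^{-c_2|w|}\) by a Cramér-type bound, provided \(c_1\) is chosen small enough relative to the tail-rate of the \(S_i\).

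The principal obstacle is the tilted single-piece bound above: it is a cone-confined analogue of \(\nu(w)=-\lim_n n^{-1}\ln \Phi(0\leftrightarrow nw)\), and proving it requires a surgery cutting the skeleton at a cone-point without introducing uncontrolled correlations between the two resulting pieces. The ratio weak mixing property (Theorem~\ref{thm:ratio_weak_mixing_ATRC}) is what lets this decoupling be performed with only a multiplicative error \(1+o_K(1)\), and the \emph{a priori} unbiased tail estimate relies on the \(K^{cN}\) entropy bound being dominated by the \(e^{-KN}\) energy cost for \(K\) large. Uniformity in \(\delta\geq\delta_0>0\) is automatic by monotonicity, since shrinking \(\delta\) shrinks the sets \(\SetRootMarkBackCont(t,\delta)\), \(\SetRootDiaCont(t,\delta)\), \(\SetRootMarkForwCont(t,\delta)\) and strengthens each of the estimates above.
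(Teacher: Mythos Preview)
The paper does not prove this lemma; it is imported from \cite[Theorem~2.1]{CamIofVel08}, with the remark that the argument there ``is general and relies only on Lemmata~\ref{lem:energy_of_a_skeleton} and~\ref{lem:entropy_of_skeleton_sets}''. Your sketch correctly names the CIV cone-point decomposition as the route, but there is a genuine gap in your execution.

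You invoke the ratio weak mixing of the ATRC (Theorem~\ref{thm:ratio_weak_mixing_ATRC}) to decouple the pieces \(\gamma_L,\gamma_1,\dots,\gamma_R\). This is both \emph{unavailable} and \emph{unnecessary} here. Unavailable, because the lemma is stated for an abstract random set \(\calC\) on a general probability space \((\Omega,\calF,P)\), with the skeleton energy bound as the \emph{only} hypothesis; there is no \(\Phi\), no ATRC, and no mixing assumed, so expressions like ``\(\Phi(\gamma\subset\Skel)\)'' are not even defined in this setting. Unnecessary, because the factorisation at the skeleton level is combinatorial rather than probabilistic: if a skeleton \(\calT\) decomposes as \(\gamma_L\circ\gamma_1\circ\cdots\circ\gamma_{M-1}\circ\gamma_R\), then \(|\calT|=|\gamma_L|+\sum_i|\gamma_i|+|\gamma_R|-M\) (the \(M\) shared cone-points being counted once), so the hypothesised energy bound \(P(\Skel=\calT)\leq e^{-K|\calT|(1+o_K(1))}\) already factorises as \(e^{KM}\prod e^{-K|\gamma_\bullet|(1+o_K(1))}\). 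The tilted single-piece sums are then controlled purely by pairing this with the tree-count entropy \(K^{cN}\) of Lemma~\ref{lem:entropy_of_skeleton_sets}; no probabilistic decoupling enters. You appear to be conflating the present skeleton-level argument with the cluster-level renewal analysis of Section~\ref{subsec:mixing_weights_renewal}, where mixing \emph{is} genuinely required to decouple the conditional kernels \(q(\,\cdot\mid\cdots)\) of~\eqref{eq:def:dep_kernels}.

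A smaller point: your monotonicity argument for uniformity in \(\delta\geq\delta_0\) is oriented the wrong way. The relevant monotonicity is that \(\CPts_{t,\delta}(V)\) is increasing in \(\delta\) (wider cones give more cone-points), so the bad event \(\{|\CPts_{t,\delta}|\leq c_1|w-v|/K\}\) is \emph{decreasing} in \(\delta\); proving the bound at \(\delta=\delta_0\) immediately gives it for all \(\delta\geq\delta_0\). Arguing via the size of \(\SetRootDiaCont(t,\delta)\) points in the opposite direction and does not yield the stated uniformity.
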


\begin{remark}
	\label{rem:out_of_the_cone_connections}
	Note that the Lemma holds directly when \(w\) is not in \(v+\fcone_{t,\delta}\): then \(P(w\in \calC)\leq e^{-\nu(w-v)(1+o(1))}\) (as the event \(w\in \calC\) implies that \(|\Skel(\calC)|\geq \nu(w-v)/K\)) but \(t\cdot(w-v) -\nu(w-v) \leq -\delta\nu(w-v)\) by definition of \(\fcone_{t,\delta}\).
\end{remark}

The second result we import is a simple but notationally heavy use of finite energy. One can find two different implementations of this argument in~\cite[Section 2.9]{CamIofVel08}, and~\cite[Section 6.1]{AouOttVel24}. Introduce a small variation on the notion of cone-points which will be convenient later (one could work directly with cone-points, but the equations become a bit heavier).
Recall \(\rme_1=(1,0)\), \(\rme_2=(0,1)\).

\begin{definition}[Regular Cone-points]
	Let \(C=(V,E)\) be a connected subgraph of \((\Z^2,\bbE)\). Say that \(v\in V\) is a \emph{regular \((t,\delta)\)-cone-point} of \(C\) (see Fig.~\ref{Fig:OZ_RegCPts}) if it is a \((t,\delta)\)-cone-point of \(V\) and, if \(\{\pm \rme_1\}\cap \fcone_{t,\delta} \neq \varnothing\), \(\{v, v+\rme_1\},\{v,v-\rme_1\} \in E\) and \(\{v, v+\rme_2\},\{v,v-\rme_2\} \notin E\).
	Define~\(\rCPts_{t,\delta}(C)\) as the set of \((t,\delta)\)-regular cone-points of \(C\).
\end{definition}
Note than when \(\fcone_{t,\delta}\) contains exactly one element of \(\{\pm\rme_1,\pm\rme_2\}\), all cone-points are necessarily regular.

\begin{figure}
	\includegraphics{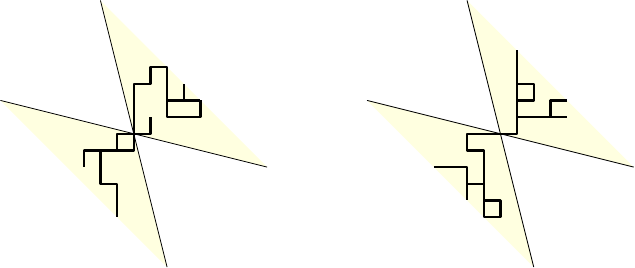}
	\caption{Left: a cone-point that is \emph{not} regular. Right: a regular cone-point.}
	\label{Fig:OZ_RegCPts}
\end{figure}

The idea for going from Lemma~\ref{lem:spine_skeleton_CPts} to the next lemma is simple: when \(v\leftrightarrow w\), up to an exponentially small error, there must be at least \(c|v-w|/K\) cone-points of the skeleton by Lemma~\ref{lem:spine_skeleton_CPts}.
Up to anther exponentially small error, a positive fraction of these cone-points must then be regular cone-points of~$\calC$ by uniform finite energy.

\begin{lemma}[Cluster Cone-points]
	\label{lem:Cluster_CPts}
	Let~\((\Omega,\calF,P)\) be probability space, \(v\in \Z^2\), and \(\omega:\Omega\to \{0,1\}^{\bbE}\) be a bond percolation random variable.
	The cluster of \(v\) in \(\omega\) is denoted by~\(\calC = \calC(\omega)\).
	Suppose that
	\begin{itemize}
		\item \(\omega\) has uniform finite energy (for opening and closing edges): for \(e\in \bbE\), let \(\calF_{e^{c}}\) be the sigma-algebra generated by \((\omega_f)_{f\neq e}\). Then, there exists \(\epsilon\in (0,1)\) such that for every \(e\in \bbE\),
		\begin{equation*}
			\epsilon <E(\omega_e\given \calF_{e^c}) < 1-\epsilon,
		\end{equation*}\(P\)-almost surely.
		\item the conclusion of Lemma~\ref{lem:spine_skeleton_CPts} holds for \(\calC\).
	\end{itemize}
	Then, for any \(\delta>0\), there exist \(c_1',c_2'>0\), \(L_0\geq 1\) such that, for any \(t\in \partial\calW\) such that the interior of \(\fcone_{t,\delta}\) contains an element of $\{\pm\rme_1, \pm \rme_2\}$, and any 
	\(w\in \Z^2\cap \fcone_{t,\delta}\) with \(|w-v|\geq L_0\),
	\begin{equation*}
		e^{t\cdot(w-v)}P\big(w\in \calC, \ |\rCPts_{t,\delta}(\calC)|\leq c_1' |w-v|\big) \leq e^{-c_2' |v-w|}.
	\end{equation*}By monotonicity, \(c_1',c_2',L_0\) can be taken uniform over \(\delta\geq \delta_0>0\).
\end{lemma}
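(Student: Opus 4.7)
The argument follows the surgery scheme of~\cite[Section~2.9]{CamIofVel08} and~\cite[Section~6.1]{AouOttVel24}: first reduce to skeleton cone-points via Lemma~\ref{lem:spine_skeleton_CPts}, then promote a positive fraction of them to regular cluster cone-points via uniform finite energy.

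Fix $\delta > 0$ and $K$ large. Lemma~\ref{lem:spine_skeleton_CPts}, applied to the coarse-grained skeleton $\Skel(\calC - v)$, yields $c_1, c_2 > 0$ and reduces the task to bounding
\[
e^{t\cdot(w-v)}\,P\bigl(w \in \calC,\ |\rCPts_{t,\delta}(\calC)| \leq c_1' |w-v|,\ |\CPts_{t,\delta}(\Skel(\calC-v))| > c_1 |w-v|/K\bigr).
\]
On this event there are skeleton cone-points $v_1, \dots, v_m$ with $m \geq c_1 |w-v|/K$, spaced at spatial distance at least $K$ by construction of the coarse-graining. Around each I would place a box $B_j$ of fixed radius $R = R(\delta)$; for $K$ large these boxes are pairwise disjoint, so the indicators of $A_j := \{v_j \in \rCPts_{t,\delta}(\calC)\}$ can be controlled by local finite-energy modifications inside $B_j$.

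The key claim is $P(A_j \mid \calF_{B_j^c}) \geq \eta > 0$ uniformly. Informally, the skeleton cone-point property combined with a cluster-fluctuation estimate (showing that excursions of $\calC$ from $\Skel(\calC)$ at scale $\geq R$ near $v_j$ are already exponentially rare) ensures that, up to an absorbable error, $\calC \setminus B_j \subset v_j + (\bcone_{t,\delta} \cup \fcone_{t,\delta})$. A finite-energy surgery inside $B_j$ (opening the edges $\{v_j, v_j \pm \rme_1\}$, closing $\{v_j, v_j \pm \rme_2\}$, and closing every edge that would push $\calC$ outside the cone) then produces the desired regular cone-point at uniform cost $\geq \eta$ via~\eqref{eq:fe_atrc}. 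A sequential conditioning along the $v_j$'s, exploiting disjointness of the $B_j$'s, yields stochastic domination of $(\ind_{A_j})_{j=1}^m$ by i.i.d.\ $\mathrm{Bernoulli}(\eta)$ variables, after which a Chernoff bound gives $\#\{j : A_j\} \geq \eta m/2$ except on an event of conditional probability $\leq e^{-c \eta^2 m}$. Setting $c_1' := \eta c_1/(4K)$, $c_2' := \min\{c_2, c\eta^2 c_1/(2K)\}$, and choosing $L_0$ so that $\eta m /2 \geq c_1' |w-v|$ for $|w-v| \geq L_0$, closes the argument.

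The main obstacle, to my mind, is making the surgery genuinely local, that is, $R$ independent of $K$: since the cluster may fluctuate at scale $\ln(K)^2$ away from its skeleton, a naive implementation would force $R \sim \ln(K)^2$ and destroy uniform positivity of $\eta$. This is resolved by the observation that, on the good event from Lemma~\ref{lem:spine_skeleton_CPts}, conditional fluctuations of $\calC$ of size beyond a constant near a skeleton cone-point have super-polynomially small probability (by a finite-volume exponential decay bound analogous to Lemma~\ref{lem:finite_volume_exp_decay}) and can therefore be absorbed into the error term already produced in the reduction step. A secondary subtlety is the dependence of the $A_j$'s on the random skeleton, which forces the conditioning to be performed sequentially along the ordered skeleton cone-points rather than in parallel.
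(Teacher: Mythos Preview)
Your approach matches the paper's: the paper gives no detailed proof, referring instead to exactly the surgery arguments of \cite[Section~2.9]{CamIofVel08} and \cite[Section~6.1]{AouOttVel24} that you invoke. The reduction to skeleton cone-points followed by a finite-energy upgrade and a Chernoff bound is the right strategy.

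Your ``main obstacle'' discussion, however, rests on a misconception. The coarse-graining scale $K$ in Lemma~\ref{lem:spine_skeleton_CPts} is fixed once and for all (one picks any $K\geq K_0$ and keeps it); it does not grow with $|w-v|$. Hence a surgery box of radius $R$ comparable to $K+\ln(K)^2$ is entirely acceptable: the resulting $\eta \geq \epsilon^{CR^2}$ is a fixed positive constant, and there is no need for $R$ to be independent of $K$. Your proposed resolution via ``super-polynomially small fluctuations'' is also misdirected: the inclusion $\calC \subset [\Skel_V(\calC-v)]_{K'}$ with $K' \sim K+\ln(K)^2$ is \emph{deterministic}, built into the coarse-graining algorithm, not a probabilistic estimate to be absorbed into an error term. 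The genuine subtlety you should address instead is that $A_j$ is not local to $B_j$: one must first verify that, on the good skeleton event, the part of $\calC$ outside a box of size $\sim K'$ around $v_j$ already lies in the double cone at $v_j$. This follows from the skeleton cone-point property combined with the deterministic inclusion above, possibly after invoking Lemma~\ref{lem:spine_skeleton_CPts} with a slightly narrower aperture $\delta'<\delta$ so that the $K'$-fattening of the $\delta'$-cone far from the apex sits inside the $\delta$-cone. Only then can the local surgery inside the box force a regular cluster cone-point.
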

See~\cite[Section 2.9]{CamIofVel08} or~\cite[Section 6.1]{AouOttVel24} for two different proofs of Lemma~\ref{lem:Cluster_CPts} via local surgery arguments.
From these two lemmas we can deduce our main cone-point estimate.

\begin{theorem}
	\label{thm:cone_points}
	Let \(\delta\in (0,1)\). There exist \(C\geq 0,c_1,c_2>0\) such that for any \(t\in \partial\calW\) such that the interior of \(\fcone_{t,\delta}\) contains an element of \(\{\pm\rme_1,\pm \rme_2\}\), and any \(x\in \Z^2\),
	\begin{equation*}
		e^{t\cdot x}\Phi\big(0\leftrightarrow x, |\rCPts_{t,\delta}(\calC)|\leq c_1 |x|\big)\leq Ce^{-c_2|x|}.
	\end{equation*}
\end{theorem}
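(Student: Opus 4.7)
The plan is to reduce the theorem to a direct application of Lemma~\ref{lem:Cluster_CPts} with $P=\Phi$, $v=0$, $w=x$, and $\calC = \calC_0$ being the cluster of the origin in $\omega_\tau$. To do so, it suffices to verify the two hypotheses of that lemma.

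\emph{Skeleton cone-points.} The skeleton hypothesis, namely the conclusion of Lemma~\ref{lem:spine_skeleton_CPts}, follows from Lemma~\ref{lem:spine_skeleton_CPts} itself applied to $\calC = \calC_0$. Its hypothesis is exactly the energy bound
\begin{equation*}
\Phi\big(\Skel_V(\calC_0) = \calT\big) \leq e^{-K|\calT|(1+o_K(1))},
\end{equation*}
which was established in Lemma~\ref{lem:energy_of_a_skeleton} via Lemma~\ref{lem:finite_volume_exp_decay} (ratio weak mixing in Theorem~\ref{thm:ratio_weak_mixing_ATRC}, together with the definition of $\calU$ and $\nu$).

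\emph{Finite energy.} For the finite-energy hypothesis we observe that the ATRC model satisfies \eqref{eq:fe_atrc} for the pair $(\omega_\tau,\omega_{\tau\tau'})$. For an edge $e$ and a conditioning configuration $(a,b)$ on $\bbE^\bullet\setminus\{e\}$ with $a\subseteq b$, writing $\mu=\Phi(\,\cdot\,|\,(\omega_\tau,\omega_{\tau\tau'}) = (a,b) \text{ off } e)$, \eqref{eq:fe_atrc} gives $\mu((\omega_\tau(e),\omega_{\tau\tau'}(e))=(a',b'))>c$ for each of the three admissible values of $(a',b')$. Summing over the value of $\omega_{\tau\tau'}(e)$ yields
\begin{equation*}
\epsilon < \Phi\bigl(\omega_\tau(e)=1 \bgiven (\omega_\tau(f))_{f\neq e}\bigr) < 1-\epsilon
\end{equation*}
$\Phi$-almost surely, for some $\epsilon>0$ depending only on $(J,U)$.

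With both hypotheses verified, Lemma~\ref{lem:Cluster_CPts} provides constants $c_1',c_2'>0$ and $L_0$ such that, for every $t\in\partial\calW$ whose forward cone $\fcone_{t,\delta}$ has nonempty intersection (interior) with $\{\pm\rme_1,\pm\rme_2\}$, and every $w\in\Z^2\cap\fcone_{t,\delta}$ with $|w|\geq L_0$,
\begin{equation*}
e^{t\cdot w}\,\Phi\bigl(w\in\calC_0,\ |\rCPts_{t,\delta}(\calC_0)|\leq c_1'|w|\bigr)\leq e^{-c_2'|w|}.
\end{equation*}
The case $|x|\leq L_0$ is absorbed in the constant $C$, and the case $x\notin\fcone_{t,\delta}$ is handled directly as in Remark~\ref{rem:out_of_the_cone_connections}: the event $\{0\leftrightarrow x\}$ already forces $|\Skel(\calC_0)|\gtrsim \nu(x)/K$, whereas $t\cdot x - \nu(x)\leq-\delta\,\nu(x)$ by the definition of $\fcone_{t,\delta}$, so the bound $e^{t\cdot x}\Phi(0\leftrightarrow x)\leq e^{-c|x|}$ holds trivially. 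Combining the three cases yields the claimed estimate with $c_1=c_1'$ and some $c_2>0$, $C\geq 0$.

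The only point requiring care is that the cited cone-point lemmas are stated for percolation configurations $\omega$ with uniform finite energy for both opening and closing edges, and we need to check that this property is inherited by the single-edge marginal of the ATRC. This is immediate from summing out $\omega_{\tau\tau'}(e)$ as explained above; no genuinely new argument is needed, and the rest is a straightforward insertion into the machinery already developed in Subsections~\ref{subsec:OZ_CG}--\ref{sec:importations}.
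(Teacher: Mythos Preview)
Your proposal is correct and follows essentially the same approach as the paper: verify the hypotheses of Lemmata~\ref{lem:spine_skeleton_CPts} and~\ref{lem:Cluster_CPts} (the energy bound via Lemma~\ref{lem:energy_of_a_skeleton} and finite energy via~\eqref{eq:fe_atrc}), apply them for $x\in\fcone_{t,\delta}$, and dispose of $x\notin\fcone_{t,\delta}$ via Remark~\ref{rem:out_of_the_cone_connections}. Your version is in fact slightly more explicit than the paper's, which compresses all of this into two sentences; your treatment of the finite-energy marginal and of the small-$|x|$ case are welcome clarifications.
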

\begin{proof}
	When \(x\notin \fcone_{t,\delta}\), the claim is trivial, see Remark~\ref{rem:out_of_the_cone_connections}. For \(x\in \fcone_{t,\delta}\), the result follows from Lemmata~\ref{lem:spine_skeleton_CPts}, and~\ref{lem:Cluster_CPts}: Lemma~\ref{lem:energy_of_a_skeleton} provides the required bound on the probability of a given skeleton, and the model has finite energy, which are the needed hypotheses for Lemmata~\ref{lem:spine_skeleton_CPts}, and~\ref{lem:Cluster_CPts}.
\end{proof}

This concludes our preparations. We are now ready to attack the proof of Theorem~\ref{thm:OZ_for_ATRC_infinite_vol}.

\subsection{Proof of Theorem~\ref{thm:OZ_for_ATRC_infinite_vol} part I: Cone-points and pre-renewal structure}
\label{subsec:CPs_pre_renewal}

We now fix \(s_0\in \bbS^1\), \(t_0\in \partial\calW\) dual to \(s_0\), \(\delta\in (0,1)\) such that the interior of \(\fcone_{t_0,\delta}\) contains and element of \(\{ \pm\rme_1, \pm \rme_2\}\), and we set
\begin{equation}
	\label{eq:t_0_fixing}
	\begin{gathered}
		\rCPts \equiv \rCPts_{t_0,\delta},\quad \fcone \equiv \fcone_{t_0,\delta}, \quad \bcone \equiv \bcone_{t_0,\delta},
		\\
		\SetRootMarkBackCont \equiv \SetRootMarkBackCont(t_0,\delta),\quad 
		\SetRootMarkForwCont \equiv \SetRootMarkForwCont(t_0,\delta),\quad
		\SetRootDiaCont \equiv \SetRootDiaCont(t_0,\delta).
	\end{gathered}
\end{equation}

From Theorem~\ref{thm:cone_points}, typical long clusters have many cone-points: having a non-linear (in \(|x|\)),  number of cone-points is exponentially more unlikely than \(\{x\in \calC\}\). The idea is now to write a realization of \(\calC\) containing many cone-points as a concatenation of ``irreducible graphs'' by splitting it at its regular cone-points. This will lead to a structure that, graphically, looks like a renewal structure. This is what we will do in this subsection. The last step will then be to extract a \emph{real} renewal structure (at the level of the measure) from this graphical one. This will be Subsection~\ref{subsec:mixing_weights_renewal}. To this end, introduce the notion of \emph{irreducible graphs}. Let \(\rme = \{\pm\rme_1\}\cap \fcone\) if the intersection is not empty, and \(\rme= \{\pm\rme_2\}\cap\fcone\) else. Say that
\begin{itemize}
	\item A marked backward-confined graph \((\gamma_L,v^*)\) is \emph{irreducible} if the diamond \linebreak \(\diam(v^*, \bend(\gamma_L))\) does not contain a regular cone-point of \(\gamma_L\), and \(\bend(\gamma_L)\) is a regular cone-point of the graph \(\gamma_L\cup \{\bend(\gamma_L), \bend(\gamma_L)+\rme\}\) (see Fig.~\ref{Fig:irr_examples}).
	\item A marked forward-confined graph \((\gamma_R,v^*)\) is \emph{irreducible} if the diamond \linebreak \(\diam(\fend(\gamma_R), v^*)\) does not contain a regular cone-point of \(\gamma_R\), and \(\fend(\gamma_R)\) is a regular cone-point of the graph \(\gamma_R\cup \{\fend(\gamma_R), \fend(\gamma_R)-\rme\}\).
	\item A diamond-confined graph \(\gamma\) is \emph{irreducible} if it does not contain a regular cone-point, and if \(\fend(\gamma),\bend(\gamma)\) are regular cone-points of \(\gamma\cup \{\{\fend(\gamma), \fend(\gamma)-\rme\},\{\bend(\gamma), \bend(\gamma) +\rme\}\}\) (see Fig.~\ref{Fig:irr_examples}).
\end{itemize}
In words: irreducible graphs are those that cannot be written as the concatenation of two non-trivial graphs, with the concatenation point being a regular cone-point. We will denote the sets of irreducible marked graphs in, respectively, \(\SetRootMarkBackCont\), \(\SetRootMarkForwCont\), \(\SetRootDiaCont\) by
\begin{equation*}
	\SetRootMarkBackCont^{\irr},\quad 
	\SetRootMarkForwCont^{\irr},\quad
	\SetRootDiaCont^{\irr}.
\end{equation*}

\begin{figure}
	\centering
	\includegraphics[scale=0.6]{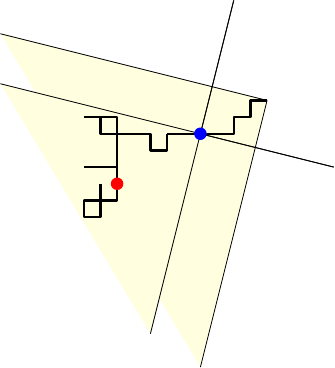}
	\hfill
	\includegraphics[scale=0.9]{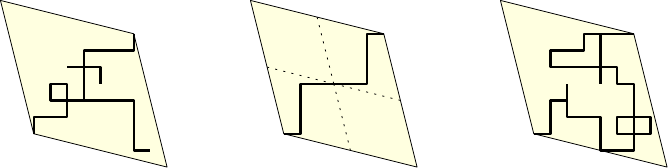}
	\caption{From left to right. 1) Backward confined graph which is not irreducible: the graph can be written as the concatenation of two graphs so that the concatenation point is a regular cone-point. 2) The graph is not irreducible as its endpoints will not generate regular cone-points when concatenated. 3) The graph is not irreducible as it contains a regular cone-point. 4) An irreducible graph.}
	\label{Fig:irr_examples}
\end{figure}

For \(x\in \fcone\cap \Z^2\), and \(C\ni x\) a realization of \(\calC\) containing at least two regular cone points \(v_1,v_2\) with \(0\in v_i+\bcone\), \(x\in v_i+ \fcone\), we can introduce the splitting into irreducible components:
\begin{equation*}
	C = \eta_L\sqcup \eta_1\sqcup\dots\sqcup\eta_M \sqcup \eta_R,
\end{equation*}where \(M\geq 1\), \((\eta_L,0)\), \((\eta_R,x)\), \(\eta_1,\dots,\eta_M\) are all irreducible, confined, (marked) graphs, and \(\sqcup\) means disjoint union of edges; note that there are sites overlap at the cone-points. Now, as mentioned in the end of Section~\ref{subsec:OZ_def}, there is a bijection between pairs \((\tilde{\gamma},v)\in \SetRootDiaCont \times \Z^2\) and diamond-confined connected graphs: translate \(\tilde{\gamma}\) by \(v\) to obtain the graph \(\gamma = v+\tilde{\gamma}\). Similar considerations hold for marked forward/backward confined connected graphs. In particular, for \(\eta_i\) in the above decomposition, there is a unique \(w_i\in \Z^2\) and a unique \(\tilde{\eta}_i\in \SetRootDiaCont^{\irr}\) such that \(\eta_i = w_i+ \tilde{\eta}_i\). Similarly for \(\eta_L,\eta_R\). As the marked point of \(\eta_L\) is \(0\), one has directly
\begin{equation*}
	w_L=0,
	\quad
	w_1 = \displace(\tilde{\eta}_L),
	\quad
	w_2 = \displace(\tilde{\eta}_L\concatenate\tilde{\eta}_1), \dots
	\quad
	w_R = \displace(\tilde{\eta}_L\concatenate\tilde{\eta}_1\concatenate\dots\concatenate \tilde{\eta}_M),
\end{equation*}
and the equivalent writing of \(C\):
\begin{equation*}
	\eta_L\sqcup \eta_1\sqcup\dots\sqcup\eta_M \sqcup \eta_R = \tilde{\eta}_L\concatenate \tilde{\eta}_1\concatenate\dots\concatenate\tilde{\eta}_M \concatenate \tilde{\eta}_R,
	\qquad
	\displace(\tilde{\eta}_L\concatenate\tilde{\eta}_1\concatenate\dots \concatenate \tilde{\eta}_R) = x,
\end{equation*}
with \(\tilde{\eta}_1,\dots,\tilde{\eta}_M\in \SetRootDiaCont^{\irr}\), \(\tilde{\eta}_L\in \SetRootMarkBackCont^{\irr}\), and \(\tilde{\eta}_R\in \SetRootMarkForwCont^{\irr}\).

By our definition of ``regular cone points'', each \(\tilde{\eta}\in \SetRootDiaCont^{\irr}\) contains a unique edge incident to \(\fend(\tilde{\eta})\), and the same for \(\bend(\tilde{\eta})\). Denote these edges by \(\fend_e(\tilde{\eta}), \bend_e(\tilde{\eta})\).
For \(\tilde{\eta},\tilde{\eta}_L,\tilde{\eta}_R\) as above, and \(v\in \Z^2\), introduce the percolation events \(A_v(\tilde{\eta})\) stating that the following occurs:
\begin{itemize}
	\item the edges in \((v+\tilde{\eta}\setminus \{\fend_e(\tilde{\eta}), \bend_e(\tilde{\eta})\})\) are open in \(\omega_{\tau}\),
	\item \(v+ \fend_e(\tilde{\eta})\), and \(v+\bend_e(\tilde{\eta})\) are open in \(\omega_{\tau\tau'}\) and closed in \(\omega_{\tau}\), and
	\item \(\partialex (v+\tilde{\eta})\) are closed in \(\omega_{\tau}\).
\end{itemize}Define the edges \(\bend_e(\tilde{\eta}_L)\), \(\fend_e(\tilde{\eta}_R)\), and the events \(A_v(\tilde{\eta}_L)\), \(A_v(\tilde{\eta}_R)\) similarly. See Fig.~\ref{Fig:Irred_decomp_Perco_events}.

\begin{figure}
	\includegraphics{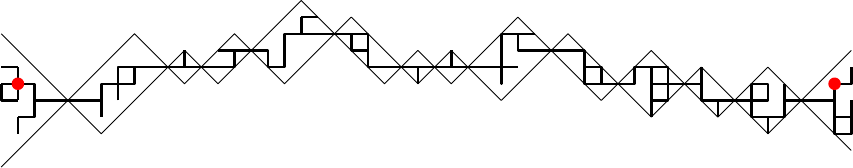}
	\includegraphics{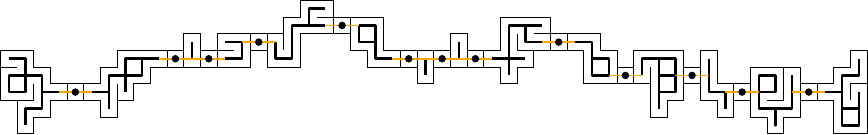}
	\caption{Top: A cluster with its irreducible decomposition. Bottom: the corresponding percolation events. Bold black edges are open in \(\omega_{\tau}\) (hence also in \(\omega_{\tau\tau'}\)); golden edges are closed in \(\omega_{\tau}\) but open in \(\omega_{\tau\tau'}\); thin lines are the dual of edges that are closed in \(\omega_{\tau}\), but for which the state of \(\omega_{\tau\tau'}\) is not prescribed.}
	\label{Fig:Irred_decomp_Perco_events}
\end{figure}

\begin{claim}
	\label{claim:chain_of_events}
	Define \(c_{J,U} := \big(\frac{\sinh(2J)}{e^{-2J}-e^{-2U}}\big)^{2}\). Then, one has
	\begin{equation*}
		\Phi(\calC= \eta_L\sqcup \eta_1\sqcup\dots\sqcup\eta_M \sqcup \eta_R)
		=
		c_{J,U}^{(M+1)}\Phi\big(A_{w_L}(\tilde{\eta}_L), A_{w_1}(\tilde{\eta}_1),\dots, A_{w_R}(\tilde{\eta}_R)\big).
	\end{equation*}
\end{claim}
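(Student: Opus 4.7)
My strategy is to observe that the two events in Claim~\ref{claim:chain_of_events} differ only in the status of a small, identifiable set of $\omega_\tau$-edges, and then to carefully compute the weight ratio that arises from flipping these edges.

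First I would unpack both events and compare them edge-by-edge. The LHS event $\{\calC_0=\eta_L\sqcup\eta_1\sqcup\dots\sqcup\eta_R\}$ forces every edge of the combined cluster to be open in $\omega_\tau$ (hence in $\omega_{\tau\tau'}$) and every edge in its external boundary to be closed in $\omega_\tau$, with no further constraint. Using the definitions given just above the claim, the RHS event $\bigcap_i A_{w_i}(\tilde\eta_i)$ imposes: every ``interior'' edge of a piece (edges of $w_i+\tilde\eta_i$ other than $\fend_e,\bend_e$) is open in $\omega_\tau$; the $2(M+1)$ attachment edges $\fend_e(\tilde\eta_i),\bend_e(\tilde\eta_i)$ (one per piece for $\tilde\eta_L,\tilde\eta_R$ and two per middle piece, totalling $1+2M+1=2(M+1)$) are closed in $\omega_\tau$ and open in $\omega_{\tau\tau'}$; the external boundary of each piece is closed in $\omega_\tau$. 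A short check using that junctions are regular cone-points (so exactly two attachment edges meet at each of the $M+1$ junctions, one from each neighbouring piece) shows that the RHS event is consistent and that the two events agree on every edge and on both components except on the $\omega_\tau$-status of the $2(M+1)$ attachment edges.

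This sets up a natural pairwise bijection: each configuration $(\omega_\tau,\omega_{\tau\tau'})$ realising the LHS is mapped to one realising the RHS by closing $\omega_\tau$ on the $2(M+1)$ attachment edges while leaving $\omega_{\tau\tau'}$ unchanged. Evaluating the ATRC weight~\eqref{eq:atrc_def} on paired configurations, the ratio is
\[
\frac{W(\mathrm{LHS})}{W(\mathrm{RHS})}=\left(\frac{\rmw_\tau}{\rmw_{\tau\tau'}}\right)^{2(M+1)}\cdot 2^{\Delta \clusters(\omega_\tau)},
\]
with no change in $\clusters(\omega_{\tau\tau'})$ since $\omega_{\tau\tau'}$ is unaltered. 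Since the pointwise ratio will be constant over the bijection, summing over the common set of ``free'' edges (external boundaries, distant edges) will give precisely the same ratio for probabilities.

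The only non-routine step is the cluster accounting $\Delta\clusters(\omega_\tau)=-2(M+1)$, and I expect this to be the main obstacle. By the definition of a regular cone-point and the external-boundary constraint, in the LHS each junction vertex $v$ has exactly two $\omega_\tau$-edges (its two attachment edges, pointing into the two adjacent pieces), and these are the only $\omega_\tau$-edges linking $v$ to anything; moreover each piece is $\omega_\tau$-attached to $v$ via a single edge. Closing the two attachment edges at $v$ therefore isolates $v$ and severs the two adjacent pieces at $v$, creating exactly two new clusters per junction. Summing over all $M+1$ junctions gives the advertised decrease $2(M+1)$. Combined with $\rmw_\tau=e^{2U}(e^{2J}-e^{-2J})$, $\rmw_{\tau\tau'}=e^{2(U-J)}-1$, and $e^{2J}-e^{-2J}=2\sinh(2J)$, the algebra simplifies to $\rmw_\tau/(2\rmw_{\tau\tau'})=\sinh(2J)/(e^{-2J}-e^{-2U})$, yielding the claimed identity.
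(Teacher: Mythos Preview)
Your proposal is correct and follows essentially the same approach as the paper: both identify the bijection obtained by flipping the $2(M+1)$ attachment edges from state $(1,1)$ to $(0,1)$, and both compute the resulting weight ratio $\rmw_\tau/(2\rmw_{\tau\tau'})$ per edge. The only cosmetic difference is that the paper accounts for the cluster change edge-by-edge (each closed edge is a bridge, creating one new $\omega_\tau$-cluster), whereas you group the accounting by junction (closing both edges at a regular cone-point isolates the vertex and separates the two adjacent pieces, giving $+2$ clusters per junction); both bookkeepings yield the same total of $2(M+1)$ new clusters.
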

\begin{proof}
	Under \(\calC= \eta_L\sqcup \eta_1\sqcup\dots\sqcup\eta_M \sqcup \eta_R\), closing any edge of the form \(\bend_e,\fend_e\) cuts the cluster into two parts.
	Thus, changing the state of such an edge in\(\omega_{\tau}, \omega_{\tau\tau'}\) from~$(1,1)$ to~$(0,1)$ brings a factor
	\begin{equation*}
		\frac{\rmw_{\tau}}{2\rmw_{\tau\tau'}} = \frac{2e^{2U}\sinh(2J)}{2(e^{2(U-J)}-1)},
	\end{equation*}by definition of the measure: closing \(e\) in \(\omega_{\tau}\) creates a cluster in \(\omega_{\tau}\), hence the factor \(2\) in the denominator, and swaps the value of \((\omega_{\tau}(e),\omega_{\tau\tau}'(e))\) from \((1,1)\), which has weight \(\rmw_{\tau}\), to \((0,1)\), which has weight \(\rmw_{\tau\tau'}\). The claim follows from the definition of \(\rmw_{\tau},\rmw_{\tau\tau'}\), see~\eqref{eq:atrc_weights}, the fact that there are \(2(M+1)\) edges of the form \(\bend_e,\fend_e\) (\(2\) per regular cone-point, \(M+1\) regular cone-points), and that the event obtained from \(\{\calC= \eta_L\sqcup \eta_1\sqcup\dots\sqcup\eta_M \sqcup \eta_R\} \) by closing the edges of the form \(\bend_e,\fend_e\) is precisely \(A_{w_L}(\tilde{\eta}_L)\cap A_{w_1}(\tilde{\eta}_1)\cap \dots \cap A_{w_R}(\tilde{\eta}_R)\).
\end{proof}

For \(\tilde{\eta}_L\in \SetRootMarkBackCont^{\irr}\), and \(\tilde{\eta}\in \SetRootDiaCont^{\irr}\cup \SetRootMarkForwCont^{\irr}\), introduce the conditional weights
\begin{equation}
	\label{eq:def:dep_kernels}
	\begin{gathered}
		q_L(\tilde{\eta}_L) := \Phi\big(A_{w_L}(\tilde{\eta}_L)\big),
		\\
		q(\tilde{\eta} \given \tilde{\eta}_L, \tilde{\eta}_1,\dots,\tilde{\eta}_k) := c_{J,U}\Phi\big( A_{\displace(\bar{\eta})}(\tilde{\eta}) \bgiven A_{w_L}(\tilde{\eta}_L), A_{w_1}(\tilde{\eta}_1),\dots, A_{w_k}(\tilde{\eta}_k)\big),
	\end{gathered}
\end{equation}
where \(\bar{\eta} = \tilde{\eta}_L\concatenate \tilde{\eta}_1\concatenate\dots\concatenate\tilde{\eta}_k\).
By Claim~\ref{claim:chain_of_events}, we get the following decomposition:
\begin{multline}
	\label{eq:condition_proba_decomposition}
	\Phi(\calC = \tilde{\eta}_L\concatenate \tilde{\eta}_1\concatenate\dots\concatenate\tilde{\eta}_M \concatenate \tilde{\eta}_R)
	\\
	=
	q_L(\tilde{\eta}_L)q( \tilde{\eta}_1\given \tilde{\eta}_L)q(\tilde{\eta}_2\given \tilde{\eta}_L, \tilde{\eta}_1)\dots q(\tilde{\eta}_M\given \tilde{\eta}_L,\dots,\tilde{\eta}_{M-1}).
\end{multline}
Let us describe informally the second part of the proof.
So far the probability of a given cluster linking~$0$ to~$x$ is decomposed as a product of conditional kernels (which are \emph{not} probability kernels). The idea is now to represent this as a mixture of \emph{independent} (factorized) kernels defined on diamond-confined graphs.
Then we will suitably normalize them by a factor \(e^{t_0\cdot \displace(\gamma)}\) to obtain probability kernels.

\subsection{Proof of Theorem~\ref{thm:OZ_for_ATRC_infinite_vol} part II: Mixing of weights and renewal structure}
\label{subsec:mixing_weights_renewal}

We will use the same procedure as~\cite[Section 7]{AouOttVel24} with the tricks from~\cite[Appendix C]{OttVel18} to compensate for the lack of monotonicity in the conditional kernels. We only describe the needed inputs, and will refer to~\cite[Section 7]{AouOttVel24} once we arrive at a stage where the remaining arguments is a copy-pasting of~\cite[Section 7]{AouOttVel24}.

The first step is to prove that the conditional kernels of~\eqref{eq:condition_proba_decomposition} have good mixing properties. This is the content of the next lemma.
\begin{lemma}
	\label{lem:mixing_conditonal_chain_infinite_volume}
	With the notations and definitions of Subsection~\ref{subsec:CPs_pre_renewal}, there exist \(\rho>0, C\geq 0, c>0\), such that
	\begin{itemize}
		\item for every \(\tilde{\eta}\in \SetRootMarkBackCont^{\irr}\cup \SetRootMarkForwCont^{\irr}\), one has that
		\begin{equation}
			\label{eq:fin_ene_conditonal_chain_infinite_volume}
			\inf_{n\geq 0}\inf_{\tilde{\eta}_0\in \SetRootMarkBackCont^{\irr}} \inf_{\tilde{\eta}_1,\dots,\tilde{\eta}_n\in \SetRootDiaCont^{\irr}} q(\tilde{\eta}\given \tilde{\eta}_0,\dots , \tilde{\eta}_n) \geq \rho^{|\tilde{\eta}|};
		\end{equation}
		\item for any \(n,k,k'\geq 0\), any \(\tilde{\eta}_0,\tilde{\eta}_0'\in \SetRootMarkBackCont^{\irr}\), any \(\tilde{\eta}\in \SetRootMarkForwCont^{\irr}\cup \SetRootDiaCont^{\irr}\), and any \linebreak \(\tilde{\eta}_1,\dots,\tilde{\eta}_{k+n}, \tilde{\eta}_{1}',\dots,\tilde{\eta}_{k'}\in \SetRootDiaCont^{\irr}\)
		\begin{equation}
			\label{eq:mixing_conditonal_chain_infinite_volume}
			\Big|\frac{q\big(\tilde{\eta} \bgiven \tilde{\eta}_0, \tilde{\eta}_1, \dots, \tilde{\eta}_{k},\tilde{\eta}_{k+1},\dots, \tilde{\eta}_{k+n}\big)}{q\big(\tilde{\eta} \bgiven \tilde{\eta}_0', \tilde{\eta}_1', \dots, \tilde{\eta}_{k'}',\tilde{\eta}_{k+1},\dots, \tilde{\eta}_{k+n}\big)} - 1\Big|\leq Ce^{-cn}.
		\end{equation}
	\end{itemize}
\end{lemma}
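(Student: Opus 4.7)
Both bounds follow from the representation~\eqref{eq:def:dep_kernels} of $q$ as a (rescaled) conditional probability of the ATRC measure~$\Phi$, combined with the finite-energy and mixing inputs established in Sections~\ref{sec:mixing}--\ref{sec:strong_mixing}.

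For~\eqref{eq:fin_ene_conditonal_chain_infinite_volume}: the event $A_v(\tilde\eta)$ prescribes the ATRC field on an edge-set $\bar E_v(\tilde\eta)\subset \bbE^\bullet$ of cardinality at most $C|\tilde\eta|$ for some universal $C\geq 1$, consisting of the edges of $v+\tilde\eta$, their outer boundary, and the cone-point edges $\bend_e,\fend_e$. By the very definition of irreducible pieces, the events $A_{w_j}(\tilde\eta_j)$ appearing in the conditioning touch $\bar E_{\displace(\bar\eta)}(\tilde\eta)$ only at the single cone-point edge $\bend_e(\tilde\eta)$, and both conditionings prescribe the same state at that edge. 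Iterating the finite-energy property~\eqref{eq:fe_atrc} edge-by-edge (invoking~\eqref{eq:smp} at each step to absorb the conditioning into a single-edge statement) yields a uniform lower bound $c_0^{|\bar E_v(\tilde\eta)|}\geq \rho^{|\tilde\eta|}$ with $\rho := c_0^C$, after including the deterministic factor $c_{J,U}$.

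For~\eqref{eq:mixing_conditonal_chain_infinite_volume}: the factor $c_{J,U}$ cancels in the ratio, and by translation invariance of $\Phi$ we may shift so that $A_0(\tilde\eta)$ sits at the origin in both chains; the shared recent past $A_{w_{k+1}}(\tilde\eta_{k+1}),\ldots,A_{w_{k+n}}(\tilde\eta_{k+n})$ then occupies the same edge-set $F_{\mathrm{rec}}$ in both conditionings, while the two distant pasts are supported on sets whose $\ell^\infty$-distance to the origin is at least $c\,n$ (each irreducible diamond-confined piece has forward displacement at least one unit). I would follow the surgery of~\cite[Appendix~C]{OttVel18} and~\cite[Section~7]{AouOttVel24}: pick an intermediate annulus $F_{\mathrm{box}}\subset F_{\mathrm{rec}}$ lying at distance of order $cn$ from both the origin and the distant past, sum over configurations $\xi$ on $F_{\mathrm{box}}$ containing a pair of decoupling paths in the sense of Section~\ref{subsec:strMix:setup} (the exceptional contribution of $\xi$ without such paths is exponentially small in $n$ by Theorem~\ref{thm:Ale04_ATRC} and self-duality), and observe that conditionally on such $\xi$ the event $A_0(\tilde\eta)$ becomes independent of the distant past by the analogue of Lemma~\ref{lem:decoupling_paths_mATRC} for the standard ATRC. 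The remaining ratio of weights $\Phi(\omega|_{F_{\mathrm{box}}}=\xi\mid \text{full past}^{(i)})$ between $i=1$ and $i=2$ is uniformly $1+O(e^{-cn})$ in $\xi$ by Theorem~\ref{thm:strong_mixing_atrc}, applied with $F_1$ the portion of the conditioning where the two chains differ.

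The main obstacle is obtaining a \emph{ratio} (rather than merely total-variation) bound that is uniform in $|\tilde\eta|$, since $q(\tilde\eta\mid\cdot)$ itself may be as small as $\rho^{|\tilde\eta|}$ by~\eqref{eq:fin_ene_conditonal_chain_infinite_volume}. This forces one to compare the two conditional distributions pointwise at the level of box configurations $\xi$, for which the strong mixing of Theorem~\ref{thm:strong_mixing_atrc} (rather than the weaker TV-type Theorem~\ref{thm:ratio_weak_mixing_ATRC} alone) is tailored; the associated bloc-percolation estimate decays exponentially in $n$ by the one-dimensional coarse-graining alluded to in Remark~\ref{rem:oneDperco}, since $F_{\mathrm{rec}}$ is tube-like.
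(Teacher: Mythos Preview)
Your treatment of~\eqref{eq:fin_ene_conditonal_chain_infinite_volume} is correct and matches the paper's one-line argument via finite energy.

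For~\eqref{eq:mixing_conditonal_chain_infinite_volume} your overall strategy is the right one and parallels the paper's, but there is a genuine gap precisely at the step you yourself flag. You assert that ``the remaining ratio of weights $\Phi(\omega|_{F_{\mathrm{box}}}=\xi\mid \text{full past}^{(i)})$ between $i=1$ and $i=2$ is uniformly $1+O(e^{-cn})$ in $\xi$ by Theorem~\ref{thm:strong_mixing_atrc}.'' But Theorem~\ref{thm:strong_mixing_atrc} only gives a total-variation bound, not a pointwise ratio bound. A direct TV estimate on the law of $\xi$ yields $\big|\sum_\xi g(\xi)(\mu_1(\xi)-\mu_2(\xi))\big|\le e^{-cn}$ with $g(\xi)=\Phi(A_0(\tilde\eta)\mid\xi,\text{recent past})\le 1$; but the denominator $\sum_\xi g(\xi)\mu_2(\xi)$ is comparable to $q(\tilde\eta\mid\cdot)$, which by~\eqref{eq:fin_ene_conditonal_chain_infinite_volume} may be as small as $\rho^{|\tilde\eta|}$, so the resulting ratio bound is not uniform in $|\tilde\eta|$. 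Your final paragraph identifies this obstacle correctly, but the proposed resolution (``compare pointwise at the level of $\xi$'') is circular: it again claims a ratio bound from a TV theorem. Also, your box $F_{\mathrm{box}}$ cannot literally be a subset of $F_{\mathrm{rec}}$, since the configuration on $F_{\mathrm{rec}}$ is fully determined by the conditioning; the decoupling paths must live in a region that extends \emph{outside} the chain tube.

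The paper fills the gap with two ingredients you are missing. First, an a-priori boundedness claim (Claim~\ref{claim:mixing_weights:claim_bounded}): using finite energy in a fixed-size box near the junction together with ratio weak mixing (Theorem~\ref{thm:ratio_weak_mixing_ATRC}), one shows the ratio in~\eqref{eq:mixing_conditonal_chain_infinite_volume} lies in $[c^{-1},c]$ for a constant $c$ independent of $n$, $|\tilde\eta|$ and of all the pieces; this dispatches small $n$. Second, and this is the heart of the matter, an abstract ``TV-mixing plus decoupling event $\Rightarrow$ ratio mixing'' lemma (Lemma~\ref{lem:app:mixing_to_ratioMixing}). Rather than comparing the two $\xi$-laws pointwise, one couples them through a high-probability decoupling event $D$ in a middle slab $E_\times$, exploiting simultaneously that (i) on $D$ the conditional law of the $A_0(\tilde\eta)$-side is \emph{identical} for the two pasts, (ii) the marginals on $E_\times$ are TV-close by Theorem~\ref{thm:strong_mixing_atrc} (with the bloc-percolation handled via the cone-point bottlenecks), and (iii) within each measure one has TV-mixing from the $A_0(\tilde\eta)$-side to $E_\times$. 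No single one of these inputs suffices; the coupling in Lemma~\ref{lem:app:mixing_to_ratioMixing} shows how they combine to yield a genuine ratio bound.
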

\begin{proof}
	The first point is by definition of \(q\), see~\eqref{eq:def:dep_kernels}, and finite energy for \(\Phi\): the support of the event \( A_{\displace(\bar{\eta})}(\tilde{\eta})\) in~\eqref{eq:def:dep_kernels} is of size at most \( 4|\tilde{\eta}|\). Focus on the second point. We implicitly always work in a large finite volume, \(\Lambda_N = \{-N,\dots, N\}^2\), with \(0,1\) boundary conditions and take limits (everything being uniform over the large enough volume). Let \(\tilde{\eta}_{k'+i}' \equiv \tilde{\eta}_{k+i}\) for \(i=1,\dots n\). For \(l\geq 0\), let
	\begin{equation*}
		v_l = \displace(\tilde{\eta}_l),
		\quad 
		v_l'= \displace(\tilde{\eta}_l'),
		\quad
		w_l = \displace(\tilde{\eta}_0\concatenate \tilde{\eta}_1 \dots\concatenate \tilde{\eta}_{l}),
		\quad
		w_l'=\displace(\tilde{\eta}_0'\concatenate \tilde{\eta}_1' \dots\concatenate \tilde{\eta}_{l}').
	\end{equation*}Let also
	\begin{equation*}
		u_l = w_{k+l}-w_k = \displace(\tilde{\eta}_{k+1}\concatenate\dots\concatenate \tilde{\eta}_{k+l}) = w_{k'+l}'-w_{k'}'.
	\end{equation*}
	For \(v\in \Z^2\), introduce the translations of the events corresponding to a given chain of irreducible graphs:
	\begin{gather*}
		B_{L,v} = A_v(\tilde{\eta}_0)\cap\bigcap_{i=1}^{k} A_{v+w_{i-1}}(\tilde{\eta}_i),
		\quad
		B_{L,v}' = A_v(\tilde{\eta}_0')\cap\bigcap_{i=1}^{k'} A_{v+w_{i-1}'}(\tilde{\eta}_i'),
		\\
		B_{v} = \bigcap_{l=1}^{n} A_{v+u_l}(\tilde{\eta}_{k+l}),
		\quad
		B_{R,v} = A_{v+u_n}(\tilde{\eta}).
	\end{gather*}
	Now, that by definition of \(q\) (see~\eqref{eq:def:dep_kernels}),
	\begin{multline*}
		\frac{q\big(\tilde{\eta} \bgiven \tilde{\eta}_0, \tilde{\eta}_1, \dots, \tilde{\eta}_{k},\tilde{\eta}_{k+1},\dots, \tilde{\eta}_{k+n}\big)}{q\big(\tilde{\eta} \bgiven \tilde{\eta}_0', \tilde{\eta}_1', \dots, \tilde{\eta}_{k'}',\tilde{\eta}_{k+1},\dots, \tilde{\eta}_{k+n}\big)}
		\\=
		\frac{\Phi\big( A_{w_{k+n}}(\tilde{\eta}) \bgiven A_{0}(\tilde{\eta}_0), A_{w_0}(\tilde{\eta}_1),\dots, A_{w_{k-1}}(\tilde{\eta}_k)\big)}{\Phi\big( A_{w_{k'+n}'}(\tilde{\eta}) \bgiven A_{0}(\tilde{\eta}_0), A_{w_0'}(\tilde{\eta}_1),\dots, A_{w_{n+k'-1}'}(\tilde{\eta}_{n+k'}')\big)}
		=
		\frac{\Phi\big(B_{R,w_k} \bgiven B_{L,0}\cap B_{w_k}\big)}{\Phi\big(B_{R,w_k'} \bgiven B_{L,0}'\cap B_{w_{k'}'}\big)}.
	\end{multline*}
	
	Then, using translation invariance of \(\Phi\),
	\begin{multline}
		\label{eq:ratio_cond_proba_to_ratio_events}
		\frac{\Phi\big(B_{R,w_k} \bgiven B_{L,0}\cap B_{w_k}\big)}{\Phi\big(B_{R,w_k'} \bgiven B_{L,0}'\cap B_{w_{k'}'}\big)}
		=
		\frac{\Phi\big(B_{R,0} \bgiven B_{L,-w_k}\cap B_{0}\big)}{\Phi\big(B_{R,0} \bgiven B_{L,-w_{k'}'}'\cap B_{0}\big)}
		\\
		=
		\frac{\Phi\big(B_{R,0} \cap B_{L,-w_k} \bgiven B_{0}\big)}{\Phi\big(B_{R,0} \bgiven B_{0}\big) \Phi\big(B_{L,-w_k} \bgiven B_{0}\big)}\frac{\Phi\big(B_{R,0} \bgiven B_{0}\big)\Phi\big(B_{L,-w_{k'}'}'\bgiven  B_{0}\big)}{\Phi\big(B_{L,-w_{k'}'}'\cap B_{R,0} \bgiven  B_{0}\big)}
		.
	\end{multline}
	Now, \(B_{R,0}\) is supported on the edges with at least one endpoint in \(u_n  + \fcone\), denoted \(E_{\triangleleft}(u_n)\), whilst \(B_{L,-w_k}, B_{L,-w_{k'}'}'\) are supported on the edges with at least one endpoint in \(- \fcone\), denoted \(E_{\triangleright}\). Looking at~\eqref{eq:ratio_cond_proba_to_ratio_events},~\eqref{eq:mixing_conditonal_chain_infinite_volume} will follow from
	\begin{itemize}
		\item[(1)] a uniform upper and lower bounds on~\eqref{eq:ratio_cond_proba_to_ratio_events},
		\item[(2)] a suitable form of ratio mixing for \(P(\cdot ) := \Phi(\cdot \given B_0)\).
	\end{itemize}
	Indeed, (2) deals with the case \(n\) large enough, while small values of~\(n\) are covered by (1). We start with (1).
	\begin{claim}
		\label{claim:mixing_weights:claim_bounded}
		There exists \(c\geq 1\) such that for any \(n\geq 0\), any \(v\in \Z^2\), any \(\tilde{\eta}_1, \dots , \tilde{\eta}_n\in \SetRootDiaCont^{\irr}\)
		with \(\displace(\tilde{\eta}_1\concatenate \dots \concatenate \tilde{\eta}_n) = v\), any \(F_{\triangleright}\subset E_{\triangleright}\), \(F_{\triangleleft}\subset E_{\triangleleft}(v)\) finite, and any \(\alpha\in \{(0,0),(0,1),(1,1)\}^{F_{\triangleleft}}\),
		\begin{equation*}
			\frac{1}{c}\leq \inf_{\xi,\xi'} \frac{P(Y_{F_{\triangleleft}} = \alpha \given Y_{F_{\triangleright}} = \xi)}{P(Y_{F_{\triangleleft}} = \alpha \given  Y_{F_{\triangleright}} = \xi')}\leq \sup_{\xi,\xi'} \frac{P(Y_{F_{\triangleleft}} = \alpha \given Y_{F_{\triangleright}} = \xi)}{P(Y_{F_{\triangleleft}} = \alpha \given  Y_{F_{\triangleright}} = \xi')} \leq c,
		\end{equation*}where the sup/inf are over \(\xi,\xi'\) having positive probability, \(P = \Phi(\ \given B_0)\), and \(Y_{F} = (\omega_{\tau}|_F,\omega_{\tau\tau'}|_F)\).
	\end{claim}
	\begin{proof}
		By symmetry of the expression, it is sufficient to prove the upper bound. Let \(M\geq 1\) be a large enough integer and \(\Lambda_M = \{-M,\dots, M\}^2\), \(E_M = \bbE_{\Lambda_M}\). Let \(\xi,\xi'\) have positive \(P\)-probability.
		We now considering a decomposition
		\begin{gather*}
			B_0 = D_{B0}\cap D_{BR},
			\quad
			\{Y_{F_{\triangleleft}} = \alpha\} = D_{R0}\cap D_{R},
			\\
			\{ Y_{F_{\triangleright}} = \xi\} = D_L\cap D_{L0},
			\quad
			\{Y_{F_{\triangleright}} = \xi'\} = D_L'\cap D_{L0}',
		\end{gather*}
		where the events \(D_L\) and~\(D_L'\) are supported on \(E_{\triangleright}\setminus \bbE_{\Lambda_M}\), the events \(D_{RB}\) and~\(D_{R}\) are supported on \(E_{\triangleleft}\setminus \bbE_{\Lambda_M}\), and the events \(D_{L0}\), \(D_{L0}'\), \(D_{B0}\), and~\(D_{R0}\) are supported on \(\bbE_{\Lambda_M}\).
		
		Now, by finite energy applied to lower bound the probability of the events with supported in \(\bbE_{\Lambda_M}\) conditionally on the others, one has that for some \(c>0\),
		\begin{multline*}
			\frac{P(Y_{F_{\triangleleft}} = \alpha \given Y_{F_{\triangleright}} = \xi)}{P(Y_{F_{\triangleleft}} = \alpha \given  Y_{F_{\triangleright}} = \xi')}
			\\
			=
			\frac{\Phi(D_{R0}\cap D_R \cap D_{L0}\cap D_L\cap D_{B0}\cap D_{BR})\Phi(D_{L0}'\cap D_L'\cap D_{B0}\cap D_{BR})}{\Phi(D_{L0}\cap D_L\cap D_{B0}\cap D_{BR}) \Phi(D_{R0}\cap D_R \cap D_{L0}'\cap D_L'\cap D_{B0}\cap D_{BR})}
			\\
			\leq
			\frac{\Phi(D_R \cap D_L\cap D_{BR})\Phi( D_L'\cap D_{BR})}{\Phi( D_L\cap D_{BR}) \Phi( D_R \cap D_L'\cap D_{BR})}e^{c M^2}.
		\end{multline*}But, by exponential ratio weak mixing (Theorem~\ref{thm:ratio_weak_mixing_ATRC}), for \(M\) large enough depending only on the angular aperture of \(\fcone\),
		\begin{equation*}
			\frac{\Phi(D_L\cap D_R\cap D_{BR})}{\Phi(D_L'\cap D_R\cap D_{BR})} \leq \frac{\Phi(D_L)\Phi(D_R\cap D_{BR})}{\Phi(D_L')\Phi(D_R\cap D_{BR})} \frac{1 + e^{-cM}}{1 - e^{-cM}}\leq 2\frac{\Phi(D_L)}{\Phi(D_L')},
		\end{equation*}and
		\begin{equation*}
			\frac{\Phi(D_L'\cap D_{BR})}{\Phi(D_L\cap D_{BR})} \leq \frac{\Phi(D_L')\Phi(D_{BR})}{\Phi(D_L)\Phi(D_{BR})} \frac{1 + e^{-cM}}{1 - e^{-cM}}\leq 2\frac{\Phi(D_L')}{\Phi(D_L)}.
		\end{equation*}
		This concludes the proof of the claim.
	\end{proof}
	
	Divide the proof of ratio mixing into two claims. We start by proving a mixing bound for \(P\). We will use the following observation: let \(\gamma\) be the simple closed dual path surrounding the connected graph \(\tilde{\eta}_1\concatenate \dots\concatenate\tilde{\eta}_n\).
	Let \(*\gamma\) be the set of primal edges that are crossed by \(\gamma\), and let \(V_{\gamma}\) be the set of sites surrounded by \(\gamma\). Then,
	\begin{equation}
		\label{eq:cond_diamond_chain_to_atrc}
		P|_{\bbE\setminus \bbE_{V_{\gamma}}} = \atrc_{\bbE\setminus \bbE_{V_{\gamma}}}^{1,1}\big(\ \bgiven \omega_{\tau}|_{*\gamma} = 0 \big) = \atrc_{\bbE\setminus \bbE_{V_{\gamma}}}^{0,1}\big(\ \bgiven \omega_{\tau}|_{*\gamma} = 0 \big),
	\end{equation}where \(\omega_{\tau}|_{*\gamma}\) is the restriction of \(\omega_{\tau}\) to \(*\gamma\), see Fig.~\ref{Fig:Effective_bc_chain}.

	\begin{figure}
		\includegraphics[scale=1]{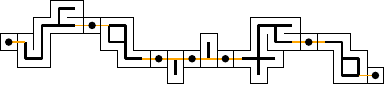}
		\hspace*{1cm}
		\includegraphics[scale=1]{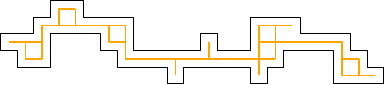}
		\caption{Black edges have state \((1,1)\), gold ones have state \((0,1)\), thin lines are dual to edges where \(\omega_{\tau} \) is \(0\). Left: the percolation event one conditions on. Right: the boundary conditions it effectively imposes.}
		\label{Fig:Effective_bc_chain}
	\end{figure}
	
	For \(\tilde{\eta}_1,\dots,\tilde{\eta}_n\), \(u_1,\dots, u_n\) as before, let \(E_{\epsilon,\times}(\tilde{\eta}_1,\dots,\tilde{\eta}_n)\) be the edges with both endpoints in \( \R^2 \setminus \big((u_{\lceil n/3\rceil}+\bcone_{t_0,\delta+\epsilon})\cup (u_{\lfloor 2n/3\rfloor}+\fcone_{t_0,\delta+\epsilon})\big) \), where~\(t_0,\delta\) are fixed in~\eqref{eq:t_0_fixing}.
	
	\begin{claim}
		\label{prf:ratio_mix_weights:str_mix_diam_meas}
		For any \(\epsilon>0\), there exist \(n_0\geq 1\), \(C\geq 0,c>0\) such that, for any \(n\geq n_0\), any \(\tilde{\eta}_1,\dots,\tilde{\eta}_n\in \SetRootDiaCont^{\irr}\), any finite sets of edges \(F_{\triangleright}\subset E_{\triangleright}\), \( F_{\triangleleft}\subset E_{\triangleleft}\), \(F_{\times}\subset E_{\epsilon,\times}(\tilde{\eta}_1,\dots,\tilde{\eta}_n)\), any \(F_1\in \{F_{\triangleright}, F_{\triangleleft}, F_{\times}\}\), and any \(\xi,\xi' \in \{(0,0),(0,1),(1,1)\}^{F_2 }\), where \(F_2=(F_{\triangleright}\cup F_{\triangleleft}\cup F_{\times})\setminus F\),
		\begin{equation*}
			\tvd\big(P(X_{F_1}\in \cdot \given X_{F_2} = \xi), P(X_{F_1}\in \cdot \given X_{F_2} = \xi')\big) \leq Ce^{-cn},
		\end{equation*}where \(P,X\) are defined as in Claim~\ref{claim:mixing_weights:claim_bounded}.
	\end{claim}
	\begin{proof}
		The idea is simple but the details of the percolation estimate are a bit tedious to write down, so we only sketch the percolation argument. The formal implementation is similar to~\cite[Lemma 3.2]{OttVel18}. Looking at~\eqref{eq:cond_diamond_chain_to_atrc}, \(P|_{\bbE\setminus \bbE_{V_{\gamma}}}\) has the \(l\)-path decoupling property of Section~\ref{subsec:strMix:setup}. One can therefore use Theorem~\ref{thm:strong_mixing_atrc} to obtain the claim. Indeed, one can bound the wanted total variation distance with a connection probability in a Bernoulli block percolation model. The model one compares with is at worse of the type depicted on Fig.~\ref{Fig:Mixing_infVol_weight_perco_chain}.
		\begin{figure}
			\includegraphics[scale=0.7]{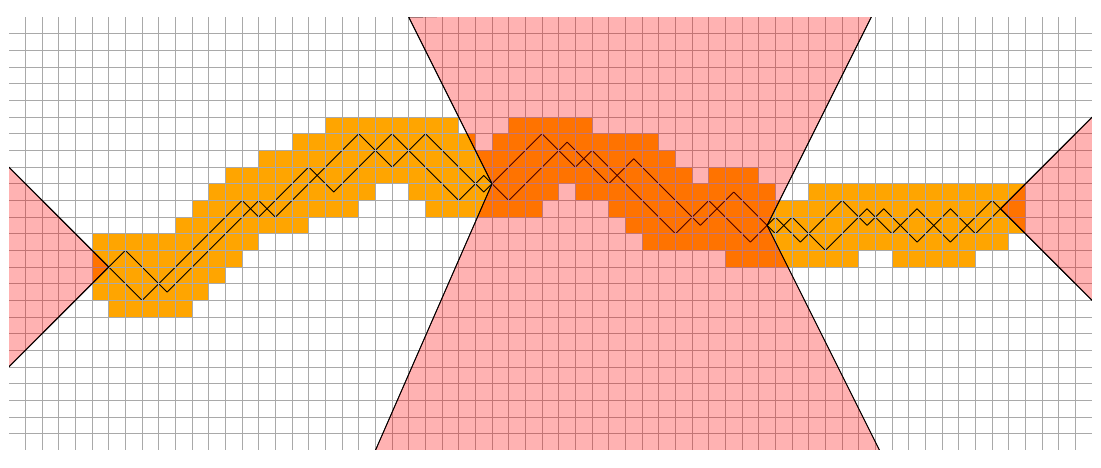}
			\caption{The grey grid represent the blocks of the block-percolation of Theorem~\ref{thm:strong_mixing_atrc}. The orange squares have a strictly smaller than one probability to be open. The white squares have a probability as small as wanted to be open. The red regions represent \(E_{\triangleright}, E_{\epsilon,\times}, E_{\triangleleft}\).}
			\label{Fig:Mixing_infVol_weight_perco_chain}
		\end{figure}
		The \(n/3\) cone-points between the different regions each provides an opportunity to disconnect \(F_1\) from \(F_2\): each induces a bottleneck in the orange chain illustrated on Fig.~\ref{Fig:Mixing_infVol_weight_perco}, and each bottleneck gives a positive probability to disconnect \(F_1\) from \(F_2\). The choice of the larger angular opening for the central region (in Figure~\ref{Fig:Mixing_infVol_weight_perco_chain}), \(\delta +\epsilon\) instead of \(\delta\), is a simple way to get rid of connections between the left/right red regions of Figure~\ref{Fig:Mixing_infVol_weight_perco_chain} and the central one occurring very high/low in the \(\rme_2\)-direction.
		\begin{figure}
			\includegraphics[scale=0.5]{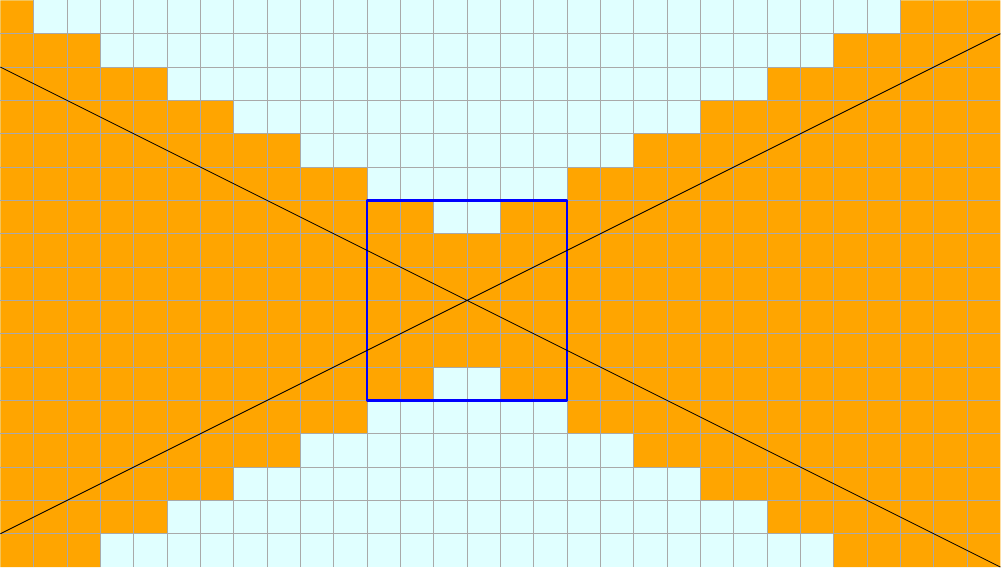}
			\caption{The grey grid represent the blocks of the block-percolation of Theorem~\ref{thm:strong_mixing_atrc}. The orange squares have a strictly smaller than one probability to be open. The light blue squares have a probability as small as wanted to be open. By exponential decay in the blue blocks, a positive density of bottleneck have the property that the two orange sides of the blue square are not connected in the blue blocks. Then, with positive probability, the blocks in the square are closed, cutting the orange chain at that bottleneck.}
			\label{Fig:Mixing_infVol_weight_perco}
		\end{figure}
	\end{proof}
	We then turn this into ratio mixing using Appendix~\ref{lem:app:mixing_to_ratioMixing}.
	\begin{claim}
		\label{prf:ratio_mix_weights:ratio_mix_cone_meas}
		There exists \(C\geq 0 ,c>0, n_0\geq 1\) such that for any \(n\geq n_0\), any \(v\in \Z^2\), any \(\tilde{\eta}_1, \dots , \tilde{\eta}_n\in \SetRootDiaCont^{\irr}\)
		with \(\displace(\tilde{\eta}_1\concatenate \dots \concatenate \tilde{\eta}_n) = v\), any \(F_{\triangleright}\subset E_{\triangleright}\), \(F_{\triangleleft}\subset E_{\triangleleft}(v)\) finite, and any \(\alpha\in \{(0,0),(0,1),(1,1)\}^{F_{\triangleleft}}\),
		\begin{equation*}
			\sup_{\xi,\xi'} \Big|\frac{P(X_{F_{\triangleleft}} = \alpha \given X_{F_{\triangleright}} = \xi)}{P(X_{F_{\triangleleft}} = \alpha \given  X_{F_{\triangleright}} = \xi')}-1\Big|\leq Ce^{-cn},
		\end{equation*}where the sup is over \(\xi,\xi'\) having positive probability, \(P,X\) are defined as in Claims~\ref{claim:mixing_weights:claim_bounded}, and~\ref{prf:ratio_mix_weights:str_mix_diam_meas}.
	\end{claim}
	\begin{proof}
		The proof will be an application of Lemma~\ref{lem:app:mixing_to_ratioMixing}. Recall~\eqref{eq:cond_diamond_chain_to_atrc}.
		
		Let \(1-\delta>\epsilon>0\), and let \(E_{\times} = E_{\epsilon,\times}(\tilde{\eta}_1,\dots,\tilde{\eta}_n)\). We apply Lemma~\ref{lem:app:mixing_to_ratioMixing} with
		\begin{itemize}
			\item \(\Omega_1 = \{(0,0),(0,1),(1,1)\}^{F_{\triangleleft}}\), \(\Omega_2 = \{(0,0),(0,1),(1,1)\}^{E_{\times}}\),
			\item \(\mu = P(\ \given X_{F_{\triangleright}} = \xi)|_{F_{\triangleleft}\sqcup E_{\times}}\), \(\nu = P(\ \given X_{F_{\triangleright}} = \xi')|_{F_{\triangleleft}\sqcup E_{\times}}\),
			\item \(D\subset \Omega_2\) is the event that there is an open path in \(\omega_{\tau\tau'}\) from the top boundary of \(\Lambda_N\) (the implicit finite volume we work in) to its bottom boundary staying in \(E_{\times}\), and that there is a dual path of open edges in \(\omega_{\tau}^*\) from the top boundary of \(\Lambda_N\) to its bottom boundary using only edges dual to edges in \(E_{\times}\).
		\end{itemize}
		The hypotheses of Lemma~\ref{lem:app:mixing_to_ratioMixing} hold with \(\epsilon = e^{-cn}\) when \(n\) is large enough by Claim~\ref{prf:ratio_mix_weights:str_mix_diam_meas}, and the uniform exponential decay of \(\omega_{\tau\tau'}^*,\omega_{\tau}\) (Theorem~\ref{thm:Ale04_ATRC}).
	\end{proof}
	This concludes the proof of~\eqref{eq:mixing_conditonal_chain_infinite_volume}.
\end{proof}

Lemma\ref{lem:mixing_conditonal_chain_infinite_volume} is the needed input to put us in the framework of~\cite[Section 7]{AouOttVel24}. Let us describe the needed modifications which all take place in the beginning of the argument. The main difference is that our conditional weights \(q\) are not monotonic. We thus need to define the \(p_k\)'s used there in a slightly different way.

For \(k> l\geq 1\), \(\tilde{\eta}_0\in \SetRootMarkBackCont^{\irr}\), \(\tilde{\eta}_1,\dots,\tilde{\eta}_{k-1} \in \SetRootDiaCont^{\irr}\), and \(\tilde{\eta}\in \SetRootDiaCont^{\irr}\cup\SetRootMarkForwCont^{\irr}\), set
\begin{gather*}
	a_{0}(\tilde{\eta}\given \tilde{\eta}_0,\dots, \tilde{\eta}_{k-1})\equiv a_0(\tilde{\eta}) = \inf_{n\geq 0}\inf_{\zeta_0\in \SetRootMarkBackCont^{\irr}}\inf_{\zeta_1,\dots,\zeta_n\in \SetRootDiaCont^{\irr}} q\big(\tilde{\eta} \bgiven \zeta_0,\zeta_1, \dots , \zeta_n\big),
	\\
	a_k(\tilde{\eta}\given \tilde{\eta}_0,\dots, \tilde{\eta}_{k-1}) = q(\tilde{\eta}\given \tilde{\eta}_0,\dots, \tilde{\eta}_{k-1}),
\end{gather*}
and
\begin{multline*}
	a_{l}(\tilde{\eta} \given \tilde{\eta}_0,\dots, \tilde{\eta}_{k-1}) \equiv a_{l}(\tilde{\eta} \given \tilde{\eta}_{k-l},\dots, \tilde{\eta}_{k-1}) \\
	= \inf_{n\geq 0}\inf_{\zeta_0\in \SetRootMarkBackCont^{\irr}}\inf_{\zeta_1,\dots,\zeta_n\in \SetRootDiaCont^{\irr}} q\big(\tilde{\eta} \bgiven \zeta_0, \dots , \zeta_n , \tilde{\eta}_{k-l},\dots, \tilde{\eta}_{k-1}\big)
\end{multline*}

In worlds: \(a_k\) records the minimal mass given by a conditional probability to an irreducible graph for a fixed frozen ``recent'' past, and any ``less recent'' possible past. Introduce then the mass increments (\(p_k\) has the same definition domain as \(a_k\)) 
\begin{equation*}
	p_{0} = a_0,\quad p_{k} = a_{k}-a_{k-1}.
\end{equation*}
Note that by definition of the \(a_k\)'s, the \(p_k\)'s are always non-negative. This property is what compensates the lack of property P6: monotonicity of \(q(\tilde{\eta}\given \tilde{\eta}_0,\dots,\tilde{\eta}_{k-1})\). One can now duplicate~\cite[Section 7]{AouOttVel24} to conclude the proof of Theorem~\ref{thm:OZ_for_ATRC_infinite_vol} with the following adaptations: use the \(p_k\)'s defined above in place of the ones defined in~\cite{AouOttVel24}, and use Lemma~\ref{lem:mixing_conditonal_chain_infinite_volume} as a replacement for~\cite[properties P5, P7]{AouOttVel24}. Indeed, these are the only properties used in the argument.

\section{Invariance principle for the modified ATRC cluster}
\label{sec:invariance_principle}

\subsection{Notations and main result of the section}

Recall~\(\matrc_{n,m}(\ \given v_L\leftrightarrow v_R)\) defined in Section~\ref{sec:coupling:interfaces}.
As in Section~\ref{sec:ATRC:RW_infinite_volume}, we will describe the geometry of the cluster of \(v_L\) under~\(\matrc_{n,m}(\ \given v_L\leftrightarrow v_R)\) using a coupling with a random walk bridge in such a way that the cluster is included in diamonds with endpoints at the random walk steps.
In this section, we will need to track certain constants that will play the role of minimal/maximal scale at which some estimates hold. Constants like \(C,c,c'\) will be defined only locally, whilst numbered constants will have fixed value re-used in several places. All constants can depend on \(J,U\) but never on \(n\).

To be able to re-use our result in future work~\cite{DobGlaOtt26}, we state a slightly more general result than what we need here. The reader can think of \(\Phi_n\) as \(\matrc_{n/2,m_n}\). Moreover, for notational convenience, we translated everything so that \(v_L = 0\). In the setup below, one can think of \(v_n\) as \(v_R-v_L\). Let \(\FVBox_{n}\) be a sequence of rectangles of the form:
\begin{equation*}
	\FVBox_{n} = \{0,\dots, n\}\times\{-m_n',\dots,m_n\}
\end{equation*}where \(m_n,m_n'\) are non-decreasing sequences whose properties are part of the main theorem. Also introduce
\begin{align*}
	\bar{\FVBox}_{n} &= \{-1,\dots, n+1\}\times\{-m_n'-1,\dots,m_n+1\},
	\\
	\FVBox_{n}^{'} &= \{1,\dots, n-1\}\times\{1-m_n',\dots,m_n - 1\}.
\end{align*}
Finally, introduce the family of rectangles: for \(k,l\geq 0\), \(n\geq 2k\),
\begin{equation}
	\label{eq:def:rectangle}
	\rectangle_{k,l}^n = \{k,\dots,n-k\}\times\{-l,\dots,l\}.
\end{equation}
We will be considering families of measures \(\Phi_{n}\) with the following properties:
\begin{enumerate}[label=(\Roman*), ref=\theenumii (\Roman*)]
	\item \label{hyp_fin_vol_connection:support} \(\Phi_{n}\) has support contained in \((\{0,1\}^2)^{\rmE_{n}}\) with \(\bbE_{\FVBox_{n}'} \subset \rmE_{n}\subset \bbE_{\bar{\FVBox}_{n}}\).
	\item \label{hyp_fin_vol_connection:FKG} \(\Phi_{n}\) is FKG-lattice.
	\item \label{hyp_fin_vol_connection:stochastic_sandwich} 
	\(\Phi_{n}(\cdot \given 1_{\rmE_{n}\setminus \bbE_{\FVBox_{n}'}})\) restricted to \(\bbE_{\FVBox_{n}'}\) is equal to \(\atrc_{\bbE_{\FVBox_{n}'}}^{1,1}\), and \(\Phi_{n}(\cdot \given 0_{\rmE_{n}\setminus \bbE_{\FVBox_{n}'}})\) restricted to \(\bbE_{\FVBox_{n}'}\) is equal to \(\atrc_{\bbE_{\FVBox_{n}'}}^{0,0}\), where \(1_F\) and~\(0_F\) are the events that all edges in \(F\) have state \((1,1)\) and~\((0,0)\) respectively. 
	In particular, as \(\Phi_n\) is FKG-lattice,
	\begin{equation}
		\label{eq:stochastic_sandwich}
		\atrc_{\bbE_{\FVBox_{n}'}}^{0,0} \preccurlyeq \Phi_n|_{\bbE_{\FVBox_{n}'}} \preccurlyeq \atrc_{\bbE_{\FVBox_{n}'}}^{1,1}
	\end{equation}
	\item \label{hyp_fin_vol_connection:finite_energy} \(\Phi_{n}\) has the following finite energy property: there exists \(\feCst\in (0,1)\), independent of \(n\) such that, for any \(e\in \rmE_{n}\),
	\begin{equation}
	\label{eq:fin_vol_FE}
	\begin{gathered}
		\Phi_{n}((\omega_{\tau}(e),\omega_{\tau\tau'}(e)) = (1,1) \given \calF_{\rmE_{n}\setminus e}) \geq \feCst \ \Phi_n\text{-almost surely},
		\\
		\Phi_{n}((\omega_{\tau}(e),\omega_{\tau\tau'}(e)) = (0,0) \given \calF_{\rmE_{n}\setminus e}) \geq \feCst \ \Phi_n\text{-almost surely},
	\end{gathered}
	\end{equation}where \((\omega_{\tau},\omega_{\tau\tau'})\sim \Phi_n\), and \(\calF_F\) is the sigma-algebra generated by \((\omega_{\tau}(e),\omega_{\tau\tau'}(e))_{e\in F}\).
	\item \label{hyp_fin_vol_connection:edge_state_swap} \(\Phi_{n}\) has the double-edge-state-swap property: there is \(\stateSwapCst>0\) such that for any \(n\), any \(e=\{x,y\}, f=\{i,j\}\in \bbE_{\calR_n'}\), and any configurations \(a,b\in \{0,1\}^{\rmE_n}\) with \(\Phi_{n}(\{(a,b)\})>0\) such that \(i\in\calC_x(a_{ef})\subset \calR_n'\setminus \partialin\calR_n'\), \(j,y\notin \calC_x(a_{ef})\),
	\begin{equation}
		\label{eq:Phi_n_edge_state_swap}
		\frac{\Phi_n(\{(a^{ef},b)\})}{\Phi_n(\{(a_{ef},b)\})} = \stateSwapCst,
	\end{equation}where \(a_{ef}(e') = a^{ef}(e') = a(e')\) if \(e'\notin \{e,f\}\), and \(a_{ef}(e) = a_{ef}(f) =0\), \(a^{ef}(e) = a^{ef}(f) = 1\). 
\end{enumerate}
Note that by Hypotheses~\ref{hyp_fin_vol_connection:stochastic_sandwich}, \(\Phi_n\xrightarrow{n\to\infty} \atrc_{J,U}\) the unique infinite volume ATRC measure as soon as \(m_n,m_n'\xrightarrow{n\to\infty} \infty\), as we work with \(J,U\) on the self dual curve, where there is a unique ATRC measure. We therefore denote
\begin{equation*}
	\Phi = \atrc_{J,U}.
\end{equation*}
Let finally \(v_n\) be a sequence of points such that:
\begin{equation}
	\label{eq:def:FV_v_n}
	\norm{v_n-(n,0)}_{\infty} \leq 2,
	\quad
	\Phi_n(0\xleftrightarrow{\omega_{\tau}} v_n )>0.
\end{equation}Let \((\omega_{\tau},\omega_{\tau\tau'})\sim \Phi_n\). We will be studying the cluster of \(0\) in \(\omega_{\tau}\) under \(\Phi_n(\cdot \given 0 \xleftrightarrow{\omega_{\tau}} v_n)\), so we will use the shorter notations
\begin{equation*}
	\calC_x = \text{ connected component of } x \text{ in } \omega_{\tau},
	\quad
	\calC \equiv \calC_0.
\end{equation*}
We will use the probabilistic description of long clusters from Theorem~\ref{thm:OZ_for_ATRC_infinite_vol} with a suitable choice of cones. First, note that by reflection symmetry the (unique by Theorem~\ref{thm:OZ_for_ATRC_infinite_vol}) \(t\in \partial \calW\) dual to \(\rme_1\) is proportional to \(\rme_1\). Moreover, it satisfies
\begin{equation}
	t\cdot \rme_1 = \nu(\rme_1) \equiv \nu_1.
\end{equation}
So the only \(t\in \partial \calW\) dual to \(\rme_1\) is \(\nu_1\rme_1\). Then, by strict convexity and symmetry, the cones \(\fcone_{\nu_1\rme_1,\delta}\) are invariant under reflection through \(\{x:\ x_2=0\}\), and have an angular aperture continuous in \(\delta\), strictly increasing, which converges to \(0\) as \(\delta\to 0\) and to \(\pi\) as \(\delta\to 1\). 
For~\(\theta\in (0,\pi/2)\), define the symmetric cones with angular aperture \(2\theta\):
\begin{align*}
	\fcone_{\theta} = \{x\in \R^2:\ \tan(\theta) x_1 \geq |x_2| \}	
	\quad\text{and}\quad
	\bcone_{\theta} = -\fcone_{\theta}.
\end{align*}
Then, for any \(\theta\in (0,\pi/2)\), there is a unique \(\delta\in (0,1)\) such that \(\fcone_{\theta} = \fcone_{\nu_1\rme_1,\delta}\).
As in Section~\ref{sec:ATRC:RW_infinite_volume}, we say that \(x\) is a \(\theta\)-cone-point of \(A\) if \(A\subset x+(\fcone_{\theta}\cup \bcone_{\theta})\), and write
\begin{align*}
	\diam_{\theta}(u,v) &= (u+\fcone_{\theta}) \cap (v+\bcone_{\theta}),
	\\
	\CPts_{\theta}(A) &= \{x\in A:\ x\text{ is a } \theta\text{-cone-point of } A\}.
\end{align*}
We will mostly use the value \(\theta = \pi/4 \), but we will need the value \(\pi/8\) for some intermediate results. We therefore set \(\theta = \pi/4\) by default, and will explicitly specify the use of different values.
\begin{equation*}
	\text{When omitted from the notation, }\theta\text{ is set to be }\pi/4.
\end{equation*}
Denote \(\SetRootMarkBackCont,\SetRootMarkForwCont,\SetRootDiaCont \) the sets of backward-confined, forward-confined, diamond-confined graphs (see Section~\ref{subsec:OZ_def}) for \(\theta = \pi/4\). Let then \(p_L,p_R,p\) be the measures given by Theorem~\ref{thm:OZ_for_ATRC_infinite_vol} for \((\delta,\nu_1\rme_1)\) with \(\fcone = \fcone_{\nu_1\rme_1,\delta}\). Denote \(\OZmeas = p^{\N}\) the random chain probability measure, \(\OZWalkmeas = (p\circ \displace^{-1})^{\otimes\N}\) the random walk increment probability measure. Denote \(\OZDecompmeas\) the positive measure (\emph{not} probability measure) on pairs length + sequences of graphs given by
\begin{equation}
	\label{eq:def:OZDecompmeas}
	\OZDecompmeas(M;\gamma_L,\gamma_1,\dots,\gamma_M,\gamma_R) = \rmC p_L(\gamma_L)p_R(\gamma_R)\prod_{i=1}^M p(\gamma_i),
\end{equation}
where \(\rmC\) is the constant given by Theorem~\ref{thm:OZ_for_ATRC_infinite_vol}. Integrating with respect to~$\OZDecompmeas$, we get
\begin{multline}
	\label{eq:def:measure_sequence}
	\int f(M;\gamma_L,\gamma_1,\dots,\gamma_M,\gamma_R) \, d\OZDecompmeas
	\\=
	\rmC\sum_{M\geq 0}\sum_{\gamma_L\in \SetRootMarkBackCont}\sum_{\gamma_R\in \SetRootMarkForwCont}\sum_{\gamma_1,\dots,\gamma_M\in \SetRootDiaCont} f(M;\gamma_L,\dots,\gamma_R)p_L(\gamma_L)p_R(\gamma_R)\prod_{i=1}^M p(\gamma_i).
\end{multline}
In particular, the above is just a finite sum whenever~\(f\) is supported on sequences of graphs with a bounded displacement of \(\gamma_L\concatenate\gamma_1\concatenate\dots\concatenate\gamma_M\concatenate\gamma_R\).
Let \((\gamma_i)_{i\geq 1} \sim \OZmeas\). For any~$v\in\Z^2$, define
\begin{equation}
	\label{eq:def:hitting_times}
	T_v = \inf\{k\geq 1:\ \sum_{i=1}^k \displace(\gamma_i)= v\},
\end{equation}where the inf of an empty set is set to be \(+\infty\) by convention. Let \(\OZmeas_v = \OZmeas(\cdot \given T_v<\infty)\), and \(\OZWalkmeas_v = \OZWalkmeas(\cdot \given T_v<\infty)\). 
Note that \(T_v\) depends only on the sequence of displacements, it is thus also well defined under \(\OZWalkmeas\).
We will frequently use the following facts about \(\OZWalkmeas\) without further details.
\begin{enumerate}
	\item First, there exist \(c_-,c_+>0\) such that for any \(n\geq 1\), \(v=(v_1,v_2)\) with \(n\leq v_1\leq 2n\), \(|v_2|\leq \sqrt{n}\),
	\begin{equation*}
		\frac{c_-}{\sqrt{n}} \leq \OZmeas(T_v<\infty) \leq \frac{c_+}{\sqrt{n}}.
	\end{equation*}This follows from the same computation as the OZ asymptotics of Theorem~\ref{thm:oz-atrc}, see for example~\cite[Section 8.1]{AouOttVel24}.
	\item Second, for any \(\epsilon>0\), there exist \(c>0,L_0\) such that for any \(k\geq 1\), \(L\geq L_0\),
	\begin{equation*}
		\OZmeas\Big(\sum_{i=k}^{k+L-1} \displace(\gamma_i)\cdot \rme_1 \notin [(\mu_1-\epsilon)L, (\mu_1+\epsilon)L]\Big) \leq e^{-cL},
	\end{equation*}where \(\mu_1 = \sum_{\gamma\in \SetRootDiaCont} p(\gamma) \displace(\gamma)\cdot \rme_1 >0\). This is a standard large deviation estimate for sums of independent random variables with exponential tails.
	\item Finally, a direct consequence of the two points above is that there exist \(n_0\geq 1\), \(c_0,c,\rho>0\), such that for any \(n\geq n_0\), \(v=(n,v_2)\) with \(|v_2|\leq \sqrt{n}\), and any \(n\geq k_n \geq c_0\ln(n)\),
	\begin{gather*}
		\OZmeas_v\big(\exists i\in \{1,\dots, n-k_n\}:\, |\{j:\, i\leq \displace(\gamma_1\concatenate\dots \concatenate \gamma_j)\cdot \rme_1 \leq i + k_n \}| < \rho k_n\big) \leq e^{-ck_n},
		\\
		\OZmeas_v\big(\exists i\in \{1,\dots, T_v\}:\, |\displace(\gamma_i)| \geq k_n\big) \leq e^{-ck_n}.
	\end{gather*}Note that the first estimate is stronger than the second, but we state the second explicitly for clarity.
\end{enumerate}

The following ``representation as a mixture of random walk bridges'' is the main result of this section.

\begin{theorem}
	\label{thm:main_Inv_principle_coupling_with_RW}
	Let \(0<J<U\) satisfy \(\sinh 2J=e^{-2U}\). Suppose \((\Phi_n)_{n\geq 1}\) is a family of measures satisfying Hypotheses \ref{hyp_fin_vol_connection:support}, \ref{hyp_fin_vol_connection:FKG}, \ref{hyp_fin_vol_connection:stochastic_sandwich}, \ref{hyp_fin_vol_connection:finite_energy}, and \ref{hyp_fin_vol_connection:edge_state_swap}. 
	Then, there exist \(c>0, C_0\geq 0, c_0\geq 0,c_1\geq 0\) such that the following holds: for any increasing sequence \(\frac{\sqrt{n}}{c_0} \geq \scale_n\geq c_0\ln(n)\), there is \(n_0\geq 1\) such that for any \(n\geq n_0\), \(m_n,m_n'\geq C_0 n\), there exists a probability measure \(\MixMeas_{n}\) supported on pairs~\((\zeta_L,\zeta_R)\in \SetRootMarkBackCont\times \SetRootMarkForwCont\) with displacement sup-norm at most \(c_1\scale_n^2\) such that, for \(v(\zeta_L,\zeta_R) := v_n-\displace(\zeta_R)-\displace(\zeta_L)\) and any \(f\) function of \(\calC\),
	\begin{multline*}
		\Big|\sum_{\zeta_L,\zeta_R} \MixMeas_{n}(\zeta_L,\zeta_R) \OZmeas_{v(\zeta_L,\zeta_R)} \big[ f(\zeta_L\concatenate \gamma_1\concatenate\dots\concatenate \gamma_M\concatenate \zeta_R)\big]
		\\
		-
		\Phi_n\big[ f(\calC) \bgiven v_n \in \calC\big]\Big|
		\leq
		\norm{f}_{\infty}e^{-c\scale_n}.
	\end{multline*}
\end{theorem}

Note that this is a total variation distance bound, so by existence of maximal coupling, this theorem gives the existence of a coupling between the law of \(\calC\) under \(\Phi_n\big(~\cdot~\bgiven v_n \in \calC\big)\) and a product law on chains of diamond confined graphs, such that \(\calC\) coincides with the chain with a probability going to \(1\) as \(n\to\infty\). The law of the sequence of cone-points of this chain is then a random walk bridge, and the distance (e.g. Hausdorff) between the linear interpolation of this bridge and the actual cluster is at most a constant times the norm of the largest step.
By a general result of Kovchegov~\cite{Kov04}, this linear interpolation converges to a Brownian Bridge under diffusive scaling and the maximal step size is \(C\ln^2(n)\) with probability going to \(1\) as \(n\to\infty\) (see Lemma~\ref{lem:InvPrinc_mATRC}). This therefore implies an invariance principle for the upper and lower envelopes of \(\calC\).

\subsection{Implications for \(\matrc\)}
\label{subsec:InvPrinc:matrc_statement}

In this paper, we are ultimately interested in a particular application of Theorem~\ref{thm:main_Inv_principle_coupling_with_RW}. We apply it to
\begin{itemize}
	\item \(m_n=m_n'= Cn\) for \(C\) a large enough integer,
	\item \(\Phi_{2n}\) is the translate of \(\matrc_{n,m_n}\) by \(v_L\),
	\item \(\rmE_{2n} = \bar{E}_{n,m_n} -v_L\) is the translation of the augmented rectangular domain of Section~\ref{sec:notations}, see also Figure~\ref{Fig:rmE_n_for_matrc},
	\item \(v_{2n} = v_R-v_L\).
\end{itemize}

\begin{figure}
	\includegraphics[scale=0.7]{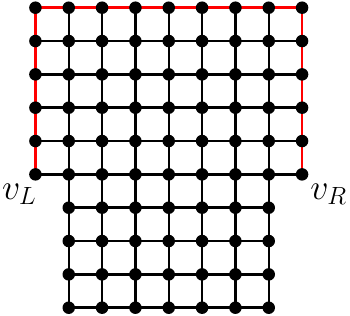}
	\caption{\(\rmE_n\) in the application to \(\matrc\).}
	\label{Fig:rmE_n_for_matrc}
\end{figure}

Hypotheses~\ref{hyp_fin_vol_connection:FKG} and~\ref{hyp_fin_vol_connection:stochastic_sandwich} follow from Lemma~\ref{lem:atrc_strong_fkg} and the explicit expression for the density of \(\matrc_{n,m_n}\), see Definition~\ref{def:mATRC}. Hypotheses~\ref{hyp_fin_vol_connection:finite_energy} follows from Lemma~\ref{lem:fe_matrc_+}. Hypotheses~\ref{hyp_fin_vol_connection:support} and~\ref{hyp_fin_vol_connection:edge_state_swap} follow from the explicit expression for the density of \(\matrc_{n,m_n}\), see again Definition~\ref{def:mATRC}.

\subsection{Good clusters and overlook of the proof}
\label{subsec:InvPrinc:good_cluster}

As the proof of Theorem~\ref{thm:main_Inv_principle_coupling_with_RW} is fairly long and spans over the rest of the section, we start by introducing the main ideas of the proof. The first step is to prove that under \(\Phi_n\big(~\cdot~\bgiven v_n \in \calC\big)\), the cluster~\(\calC\) has a nice structure with very high probability. Let us describe what we mean by nice. Introduce
\begin{equation*}
	\slab_{l,r} = [l,r]\times \R,
	\quad
	\slabCP_{\theta,l,r}(C) = \slab_{l,r} \cap \CPts_{\theta}(C),
	\quad
	\slabCP_{l,r} \equiv \slabCP_{\pi/4,l,r}(C).
\end{equation*}
We introduce the set of \emph{good clusters}: \(C\in \goodCl_n(\scale_n)\) if all of the following are fulfilled for some small enough~$\rho>0$ depending only on \(U,J\):
\begin{enumerate}
	\item \(v_n\in C\);
	\item \(|\slabCP_{2,\scale_n-1}(C)| \geq \rho \scale_n\), \(|\slabCP_{n-\scale_n+1,n-2}(C)|\geq \rho \scale_n\);
	\item \(|\slabCP_{\scale_n+1,2\scale_n-1}(C)| > 0\), \(|\slabCP_{n-2\scale_n+1,n-\scale_n-1}(C)| >0\);
	\item \(|\slabCP_{2\scale_n+1,3\scale_n-1}(C)|  \geq \rho \scale_n\), \(|\slabCP_{n-3\scale_n+1,n-2\scale_n-1}(C)| \geq \rho \scale_n\);
	\item \(|\slabCP_{3\scale_n+1,4\scale_n-1}(C)| > 0\), \(|\slabCP_{n-4\scale_n+1,n-3\scale_n-1}(C)| > 0\).
\end{enumerate}
The value of~$\rho$ is given by Lemmas~\ref{lem:InvPrinc_crossing_clusters_are_nice}, and~\ref{lem:InvPrinc_cluster_is_nice}.
We introduce the following notation (see Figure~\ref{Fig:InvPrinc_goodCluster_Decomp}):
\begin{itemize}
	\item if \(\slabCP_{3\scale_n+1,4\scale_n-1}(\calC)\neq \varnothing\), then denote its leftmost point by~\(W_L\), and otherwise set~\(W_L = \dagger\);
	\item if \(\slabCP_{n-4\scale_n+1,n-3\scale_n-1}(\calC) \neq \varnothing\), then denote its rightmost point by~\(W_R\), and otherwise set~\(W_R = \dagger\);
	\item \(\calK_L := \calC \cap (W_L+\bcone)\) if \(W_L\neq \dagger\), and \(\calK_L = \dagger\) otherwise;
	\item \(\calK_R := -W_R + \calC \cap (W_R+\bcone)\) if \(W_R\neq \dagger\), and \(\calK_R = \dagger\) otherwise;
	\item \(\calK := -W_L + \calC \cap \diam(W_L,W_R)\) if \(W_L\neq \dagger\) and \(W_R\neq \dagger\), and \(\calK=\dagger\) otherwise.
\end{itemize}
In particular, on the event \(\{W_L\neq \dagger,W_R\neq \dagger\}\), \(\calC = \calK_L\concatenate \calK\concatenate \calK_R\).

The first step of the proof of Theorem~\ref{thm:main_Inv_principle_coupling_with_RW} is to show that \(\calC\in \goodCl_n(\scale_n)\) with high probability; this is the content of Section~\ref{subsec:InvPrinc:phi_n_goodCl}.
Then, on the event \(\calC\in \goodCl_n(\scale_n)\), we will do a sampling as follows: first sample \(\calK_L,\calK_R\) compatible with \(\goodCl_n(\scale_n)\). Then, sample \(\calK\) conditionally on \(\calK_L,\calK_R\). Mixing properties of \(\Phi_n\), proved in Section~\ref{subsec:InvPrinc:phi_n_properties}, will then allow us to show that for these realizations of \(\calK_L,\calK_R\), the density of \(\calK\) conditional on \(\calK_L,\calK_R\) is close to the density of an infinite volume cluster connecting \(0\) to \(v_n\) under \(\atrc\) conditional on \(\calK_L,\calK_R\). This is done in Section~\ref{subsec:InvPrinc:density_swap}. But this last object is naturally coupled to a random walk bridge by Theorem~\ref{thm:OZ_for_ATRC_infinite_vol}, which will allow us to conclude the proof of Theorem~\ref{thm:main_Inv_principle_coupling_with_RW} in Section~\ref{subsec:InvPrinc:prf_coupling_RWB}. Section~\ref{subsec:InvPrinc:BBconv} then recalls the setup needed to go from Theorem~\ref{thm:main_Inv_principle_coupling_with_RW} to the invariance principle.

\begin{figure}
	\includegraphics[scale=0.73]{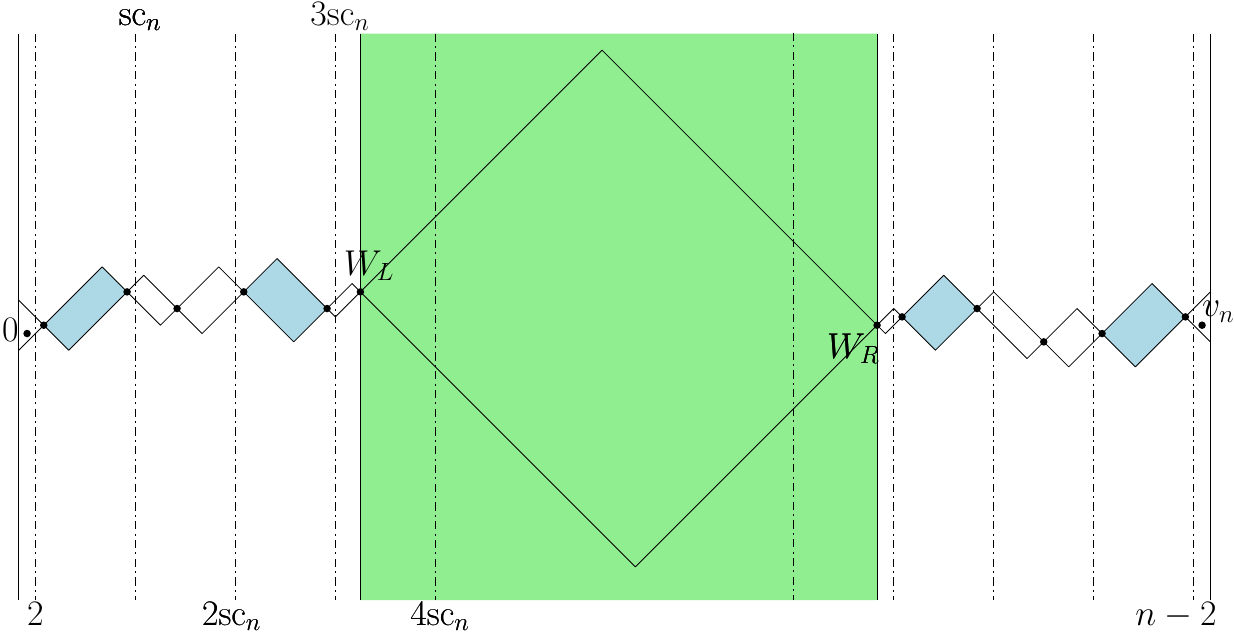}
	\caption{If \(\calC\in \goodCl_n(\scale_n)\), the blue regions all contain at least \(\rho \scale_n\) cone-points. The green region represents \(\Delta(W_L,W_R)\).}
	\label{Fig:InvPrinc_goodCluster_Decomp}
\end{figure}

Finally, introduce a few additional objects that will be needed for the control of the conditional densities of \(\calK\). For \(u,v\in \Z^2\) with \(u_1< v_1\), let, see Figure~\ref{Fig:InvPrinc_goodCluster_Decomp},
\begin{multline}
	\label{eq:def:Delta_u_v}
	\Delta(u,v)
	=
	\{u_1,\dots,v_1 \}
	\\
	\times \{ \min(u_2,v_2)- (v_1-u_1+2), \dots, \max(u_2,v_2)+(v_1-u_1+2)\}.
\end{multline}We will then need the \emph{envelope} of cone-confined graphs. The \emph{outer boundary} and \emph{filling} of \(V\subset \Z^2\) finite connected (for nearest-neighbour connectivity) are, respectively, the set of edges \(\OutBnd(V)\), the set of sites \(\Fill(V)\) obtained as follows: let \(A_0,\dots,A_m\) be the connected components of \(\Z^2\setminus V\) with \(A_0\) the only infinite connected component. Then, \(\OutBnd(V) = \partialedge A_0\), and \(\Fill(V) = \Z^2\setminus A_0\). Define then the envelopes.
\begin{itemize}
	\item For \(\gamma_L\) a connected graph, \(v\in \gamma_L\) a vertex with \(\gamma_L\subset v+\bcone\), define
	\begin{equation*}
		\Env_L(\gamma_L) = \OutBnd(\gamma_L\setminus v).
	\end{equation*}Note that \(v\) is uniquely defined when it exists.
	\item For \( \gamma_R\) a connected graph, \(v\in \gamma_R\) a vertex with \(\gamma_R\subset v+\fcone\), define
	\begin{equation*}
		\Env_R(\gamma_R) = \OutBnd(\gamma_R\setminus v)
	\end{equation*}Note that \(v\) is uniquely defined when it exists.
	\item For \(\gamma\) a connected graph, \(u,v\in \gamma\) with \(\gamma\subset \diam(u,v)\), define
	\begin{gather*}
		\Env(\gamma) = \OutBnd(\gamma\setminus \{u,v\}),
		\\
		\Env^+(\gamma) = \{e\in \Env(\gamma):\ e \text{ strictly above }p_{\gamma}\},
		\\
		\Env^-(\gamma) = \{e\in \Env(\gamma):\ e \text{ strictly below }p_{\gamma}\},
	\end{gather*}where \(p_{\gamma}\) is any bi-infinite self-avoiding path obtained by gluing \((\dots, u-2\rme_1, u-\rme_1,u)\), any self-avoiding path from \(u\) to \(v\) inside \(\gamma\), and \((v,v+\rme_1,v+\rme_2,\dots)\). In particular, \(\Env^+(\gamma)\cup \Env^-(\gamma) = \Env(\gamma)\setminus \{\{u,u+\rme_1\},\{v,v-\rme_1\}\}\).
\end{itemize}
Then, introduce the sets of good forward, backward, and diamond contained marked graphs that can be realizations of \(\calK_L\), \(\calK_R\), and \(\calK\) respectively. For \(v\in \Z^2\),
\begin{equation}
	\label{eq:def:Set_pieces_decomp_FVcluster}
	\begin{aligned}
		\SetRootMarkBackContFV(\scale_n) &= \big\{(\gamma_L,0):\ 0\in \gamma_L,\ \widetilde{\Phi}_n\big(W_L=\displace(\gamma_L),\, \calK_L=\gamma_L\big) >0\big\},
		\\
		\SetRootMarkForwContFV(\scale_n) &= \big\{(\gamma_R,v):\ v\in \gamma_R,\ \widetilde{\Phi}_n\big(W_R = v_n-v,\, \calK_R=\gamma_R \big) >0\big\},
		\\
		\SetRootDiaCont_{v} &= \big\{\gamma\in \SetRootDiaCont:\ \displace(\gamma) = v\big\},
	\end{aligned}
\end{equation}where \(\widetilde{\Phi}_n = \Phi_n(\,\cdot\,\given \calC\in \goodCl_n(\scale_n) )\).
Finally, introduce the events linked with the occurrence of cone confined graphs. 
These are finite volume versions of the events \(A_v(\eta)\) introduced in Section~\ref{subsec:CPs_pre_renewal}. For \(F\subset \bbE\), \(w\in \Z^2\), define the following:
\begin{itemize}
	\item For \(0\in \gamma_L\), \(\gamma_L\) connected such that \(\gamma_L\subset v+\bcone\) for a (uniquely defined) \(v\in \gamma_L\), let \(A_{w,F}^L(\gamma_L)\) be the event
	\begin{equation*}
		A_{w,F}^L(\gamma_L) = \bigcap_{e\in \gamma_L\cap F}\big\{\omega_{\tau}(w+e)= \omega_{\tau\tau'}(w+e) = 1\big\} \cap \bigcap_{e\in (\partialedge \gamma_L\cap F)\setminus \{v,v+\rme_1\}} \big\{\omega_{\tau}(w+e)= 0\big\}.
	\end{equation*}
	\item For \(0,v\in \gamma_R\), \(\gamma_R\) connected such that \(\gamma_R\subset \fcone\), let \(A_{w,F}^R(\gamma_R)\) be the event
	\begin{equation*}
		A_{w,F}^R(\gamma_R) = \bigcap_{e\in \gamma_R\cap F}\big\{\omega_{\tau}(w+e)= \omega_{\tau\tau'}(w+e) = 1\big\} \cap \bigcap_{e\in (\partialedge \gamma_R\cap F)\setminus \{0,-\rme_1\}} \big\{\omega_{\tau}(w+e)= 0\big\}.
	\end{equation*}
	\item For \(0,v\in \gamma\), \(\gamma\) connected such that \(\gamma\subset \diam(0,v)\), let \(A_{w,F}^0(\gamma)\) be the event
	\begin{equation*}
		A_{w,F}^0(\gamma) = \bigcap_{e\in \gamma\cap F}\big\{\omega_{\tau}(w+e)= \omega_{\tau\tau'}(w+e) = 1\big\} \cap \bigcap_{e\in (\partialedge \gamma\cap F)\setminus \mathrm{bnd}_{\gamma}} \big\{\omega_{\tau}(w+e)= 0\big\},
	\end{equation*}where \(\mathrm{bnd}_{\gamma} = \big\{\{0,-\rme_1\}, \{0,\rme_2\}, \{0,-\rme_2\},\{v,v+\rme_1\}, \{v,v+\rme_2\}, \{v,v-\rme_2\}\big\}\).
\end{itemize}
When \(F\) is omitted from the notation, it is set to be \(\bbE_{\Z^2}\).

\subsection{Basic properties of the measure}
\label{subsec:InvPrinc:phi_n_properties}

\begin{lemma}
	\label{lem:InvPrinc:exp_dec}
	Let \(0<J<U\) satisfy \(\sinh 2J=e^{-2U}\). Suppose the sequence \(\Phi_n\) satisfies Hypotheses \ref{hyp_fin_vol_connection:support}, \ref{hyp_fin_vol_connection:FKG}, \ref{hyp_fin_vol_connection:stochastic_sandwich}, \ref{hyp_fin_vol_connection:finite_energy}, and \ref{hyp_fin_vol_connection:edge_state_swap}. Then, there exists \(c>0\) depending on \(\feCst,J,U\) such that for any \(n,m_n,m_n'\geq 1\), any simply connected \(F\subset \rmE_n\), and any \(x,y\in \bbV_{F}\),
	\begin{equation}
		\sup_{\eta}\Phi_n\big(x\xleftrightarrow{F} y \bgiven \omega_{\tau}(e) = \eta(e) \, \forall e \in F^c\big) \leq e^{-c|x-y|}.
		\label{eq:expDecPhi_n}
	\end{equation}
	In particular, there exist integers \(n_0\geq 1, \Cl[ImpCst]{cst:maxHeightCluster}\geq 0\) such that, for any \(m_n,m_n'\geq \Cr{cst:maxHeightCluster} n\), and \(n\geq n_0\),
	\begin{equation}
		\Phi_n (\exists x\in \calC :\ |x_2|\geq \Cr{cst:maxHeightCluster} n) \leq e^{-3\nu_1 n}.
		\label{eq:linear_max_height}
	\end{equation}
\end{lemma}
\begin{proof}
	Note that~\eqref{eq:linear_max_height} follows from~\eqref{eq:expDecPhi_n} and a union bound: taking \(\Cr{cst:maxHeightCluster}\geq \frac{100}{c}\nu_1 \), we have (for \(n\) large enough)
	\begin{equation*}
		\Phi_n (\exists x\in \calC :\ |x_2|\geq \Cr{cst:maxHeightCluster} n)
		\leq
		\sum_{x: \norm{x}_{\infty} \geq \Cr{cst:maxHeightCluster} n}e^{-c|x|}
		\leq
		\sum_{k \geq \Cr{cst:maxHeightCluster} n} 4k e^{-ck/2}
		\leq
		e^{-25\nu_1n}.
	\end{equation*}
	Thus, we focus on~\eqref{eq:expDecPhi_n}.
	By Hypotheses~\ref{hyp_fin_vol_connection:stochastic_sandwich}, for any \(F\subset \bbE_{\FVBox'_n}\),
	\begin{equation*}
		\Phi_n\big(~\cdot~\bgiven \omega_{\tau}(e) = \eta(e) \, \forall e \in F^c\big) \preccurlyeq \atrc_{F}^{1,1} = \Phi(~\cdot~\given \omega_{\tau}(e) =1 \, \forall e\in F^c).
	\end{equation*}
	The right-hand side decays exponentially by Theorem~\ref{thm:Ale04_ATRC}, thus implying~\eqref{eq:expDecPhi_n} whenever \(F\subset \bbE_{\FVBox'_n}\).
	To extend~\eqref{eq:expDecPhi_n} to the case \(F\not\subset \bbE_{\FVBox'_n}\), we use one-dimensionality of~$\bbE_{\bar{\FVBox}_n}\setminus\bbE_{\FVBox'_n}$ and the finite energy property of~$\Phi_n$ (Hypotheses~\ref{hyp_fin_vol_connection:finite_energy}).
	We only sketch the argument. One can first sample the configuration on \(\bbE_{\FVBox'_n}\). By the previous observation, all clusters there have exponential tails. Then, the edges in~$\rmE_n\cap (\bbE_{\bar{\FVBox}_n}\setminus\bbE_{\FVBox'_n})$ do not allow to merge many small clusters into something much bigger because each of these edges is closed with a uniformly positive probability.
	This was made formal in a classical work of Alexander~\cite{Ale04}.
	Though our setting does not fit in~\cite[Theorem~1.1]{Ale04}, the arguments in~\cite[Section 2]{Ale04} do apply {\em mutatis mutandis}.
\end{proof}

Using Theorem~\ref{thm:ratio_weak_mixing_ATRC}, we obtain a fast relaxation of \(\Phi_n\) towards \(\Phi\).

\begin{lemma}
	\label{lem:InvPrinc:relax_ratio}
	Let \(0<J<U\) satisfy \(\sinh 2J=e^{-2U}\). Suppose the sequence \(\Phi_n\) satisfies Hypotheses \ref{hyp_fin_vol_connection:support}, \ref{hyp_fin_vol_connection:FKG}, \ref{hyp_fin_vol_connection:stochastic_sandwich}, \ref{hyp_fin_vol_connection:finite_energy}, and \ref{hyp_fin_vol_connection:edge_state_swap}.
	Then, there exist \(c>0,C\geq 0\) such that, for every \(n,m_n,m_n'\geq 2\), any \(F\subset \bbE_{\FVBox_{n}'}\), and any event \(A\) supported on \(F\) with \(\Phi(A)>0\),
	\begin{equation*}
		\Big|\frac{\Phi_m(A)}{\Phi(A)} -1 \Big| \leq C\sum_{e\in F} e^{-c\rmd_{\infty}(e,(\FVBox_{n}')^c)},
	\end{equation*}
	as soon as the R.H.S. is strictly less than \(0.1\). In particular, there exists \(\Cl[ImpCst]{cst:minMixScale}\geq 0\) such that, for any \(a> 0\), \(m_n,m_n'\geq an + \Cr{cst:minMixScale}\ln(n)\), one can find \(n_0\geq 1\) such that for any \(n\geq n_0\) and any event \(B\) supported on \(\bbE_{\rectangle_{\Cr{cst:minMixScale}\ln(n), an}^n}\), one has
	\begin{equation*}
		\tfrac{1}{2}\leq \frac{\Phi_n(B)}{\Phi(B)} \leq 2.
	\end{equation*}
\end{lemma}
\begin{proof}
	This follows from Hypotheses~\ref{hyp_fin_vol_connection:stochastic_sandwich}, the exponential ratio weak mixing of \(\atrc\) (Theorem~\ref{thm:ratio_weak_mixing_ATRC}), and Lemma~\ref{lem:ratio_FKG_measures}: by Theorem~\ref{thm:ratio_weak_mixing_ATRC}, for any \(A\) supported on \(F\),
	\begin{equation*}
		\Big|\frac{\atrc^{1,1}_{\bbE_{\FVBox_{n}'}}(A)}{\atrc^{0,0}_{\bbE_{\FVBox_{n}'}}(A)} -1\Big|
		\leq
		C\sum_{e\in F}\sum_{e'\in \bbE_{\FVBox_{n}'}^c} e^{-c\rmd_{\infty}(e,e')}
		\leq
		C'\sum_{e\in F} e^{-c\rmd_{\infty}(e,(\FVBox_{n}')^c)}.
	\end{equation*} 
	For \(F= \bbE_{\rectangle_{\Cr{cst:minMixScale}\ln(n), an}^n} \), the last sum can then be made as small as wanted by taking \(\rectangle_{\Cr{cst:minMixScale}\ln(n), an}^n\) at distance at least \(c'\ln(n)\) from \((\FVBox_{n}')^c\) for \(c'\) large enough, which amounts to take \(\Cr{cst:minMixScale}\) large enough. 
	The claim then follows from Lemma~\ref{lem:ratio_FKG_measures} (\(0.1\) is any small enough constant to control the ratios in the conclusion of Lemma~\ref{lem:ratio_FKG_measures}).
\end{proof}

Our last ``preparatory Lemma'' is a sharp-up-to-constant lower bound on the probability of the event on which we want to condition.
\begin{lemma}
	\label{lem:InvPrinc:weak_LB}
	Let \(0<J<U\) satisfy \(\sinh 2J=e^{-2U}\). Suppose the sequence \(\Phi_n\) satisfies Hypotheses \ref{hyp_fin_vol_connection:support}, \ref{hyp_fin_vol_connection:FKG}, \ref{hyp_fin_vol_connection:stochastic_sandwich}, \ref{hyp_fin_vol_connection:finite_energy}, and \ref{hyp_fin_vol_connection:edge_state_swap}.
	Then, there exists \(c\geq 0\) such that, for any \(a>0\), the following holds.
	There exists \(n_0\geq 1\) such that, for any \(n\geq n_0\), \(m_n,m_n'\geq a n + \Cr{cst:minMixScale}\ln(n)\),
	\begin{equation*}
		\Phi_{n}(v_n\in \calC) \geq \frac{c}{\sqrt{n}}e^{-n\nu_1}.
	\end{equation*}
\end{lemma}
\begin{proof}
	Let \(x_N=(N,0)\), \(y_N=(n-N,0)\) with \(N\) to be fixed large enough later. Let
	\begin{equation*}
		V_{N} = \Z^2\cap [N,n-N]\times [-an,an] \cap (x_N+\fcone) \cap (y_N+\bcone),
		\quad
		F_{N} = \bbE_{V_{N}}.
	\end{equation*}Then, by inclusion of events and FKG inequality,
	\begin{equation}
		\label{eq:splitting_restricted_connection}
		\begin{aligned}
			\Phi_{n}(v_n\in \calC)
			&\geq
			\Phi_{n}(0\leftrightarrow x_{2N})\Phi_{n}(v_n\leftrightarrow y_{2N})\Phi_{n}(x_{2N}\xleftrightarrow{F_N} y_{2N})
			\\
			&\geq
			\Big(1-C\sum_{e\in F_N} e^{-c\rmd_{\infty}(e, (\FVBox_{n}')^c)} \Big) \feCst^{4N} \Phi(x_{2N}\xleftrightarrow{F_N} y_{2N}),
		\end{aligned}
	\end{equation}
	where we used Lemma~\ref{lem:InvPrinc:relax_ratio}, and Hypotheses~\ref{hyp_fin_vol_connection:finite_energy} in the second line. Now, choosing \(N\) large enough as a function of \(C\) and~\(c\), we get~\(C\sum_{e\in F_N} e^{-c\rmd_{\infty}(e, (\FVBox_{n}')^c)} \leq \frac{1}{2}\) for any \(n\) large enough.
	
	To conclude, we can then use Theorem~\ref{thm:OZ_for_ATRC_infinite_vol} to lower bound the remaining probability. Let \(x_N=S_0, S_1,S_2,\dots\) be a random walk with step distribution \(\OZWalkmeas\). Using the finite energy of the probability measures given by Theorem~\ref{thm:OZ_for_ATRC_infinite_vol}, \(\Phi(x_N\xleftrightarrow{F_N} y_N)\) is greater than \(C'e^{-\nu_1(n-4N)}\) times
	\begin{multline*}
		\OZWalkmeas\Big(T_{y_{2N}-x_{2N}}<\infty,\ \bigcup_{i=1}^{T_{y_{2N}-x_{2N}}} \diam(S_{i-1},S_i) \\
		\subset [N,n-N]\times [-an,an] \cap \big(x_N+\fcone\big) \cap \big(y_N+\bcone\big)\Big),
	\end{multline*}where \(S_0=x_N\), \(S_i = X_i+ S_{i-1}\), \((X_i)_{i\geq 1}\sim \OZWalkmeas\), and \(T\) is 
	defined in~\eqref{eq:def:hitting_times}. The steps \(X_i\)'s are supported on the whole of \(\fcone\cap \Z^2\), have exponential tails, and mean proportional to \(\rme_1\).
	Then, the Local Limit Theorem applies and a simple computation implies that the probability of \(T_{y_{2N}-x_{2N}}<\infty\) is greater than \(C/\sqrt{n}\); see eg.~\cite{AouOttVel24} between eq.~$(20)$ and~$(22)$. Adding the constraint that the diamond envelope has the wanted containment property only costs a multiplicative constant. This is a straightforward consequence of the exponential tails of the steps. The reader interested by the precise implementation of this last point can have a look at~\cite[Lemmas 2.6,2.7]{OttVel19} for the implementation of a very similar (though more involved) claim.
\end{proof}

\subsection{Typical geometry}
\label{subsec:InvPrinc:phi_n_goodCl}

We will go through an intermediate set of edges before trying to control \(\calC\). Introduce \(\omega_{\tau}|_{k,l}^n \equiv \omega_{\tau}|_{\bbE_{\rectangle_{k,l}^n}}\) the restriction of \(\omega_{\tau}\) to the edges with both endpoints in \(\rectangle_{k,l}^n\), and the set of edges in \(\bbE_{\rectangle_{k,l}}\) that are connected to a left-right crossing of \(\rectangle_{k,l}\):
\begin{multline}
	\label{eq:def:crossing_clusters}
	\crossCl^{k,l}
	\equiv
	\crossCl^{k,l,n}(\omega_{\tau})
	=
	\big\{e\in \bbE_{\rectangle_{k,l}^n}:\ \omega_{\tau}(e)=1,\ e\xleftrightarrow{\omega_{\tau}|_{k,l}^n} \{k\}\times \{-l,\dots,l\},
	\\ e\xleftrightarrow{\omega_{\tau}|_{k,l}^n} \{n-k\}\times \{-l,\dots,l\}\big\}.
\end{multline}Also, introduce the event \(\doubleCross_{k,l}\) that \(\crossCl^{k,l}\) contains at least two disjoint paths from \(\{k\}\times \Z\) to \(\{n-k\}\times \Z\).

To get control over the geometry of \(\calC\) under \(\Phi_n(\cdot \given v_n\in \calC)\), we will first get control over the geometry of \(\crossCl\) and then prove that \(\calC\) and \(\crossCl\) are actually very close.

The next Lemma is our only use (in Section~\ref{sec:invariance_principle}) of an angular aperture different than \(\pi/4\) for cone-points. The angle is therefore explicitly mentioned in the notation.
\begin{lemma}
	\label{lem:InvPrinc_crossing_clusters_are_nice}
	Let \(0<J<U\) satisfy \(\sinh 2J=e^{-2U}\). Suppose the sequence \(\Phi_n\) satisfies Hypotheses \ref{hyp_fin_vol_connection:support}, \ref{hyp_fin_vol_connection:FKG}, \ref{hyp_fin_vol_connection:stochastic_sandwich}, \ref{hyp_fin_vol_connection:finite_energy}, and \ref{hyp_fin_vol_connection:edge_state_swap}. Let \(\Cr{cst:maxHeightCluster}\) be given by Lemma~\ref{lem:InvPrinc:exp_dec}, and \(\Cr{cst:minMixScale}\) be given by Lemma~\ref{lem:InvPrinc:relax_ratio}.
	Then, there exist \(n_0\geq 1\), \(\epsilon_0>0\), \(\rho>0\), \(K_0\geq 0\), \(c>0,C\geq 0\) such that for any \(n\geq n_0\), \( \epsilon_0 n \geq k_n\geq \Cr{cst:minMixScale}\ln(n)\), \(m_n,m_n'\geq 2\Cr{cst:maxHeightCluster} n+k_n\), one has the following bounds.
	\begin{enumerate}
		\item \label{item:lem_crossing:double_cross} \(\Phi_n\big(\doubleCross_{k_n,2\Cr{cst:maxHeightCluster}n}\big) \leq e^{-\nu_1n - cn}\).
		\item \label{item:lem_crossing:inclusion_Cross_in_Cl} \(\Phi_n\big(\varnothing\neq \crossCl^{k_n,2\Cr{cst:maxHeightCluster}n} \not\subset \calC,\ v_n\in \calC\big) \leq e^{-\nu_1 n- cn}\).
		\item \label{item:lem_crossing:inclusion_Cl_in_LargeCross} For any \(K\geq K_0\),
		\begin{multline*}
			\Phi_n\big(\varnothing\neq \crossCl^{k_n,2\Cr{cst:maxHeightCluster}n}\subset \bbE_{\rectangle_{k_n,\Cr{cst:maxHeightCluster}n}^n},\ \exists x\in\bbV_{\rmE_n}:\ x \leftrightarrow \crossCl^{k_n,2\Cr{cst:maxHeightCluster}n},
			\\
			\rmd_{\infty}\big(x,\crossCl^{k_n,2\Cr{cst:maxHeightCluster}n} \cap \{k_n,n-k_n\}\times \Z\big) \geq Kk_n \big)
			\leq
			e^{-\nu_1 n - cKk_n}.
		\end{multline*}
		\item \label{item:lem_crossing:slabCPTs} For any \(K\geq K_0\), and any \(i\in \{k_n,\dots, n-(K+1)k_n\}\),
		\begin{equation*}
			\Phi_n\big(\varnothing\neq \crossCl^{k_n,2\Cr{cst:maxHeightCluster}n}\subset \bbE_{\rectangle_{k_n,\Cr{cst:maxHeightCluster}n}^n},\ |\slabCP_{\pi/8,i,i+Kk_n}(\crossCl^{k_n,2\Cr{cst:maxHeightCluster}n})| < \rho K k_n \big) \leq e^{-\nu_1 n - c K k_n}.
		\end{equation*}
	\end{enumerate}
\end{lemma}
\begin{proof}
	Proceed in increasing order.
	
	\smallskip
	
	\noindent\textbf{Start with item~\ref{item:lem_crossing:double_cross}.}
	The proof goes as follows: if there are at least two crossings of \(\rectangle_{k_n,2\Cr{cst:maxHeightCluster}n}^n\), one can condition on the lowest. Then, the probability of a second crossing given the lowest one is at most \((4\Cr{cst:maxHeightCluster}n)^2e^{-c (n-2k_n)}\) uniformly over the lowest crossing by a union bound and Theorem~\ref{thm:Ale04_ATRC}. Indeed, one can dominate the measure conditioned on the lowest crossing by an ATRC measure on the part of \(\rectangle_{k_n,2\Cr{cst:maxHeightCluster}n}^n\) above the lowest crossing with \(1,1\) boundary conditions. The later has exponential decay of connectivities by Theorem~\ref{thm:Ale04_ATRC}. Re-summing over the lowest crossing, we obtain as upper bound \((4\Cr{cst:maxHeightCluster}n)^2e^{-c (n-2k_n)}\) times the probability that \(\rectangle_{k_n,2\Cr{cst:maxHeightCluster}n}^n\) is crossed. Using Lemma~\ref{lem:InvPrinc:relax_ratio}, a union bound, and Theorem~\ref{thm:OZ_for_ATRC_infinite_vol}, one can upper bound this probability by \(2(4\Cr{cst:maxHeightCluster}n)^2e^{-\nu_1(n-2k_n)}\) for any \(n\) large enough. This gives the first point once \(\epsilon_0>0\) is taken small enough and \(n_0\) large enough.
	
	\smallskip
	
	\noindent\textbf{Continue with item~\ref{item:lem_crossing:inclusion_Cross_in_Cl}.}
	Assume \(\crossCl^{k_n,2\Cr{cst:maxHeightCluster}n}\neq \varnothing\) and~\(v_n\in \calC\). 
	If~\(\crossCl^{k_n,2\Cr{cst:maxHeightCluster}n}\not \subset \calC\), then at least one of the two following events occurs:
	\begin{itemize}
		\item There exists \(x\in \bbV_{\rmE_n}\) with \(|x_2|\geq \Cr{cst:maxHeightCluster}n\) such that \(x\in \calC\).
		\item \(\doubleCross_{k_n,2\Cr{cst:maxHeightCluster}n}\) occurs.
	\end{itemize}
	The first event has probability at most \(e^{-3\nu_1n}\) by Lemma~\ref{lem:InvPrinc:exp_dec}, and the second has probability at most \(e^{-\nu_1n - cn}\) by item~\ref{item:lem_crossing:double_cross}.
	
	\smallskip
	
	\noindent\textbf{Turn to item~\ref{item:lem_crossing:inclusion_Cl_in_LargeCross}.}
	By item~\ref{item:lem_crossing:double_cross}, we can restrict to the case were the event \(\doubleCross_{k_n,2\Cr{cst:maxHeightCluster}n}\) does not occur. In particular, we can assume that \(\crossCl^{k_n,2\Cr{cst:maxHeightCluster}n}\) is connected. Now, observe that, as \(\crossCl^{k_n,2\Cr{cst:maxHeightCluster}n}\subset \bbE_{\rectangle_{k_n,\Cr{cst:maxHeightCluster}n}^n}\), if \(x\) is connected to \(\crossCl^{k_n,2\Cr{cst:maxHeightCluster}n}\) but is not part of it, \(x\) has to be connected to \(\crossCl^{k_n,2\Cr{cst:maxHeightCluster}n}\cap \{k_n,n-k_n\}\times \{-\Cr{cst:maxHeightCluster}n,\dots,\Cr{cst:maxHeightCluster}n\}\), see Figure~\ref{Fig:InvPrinc_cluster_cross_CPts}.
	Now, by Lemma~\ref{lem:InvPrinc:exp_dec}, the probability that \(x\) is connected to \(\crossCl^{k_n,2\Cr{cst:maxHeightCluster}n}\) conditionally on \(\crossCl^{k_n,2\Cr{cst:maxHeightCluster}n}\) is at most \(\exp(-c\rmd_{\infty}(x, \crossCl^{k_n,2\Cr{cst:maxHeightCluster}n}\cap \{k_n,n-k_n\}\times \Z))\). The claim follows from a union bound over \(x\) conditionally on \(\crossCl^{k_n,2\Cr{cst:maxHeightCluster}n}\).
	
	\smallskip
	
	\noindent\textbf{Finish with item~\ref{item:lem_crossing:slabCPTs}.}
	Under the event \(\varnothing\neq \crossCl^{k_n,2\Cr{cst:maxHeightCluster}n}\subset \bbE_{\rectangle_{k_n,\Cr{cst:maxHeightCluster}n}^n}\), there exist \(x\in \{k_n\}\times \{-\Cr{cst:maxHeightCluster}n,\dots, \Cr{cst:maxHeightCluster}n\}\), and \(y\in \{n-k_n\}\times \{-\Cr{cst:maxHeightCluster}n,\dots, \Cr{cst:maxHeightCluster}n\}\) such that \(x\) is connected to \(y\) in \(\rectangle_{k_n,\Cr{cst:maxHeightCluster}n}^n\). For \(\epsilon>0\), let 
	\begin{equation*}
		A_{\epsilon}^n = \big\{\exists x,y\in \rectangle_{k_n,\Cr{cst:maxHeightCluster}n}^n:\ x_1=k_n,\: y_1=n-k_n,\: |x_2-y_2|\geq  \epsilon n,\: x\xleftrightarrow{\rectangle_{k_n,\Cr{cst:maxHeightCluster}n}^n} y \big\}.
	\end{equation*}
	Now, by a union bound, Theorem~\ref{thm:oz-atrc}, and Lemma~\ref{lem:InvPrinc:relax_ratio}, one has that for any \(\epsilon>0\) and \(n\) large enough,
	\begin{align*}
		\Phi_n\big(A_{\epsilon}^n\big)
		&\leq
		2\sum_{x: x_1=k_n, |x_2|\leq \Cr{cst:maxHeightCluster}n} \sum_{y: y_1=n-k_n, |y_2|\leq \Cr{cst:maxHeightCluster}n}\mathds{1}_{|x_2-y_2|\geq \epsilon n}\Phi(x\leftrightarrow y)
		\\
		&\leq
		4\Cr{cst:maxHeightCluster}n \sum_{y: y_1=n-2k_n, |y_2|\leq 2\Cr{cst:maxHeightCluster}n}\mathds{1}_{|y_2|\geq \epsilon n} e^{-\nu(y)}
		\\
		&\leq
		4\Cr{cst:maxHeightCluster}n \sum_{\epsilon n \leq |k|\leq 2\Cr{cst:maxHeightCluster}n} e^{-\nu_1(n-2k_n)- c|k|}
		\leq
		(4\Cr{cst:maxHeightCluster}n)^2 e^{-\nu_1(n-2k_n)- c\epsilon n},
	\end{align*}for some \(c>0\) as \(\nu\) is a uniformly convex norm. The above shows that we can fix any \(\epsilon>0\) and restrict the study of the wanted probability to \((A_{\epsilon}^n)^c\) at the price of taking \(\epsilon_0\) small enough. Now, take \(\epsilon>0\) small enough so that we can apply Theorem~\ref{thm:OZ_for_ATRC_infinite_vol} with \(s_0=\rme_1\) to connections between \(x,y\) with \(x_1<y_1\), \(|x_2-y_2|\leq \epsilon (y_1-x_1)\).
	Moreover, by item~\ref{item:lem_crossing:double_cross}, we can work under \((\doubleCross_{k_n,2\Cr{cst:maxHeightCluster}n})^c\), so we can assume \(\crossCl^{k_n,2\Cr{cst:maxHeightCluster}n}\) is connected.
	Now, remains to study the event of item~\ref{item:lem_crossing:slabCPTs} under the event \((\doubleCross_{k_n,2\Cr{cst:maxHeightCluster}n})^c \cap (A_{\epsilon}^n)^c\). In particular, that there are \(x,y\in \rectangle_{k_n,\Cr{cst:maxHeightCluster}n}\) with \(x_1=k_n\), \(y_1=n-k_n\), \(|y_2-x_2|\leq \epsilon n\), \(x\) connected to \(y\) in \(\rectangle_{k_n,\Cr{cst:maxHeightCluster}n}\), and \(\crossCl^{k_n,2\Cr{cst:maxHeightCluster}n}\) is the cluster of \(x,y\) in \(\rectangle_{k_n,2\Cr{cst:maxHeightCluster}n}^n\). We can thus make a union bound over such \(x,y\)'s, and use Lemma~\ref{lem:InvPrinc:relax_ratio} to replace \(\Phi_n\) by \(\Phi\) to obtain that the probability of the intersection of the event in item~\ref{item:lem_crossing:slabCPTs} with \((\doubleCross_{k_n,2\Cr{cst:maxHeightCluster}n})^c \cap (A_{\epsilon}^n)^c\) is less or equal to
	\begin{multline*}
		2\sum_{x,y} \Phi\big(y\in \calC_{x}^{\restrict},\ |\slabCP_{\pi/8,i,i+Kk_n}(\calC_{x}^{\restrict})| < \rho K k_n\big)
		\\
		\leq
		2\sum_{x:x_1=n-2k_n, |x_2|\leq \epsilon n} \Phi\big(x\in \calC,\ |\slabCP_{\pi/8,i,i+Kk_n}(\calC)| < \rho K k_n\big)
	\end{multline*}where \(i\) is as in item~\ref{item:lem_crossing:slabCPTs}, the first sum is over \(x,y\) with \(x_1=k_n\), \(y_1=n-k_n\), \(|x_2-y_2|\leq \epsilon n\), \(|x_2|,|y_2|\leq \Cr{cst:maxHeightCluster}n\), and \(\calC_{x}^{\restrict}\) is the cluster of \(x\) in the restriction of \(\omega_{\tau}\) to \(\bbE_{\rectangle_{k_n,2\Cr{cst:maxHeightCluster}n}^n}\). The inequality uses that, under the event \(x\leftrightarrow y\), \(\slabCP_{\pi/8,i,i+Kk_n}(\calC_{x}^{\restrict})\subset \slabCP_{\pi/8,i,i+Kk_n}(\calC_{x}) \), with \(\calC_x\) the cluster of \(x\), and translation invariance of \(\Phi\). Now, using Theorem~\ref{thm:OZ_for_ATRC_infinite_vol}, the exponential tails of the measures \(p\circ \displace^{-1}\), \(p_L\circ \displace^{-1}\), \(p_R\circ \displace^{-1}\) there, and classical large deviation bounds for deviation of sums of i.i.d. random vectors with exponential tails away from their mean, one directly obtains that
	\begin{equation*}
		\sum_{x:x_1=n-2k_n, |x_2|\leq \epsilon n} \Phi\big(x\in \calC,\ |\slabCP_{\pi/8,i,i+Kk_n}(\calC)| < \rho K k_n\big)
		\leq
		e^{-\nu_1(n-2k_n)}e^{-cKk_n},
	\end{equation*}for some \(c>0\), and all \(n\) large enough, as soon as \(K\) is larger than some fixed constant.
\end{proof}

\begin{figure}
	\includegraphics[scale=0.8]{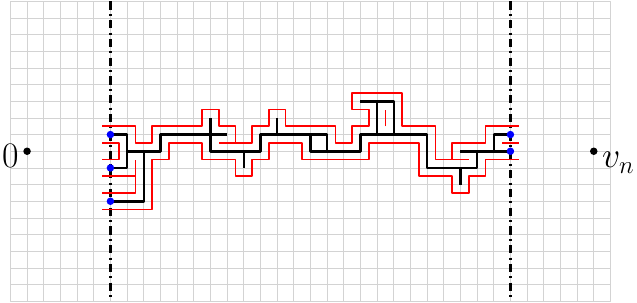}
	\caption{The open cluster \(\crossCl \) (in black) and the edges forced to be closed by its presence (dual of the red). The blue points are where connections to \(\crossCl \) have to arrive.}
	\label{Fig:InvPrinc_cluster_cross_CPts}
\end{figure}

We then push the properties of \(\crossCl^{k_n,2\Cr{cst:maxHeightCluster}n}\) to \(\calC\) using a deterministic observation about cone-points.
\begin{figure}
	\includegraphics[scale=0.9]{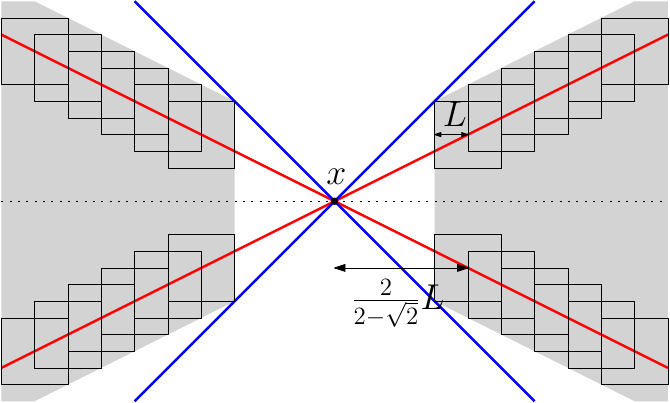}
	\caption{Red: the cone-point for \(\theta = \pi/8\). Blue: the cone-point for \(\theta = \pi/4\). The addition of the grey area is compensated by the increase of the cone opening.}
	\label{Fig:InvPrinc_CPts_enlargement}
\end{figure}
\begin{lemma}
	\label{lem:InvPrinc_cluster_is_nice}
	Let \(0<J<U\) satisfy \(\sinh 2J=e^{-2U}\). Suppose the sequence \(\Phi_n\) satisfies Hypotheses \ref{hyp_fin_vol_connection:support}, \ref{hyp_fin_vol_connection:FKG}, \ref{hyp_fin_vol_connection:stochastic_sandwich}, \ref{hyp_fin_vol_connection:finite_energy}, and \ref{hyp_fin_vol_connection:edge_state_swap}.
	Then, there exist \(n_0\geq 1\), \(\Cl[ImpCst]{cst:scaleGCl}>0\), \(\Cl[ImpCst]{cst:scaleMinBox}\geq 5\), \(c>0\), such that for any \(n\geq n_0\), \(\frac{n}{\Cr{cst:scaleGCl}} \geq \scale_n \geq \Cr{cst:scaleGCl}\ln(n)\), \(m_n,m_n'\geq \Cr{cst:scaleMinBox} n\), one has
	\begin{equation*}
		\Phi_n\big(\calC\notin \goodCl_n(\scale_n) ,\ v_n\in \calC \big) 
		\leq
		e^{-\nu_1 n- c\scale_n}.
	\end{equation*}
\end{lemma}
\begin{proof}
	Let \(\Cr{cst:maxHeightCluster},\Cr{cst:minMixScale}\) be the constants given by Lemmas~\ref{lem:InvPrinc:exp_dec}, and~\ref{lem:InvPrinc:relax_ratio} respectively. The deterministic observation is that for any \(L\geq 1\), and \(V\) connected, any \(x \in \CPts_{\pi/8}(V)\) also satisfies \(x\in \CPts_{\pi/4}(\lrangle{V}_L)\), where \(\lrangle{V}_L\) is given by
	\begin{equation*}
		\lrangle{V}_L = V\cup \bigcup_{y\in V: y_1< x_1 - \frac{2}{2-\sqrt{2}}L }(y+[-L,L]^2) \cup \bigcup_{y\in V: y_1> x_1 + \frac{2}{2-\sqrt{2}}L }(y+[-L,L]^2)
	\end{equation*}where we used \(\tan(\pi/8) = \sqrt{2}-1\) and \(\tan(\pi/4) = 1\). See Figure~\ref{Fig:InvPrinc_CPts_enlargement}. We will use Lemma~\ref{lem:InvPrinc_crossing_clusters_are_nice} to say that \(\calC\) is close to some crossing cluster (included in a slight enlargement), and that the said crossing cluster has all the cone-points required to be a good cluster, \emph{but for a smaller cone opening}. Then, we increase the cone opening to take care of the slight enlargement of the crossing cluster needed to control \(\calC\).
	
	Introduce
	\begin{equation*}
		k_n = \lfloor \scale_n/r \rfloor.
	\end{equation*}for some \(r>0\) to be chosen large later.
	
	Let
	\begin{equation*}
		\crossCl^{\partial} = \crossCl^{k_n,2\Cr{cst:maxHeightCluster}n} \cap \{k_n,n-k_n\}\times \Z.
	\end{equation*}
	Let \( K,K' \geq 1\) be some integers to be fixed large later, and \(\rho>0\) is small to be fixed small later. Introduce the events
	\begin{itemize}
		\item \(A_1(K)\) is the event that \(\crossCl^{k_n,2\Cr{cst:maxHeightCluster}n} \subset \calC\) and that
		\begin{equation*}
			\calC\subset \crossCl^{k_n,2\Cr{cst:maxHeightCluster}n} \cup \bigcup_{x\in \crossCl^{\partial}} (x+[-Kk_n,Kk_n]^2).
		\end{equation*}
		\item \(A_2(\rho,K')\) is the event that for every \(i\in \{k_n,\dots,n-(K'+1)k_n\}\),\linebreak \(|\slabCP_{\pi/8,i,i+K'k_n}(\crossCl^{k_n,2\Cr{cst:maxHeightCluster}n})|\geq \rho k_n\).
	\end{itemize}
	By Lemma~\ref{lem:InvPrinc_crossing_clusters_are_nice}, if we fix \(\rho >0\) small enough, there exist \(c>0, K_0\geq 0, n_0\geq 0\) such that for any \(K,K'\geq K_0\), and any \(n\geq n_0\),
	\begin{equation*}
		\Phi_n\big(A_1(K)^c,\ v_n\in \calC\big) \leq e^{-\nu_1n -c K k_n},
		\quad
		\Phi_n\big(A_2(\rho,K')^c,\ v_n\in \calC\big) \leq e^{-\nu_1n -c K' k_n},
	\end{equation*}as long as \(\epsilon_0 n\geq k_n \geq \Cr{cst:minMixScale} \ln(n)\) with \(\epsilon_0>0\) fixed given by Lemma~\ref{lem:InvPrinc_crossing_clusters_are_nice}, and \(m_n,m_n'\geq 2\Cr{cst:maxHeightCluster} n + k_n\).
	Now, the deterministic observation implies that on the event \(A_1(K)\cap \{v_n\in \calC\}\),
	\begin{equation*}
		\slabCP_{\pi/8, a_K k_n,n-a_K k_n }(\crossCl^{k_n,2\Cr{cst:maxHeightCluster}n}) \subset \slabCP_{\pi/4, a_K k_n,n-a_K k_n }(\calC)
	\end{equation*}where \(a_K = \frac{2K}{2-\sqrt{ 2}} +1\). The conclusion follows once we fix \(K,K'\) large enough, and take \(r = 10a_K\) (\(10\) is an arbitrary number strictly larger than \(2\)), as the cone-points required to be in \(\goodCl_n(\scale_n)\) are included in those guaranteed by the occurrence of \(A_2(\rho,K')\). The constant \(\Cr{cst:scaleGCl}\) is taken large enough so that \(k_n = \scale_n/r\) can fit the required constraints, and the constant \(\Cr{cst:scaleMinBox}\) is taken to be any constant such that \(\Cr{cst:scaleMinBox}n \geq 2\Cr{cst:maxHeightCluster} n + k_n = 2\Cr{cst:maxHeightCluster} n + \scale_n/r \) for all \(n\) large enough and all \(\frac{n}{\Cr{cst:scaleGCl}}\geq \scale_n \geq \Cr{cst:scaleGCl} \ln(n)\).
\end{proof}

\subsection{Density swapping}
\label{subsec:InvPrinc:density_swap}

We now study the density of \(\Phi_n\circ \calC^{-1}\) on the event \(\calC\in \goodCl_n(\scale_n)\) for \(\scale_n\) as in Lemma~\ref{lem:InvPrinc_cluster_is_nice}. Recall the notations and definitions introduced in Section~\ref{subsec:InvPrinc:good_cluster} and the edge-set~$\rmE_n$ on which~$\Phi_n$ is defined.

\begin{lemma}
	\label{lem:InvPrinc:density_swapping}
	Let \(0<J<U\) satisfy \(\sinh 2J=e^{-2U}\). Suppose the sequence \(\Phi_n\) satisfies Hypotheses \ref{hyp_fin_vol_connection:support}, \ref{hyp_fin_vol_connection:FKG}, \ref{hyp_fin_vol_connection:stochastic_sandwich}, \ref{hyp_fin_vol_connection:finite_energy}, and \ref{hyp_fin_vol_connection:edge_state_swap}. Let \(\Cr{cst:scaleGCl},\Cr{cst:scaleMinBox}\) be given by Lemma~\ref{lem:InvPrinc_cluster_is_nice}. Then, there are \(n_0\geq 1\), \(c>0\), such that for any \(n\geq n_0\), \(m_n,m_n'\geq \Cr{cst:scaleMinBox} n\), \(\frac{n}{\Cr{cst:scaleGCl}} \geq \scale_n \geq \Cr{cst:scaleGCl} \ln(n)\), one has that for every \((\gamma_L,0)\in \SetRootMarkBackContFV(\scale_n)\), every \((\gamma_R,v) \in \SetRootMarkForwContFV(\scale_n)\), and every \(\gamma\) connected containing~\(0\) and~\(v_n-v-u\), where \(u\equiv \displace(\gamma_L)\), with \(\gamma\subset \diam(0,v_n-v-u)\),
	\begin{equation*}
		\Big|\frac{\Phi_n\big( A_{u}^0(\gamma)\given A_{0,\rmE_n}^L(\gamma_L),\, A_{v_n-v,\rmE_n}^R(\gamma_R),\, \calC\in \goodCl_n(\scale_n)\big)}{\Phi\big( A_{u}^0(\gamma)\given A_{0}^L(\gamma_L),\, A_{v_n-v}^R(\gamma_R),\, \calC\in \goodCl_n(\scale_n)\big)} -1 \Big|
		\leq 
		e^{-c\scale_n}.
	\end{equation*}
\end{lemma}

\begin{proof}
	To shorten notations, let \(w_L=\displace(\gamma_L)\), \(w_R = v_n-v\). We will use several times that \(A_{u}^0(\gamma)\) is supported on edges with both endpoints in \(\Delta(w_L,w_R)\).
	First, notice that, as \((\gamma_L,0)\in \SetRootMarkBackContFV(\scale_n)\), and \((\gamma_R,v) \in \SetRootMarkForwContFV(\scale_n)\), there is a leftmost cone-point of \(\gamma_L\) in the slab \(\slab_{2,\scale_n-1}\), denote it \(w_{L}^1\), a leftmost cone-point of \(\gamma_L\) in the slab \(\slab_{\scale_n+1,2\scale_n-1}\), denote it \(w_{L}^{2}\), and a leftmost cone-point of \(\gamma_L\) in the slab \(\slab_{2\scale_n+1,3\scale_n-1}\), denote it \(w_{L}^{3}\).
	In the same fashion, there is a rightmost cone-point of \(v_n-v+ \gamma_R\) in the slab \(\slab_{n-\scale_n+1,n-2}\), denote it \(w_{R}^{1}\), a rightmost cone-point of \(v_n-v+\gamma_R\) in the slab \(\slab_{n-2\scale_n+1,n-\scale_n-1}\), denote it \(w_{R}^{2}\), and a rightmost cone-point of \(v_n-v+\gamma_R\) in the slab \(\slab_{n-3\scale_n+1,n-2\scale_n-1}\), denote it \(w_{R}^{3}\). See Figure~\ref{Fig:InvPrinc_Decomp_Annuli}.
	
	Let then
	\begin{equation*}
		F_n = \bbE_{\{w_L^1\cdot \rme_1,\dots,w_R^1\cdot \rme_1\}\times \{-5n,\dots, 5n\}},
	\end{equation*}
	and introduce the sets, see Figure~\ref{Fig:InvPrinc_Decomp_Annuli},
	\begin{equation*}
		E_1 = \bbE_{\Delta(w_L, w_R)},
		\quad
		E_2 = \bbE_{\{w_L^2\cdot \rme_1,\dots,w_R^2\cdot\rme_1\}\times \{-4n,\dots,4n\}\setminus \{w_L^3\cdot \rme_1,\dots,w_R^3\cdot \rme_1\}\times \{-3n,\dots,3n\}}.
	\end{equation*}Note that \(\Delta(w_L, w_R) \subset \{w_L\cdot \rme_1,\dots, w_R\cdot \rme_1\}\times \{-2n,\dots,2n\}\) by definition, and that \(m_n,m_n'\geq 5n\) by choice of \(\Cr{cst:scaleMinBox}\).
	
	Introduce the following modifications \(\tilde{A}_{0,\rmE_n}^L(\gamma_L),\, \tilde{A}_{w_R,\rmE_n}^R(\gamma_R),\, \tilde{A}_{0}^L(\gamma_L),\, \tilde{A}_{w_R}^R(\gamma_R)\) of \(A_{0,\rmE_n}^L(\gamma_L),\, A_{w_R,\rmE_n}^R(\gamma_R),\, A_{0}^L(\gamma_L),\, A_{w_R}^R(\gamma_R)\): ask that \(\{w_L^1,w_L^1+\rme_1\}, \{w_R^1,w_R^1-\rme_1\}\) are closed in \(\omega_{\tau}\) rather than open. See Figure~\ref{Fig:InvPrinc_Decomp_Annuli}.
	
	\begin{figure}
		\includegraphics[scale=0.75]{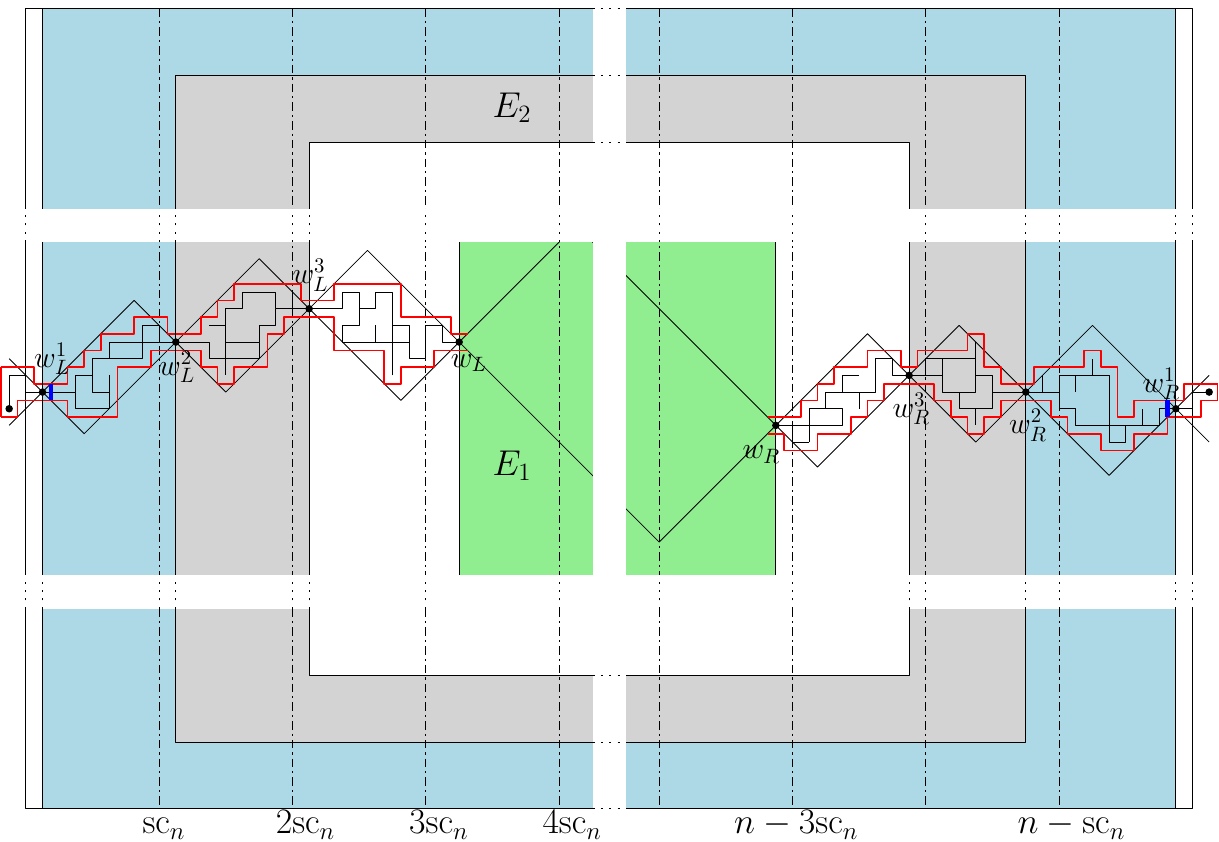}
		\caption{For \(\gamma_L\in \SetRootMarkBackContFV(\scale_n)\), \(\gamma_R\in \SetRootMarkForwContFV(\scale_n)\), \(\gamma_L,\gamma_R\) both contain at least \(\rho \scale_n\) cone-points in the blue and white annuli. The thick blue edges are the ones which states are changed in \(\tilde{A}_v(\gamma_L,\gamma_R)\). The red paths are the edges dual to \(\Env_L(\gamma_L)\), and to \(\Env_R(v_n-v+\gamma_R)\). A zoom on the relevant part of the box is made, white strips represent parts of the system that are not represented.}
		\label{Fig:InvPrinc_Decomp_Annuli}
	\end{figure}
	
	We first observe that by Hypotheses~\ref{hyp_fin_vol_connection:edge_state_swap}, and the same observation as in Claim~\ref{claim:chain_of_events},
	\begin{align}\label{eq:InvPrinc:prf_lem_density_swapp:edge_swap}
		\frac{\Phi_n\big( A_{w_L}^0(\gamma),\, \tilde{A}_{0,\rmE_n}^L(\gamma_L),\, \tilde{A}_{w_R,\rmE_n}^R(\gamma_R)\big)}{\Phi_n\big( A_{u}^0(\gamma),\, A_{0,\rmE_n}^L(\gamma_L),\, A_{w_R,\rmE_n}^R(\gamma_R)\big)}
		&=
		\stateSwapCst,
		\\
		\frac{\Phi\big( A_{u}^0(\gamma),\, \tilde{A}_{0}^L(\gamma_L),\, \tilde{A}_{w_R}^R(\gamma_R)\big)}{\Phi\big( A_{u}^0(\gamma),\, A_{0}^L(\gamma_L),\, A_{w_R}^R(\gamma_R)\big)}
		&=
		\Big(\frac{\sinh(2J)}{e^{-2J}-e^{-2U}}\Big)^2 =: c_{J,U}.\nonumber
	\end{align}
	Thus, as
	\begin{multline*}
		\Phi_n\big( \calC\in \goodCl_n(\scale_n),\, A_{0,\rmE_n}^L(\gamma_L),\, A_{w_R,\rmE_n}^R(\gamma_R)\big)
		\\=
		\sum_{\gamma'\in \SetRootDiaCont_{w_R-w_L}}\Phi_n\big( A_{w_L}^0(\gamma'),\, A_{0,\rmE_n}^L(\gamma_L),\, A_{w_R,\rmE_n}^R(\gamma_R)\big),
	\end{multline*}we have that
	\begin{multline}
		\label{eq:InvPrinc:prf_lem_density_swapp:cond_proba_expansion}
		\Phi_n\big( A_{w_L}^0(\gamma) \bgiven \calC\in \goodCl_n(\scale_n),\, A_{0,\rmE_n}^L(\gamma_L),\, A_{w_R,\rmE_n}^R(\gamma_R)\big)
		\\=
		\frac{\Phi_n\big( A_{w_L}^0(\gamma)\bgiven \tilde{A}_{0,\rmE_n}^L(\gamma_L),\, \tilde{A}_{w_R,\rmE_n}^R(\gamma_R)\big)}{\sum_{\gamma'\in \SetRootDiaCont_{w_R-w_L}}\Phi_n\big( A_{w_L}^0(\gamma') \bgiven \tilde{A}_{0,\rmE_n}^L(\gamma_L),\, \tilde{A}_{w_R,\rmE_n}^R(\gamma_R)\big)},
	\end{multline}and similarly for \(\Phi\). The Lemma will follow from the above observations and the next claim.
	
	\begin{claim}
		\label{claim:InvPrinc:prf_lem_density_swapp}
		There exist \(c>0\) and~\(n_0\geq 1\) such that, for any~\(n\geq n_0\) and any event \(B\) supported on edges with both endpoints in \(\Delta(w_L,w_R)\),
		\begin{equation*}
			\Big|\frac{\Phi_n\big( B\given \tilde{A}_{0,\rmE_n}^L(\gamma_L),\, \tilde{A}_{w_R,\rmE_n}^R(\gamma_R)\big)}{\Phi\big( B \given \tilde{A}_{0}^L(\gamma_L),\, \tilde{A}_{w_R}^R(\gamma_R)\big)} -1\Big|
			\leq
			e^{-c\scale_n}.
		\end{equation*}
	\end{claim}
	\begin{proof}
		We use Lemma~\ref{lem:app:mixing_to_ratioMixing} with \(\epsilon = e^{-c\scale_n}\) and the following inputs:
		\begin{itemize}
			\item \(\mu\) is the restriction of~\(\Phi_n\big( \cdot \given \tilde{A}_{0,\rmE_n}^L(\gamma_L),\, \tilde{A}_{w_R,\rmE_n}^R(\gamma_R)\big) \) to \(E_1\cup E_2\), \(\mu_1\) is the restriction of \(\mu\) to \(E_1\), and \(\mu_2\) is the restriction of \(\mu\) to \(E_2\).
			\item \(\nu\) is the restriction of~\(\Phi\big( \cdot \given \tilde{A}_{0}^L(\gamma_L),\, \tilde{A}_{w_R}^R(\gamma_R)\big)\) to \(E_1\cup E_2\), \(\nu_1\) is the restriction of \(\nu\) to \(E_1\), and \(\nu_2\) is the restriction of \(\nu\) to \(E_2\).
			\item \(D\) is the event that \(E_2\) contains a circuit of primal edges open in \(\omega_{\tau\tau'}\), and a circuit of dual edges open in \(\omega_{\tau}^*\) which both surround \(E_1\).
		\end{itemize}
		We need to show that the Hypotheses of Lemma~\ref{lem:app:mixing_to_ratioMixing} are fulfilled. Introduce the shorthands
		\begin{equation*}
			\tilde{A}_F(\gamma_L,\gamma_R) \equiv \tilde{A}_{0,F}^L(\gamma_L) \cap \tilde{A}_{w_R,F}^R(\gamma_R),
			\quad
			\tilde{A}(\gamma_L,\gamma_R) \equiv \tilde{A}_{0}^L(\gamma_L) \cap \tilde{A}_{w_R}^R(\gamma_R).
		\end{equation*}Let
		\begin{equation*}
			\tilde{\gamma}_L = \gamma_L\cap \diam(w_L^1,w_L),
			\quad
			\tilde{\gamma}_R = (\gamma_R + w_R)\cap \diam(w_R,w_R^1),
		\end{equation*}and
		\begin{equation*}
			\calE = \Env^+(\tilde{\gamma}_L) \cup \Env^-(\tilde{\gamma}_L) \cup \Env^+(\tilde{\gamma}_R) \cup \Env^-(\tilde{\gamma}_R) \cup \big\{\{w_L^1,w_L^1+\rme_1\},\{w_R^1,w_R^1-\rme_1\}\big\}.
		\end{equation*}
		Let \(N = 10^{10^{m_n+m_n'+n}}\) and set
		\begin{equation*}
			V_N = \{-N,\dots,N\}^2 \setminus (\Fill(\tilde{\gamma}_L)\cup \Fill(\tilde{\gamma}_R)),
			\quad
			Q = \atrc_{\bbE_{V_N}}^{1,1}\big(\cdot \bgiven \omega_{\tau}(e) = 0\, \forall e \in \calE\big).
		\end{equation*}
		As we will see below, any large enough integer as function of \(m_n,m_n',n\) would do.
		
		The proofs of Hypotheses 1 and 2 of Lemma~\ref{lem:app:mixing_to_ratioMixing} are very close to the proof of Claim~\ref{prf:ratio_mix_weights:str_mix_diam_meas}. We therefore refer to it after reducing the wanted claim to a statement analogous to the one present in the proof of Claim~\ref{prf:ratio_mix_weights:str_mix_diam_meas}.
		
		\smallskip
		
		\noindent\textbf{Hypotheses 1 of Lemma~\ref{lem:app:mixing_to_ratioMixing} holds:}
		let \(a,b,a',b'\in \{0,1\}^{E_1}\) with \(a\leq b\), \(a'\leq b'\). Use the shorthand \(F_n'= F_n\setminus E_1\) Then, let \(\pi\) be a monotone coupling of \(\atrc^{1,1}_{F_n'}\big(\cdot \given \tilde{A}_{F_n'}(\gamma_L,\gamma_R)\big)\), and \(\atrc^{0,0}_{F_n'}\big(\cdot \given \tilde{A}_{F_n'}(\gamma_L,\gamma_R)\big)\), and \((X,Y)\sim \pi\). Then, by Hypotheses~\ref{hyp_fin_vol_connection:FKG},~\ref{hyp_fin_vol_connection:stochastic_sandwich}, the measures \(\Phi_n\big( \cdot \given \tilde{A}_{\rmE_n}(\gamma_L,\gamma_R),\, \omega_{\tau}|_{E_1} = a,\, \omega_{\tau\tau'}|_{E_1} = b\big)\),\linebreak
		\(\Phi\big( \cdot \given \tilde{A}(\gamma_L,\gamma_R),\, \omega_{\tau}|_{E_1} = a,\, \omega_{\tau\tau'}|_{E_1} = b\big)\), as well as their versions with \(a',b'\) replacing \(a,b\), are stochastically dominated by \(\atrc^{1,1}_{F_n'}\big(\cdot \given \tilde{A}_{F_n'}(\gamma_L,\gamma_R)\big)\), and stochastically dominate \(\atrc^{0,0}_{F_n'}\big(\cdot \given \tilde{A}_{F_n'}(\gamma_L,\gamma_R)\big)\). Thus, the total variation distance between \(\Phi_n\big( \cdot \given \tilde{A}_{\rmE_n}(\gamma_L,\gamma_R),\, \omega_{\tau}|_{E_1} = a,\, \omega_{\tau\tau'}|_{E_1} = b\big)\) and \(\Phi_n\big( \cdot \given \tilde{A}_{\rmE_n}(\gamma_L,\gamma_R),\, \omega_{\tau}|_{E_1} = a',\, \omega_{\tau\tau'}|_{E_1} = b'\big)\) is less than \(\pi(X|_{E_2}\neq Y|_{E_2})\). The same holds for \(\Phi\). Then, by a union bound an monotonicity of \(\pi\),
		\begin{align*}
			&\pi(X|_{E_2}\neq Y|_{E_2})
			\\
			&\leq
			\sum_{e\in E_2} \Big(
			\atrc^{1,1}_{F_n'}\big(\omega_{\tau}(e)=1 \given \tilde{A}_{F_n'}(\gamma_L,\gamma_R)\big)
			- \atrc^{0,0}_{F_n'}\big(\omega_{\tau}(e)=1 \given \tilde{A}_{F_n'}(\gamma_L,\gamma_R)\big)
			\\&\qquad 
			+ \atrc^{1,1}_{F_n'}\big(\omega_{\tau\tau'}(e)=1 \given \tilde{A}_{F_n'}(\gamma_L,\gamma_R)\big)
			- \atrc^{0,0}_{F_n'}\big(\omega_{\tau\tau'}(e)=1 \given \tilde{A}_{F_n'}(\gamma_L,\gamma_R)\big) \Big).
		\end{align*}
		
		We then will use Theorem~\ref{thm:strong_mixing_atrc} to bound the last differences. Letting \(\calE'\) be the union of \(E_1\) with the set of edges with at least one endpoint in \(\{x\in \bbV_{F_n}^c:\ \rmd_{\infty}(x,\bbV_{F_n}) = 1 \}\), we have that (recall that \(Q = \atrc_{\bbE_{V_N}}^{1,1}\big(\cdot \bgiven \omega_{\tau}(e) = 0\, \forall e \in \calE\big)\)) for any event \(B\) supported on \(E_2\),
		\begin{gather*}
			\atrc^{1,1}_{F_n'}\big(B \given \tilde{A}_{F_n}(\gamma_L,\gamma_R)\big)
			=
			Q\big(B \bgiven \omega_{\tau}(e') = \omega_{\tau\tau'}(e') = 1\, \forall e'\in \calE'\big),
			\\
			\atrc^{0,0}_{F_n'}\big(B \given \tilde{A}_{F_n}(\gamma_L,\gamma_R)\big)
			=
			Q\big(B \bgiven \omega_{\tau}(e') = \omega_{\tau\tau'}(e') = 0\, \forall e'\in \calE'\big).
		\end{gather*}
		Theorem~\ref{thm:strong_mixing_atrc} can then be applied to \(Q\) to give that the differences are less than the probability in an inhomogeneous bloc percolation that \(e\) is connected to \(\calE'\), exactly as in the proof of Claim~\ref{prf:ratio_mix_weights:str_mix_diam_meas}. This probability is then less than \(e^{-c\scale_n}\) as both \(\gamma_L\) and \(w_R+\gamma_R\) contain at least \(\rho\scale_n\) cone points in the both slabs \(\slab_{w_L^1\cdot \rme_1,w_L^2\cdot \rme_1}, \slab_{w_L^3\cdot \rme_1,w_L\cdot \rme_1}\), and \(\slab_{w_R^2\cdot \rme_1,w_R^1\cdot \rme_1}, \slab_{w_R\cdot \rme_1,w_R^3\cdot \rme_1}\) respectively. The argument is then the same as in the proof of Claim~\ref{prf:ratio_mix_weights:str_mix_diam_meas}.
		
		\smallskip
		
		\noindent\textbf{Hypotheses 2 of Lemma~\ref{lem:app:mixing_to_ratioMixing} holds:} let \(\pi\) be a monotone coupling of \(\atrc^{1,1}_{F_n'}\big(\cdot \given \tilde{A}_{F_n'}(\gamma_L,\gamma_R)\big)\), and \(\atrc^{0,0}_{F_n'}\big(\cdot \given \tilde{A}_{F_n'}(\gamma_L,\gamma_R)\big)\), and \((X,Y)\sim \pi\). Then, by Hypotheses~\ref{hyp_fin_vol_connection:FKG},~\ref{hyp_fin_vol_connection:stochastic_sandwich}, the measures \(\Phi_n\big( \cdot \given \tilde{A}_{\rmE_n}(\gamma_L,\gamma_R)\big)\), and \(\Phi\big( \cdot \given \tilde{A}(\gamma_L,\gamma_R)\big)\) are stochastically dominated by \(\atrc^{1,1}_{F_n'}\big(\cdot \given \tilde{A}_{F_n'}(\gamma_L,\gamma_R)\big)\), and stochastically dominate \(\atrc^{0,0}_{F_n'}\big(\cdot \given \tilde{A}_{F_n'}(\gamma_L,\gamma_R)\big)\). Thus, \(\tvd(\mu_2,\nu_2) \leq \pi(X|_{E_2}\neq Y|_{E_2})\). The rest is the same as in the previous point.
		
		\smallskip
		
		\noindent\textbf{Hypotheses 3 of Lemma~\ref{lem:app:mixing_to_ratioMixing} holds:}
		let \((a,b)\in D\) compatible with \(\tilde{A}_{\rmE_n}(\gamma_L,\gamma_R)\). We have that by Hypotheses~\ref{hyp_fin_vol_connection:FKG}, Hypotheses~\ref{hyp_fin_vol_connection:stochastic_sandwich}, and the lattice-FKG property of ATRC, the measures \(\Phi_n\big( \cdot \given \tilde{A}_{\rmE_n}(\gamma_L,\gamma_R),\, (\omega_{\tau}, \omega_{\tau\tau'})|_{E_2} = (a,b)\big)\), and\linebreak \(\Phi\big( \cdot \given \tilde{A}(\gamma_L,\gamma_R),\, (\omega_{\tau},\omega_{\tau\tau'})|_{E_2} = (a,b)\big)\) are stochastically dominated by \linebreak \(\atrc^{1,1}_{F_n}\big(\cdot \given \tilde{A}_{F_n}(\gamma_L,\gamma_R),\, (\omega_{\tau},\omega_{\tau\tau'})|_{E_2} = (a,b)\big)\), and stochastically dominate \(\atrc^{0,0}_{F_n}\big(\cdot \given \tilde{A}_{F_n}(\gamma_L,\gamma_R),\, (\omega_{\tau},\omega_{\tau\tau'})|_{E_2} = (a,b)\big)\). The path decoupling property of \(\atrc\), see Lemma~\ref{lem:decoupling_paths_mATRC} and the two paragraphs before it, imply that these two ATRC measures agree on \(E_1\). So, the two measures restricted to \(E_1\) are equal, which is the ``conditional equality'' part of the third Hypotheses of Lemma~\ref{lem:app:mixing_to_ratioMixing}. The fact that \(D\) has probability at least \(1-e^{-c\scale_n}\) under \(\mu_2,\nu_2\) follows from Lemma~\ref{lem:InvPrinc:exp_dec} for \(\mu_2\), and Theorem~\ref{thm:Ale04_ATRC} for \(\nu_2\).
	\end{proof}
	
	By Claim~\ref{claim:InvPrinc:prf_lem_density_swapp}, and~\eqref{eq:InvPrinc:prf_lem_density_swapp:cond_proba_expansion}, we get, with \(\epsilon_n \equiv e^{-c\scale_n}\),
	\begin{align*}
		&\Phi_n\big( A_{u}^0(\gamma)\given A_{0,\rmE_n}^L(\gamma_L),\, A_{w_R,\rmE_n}^R(\gamma_R),\, \calC\in \goodCl_n(\scale_n)\big)
		\\
		&\qquad=
		\frac{\Phi_n\big( A_{w_L}^0(\gamma)\given \tilde{A}_{0,\rmE_n}^L(\gamma_L),\, \tilde{A}_{w_R,\rmE_n}^R(\gamma_R)\big)}{\sum_{\gamma'\in \SetRootDiaCont_{w_R-w_L}}\Phi_n\big( A_{w_L}^0(\gamma') \given \tilde{A}_{0,\rmE_n}^L(\gamma_L),\, \tilde{A}_{w_R,\rmE_n}^R(\gamma_R)\big)}
		\\
		&\qquad\leq
		\frac{1+\epsilon_n}{1-\epsilon_n}\frac{\Phi\big( A_{w_L}^0(\gamma)\given \tilde{A}_{0}^L(\gamma_L),\, \tilde{A}_{w_R}^R(\gamma_R)\big)}{\sum_{\gamma'\in \SetRootDiaCont_{w_R-w_L}}\Phi\big( A_{w_L}^0(\gamma') \given \tilde{A}_{0}^L(\gamma_L),\, \tilde{A}_{w_R}^R(\gamma_R)\big)}
		\\
		&\qquad=
		\frac{1+\epsilon_n}{1-\epsilon_n}\Phi\big( A_{u}^0(\gamma)\given A_{0}^L(\gamma_L),\, A_{w_R}^R(\gamma_R),\, \calC\in \goodCl_n(\scale_n)\big),
	\end{align*}which is half of the wanted claim. The other half follows the same computation with \(\leq\) replaced by \(\geq\), and \(\frac{1+\epsilon_n}{1-\epsilon_n}\) replaced by \(\frac{1-\epsilon_n}{1+\epsilon_n}\).
\end{proof}

From this Lemma and trivial algebra, we obtain the following: for \(n\) large enough, \(\scale_n\) as in Lemma~\ref{lem:InvPrinc:density_swapping}, one has that for any \(A\subset \goodCl_n(\scale_n)\),
\begin{equation}
	\label{eq:InvPrinc:Phi_n_to_cond_Phi}
	\begin{aligned}
		&\Phi_n\big(\calC\in A\given \calC\in \goodCl_n\big)
		\\
		&\qquad\lesseqgtr(1\pm e^{-c\scale_n})\sum_{\gamma_L\in \SetRootMarkBackContFV}\sum_{\gamma_R\in \SetRootMarkForwContFV}
		\Phi_n\big(A_{0,\rmE_n}^L(\gamma_L),\, A_{v_n-\displace(\gamma_R),\rmE_n}^R(\gamma_R) \bgiven \calC\in \goodCl_n\big)
		\\
		&\phantom{\lesseqgtr(1\pm e^{-c\scale_n})\sum_{\gamma_L\in \SetRootMarkBackContFV}\sum_{\gamma_R\in \SetRootMarkForwContFV}}\cdot \Phi\big( \calC \in A \bgiven \calC\in \goodCl_n,\, A_{0}^L(\gamma_L),\, A_{v_n-\displace(\gamma_R)}^R(\gamma_R) \big)
	\end{aligned}
\end{equation}where \(\SetRootMarkBackContFV\equiv \SetRootMarkBackContFV(\scale_n)\), \(\SetRootMarkForwContFV\equiv \SetRootMarkForwContFV(\scale_n)\), and \(\goodCl_n\equiv \goodCl_n(\scale_n)\).

\subsection{Proof of Theorem~\ref{thm:main_Inv_principle_coupling_with_RW}}
\label{subsec:InvPrinc:prf_coupling_RWB}

The proof of Theorem~\ref{thm:main_Inv_principle_coupling_with_RW} will build around~\eqref{eq:InvPrinc:Phi_n_to_cond_Phi}, and the OZ decomposition of~\eqref{eq:def:measure_sequence}. For \(\gamma_L\in \SetRootMarkBackCont\), \(\gamma_R\in\SetRootMarkForwCont\), and \(C\) a realization of \(\calC\) under \(\Phi\) containing~\(0\) and~\(v_n\), denote
\begin{itemize}
	\item \(C\sim \gamma_L\) if \(C\cap (\displace(\gamma_L) + \bcone) = \gamma_L\) and \(\displace(\gamma_L)\in \CPts(C)\),
	\item \(C\sim \gamma_R\) if \(C\cap (v_n-\displace(\gamma_R) + \fcone) = \gamma_R\) and \(v_n - \displace(\gamma_R)\in \CPts(C)\).
\end{itemize}
Moreover, we will use the shorthand for sequences:
\begin{equation*}
	(x_i,x_{i+1},\dots,x_j) \equiv x_{i}^j.
\end{equation*}

For \(\eta_L\in\SetRootMarkBackCont,\, \eta_R\in \SetRootMarkForwCont\), introduce the probability measure \(\OZDecompmeas_{\eta_L,\eta_R}^{v_n}\) on the same space as \(\OZDecompmeas\) via: for any event \(B\),
\begin{equation}
	\label{eq:InvPrinc:def:OZDecCond}
	\OZDecompmeas_{\eta_L,\eta_R}^{v_n}(B)
	\coloneqq
	\frac{\int d\OZDecompmeas(M;\gamma_0^{M+1}) \mathds{1}_{\gamma_0^{M+1}\in B} \mathds{1}_{\displace(\bar{\gamma}) = v_n}\mathds{1}_{\bar{\gamma}\sim \eta_L} \mathds{1}_{\bar{\gamma}\sim \eta_R} }{\int d\OZDecompmeas(M;\gamma_0^{M+1}) \mathds{1}_{\displace(\bar{\gamma}) = v_n}\mathds{1}_{\bar{\gamma}\sim \eta_L} \mathds{1}_{\bar{\gamma}\sim \eta_R}},
\end{equation}where \(\OZDecompmeas\) is the measure introduced in~\eqref{eq:def:OZDecompmeas}, and
where \(\bar{\gamma} = \gamma_0\concatenate \dots \concatenate \gamma_{M+1}\). Also, write
\begin{equation*}
	\OZDecompmeas_{\eta_L,\eta_R}^{v_n} \circ \bar{\gamma}^{-1}
\end{equation*}the measure induced on connected subset of \(\Z^2\) containing \(0\) by concatenation of the graph sequence sampled under \(\OZDecompmeas_{\eta_L,\eta_R}^{v_n}\).
Then, note that for \(\gamma_L\in \SetRootMarkBackContFV(\scale_n)\), \(\gamma_R\in \SetRootMarkForwContFV(\scale_n)\),
\begin{align*}
	\{\calC\in \goodCl_n(\scale_n)\}\cap A_{0}^L(\gamma_L)\cap A_{v_n-\displace(\gamma_R)}^R(\gamma_R)
	&=
	\bigcup_{\gamma\in \SetRootDiaCont_{v_n-\displace(\gamma_L)-\displace(\gamma_R)}}\{\calC = \gamma_L\concatenate \gamma\concatenate\gamma_R\}
	\\
	&=
	\big\{\calC\in \goodCl_n(\scale_n),\, \calC\sim \gamma_L,\, \calC\sim \gamma_R\big\}.
\end{align*}

\begin{lemma}
	\label{lem:InvPrinc:cond_Phi_to_OZDecCond}
	Let \(0<J<U\) satisfy \(\sinh 2J=e^{-2U}\). Then, there exist \(n_0\geq 1\), \(c>0\), and~\(\epsilon_0>0\), such that for any \(n\geq n_0\), and any \(\eta_L\in \SetRootMarkBackCont,\eta_R\in \SetRootMarkForwCont\) with \(\norm{\displace(\eta_L)}_{\infty},\norm{\displace(\eta_R)}_{\infty} \leq \epsilon_0 \sqrt{n}\),
	\begin{equation*}
		\tvd\Big( \Phi\big( \calC\in \cdot \bgiven \goodCl_n(\scale_n),\, \calC\sim \eta_L,\, \calC\sim \eta_R \big),\, \OZDecompmeas_{\eta_L,\eta_R}^{v_n}\circ \bar{\gamma}^{-1} \Big)
		\leq
		e^{-cn}.
	\end{equation*}
\end{lemma}
\begin{proof}
	First, by Theorem~\ref{thm:OZ_for_ATRC_infinite_vol}, there exists \(c>0\) such that for any \(n\) large enough, and any set \(B\) of realisations of \(\calC\),
	\begin{equation}
		\label{eq:prf_lem_cond_Phi_to_OZDecCond:OZ_coulping}
		\Big|e^{\nu_1 v_n\cdot \rme_1}\Phi\big( \calC\in B,\, v_n\in \calC\big)
		-
		\int d\OZDecompmeas(M;\gamma_{0}^{M+1}) \mathds{1}_{\displace(\bar{\gamma}) = v_n} \mathds{1}_{\bar{\gamma}\in B}\Big|
		\leq
		e^{-cn}
	\end{equation}where \(\bar{\gamma} = \gamma_0\concatenate\dots\concatenate \gamma_{M+1}\). Then, by definition of \(\OZDecompmeas,\OZWalkmeas\), see~\eqref{eq:def:measure_sequence}, and the ``finite energy'' properties of the probability measures \(p_L,p_R,p\), see Theorem~\ref{thm:OZ_for_ATRC_infinite_vol}, for some \(c'>0\), one has
	\begin{equation*}
		\int d\OZDecompmeas(M;\gamma_0^{M+1}) \mathds{1}_{\displace(\bar{\gamma}) = v_n}\mathds{1}_{\bar{\gamma}\sim \eta_L} \mathds{1}_{\bar{\gamma}\sim \eta_R}
		\geq
		\rmC e^{-c'(|\eta_L|+|\eta_R|)} \OZWalkmeas(T_{v_n-\displace(\eta_L)-\displace(\eta_R)}<\infty).
	\end{equation*}Finally, by a direct random walk computation using the Local Limit Theorem in dimension 2, see for example~\cite[Section 8.1]{AouOttVel24}, and the fact that \(\norm{\displace(\eta_L)+\displace(\eta_R)}_{\infty} \leq 2\epsilon_0\sqrt{n}\),
	\begin{equation*}
		\OZWalkmeas(T_{v_n-\displace(\eta_L)-\displace(\eta_R)}<\infty)
		\geq
		\tfrac{C}{\sqrt{n}},
	\end{equation*}for some \(C>0\). Combining, we get, for \(n\) large enough,
	\begin{equation*}
		\int d\OZDecompmeas(M;\gamma_0^{M+1}) \mathds{1}_{\displace(\bar{\gamma}) = v_n}\mathds{1}_{\bar{\gamma}\sim \eta_L} \mathds{1}_{\bar{\gamma}\sim \eta_R}
		\geq
		e^{-c' \epsilon_0^2 n}
	\end{equation*}for some \(c'>0\).
	Now, by~\eqref{eq:prf_lem_cond_Phi_to_OZDecCond:OZ_coulping}, for any \(B\),
	\begin{multline*}
		\Phi\big( \calC\in B \bgiven \goodCl_n(\scale_n),\, \calC \sim \eta_L,\, \calC \sim \eta_R \big)
		\\
		\lesseqgtr
		\frac{\pm e^{-cn} + \int d\OZDecompmeas(M;\gamma_0^{M+1}) \mathds{1}_{\bar{\gamma}\in B} \mathds{1}_{\displace(\bar{\gamma}) = v_n}\mathds{1}_{\bar{\gamma}\sim \eta_L} \mathds{1}_{\bar{\gamma}\sim \eta_R} }{\mp e^{-cn} + \int d\OZDecompmeas(M;\gamma_0^{M+1}) \mathds{1}_{\displace(\bar{\gamma}) = v_n}\mathds{1}_{\bar{\gamma}\sim \eta_L} \mathds{1}_{\bar{\gamma}\sim \eta_R}}.
	\end{multline*}
	So, if \(\epsilon_0>0\) is small enough, for any \(n\) large enough,
	\begin{align*}
		(1-e^{-cn/2})\OZDecompmeas_{\eta_L,\eta_R}^{v_n}(B) - e^{-cn/2}
		&\leq
		\Phi\big( \calC\in B \bgiven \goodCl_n(\scale_n),\, \calC \sim\eta_L,\, \calC \sim\eta_R \big)
		\\
		&\leq
		e^{-cn/2} + (1+e^{-cn/2})\OZDecompmeas_{\eta_L,\eta_R}^{v_n}(B),
	\end{align*}
	which is the wanted claim.
\end{proof}

For \(\eta_L\in \SetRootMarkBackCont,\eta_R\in \SetRootMarkForwCont\), and \(\gamma_0^{M+1}\sim \OZDecompmeas_{\eta_L,\eta_R}^{v_n}\), define the random variables
\begin{align*}
	\rmT_{\eta_L} &= 
	\begin{cases}
		\min\{k\geq 0:\ \eta_L\subset \gamma_0\concatenate\dots\concatenate\gamma_k \} & \text{ if }  \gamma_0\concatenate\dots\concatenate \gamma_{M+1} \sim \eta_L
		\\
		\dagger & \text{ else}
	\end{cases},
	\\
	\rmT_{\eta_R} &= \begin{cases}
		\max\{0\leq k\leq M+1:\ \eta_R\subset \gamma_k\concatenate\dots\concatenate\gamma_{M+1} \} & \text{ if }  \gamma_0\concatenate\dots\concatenate \gamma_{M+1} \sim \eta_R
		\\
		\dagger & \text{ else}
	\end{cases},
\end{align*}
and the events for \(K\geq 0\),
\begin{align*}
	\Forget_{\eta_L,\eta_R}^{v_n}(K)
	=
	&\big\{\rmT_{\eta_L}\neq \dagger,\, \norm{\displace(\gamma_0\concatenate\dots\concatenate \gamma_{\rmT_{\eta_L}})}_{\infty} \leq K \big\}
	\\
	\cap 
	&\big\{\rmT_{\eta_R}\neq \dagger,\, \norm{\displace(\gamma_{\rmT_{\eta_R}}\concatenate\dots\concatenate \gamma_{M+1})}_{\infty} \leq K\big\}.
\end{align*}
	
Now, for \(K\geq \max(\norm{\displace(\eta_L)}_{\infty},\norm{\displace(\eta_R)}_{\infty})\), and \(n\) large enough, note that for any function \(f\)
\begin{multline}
	\label{eq:InvPrinc:OZDec_forget_bnd_pieces}
	\OZDecompmeas_{\eta_L,\eta_R}^{v_n}\big( f(\bar{\gamma}_0^{M+1}) \,;\, \Forget_{\eta_L,\eta_R}^{v_n}(K) \big)=
	\\
	\sum_{\substack{\gamma_L\in\SetRootMarkBackCont\\ \norm{\displace(\gamma_L)}_{\infty}\leq K }}
	\sum_{\substack{\gamma_R\in\SetRootMarkForwCont\\ \norm{\displace(\gamma_R)}_{\infty}\leq K }}
	\sum_{k,l\geq 0} \OZDecompmeas_{\eta_L,\eta_R}^{v_n}\big( (\rmT_{\eta_L},\rmT_{\eta_R}) = (k,l),\, \bar{\gamma}_0^{k} = \gamma_L,\, \bar{\gamma}_{M+1-l}^{M+1} = \gamma_R\big)
	\\
	\cdot\OZmeas_{v_n-\displace(\gamma_L)-\displace(\gamma_R)}\big( f(\gamma_L\concatenate\tilde{\gamma}_1\concatenate\dots\concatenate \tilde{\gamma}_M \concatenate \gamma_{R})\big),
\end{multline}where \(\tilde{\gamma}_1^{M}\sim \OZmeas_{v_n-\displace(\gamma_L)-\displace(\gamma_R)}\), and \(\bar{x}_i^j = x_i\concatenate\dots\concatenate x_j\). Note that, after conditioning on \(\Forget_{\eta_L,\eta_R}^{v_n}(K)\), this is precisely a ``bridge decomposition'' for the probability measure \(\OZDecompmeas_{\eta_L,\eta_R}^{v_n}( \cdot \given \Forget_{\eta_L,\eta_R}^{v_n}(K) )\), which is what we are after. To conclude the proof of Theorem~\ref{thm:main_Inv_principle_coupling_with_RW}, the idea is to prove that \(\Forget_{\eta_L,\eta_R}^{v_n}(C\scale_n^2)\) has very large probability once \(C\) is taken large enough and \(\scale_n\) is as in the statement of Theorem~\ref{thm:main_Inv_principle_coupling_with_RW}. This is the content of the next Lemma.

\begin{lemma}
	\label{lem:InvPrinc:OZDecCond_to_bridgeMixture}
	Let \(0<J<U\) satisfy \(\sinh 2J=e^{-2U}\). Then, there exist \(n_0\geq 1\), \(c>0\), and~\(\Cl[ImpCst]{cst:scaleForget}\geq 1\), such that for any \(n\geq n_0\), any \(\ln(n)\leq K\leq \sqrt{n}\), and any \(\eta_L\in \SetRootMarkBackCont,\eta_R\in \SetRootMarkForwCont\) with \(\norm{\displace(\eta_L)}_{\infty} \leq K\), \(\norm{\displace(\eta_R)}_{\infty} \leq K\),
	\begin{equation*}
		\OZDecompmeas_{\eta_L,\eta_R}^{v_n}\big( \Forget_{\eta_L,\eta_R}^{v_n}(\Cr{cst:scaleForget}K^2) \big)
		\geq
		1- e^{-cK^2}.
	\end{equation*}
\end{lemma}
\begin{proof}
	Recall~\eqref{eq:InvPrinc:def:OZDecCond}. We will use this expression with \(B = (\Forget_{\eta_L,\eta_R}^{v_n}(\Cr{cst:scaleForget}K^2))^c\) by upper bounding the numerator and lower bounding the denominator. We start with the latter task:
	\begin{multline*}
		\int d\OZDecompmeas(M;\gamma_0^{M+1}) \mathds{1}_{\displace(\bar{\gamma}) = v_n}\mathds{1}_{\bar{\gamma}\sim \eta_L} \mathds{1}_{\bar{\gamma}\sim \eta_R}
		\\
		\geq
		\rmC e^{-c'(|\eta_L|+|\eta_R|)} \OZWalkmeas(T_{v_n-\displace(\eta_L)-\displace(\eta_R)}<\infty)
		\geq
		\tfrac{C}{\sqrt{n}}e^{-c'K^2 },
	\end{multline*}for some \(C>0\), \(c'\geq 0\) as in the proof of Lemma~\ref{lem:InvPrinc:cond_Phi_to_OZDecCond}. We then turn to the upper bound on the numerator. Note that under \((\Forget_{\eta_L,\eta_R}^{v_n}(\Cr{cst:scaleForget}K^2))^c\), one of the elements of \(\gamma_0^{M+1} \) must have a displacement with sup-norm at least \(\Cr{cst:scaleForget}K^2 -K\) as \(\norm{\displace(\eta_L)}_{\infty}, \norm{\displace(\eta_R)}_{\infty} \leq K\). Moreover, as \(\norm{v_n-(n,0)}_{\infty}\leq 2\), and \(\displace(\gamma) \cdot \rme_1 \geq 1\) \(p\)-a.s., \(M\leq n+2\) with full measure. Thus, as \(p,p_L,p_R\) all have exponential tails,
	\begin{multline*}
		\int d\OZDecompmeas(M;\gamma_0^{M+1}) \mathds{1}_{(\Forget_{\eta_L,\eta_R}^{v_n}(\Cr{cst:scaleForget}K^2))^c}(\bar{\gamma}) \mathds{1}_{\displace(\bar{\gamma}) = v_n}\mathds{1}_{\bar{\gamma}\sim \eta_L} \mathds{1}_{\bar{\gamma}\sim \eta_R}
		\\
		\leq
		(n+4) e^{-c(\Cr{cst:scaleForget}K^2 -K)}
		\leq
		e^{-c\Cr{cst:scaleForget} K^2}
	\end{multline*}as soon as \(\Cr{cst:scaleForget}\) is taken large enough. Thus, combining everything,
	\begin{equation*}
		\OZDecompmeas_{\eta_L,\eta_R}^{v_n}\big( (\Forget_{\eta_L,\eta_R}^{v_n}(\Cr{cst:scaleForget}K^2))^c \big)
		\leq
		\tfrac{\sqrt{n}}{C}e^{c'K^2 }e^{-c\Cr{cst:scaleForget} K^2}.
	\end{equation*}Taking \(\Cr{cst:scaleForget}\) large enough gives the wanted claim.
\end{proof}

We are now ready to define \(\MixMeas_n\). Rather than writing the explicit expression, we describe how to sample from it. Let \(\max(\Cr{cst:scaleGCl}, \Cr{cst:minMixScale})\ln(n) \leq \scale_n \leq \min(\epsilon_0,1) \sqrt{n}\) with \(\epsilon_0\) given by Lemma~\ref{lem:InvPrinc:cond_Phi_to_OZDecCond}, and assume \(n\) large enough and \(m_n,m_n'\geq \Cr{cst:scaleMinBox} n\). For short, let \(\goodCl_n \equiv \goodCl_n(\scale_n)\).
\begin{itemize}
	\item First, sample \(\calK_{L},\calK_R\) under \(\Phi_n(\cdot \given \calC\in \goodCl_n)\);
	recall Section~\ref{subsec:InvPrinc:good_cluster} for their definition). They satisfy \(\norm{\displace(\calK_L)}_{\infty}\leq 5\scale_n\) a.s., and the same for \(\calK_R\), by construction.
	\item Then, sample \(\gamma_0^{M+1}\) under \(\OZDecompmeas_{\calK_L,\calK_R}^{v_n}(\cdot \given \Forget_{\calK_L,\calK_R}^{v_n}(\Cr{cst:scaleForget} \scale_n^2))\) and set \(\calK_L'= \gamma_0\concatenate\dots\concatenate\gamma_{\rmT_{\calK_L}}\), \(\calK_R'=\gamma_{\rmT_{\calK_R}} \concatenate \dots\concatenate\gamma_{M+1}\). They satisfy \(\norm{\displace(\calK_L')}_{\infty} \leq \Cr{cst:scaleForget} \scale_n^2\) by construction.
\end{itemize}\(\MixMeas_n\) is then the law of \((\calK_L',\calK_R')\).

Theorem~\ref{thm:main_Inv_principle_coupling_with_RW} now follows from Lemmas~\ref{lem:InvPrinc_cluster_is_nice},~\ref{lem:InvPrinc:density_swapping},~\ref{lem:InvPrinc:cond_Phi_to_OZDecCond},~\ref{lem:InvPrinc:OZDecCond_to_bridgeMixture}, and~\eqref{eq:InvPrinc:OZDec_forget_bnd_pieces}, and some routine considerations. Indeed, first note that by Lemmas~\ref{lem:InvPrinc_cluster_is_nice}, and~\ref{lem:InvPrinc:weak_LB},
\begin{equation*}
	\Phi_{n}(\calC\notin \goodCl_n \given v_n\in \calC)
	=
	\frac{\Phi_{n}(\calC\notin \goodCl_n,\, v_n\in \calC)}{\Phi_{n}(v_n\in \calC)}
	\leq
	C\sqrt{n} e^{-\nu_1n -c\scale_n}e^{\nu_1n}
	\leq
	e^{-c\scale_n}
\end{equation*}for \(n\) large enough.
So, the total variation distance between \(\Phi_n(\cdot \given v_n\in \calC)\) and \(\Phi_n(\cdot \given \calC\in \goodCl_n)\) is at most \(e^{-c\scale_n}\), see Lemma~\ref{lem:tot_var_event_norm} if more details are needed. Then, by Lemma~\ref{lem:InvPrinc:density_swapping}, see~\eqref{eq:InvPrinc:Phi_n_to_cond_Phi}, the total variation distance between \(\Phi_n(\calC \in \cdot \given \calC\in \goodCl_n)\) and
\begin{equation*}
	\sum_{\eta_L,\eta_R} \Phi_n\big(\calK_L=\eta_L,\,\calK_R=\eta_R \given \calC\in \goodCl_n\big) \Phi\big(\calC\in \cdot \given \calC\in \goodCl_n,\, \eta_L\sim \calC,\eta_R\sim \calC \big)
\end{equation*}is at most \(e^{-c\scale_n}\). Now, from Lemma~\ref{lem:InvPrinc:cond_Phi_to_OZDecCond}, we get that this last measure is at total variation distance at most \(e^{-cn}\) from the push-forward of
\begin{equation*}
	\sum_{\eta_L,\eta_R} \Phi_n\big(\calK_L=\eta_L,\,\calK_R=\eta_R \given \calC\in \goodCl_n\big) \OZDecompmeas_{\eta_L,\eta_R}^{v_n}
\end{equation*}by \(\gamma_0^{M+1}\mapsto \gamma_0\concatenate\dots\concatenate \gamma_{M+1}\). Finally, by Lemma~\ref{lem:InvPrinc:OZDecCond_to_bridgeMixture}, and direct considerations on total variation distance, see again Lemma~\ref{lem:tot_var_event_norm} in case of need, this last measure is at total variation distance at most \(e^{-c\scale_n^2}\) from the pushforward of
\begin{equation*}
	\sum_{\eta_L,\eta_R} \Phi_n\big(\calK_L=\eta_L,\,\calK_R=\eta_R \given \calC\in \goodCl_n\big) \OZDecompmeas_{\eta_L,\eta_R}^{v_n}(\cdot \given \Forget_{\eta_L,\eta_R}(\Cr{cst:scaleForget} \scale_n^2)).
\end{equation*}Theorem~\ref{thm:main_Inv_principle_coupling_with_RW} thus follows from this and a look at~\eqref{eq:InvPrinc:OZDec_forget_bnd_pieces}.

\subsection{Convergence to the Brownian Bridge}
\label{subsec:InvPrinc:BBconv}

A general result of Kovchegov~\cite{Kov04} allows to deduce the invariance principle from Theorem~\ref{thm:main_Inv_principle_coupling_with_RW}.
We start by introducing some notation.
Recall the constant \(C_0\) and the laws \(p\), \(\overline{\MixMeas}_{n,m_n}\) from Theorem~\ref{thm:main_Inv_principle_coupling_with_RW}.
Let \(m_n\) be any sequence with \(m_n\geq C_0 n\), \(p_{*}\) be a probability measure on \(\Z^2\cap \fcone\) defined as the push-forward of \(p\) by \(\displace\), \(q_*^{n}\) be a probability measure on \(\big(\{-\lceil c_0 \ln^2(n)\rceil,\dots, \lceil c_0 \ln^2(n)\rceil\}^2\cap \fcone\big)^2\) defined as the push-forward of \(\overline{\MixMeas}_{n,m_n}\) by \(\displace\).

Define all random variables on a common probability space \((\Omega,\calF,\bbP)\). Let \(X_1,X_2,\dots\) be an i.i.d. sequence of random variables with law \(p_*\). Let
\begin{equation*}
	\alpha = E(X_1\cdot \rme_1),\quad \sigma^2_x = E((X_1\cdot \rme_1)^2)-\alpha^2, \quad \sigma^2_y = E((Y_1\cdot\rme_2)^2),\quad \chi= \frac{\sigma^2_y}{\alpha}.
\end{equation*}
Since~\(\Phi\) has a reflection symmetry, so do \(p\) and \(p_*\)).
Then, the mean vector and covariance matrix of \(p_*\) are given by
\begin{equation}
\label{eq:mean_Cov_matrix}
	\bar{\mu} = \begin{pmatrix}
		\alpha \\
		0
	\end{pmatrix},
	\quad
	\CovMat = \begin{pmatrix}
		\sigma^2_x & 0\\
		0 & \sigma^2_y
	\end{pmatrix}.
\end{equation}
Define
\begin{equation}
	\label{eq:def:directed_random_walk}
	S_0:=(0,0),\quad S_{k} := S_{k-1} + X_k.
\end{equation}
Let \((V,W)\sim q^n_*\) be independent of the \(X_i\)'s. Define
\begin{equation*}
	T_n := \min\big\{k\geq 1: \ S_k =  v_R-v_L-V-W\big\},
\end{equation*}
where \(T_n := \infty\) when the set is empty.
When \(T_n<\infty\), let~$Z_0:= (0,0)$, $Z_1:=V$, and
\[
	Z_k:=
	\begin{cases}
		V+S_{k-1}, & \text{ if } k=2,\dots T_n+1,\\
		Z_{T_n + 1}+W = v_R-v_L & \text{ if } k=T_n + 2.
	\end{cases}
\]
Further, for \(s\in [0,2n+2]\), define
\begin{equation*}
	\tau_-(s) := \max\{k\geq 0:\ Z_k\cdot\rme_1 \leq s \},\quad \tau_+(s) := \min\{k\geq 0:\ Z_k\cdot\rme_1 \geq s \}.
\end{equation*}
and the linear interpolation:
\begin{equation*}
	\LinInt_s(Z) = \frac{Z_{\tau_+(s)}\cdot \rme_1 - s}{(Z_{\tau_+(s)}-Z_{\tau_-(s)})\cdot \rme_1}  Z_{\tau_-(s)}\cdot \rme_2 + \frac{ s- Z_{\tau_-(s)}\cdot \rme_1 }{(Z_{\tau_+(s)}-Z_{\tau_-(s)})\cdot \rme_1} Z_{\tau_+(s)}\cdot \rme_2
\end{equation*}
If \(T_{n}=\infty\), set \(\LinInt_s(Z) :\equiv 0\). 
Define also a re-scaled version of \((\LinInt_s(Z))_{s\in [0,2n+2]}\):
\begin{equation*}
	\Gamma_t^n = \Gamma_t^n(Z) = \frac{1}{\sqrt{2n \chi}}\LinInt_{t(2n+2)}(Z),\quad t\in[0,1].
\end{equation*}

The next Lemma concludes our study of the invariance principle for the (modified) ATRC cluster.
\begin{lemma}
\label{lem:InvPrinc_mATRC}
	The sequence of random functions \((\Gamma_t^n)_{t\in [0,1]}\) under~$\bbP$ conditioned on~$T_n<\infty$, converges weakly to a standard Brownian Bridge. Moreover, with probability going to \(1\) as \(n\to \infty\), the maximal step of \(Z\) has norm less than \(C\ln^2(n)\) for some \(C\geq 0\). 
\end{lemma}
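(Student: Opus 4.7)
The plan is to derive the statement from the invariance principle of~\cite{Kov04} for directed random-walk bridges, after reducing to the bridge without boundary pieces. First, I will verify that $p_*$, the pushforward of $p$ under $\displace$, satisfies the hypotheses of~\cite{Kov04}. Theorem~\ref{thm:OZ_for_ATRC_infinite_vol} gives: exponential tails for $p$ (and hence for $p_*$); mean displacement $\alpha\rme_1$ with $\alpha > 0$; and, by the reflection symmetry of $\Phi$ through the horizontal axis together with the uniform ``finite energy'' lower bound on irreducible pieces in Theorem~\ref{thm:OZ_for_ATRC_infinite_vol}(2), the covariance matrix of $p_*$ is diagonal, $\CovMat = \mathrm{diag}(\sigma_x^2, \sigma_y^2)$, with $\sigma_x^2, \sigma_y^2 > 0$ and $p_*$ aperiodic in the transverse coordinate.

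Next, I will apply~\cite{Kov04} to $(S_0, \dots, S_{T_n})$ conditioned on the event $\{S_{T_n} = u\}$ for a deterministic endpoint $u$ with $\|u - (2n+2)\rme_1\|_\infty \leq 2c_0 \ln^2 n$. The positive drift $\alpha$ and a local central limit theorem (which follows from the exponential moments of $p_*$ by standard Fourier/saddle-point analysis) show that $T_n$ concentrates around $u\cdot\rme_1 / \alpha$ with $\sqrt n$-fluctuations, and that $\bbP(S_k = u \text{ for some } k)$ is of order $1/\sqrt n$, uniformly in $u$ in the above range. Kovchegov's invariance principle then gives that the linear interpolation of the transverse component $(S_k\cdot\rme_2)$ as a function of horizontal position, rescaled vertically by $\sqrt{2n\chi}$ with $\chi = \sigma_y^2/\alpha$, converges in law to a standard Brownian bridge on $[0,1]$.

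Third, I will integrate over $(V, W) \sim q_*^n$. Since $\|V\|_\infty + \|W\|_\infty \leq 2c_0 \ln^2 n$ almost surely by the support property of $\overline{\MixMeas}_{n,m_n}$ in Theorem~\ref{thm:main_Inv_principle_coupling_with_RW}(1), the contributions $Z_1 = V$ and $Z_{T_n+2} - Z_{T_n+1} = W$ shift $\Gamma^n_t$ by $O(\ln^2 n / \sqrt n) = o(1)$ uniformly in $t \in [0,1]$. The sought convergence under $\bbP_n$ then follows by conditioning on $(V,W)$, applying the previous step (with an error uniform over the relevant endpoints $u = v_R - v_L - V - W$), and averaging against $q_*^n$. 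For the maximum-step claim, the two boundary steps are $O(\ln^2 n)$ by construction, while for the interior steps a union bound combined with the exponential tails of $p_*$ and the fact that $T_n = O(n)$ with overwhelming probability under $\bbP_n$ yields $\max_{1 \leq i \leq T_n}\|X_i\|_\infty \leq C\ln n$ with $\bbP_n$-probability $1 - o(1)$.

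The main obstacle I expect is extracting from~\cite{Kov04} the quantitative uniformity required for the averaging step: both the invariance principle for the bridge and the leading asymptotics of the normalising constant $Z_{v(\zeta_L,\zeta_R)}$ entering~\eqref{eq:InvPrinc_bridge_measure} must hold uniformly over targets ranging over a $\ln^2 n$-neighbourhood of $(2n+2)\rme_1$, so that the conditioning on $T_n<\infty$ does not distort the limit. This ultimately rests on a sharp local central limit theorem for $(S_k)$, which is afforded by the exponential moments of $p_*$ via standard characteristic-function techniques.
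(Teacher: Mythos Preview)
Your proposal is correct and follows essentially the same approach as the paper, which simply cites~\cite{Kov04} as a direct application (noting that the mean step being along $\rme_1$ is a special case of that paper's setup) and attributes the maximal-step bound to ``a simple large deviation estimate combined with the Local Limit Theorem.'' You have unpacked more of the mechanics --- checking the hypotheses of~\cite{Kov04}, conditioning on the boundary pieces $(V,W)$, and averaging using the uniformity of the local CLT over the $\ln^2 n$-window of targets --- which the paper leaves implicit; your identification of the uniformity of the bridge invariance principle and of the normalising constants $Z_{v(\zeta_L,\zeta_R)}$ as the key technical point is accurate, and the paper does not elaborate on it either.
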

\begin{proof}
	\cite[Technical Theorem]{Kov04} proves a general result about convergence of directed random walk bridges in dimension~$d$ to the graph of a Brownian bridge in dimension~$d-1$.
	In our setup, this result gives the following.
	Consider a directed random walk with iid steps having exponential tails and mean equal to~$a\rme_1$ with~$a>0$.
	Now, look at a sequence of bridges for this walk from~$0$ to~$x_n\in \Z^2$.
	Then, for any~$\varepsilon >0$, the sequence of the bridges converges, under linear scaling in coordinate~$\rme_1$ and diffusive scaling in coordinate~$\rme_2$, to the graph of a~$1D$ Brownian bridge uniformly in~$\norm{x_n - (n,0)} \leq n^{1/2-\varepsilon}$.
	
	Our case satisfies these conditions: Theorem~\ref{thm:main_Inv_principle_coupling_with_RW} gives the wanted properties for the steps; 
	the walk starts at~$V$ and ends at~$W$, where~$\norm{V}$ and~$\norm{W-(2n,0)}$ are a.s. bounded by~$c\ln^2 n$.
	Thus, we get the convergence.
	The estimate on the maximal step size is a simple large deviation estimate combined with the local limit theorem.
\end{proof}

\begin{remark}
	\label{rem:diffusivity_constant}
	Note that the value of the diffusivity constant \(\chi\) can be identified with the curvature of \(\partial\calW\) at \(\nu(\rme_1)\rme_1\), exactly as in~\cite[Appendix B]{IofOttShlVel22}.
\end{remark}

\section{Proofs of Theorems~\ref{thm:potts}, \ref{thm:order-disorder:FK_loop},~\ref{thm:oz-at} and~\ref{thm:oz-atrc}}
\label{sec:proofs_thms}

Previous sections establish coupling between the FK-percolation and the (modified) ATRC models and several results for the latter: mixing properties, Ornstein--Zernike theory, invariance principle for the interface under suitable Dobrushin boundary conditions.
In order to transfer the invariance principle to the FK-percolation, we need to show proximity of interfaces in the two models.

Consider the coupling measure \(\Psi^{1/0}\) constructed in Section~\ref{sec:coupling:interfaces}.
A crucial feature of this coupling is that the respective interfaces stay close to each other with high probability. To measure the distance between interfaces, we will work with the \emph{one-sided Hausdorff distance} defined by
\begin{equation*}
\rmd_{\mathrm{H}}(R,S):=\sup_{x\in R}\inf_{y\in S}\rmd_\infty(x,y),\qquad R,S\subset\R^2.
\end{equation*}
Given a bi-infinite connected set \(C\subset\bbL_\bullet\cap  (\R\times [-c,c])\), we say that a subset \(R\subset\R^2\) is (weakly) \emph{above} \(C\) if it is contained in (the closure of) the connected component of the point~\((0,c+1)\) in \(\R^2\setminus C\), where we identify \(C\) with the union of line segments between the endpoints of the edges in \(\bbE_C\). We say that \(R\) is (weakly) \emph{below} \(C\) if it is contained in (the closure of) the connected component of the point~\((0,-c-1)\) in \(\R^2\setminus C\). We make the analogous definitions for finite connected sets by extending them to bi-finite connected sets by attaching left-infinite horizontal lines to all the leftmost points of the finite set and right-infinite horizontal lines to all the rightmost points; and, analogously, for connected subsets \(C\subset\bbL_\circ\).
Recall the upper and lower envelopes~\(\Gamma_{\fk}^{\pm,n}\) in \(G_n\) defined in Section~\ref{sec:intro}.
Define~\(\Gamma_{\fk}^{\pm,n,m}\) in~\(G=G_{n,m}\) analogously.
Recall also the definition of~\(\Gamma_{\atrc}^{n,m}\) in Section~\ref{sec:interface_matrc}.

\begin{lemma}\label{lem:closeness_interfaces_fk_matrc}
There exist constants \(C,c>0\) such that, for any \(n,m,k\geq 1\), 
\begin{equation*}
\Psi^{1/0}(\rmd_{\mathrm{H}}(\Gamma_{\fk}^{\pm,n,m},\Gamma_{\atrc}^{n,m})>k)
\leq Cnme^{-ck}.
\end{equation*}
\end{lemma}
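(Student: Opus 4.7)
The plan is to exploit the fact that under the coupling $\Psi^{1/0}$, both the FK configuration $\omega$ and the modified ATRC pair $(\omega_\tau,\omega_{\tau\tau'})$ are constructed from a common six-vertex spin configuration $(\sigma_\bullet,\sigma_\circ)$ together with auxiliary randomness. At every tile $t$ of type 1-4, the ice rule forces a rigid local picture in which the FK primal edge state coincides with $\omega_\tau(e_t)$ (primal spin agreement being equivalent to the primal edge being open in both models, as can be read off Lemma~\ref{lem:6V_spins_to_AT} combined with the loop-arcs picture used in Lemma~\ref{lem:6V_spins_to_01FK}). Consequently, $\omega$ and $\omega_\tau$ can only disagree at tiles of type 5-6, where the tile uniforms $U'_t$ resample $(\omega_\tau,\omega_{\tau\tau'})$ independently of the loop configuration determining $\omega$.

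I would first recast both interfaces as dual-connectivity objects expressible through the spins. The Dobrushin condition $\sigma_\circ=+1$ on $\bbH^+$ and $\sigma_\circ=-1$ on $\bbH^-$ forces the ATRC polygon $\gamma$ surrounding $\calC$ to coincide exactly with the dual contour separating the two constant-sign regions of $\sigma_\circ$; hence $\Gamma_{\atrc}^{n,m}$ is the set of primal vertices enclosed by this contour. Analogously, the FK envelopes admit a clean description: $\Gamma_{\fk}^{+,n,m}$ is the upper envelope of the $\omega^*$-dual cluster attached to the bottom boundary of $\Lambda_{n,m}$, while $\Gamma_{\fk}^{-,n,m}$ is the lower envelope of the primal $\omega$-cluster attached to the top boundary. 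Since $\omega^*$ and $\omega_\tau^*$ agree at every type 1-4 tile, any macroscopic separation between $\Gamma_{\fk}^{\pm}$ and $\Gamma_{\atrc}$ must be mediated by a long chain of type 5-6 tiles where the two configurations disagree.

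The main step is then a local deviation estimate. Fixing a candidate point $x$ at distance $\geq k$ from $\Gamma_{\atrc}$, I would show that the event $x\in \Gamma_{\fk}^{+}$ forces one of two macroscopic structures: either (a) an $\omega^*$-dual connection of length $\geq k$ originating at $x$ and reaching $\gamma$ through a region where all the underlying primal edges are also closed in $\omega_\tau$, which in turn produces a long dual $\omega_\tau^*$-connection in the modified ATRC; or (b) a vertical excursion of the cluster $\calC$ of length $\geq k$ reaching a neighbourhood of $x$. Both events are controlled by Theorem~\ref{thm:Ale04_ATRC} applied to $\omega_\tau$ (and, by self-duality~\eqref{eq:atrc_selfdual}, to $\omega_{\tau\tau'}^*$), giving probability at most $e^{-ck}$. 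A union bound over the $O(nm)$ candidate positions for $x$ yields the claimed $Cnme^{-ck}$; the bound for $\Gamma_{\fk}^{-,n,m}$ follows by a symmetric argument using the primal FK cluster attached to the top boundary.

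The main obstacle lies in making the correspondence between $\omega^*$ and $\omega_\tau^*$ rigorous across $\gamma$: although they agree at all type 1-4 tiles, the conditional law of the type 5-6 resampling given $\gamma$ must be controlled in the modified ATRC, whose boundary interactions are delicate. I would handle this by combining the decoupling property Lemma~\ref{lem:decoupling_paths_mATRC}, which makes the configurations on the two sides of $\gamma$ conditionally independent once $\gamma$ is realized, with the exponential relaxation proved in Section~\ref{sec:mixing}, which transfers the needed infinite-volume decay estimates to the modified measure $\matrc_K$ near its (less standard) upper boundary.
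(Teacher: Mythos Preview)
Your observation that, under the coupling $\Psi^{1/0}$, the configurations $\omega$, $\omega_\tau$ and $\omega_{\tau\tau'}$ all agree at every tile of type 1--4 is correct and natural. The gap is in case~(a): you produce a long $\omega_\tau^*$-connection and then invoke Theorem~\ref{thm:Ale04_ATRC} ``and, by self-duality, $\omega_{\tau\tau'}^*$''. But Theorem~\ref{thm:Ale04_ATRC} gives decay for $\omega_\tau$, and by~\eqref{eq:atrc_selfdual} the dual statement is decay for $\omega_{\tau\tau'}^*$; the law of $\omega_\tau^*$ is that of $\omega_{\tau\tau'}$ on the dual lattice, which \emph{percolates}. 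Long $\omega_\tau^*$-paths are therefore typical, not rare, and your bound for~(a) fails. If instead you try to extract a long $\omega_{\tau\tau'}^*$-path from the $\omega^*$-path, each type 5--6 tile along the way has $\omega_{\tau\tau'}=1$ with probability $1-1/\svc$ independently of $\omega$, interrupting the dual path; nothing prevents the $\omega^*$-path from meeting many such tiles. Your last paragraph flags this, but Lemma~\ref{lem:decoupling_paths_mATRC} and the mixing results of Section~\ref{sec:mixing} concern the intrinsic structure of $\matrc$, not the joint law of $(\omega,\omega_{\tau\tau'})$ at type 5--6 tiles. (There is also a smaller issue: the polygon $\gamma$ surrounding $\calC$ lives on $\bbL_\circ$ and carries both signs of $\sigma_\circ$---one on its upper part, one on its lower part---so it does not coincide with ``the dual contour separating the two constant-sign regions of $\sigma_\circ$''.)

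The paper avoids this problem by not comparing $\omega$ and $\omega_\tau$ edgewise at all. It conditions on the entire dual FK cluster $\calC'$ of $v_L'=(-n-\tfrac12,-\tfrac12)$; by the FK domain Markov property the law of $\omega$ above $\calC'$ is wired, which via BKW gives the six-vertex spin measure with $(+,+)$ boundary conditions and an associated modified ATRC measure $\matrc_{K_{C'}}^{\varnothing}$ dominated by $\atrc_{G_{C'}}^{1,1}$. Under these boundary conditions every connected component of $\{\sigma_\circ=-1\}$ is surrounded by an $\omega_\tau$-circuit (since $\sigma_\circ\sim\omega_\tau^*$ and $\sigma_\circ\equiv +1$ on the boundary). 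Thus if the uppermost $\sigma_\circ=-1$ path $p_-$---which sits within distance $\tfrac12$ of $\Gamma_{\atrc}$---rises more than $k$ above $\calC'$, there is an $\omega_\tau$-connection of length $\geq k$, and \emph{this} is what Theorem~\ref{thm:Ale04_ATRC} bounds. The triangle inequality through $\calC'$ and $p_-$ then controls $\rmd_{\mathrm H}(\Gamma_{\fk}^+,\Gamma_{\atrc})$. The argument never needs decay of $\omega_\tau^*$.
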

\begin{proof}
Let us fix~\(n,m\geq 1\) and omit them from the notation. 
We present the argument for \(\Gamma_{\fk}^{+}\). The statement for \(\Gamma_{\fk}^{-}\) is proved in an analogous fashion. 
Consider the coupling \(\Psi^{1/0}\) of \(\omega\sim\fk_{G}^{1/0},\ (\sigma_\bullet,\sigma_\circ)\sim\spin_{\mathcal{D}}^{+,+-}\) and \((\omega_{\tau},\omega_{\tau\tau'})\sim\matrc_K\) described in Section~\ref{sec:coupling}. 
Let \(\calC'=\calC'_{v_L'}\) be the cluster of~\(v'_L:=(-n-\tfrac{1}{2},-\tfrac{1}{2})\) in~\(\omega^*\); see Fig.~\ref{fig:prox_int}.
\begin{figure}
\includegraphics[scale=0.42,page=1]{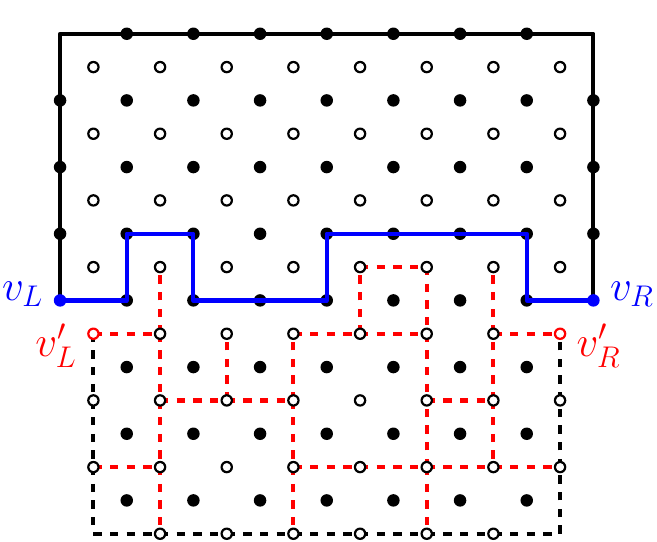}
\hspace{5pt}
\includegraphics[scale=0.42,page=2]{prox_int_fk_atrc}
\hspace{5pt}
\includegraphics[scale=0.42,page=3]{prox_int_fk_atrc}
\caption{For~\(n=m=3\). Left: vertices of~\(G\) (solid) and its dual (hollow) with~``\(1/0\)'' boundary conditions (black solid and dashed edges), the dual cluster~\(\calC'=C'\) (endpoints of dashed edges), and the lowermost path~\(p_{C'}^\shortuparrow\) (blue solid edges). Center: the set~\(\mathcal{D}_{C'}\) (solid and hollow vertices), the tiles in~\(A_{C'}\) that intersect them, with boundary tiles in~\(A_{C'}^\rmb\) in grey. Right: The associated edges in~\(\bar{E}^{C'}\), with those in~\(E_\rmb^{C'}\) contained in grey tiles.}
\label{fig:prox_int}
\end{figure}
Note that
\begin{equation}\label{eq:triang-ineq-c-prime-atrc}
	\rmd_{\mathrm{H}}(\Gamma_{\fk}^{+},\Gamma_{\atrc}) \leq \rmd_{\mathrm{H}}(\Gamma_{\fk}^{+},\calC') + \rmd_{\mathrm{H}}(\calC',\Gamma_{\atrc}) \leq \tfrac{1}{2} + \rmd_{\mathrm{H}}(\calC',\Gamma_{\atrc}).
\end{equation}
Let \(p_{-}\subset \bbV_{*E}\) be the uppermost path connecting \(v'_L\) to~\(v'_R:=(n+\tfrac{1}{2},-\tfrac{1}{2})\) on which \(\sigma_\circ\equiv-1\).
Recall the definition of~\(\calC=\calC_{v_L}\) in Section~\ref{sec:interface_matrc}.
Since \(*\partialedge\calC\) contains such a path, as well as a path connecting \((-n-\tfrac{3}{2},\tfrac{1}{2})\) to \((n+\tfrac{3}{2},\tfrac{1}{2})\) on which \(\sigma_\circ\equiv 1\) (see Fig.~\ref{fig:bc_FK_loops_spins}~and~\ref{fig:interface_matrc}), we have
\begin{equation}\label{eq:triang-ineq-c-prime-p-minus}
	\rmd_{\mathrm{H}}(\calC',\Gamma_{\atrc}) \leq \rmd_{\mathrm{H}}(\calC',p_-) + \rmd_{\mathrm{H}}(p_-,\Gamma_{\atrc}) \leq \rmd_{\mathrm{H}}(\calC',p_-) + \tfrac{1}{2}.
\end{equation}
Fix a realisation \(C'\) of \(\calC'\).
By the coupling, \(\sigma_\circ\equiv -1\) on \(\calC'=C'\) (see Fig.~\ref{fig:bc_FK_loops_spins}). Thus, the path \(p_-\) cannot contain vertices below \(C'\).
Hence, the random variable \(\rmd_{\mathrm{H}}(C',p_-)\) is measurable with respect to \(\sigma_\circ\) restricted to the vertices in \(\mathcal{D}\cap\bbL_\circ\) above \(C'\). 
Let us determine the conditional law of \((\sigma_\bullet,\sigma_\circ)\) restricted to the vertices in \(\mathcal{D}\) above \(C'\).

Let \(p_{C'}^\shortuparrow\) be the lowermost path in \(\omega\) connecting~\(v_L\) to~\(v_R\) above \(C'\) and \(\mathcal{D}_{C'}\) be the subset of vertices in \(\mathcal{D}\) above \(p_{C'}^\shortuparrow\).
Denote by \(A_{C'}\) the set of tiles with at least one corner in \(\mathcal{D}_{C'}\), and by \(A_{C'}^\rmb\) the set of tiles with precisely one corner in \(\mathcal{D}_{C'}\); see Fig.~\ref{fig:prox_int}.
Define the associated sets of edges
\begin{equation*}
\bar{E}^{C'}=\{e_t:t\in A_{C'}\},\quad E_\rmb^{C'}=\{e_t:t\in A_{C'}^\rmb\}\quad\text{and}\quad E^{C'}=\bar{E}^{C'}\setminus E_\rmb^{C'}.
\end{equation*}
Set \(G_{C'}=(\bbV_{E^{C'}},E^{C'})\), \(K_{C'}=(\bbV_{\bar{E}^{C'}},\bar{E}^{C'})\), and let \(K_{C'}^1\) be the graph obtained from \(K_{C'}\) by identifying vertices in \(\partialin_{\bbL_\bullet} \bbV_{\bar{E}^{C'}}\) and those connected by \(p_{C'}^\shortuparrow\).

By Lemma~\ref{lem:6V_spins_to_01FK} and the domain Markov property of FK-percolation,
\begin{equation*}
\Psi^{1/0}(\omega|_{E^{C'}}=\omega_0\given\calC'={C'})=\fk_{G}^{1/0}(\omega|_{E^{C'}}=\omega_0\given\calC'={C'})=\fk_{G_{C'}}^{1}(\omega|_{E^{C'}}=\omega_0)
\end{equation*}
for any \(\omega_0\in\{0,1\}^{E^{C'}}\).
The FK-percolation measure with wired boundary conditions can be coupled to the six-vertex spin measure with~\((+,+)\) boundary conditions.
This version of the BKW coupling was presented in~\cite{GlaPel23,Lis21} and is very closely related  to our Lemma~\ref{lem:6V_spins_to_01FK} that deals with the Dobrushin boundary conditions.
We provide only the statement.

We first define the six-vertex spin measure on~\(\mathcal{D}_{C'}\) under~\((+,+)\) boundary conditions (compare to~\eqref{eq:def_6v-spin_dobrushin}):
for \(\sigma=(\sigma_\bullet,\sigma_\circ)\in\{\pm1\}^{\bbL_\bullet}\times\{\pm1\}^{\bbL_\circ}\), it is defined by
\begin{equation*}
\spin_{\mathcal{D}_{C'}}^{+,+}(\sigma)\propto
\svc^{|T_{5,6}^\rmi(\sigma)|}\,\svcb^{|T_{5,6}^{\rmb}(\sigma)|}\,\mathds{1}_{\sigma=+1\text{ on }(\bbL_\bullet\cup\bbL_\circ)\setminus\mathcal{D}_{C'}}\,\mathds{1}_{\mathrm{ice}}(\sigma),
\end{equation*}
where \(T_{5,6}^\rmi(\sigma)\) and \(T_{5,6}^\rmb(\sigma)\) are respectively the sets of tiles in \(A_{C'}\setminus A_{C'}^\rmb\) and \(A_{C'}^\rmb\) that are of types 5,6 in \(\sigma\).
Then, as in Lemma~\ref{lem:6V_spins_to_01FK},
\begin{equation*}
\Psi^{1/0}\big((\sigma_\bullet|_{\mathcal{D}_{C'}\cap\bbL_\bullet},\sigma_\circ|_{\mathcal{D}_{C'}\cap\bbL_\circ})\in\cdot\bgiven\calC'={C'}\big)=\spin_{\mathcal{D}_{C'}}^{+,+}|_{\mathcal{D}_{C'}}.
\end{equation*}

In the same vein, we determine the conditional law of \((\omega_{\tau},\omega_{\tau\tau'})\) on~\(\bar{E}^{C'}\) by adapting Lemma~\ref{lem:6V_spins_to_AT} to~\(+,+\) boundary conditions.
We now define a modified ATRC measure on~\(K_{C'}\) by taking Definition~\ref{def:mATRC} and replacing~\((E,E_\rmb^+,E_\rmb^-)\) with~\((E^{C'},E_\rmb^{C'},\varnothing)\) (in particular, \(I(\calC)\equiv 0\) for all clusters): for any \((a,b)\in\{0,1\}^{\bar{E}^{C'}}\times \{0,1\}^{\bar{E}^{C'}}\), define
\begin{equation*}
\matrc_{K_{C'}}^\varnothing(a,b)\propto
\mathds{1}_{a\subseteq b}\mathds{1}_{b\setminus a\subseteq E^{C'}}
2^{\clusters_{K_{C'}}(a)+\clusters_{K_{C'}^1}(b)}2^{|a|}(\svc-2)^{|b\setminus a|}(\svcb-1)^{|E_\rmb^{C'}\setminus b|}.
\end{equation*}
Since \(\calC'={C'}\), we have~\(\sigma_\bullet\equiv 1\) at all endpoints of edges in~\(E_\rmb^{C'}\). 
Then, as in Lemma~\ref{lem:6V_spins_to_AT}, 
\begin{equation*}
\Psi^{1/0}\big((\omega_{\tau},\omega_{\tau\tau'})|_{\bar{E}^{C'}}\in\cdot\bgiven\calC'={C'}\big)=\matrc_{K_{C'}}^\varnothing.
\end{equation*}
We claim that this measure satisfies~\eqref{eq:strong-fkg}.
Indeed, all local terms in the right side of the display trivially satisfy the property and the non-local terms assign weight two to the clusters in~\(a\) and~\(b\).
It is standard that such terms satisfy~\eqref{eq:strong-fkg}: see the classical~\cite[Theorem~3.8]{Gri06} for the case of the random-cluster model; alternatively, the proof of Lemma~\ref{lem:fkg_matrc_+} applies by setting~\(\alpha\equiv\beta\equiv\tfrac12\).
Thus,
\begin{equation*}
\matrc_{K_{C'}}^\varnothing\leq_{\mathrm{st}}\matrc_{K_{C'}}^\varnothing(\cdot\given\omega_{\tau}(e)=1\text{ for all }e\in E_\rmb^{C'})=\atrc_{G_{C'}}^{1,1}.
\end{equation*}
As in the proof of Lemma~\ref{lem:InvPrinc:exp_dec}, uniform exponential decay of connection probabilities in \(\atrc_{G_{C'}}^{\atrcwired,\atrcwired}\) (Theorem~\ref{thm:Ale04_ATRC}) and the arguments of~\cite[Section 2]{Ale04} imply that connection probabilities in~\(\matrc_{K_{C'}}^\varnothing\) decay exponentially as well.

Finally, since \(\sigma_\circ\sim\omega_{\tau}^*\) and \(\sigma_\circ=1\) on \(\bbL_\circ\setminus\mathcal{D}_{C'}\), any connected component of \(\{\sigma_\circ=-1\}\) must be surrounded by a circuit in \(\omega_{\tau}\), whence
\begin{equation*}
\Psi^{1/0}(\rmd_{\mathrm{H}}(\calC',p_-)>k\given\calC'={C'})\leq \matrc_{K_{C'}}^\varnothing(\exists\,i\in p_{C'}^\shortuparrow:\,i\xleftrightarrow{\omega_{\tau}}i+\partialin\Lambda_k)\leq Cnmke^{-ck}.
\end{equation*}
Averaging over~\(C'\) and inserting this in~\eqref{eq:triang-ineq-c-prime-atrc} and~\eqref{eq:triang-ineq-c-prime-p-minus} completes the proof.
\end{proof}

\subsection{Proof of Theorem~\ref{thm:order-disorder:FK_loop}}
\label{sec:proof_thm_fk}
We will derive Theorem~\ref{thm:order-disorder:FK_loop} from the analogous statement for the cluster \(\calC=\calC_{v_L}\) of~\(v_L\) in~\(\omega_\tau\) with law the first marginal of~\(\matrc_{K_{n,m}}(\cdot\given v_L\xleftrightarrow{\omega_\tau} v_R)\) (Theorem~\ref{thm:main_Inv_principle_coupling_with_RW} and Lemma~\ref{lem:InvPrinc_mATRC}) and the proximity statement (Lemma~\ref{lem:closeness_interfaces_fk_matrc}). 

\begin{proof}[Proof of Theorem~\ref{thm:order-disorder:FK_loop}]
To lighten the notation, we omit \(p_c(q),q\) from the subscripts.
Recall the definition of the upper and lower envelopes~\(\Gamma_{\fk}^{\pm,n}\) in \(G_n\) defined in Section~\ref{sec:intro}, and of \(\Gamma_{\fk}^{\pm,n,m}\) in \(G=G_{n,m}\) defined analogously.
We will first show the statement for the FK measure \(\fk_{G_{n,Cn}}^{1/0}\) for \(C\) sufficiently large, and then from it derive the statement for~\(\fk_{G_n}^{1/0}\).

By Theorem~\ref{thm:main_Inv_principle_coupling_with_RW} and Lemmata~\ref{lem:InvPrinc_mATRC}~and~\ref{lem:closeness_interfaces_fk_matrc}, there exists \(C>1\) such that the statement holds for~\(\Gamma_{\fk}^{\pm,n,Cn}\) under the measure~\(\fk_{G_{n,Cn}}^{1/0}\).
The derivation for \(\Gamma_{\fk}^{\pm,n}\) under \(\fk_{G_n}^{1/0}\) follows from the FKG and DLR properties of FK percolation, and we only sketch the argument. 
Define the graphs \(G_{n,Cn}^\pm=(V_{n,Cn}^\pm,E_{n,Cn}^\pm)\) by
\begin{gather*}
\Lambda_{n,Cn}^- := \{-n, \dots, n\}\times\{-Cn, \dots, n\} ,\quad \Lambda_{n,Cn}^+ := \{-n, \dots, n\}\times\{-n, \dots, Cn\},\\
V_{n,Cn}^\pm=\Lambda_{n,Cn}^\pm\cup (\partialex\Lambda_{n,Cn}^\pm\cap\bbH^+),\quad E_{n,Cn}^\pm=\bbE_{V_{n,Cn}^\pm}\setminus\bbE_{(\Lambda_{n,Cn}^\pm)^c}.
\end{gather*}
Then, by the FKG and DLR properties of the FK measures, it holds that
\begin{equation*}
\fk_{G_{n,Cn}}^{1/0}\geq_\mathrm{st}\fk_{G_{n,Cn}^+}^{1/0}\leq_\mathrm{st}\fk_{G_{n}}^{1/0}\leq_\mathrm{st}\fk_{G_{n,Cn}^-}^{1/0}\geq_\mathrm{st}\fk_{G_{n,Cn}}^{1/0}.
\end{equation*} 
If \(\omega\) is distributed according to \(\fk_{G_{n,Cn}}^{1/0}\), by the statement for \(\Gamma_{\fk}^{\pm,n,Cn}\) under this measure, there exists a left-right crossing above \([-n,n]\times\{0\}\) in \(\omega\) and one in \(\omega^*\) below it at arbitrary small linear distance from \([-n,n]\times\{0\}\).
A chain of classical monotone coupling arguments finishes the proof.
\end{proof}

\subsection{Proof of Theorem~\ref{thm:potts}}
\label{sec:proof_thm_potts}

We will derive Theorem~\ref{thm:potts} from Theorem~\ref{thm:order-disorder:FK_loop}.
Recall the definition of the one-sided Hausdorff distance \(\rmd_{\mathrm{H}}\), and the notion of a subset of \(\R^2\) being above or below a connected set in \(\bbL_\bullet\) or \(\bbL_\circ\), introduced above Lemma~\ref{lem:closeness_interfaces_fk_matrc}.
\begin{proof}[Proof of Theorem~\ref{thm:potts}]
To simplify the notation, we omit \(n,q,T_c(q),p_c(q)\) from sub and superscripts. Consider the Edwards--Sokal coupling \(\edwardSokal_{\Lambda}^{1/0}\) of \(\sigma\sim\potts_{\Lambda}^{1/\rmf}\) and \(\omega\sim\fk_{G_n}^{1/0}\) described in the introduction; see~\cite[Section 1.4]{Gri06} for details.
Observe that the FK and Potts envelopes deterministically satisfy
\begin{equation*}
\Gamma_{\fk}^{-}(k)\leq\Gamma_{\fk}^{+}(k),\quad \Gamma_{\potts}^{-}(k)\leq\Gamma_{\potts}^{+}(k),\quad\Gamma_{\potts}^{\pm}(k)\leq\Gamma_{\fk}^{\pm}(k)\quad\forall k\in\{-n,\dots,n\}.
\end{equation*}
In particular, \(\Gamma_{\potts}^{+}\) is ``sandwiched'' between \(\Gamma_{\potts}^{-}\) and \(\Gamma_{\fk}^{+}\).
By Theorem~\ref{thm:order-disorder:FK_loop}, it suffices to show the existence of \(c,C>0\) for which, for any \(k\geq 1\),
\begin{equation*}
\edwardSokal_{\Lambda}^{1/0}(\rmd_{\mathrm{H}}(\Gamma_{\potts}^{-},\Gamma_{\fk}^{-})>k)<Cn^2e^{-ck}.
\end{equation*}

Let \(\calC\) be the cluster of the lower boundary in \(\{\sigma\neq 1\}\), that is, the set of~\(i\in\Lambda\) for which there exists a path \((i_0,\dots,i_\ell)\) in \(\Lambda\) with \(i_0=i,\,i_\ell\in\partialin\Lambda\cap\bbH^-\) and \(\sigma(i_k)\neq 1\) for \(0\leq k\leq\ell\).
By definition, each point in the lower envelope \(\Gamma_{\potts}^{-}\) is weakly above \(\calC\). 
Therefore, it suffices to show that \(\Gamma_{\fk}^{-}\) is not far above \(\calC\).
Fix a realisation \(C\) of \(\calC\), and define \(\Lambda_C=\Lambda\setminus (C\cup\partialex C)\). By the coupling and the spatial Markov property of the Potts model, it holds that
\begin{equation*}
\edwardSokal_{\Lambda}^{1/0}(\sigma|_{\Lambda_C}\in\cdot\given\calC=C)=\potts_{\Lambda}^{1/\rmf}(\sigma|_{\Lambda_C}\in\cdot\given\calC=C)=\potts_{\Lambda_C}^{1}.
\end{equation*}
Let \(G_C=(V_C,E_C)\) be defined by \(E_C=\bbE_{\Lambda_C}\cup\partialedge\Lambda_C\) and \(V_C=\bbV_{E_C}\). Then, by the above and by the coupling,
\begin{equation*}
\edwardSokal_{\Lambda}^{1/0}(\omega|_{E_C}\in\cdot\given\calC=C)=\fk_{G_C}^{1}(\omega|_{E_C}\in\cdot).
\end{equation*}
Now, conditional on \(\calC=C\), if \(\rmd_{\mathrm{H}}(C,\Gamma_{\fk}^{-})>k\), then there exists a dual path of length \(k\) in \(\omega^*|_{*E_C}\). Since \(\fk_{G_C}^{1}\) stochastically dominates the infinite-volume measure \(\fk^{1}\), which admits exponential decay of connection probabilites in its dual~\cite[Theorem 1.2]{DumGagHar21}, the proof is complete. 
\end{proof}

\subsection{Proof of Theorems~\ref{thm:oz-at}~and~\ref{thm:oz-atrc}}

Finally, the Ornstein-Zernike asymptotics for the two-point function of the AT and ATRC models (Theorems~\ref{thm:oz-at}~and~\ref{thm:oz-atrc}) are direct consequences of Theorem~\ref{thm:OZ_for_ATRC_infinite_vol}.

\begin{proof}[Proof of Theorems~\ref{thm:oz-at}~and~\ref{thm:oz-atrc}.]
	Theorem~\ref{thm:OZ_for_ATRC_infinite_vol} gives, in particular, that for \(s\in \bbS^1\), one can find \(t\in \partial\calW,\delta\in (0,1)\) such that (sums are over pieces confined in the cones/diamonds obtained using \(\fcone_{t,\delta}\))
	\begin{multline*}
		\Big| \rmC\sum_{\gamma_{L},\gamma_R}p_L(\gamma_L)p_R(\gamma_R)\sum_{k\geq 0}\sum_{\gamma_1,\dots,\gamma_k}  \mathds{1}_{\displace(\bar{\gamma})= ns} \prod_{i=1}^k p(\gamma_k) \\
		- e^{n\nu(s)}\atrc_{J,U}(0\xleftrightarrow{\omega_{\tau}} ns)\Big|\leq C_1 e^{-c_1n},
	\end{multline*}which is a comparison between \(e^{n\nu(s)}\atrc_{J,U}(0\leftrightarrow ns)\) and the Green functions of a directed random walk on \(\Z^2\) (the push-forward of \(p_L,p_R,p\) by \(\displace\)). One can then use the local limit theorem in dimension 2 as in~\cite[Section 8.1]{AouOttVel24} to obtain
	\begin{equation}
		\label{eq:OZ_asymp_ATRC}
		e^{n\nu(s)}\atrc_{J,U}(0\leftrightarrow ns) = \frac{c(s)}{\sqrt{n}}(1+o_n(1)),
	\end{equation}which is the wanted asymptotics for the ATRC model. The claim for the AT model follows from the coupling~\eqref{eq:at_atrc_coupling} between \(\at\) and \(\atrc\), .
\end{proof}

\appendix

\section{Total variation distance}
\label{app:tot_var_dist}

Let \(\Omega\) be countable or finite. Consider the measure space \(\Omega \equiv (\Omega, \calP(\Omega))\). Recall that for two finites measures \(\mu,\nu\) on \(\Omega\), one can define their total variation distance by
\begin{equation*}
	\tvd(\mu,\nu)
	\coloneq
	\sup_{A\subset \Omega} |\mu(A) - \nu(A)|
	=
	\tfrac{1}{2}\sum_{\omega\in \Omega} |\mu(\omega) - \nu(\omega)|
	=
	\sup_{\norm{f}_{\infty} \leq 1} \bigl|\mu[f] -\nu[f]\bigr|
\end{equation*}where the second supremum is over functions \(f:\Omega\to \R\), and we denoted \(\mu[f] = \int f d\mu\). In the case of \(\mu,\nu\) probability measures, \(\tvd(\mu,\nu) = \min_{\pi\in \Pi(\mu,\nu)} \pi(X\neq Y)\) where \(\Pi(\mu,\nu)\) is the set of couplings of \(\mu\) and \(\nu\), and \((X,Y)\sim \pi\).

\begin{lemma}
	\label{lem:tot_var_event_norm}
	Let \(\Omega\) be countable or finite. Let \(\mu,\nu\) be two finites measures on \((\Omega, \calP(\Omega))\). Suppose \(\mu(\Omega) = 1\). Let \(\epsilon\in [0,1)\), \(A\subset \Omega\). Then, one has the following.
	\begin{enumerate}
		\item If \(\tvd(\mu,\nu)\leq \epsilon\), \(\nu(\Omega)\in [1-\epsilon,1+\epsilon]\), and 
		\begin{equation*}
			\tvd(\nu,\bar{\nu}) \leq \tfrac{\epsilon}{2},
			\quad
			\tvd(\mu,\bar{\nu}) \leq \tfrac{3\epsilon}{2}.
		\end{equation*}where \(\bar{\nu}(B) = \frac{\nu(B)}{\nu(\Omega)}\) is the normalized version of \(\nu\).
		\item If \(\mu(A)\geq 1-\epsilon\), then,
		\begin{equation*}
			\tvd(\mu,\mu_A) \leq \tfrac{3\epsilon}{4},
		\end{equation*}where \(\mu_A(B) = \frac{\mu(A\cap B)}{\mu(A)}\) is the conditional measure.
	\end{enumerate}
\end{lemma}
\begin{proof}
	Start with the first point. By the assumption, \(|\nu(\Omega)-1| = |\nu(\Omega)-\mu(\Omega)|\leq \epsilon\). Then, \(\tvd(\nu,\bar{\nu}) = \frac{1}{2}\sum_{\omega}\nu(\omega)|1-\frac{1}{\nu(\Omega)}| = \frac{1}{2}|\nu(\Omega)-1| \leq \frac{\epsilon}{2} \). The second part of the display follows by triangle inequality.
	
	The second point follows form the first using the measure \(\nu(B) = \mu(A\cap B)\), which satisfies \(\tvd(\mu,\nu) = \frac{\nu(\Omega\setminus A)}{2}\leq \frac{\epsilon}{2}\).
\end{proof}

\section{Mixing to ratio mixing}

We prove here a technical Lemma whose use is recurrent in the proof that mixing implies ratio mixing under suitable conditions. It is a simplified version of the argument in~\cite[Section 5]{Ale98}.

\begin{lemma}
	\label{lem:app:mixing_to_ratioMixing}
	Let \(\Omega_i,\, i=1,2\) be finite sets. Let \(\Omega = \Omega_1\times \Omega_2\) and \(\calF_i =\{A\subset \Omega_i\}\), \(\calF = \{A\subset \Omega\}\). Let \(\mu,\nu\) be positive probability measures on \((\Omega,\calF)\). Let
	\begin{equation*}
		\pi_i :\Omega \to \Omega_i,\quad \pi_i((\omega_1,\omega_2)) = \omega_i,\quad
		\mu_i = \mu\circ \pi_i^{-1}.
	\end{equation*}Let \(\epsilon_1,\epsilon_2,\epsilon_3\in [0,1)\). Suppose that all of the following hold:
	\begin{enumerate}
		\item Mixing of \(\mu,\nu\): for every \(\xi,\xi'\in \Omega_1\), \(A\subset \Omega_2\), \(\rho\in \{\mu,\nu\}\)
		\begin{equation*}
			|\rho(\Omega_1\times A\given \{\xi\}\times \Omega_2) - \rho(\Omega_1\times A\given \{\xi'\}\times \Omega_2)|\leq \epsilon_1
		\end{equation*}
		\item Proximity between second marginals: \(\tvd(\mu_2,\nu_2)\leq \epsilon_2\);
		\item Conditional equality: there exists an event \(D\subset \Omega_2\) such that for \(\rho\in \{\mu_2,\nu_2\}\), \(\rho(D) \geq 1-\epsilon_3\), and for any \(y\in D\),
		\begin{equation*}
			\frac{\mu(x,y)}{\mu_2(y)} = \frac{\nu(x,y)}{\nu_2(y)},\ \forall x\in \Omega_1.
		\end{equation*}
	\end{enumerate}Then, if \(\epsilon = \max(\epsilon_1,\sqrt{\epsilon_2},\epsilon_3) \leq 0.1\), then, for any \(x\in \Omega_1\),
	\begin{equation*}
		1-9\epsilon \leq \frac{\mu_1(x)}{\nu_1(x)} \leq \frac{1}{1-9\epsilon}.
	\end{equation*}
\end{lemma}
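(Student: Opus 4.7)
The plan is to combine the three hypotheses by exploiting hypothesis~(3) twice---once to obtain an exact multiplicative identity for $\mu_1(x)/\nu_1(x)$, and once to obtain an additive comparison---and then to close the proof by a case split on the size of $\min(\mu_1(x),\nu_1(x))$, in which the role of the exponent $\tfrac12$ in the condition $\epsilon\geq\sqrt{\epsilon_2}$ becomes visible.

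First, I would rewrite hypothesis~(3) on $D$ as $\mu(x,y)\,\nu_2(y) = \nu(x,y)\,\mu_2(y)$ for every $y\in D$, and using Bayes' rule on both sides transform this into $\mu_1(x)\,\mu(y\,|\,x)\,\nu_2(y) = \nu_1(x)\,\nu(y\,|\,x)\,\mu_2(y)$. Summing over $y\in D$ produces the exact identity
\[
\mu_1(x)\,P_\mu(x) = \nu_1(x)\,P_\nu(x),\qquad P_\mu(x):=\sum_{y\in D}\mu(y\,|\,x)\nu_2(y),\quad P_\nu(x):=\sum_{y\in D}\nu(y\,|\,x)\mu_2(y),
\]
so that $\mu_1(x)/\nu_1(x) = P_\nu(x)/P_\mu(x)$. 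Averaging hypothesis~(1) against the first marginal gives $\tvd(\mu(\cdot\,|\,x),\mu_2)\leq\epsilon_1$ (and similarly for $\nu$), which combined with $\|\nu_2\|_\infty\leq 1$ yields the two-sided bounds $|P_\mu(x) - S|\leq 2\epsilon_1$ and $|P_\nu(x) - S|\leq 2\epsilon_1$, where $S:=\sum_{y\in D}\mu_2(y)\,\nu_2(y)$.

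Second, summing the conditional equality in the form $\mu_1(x)\,\mu(D\,|\,x) = \sum_{y\in D}\mu_2(y)\,\nu(x\,|\,y)$ (using $\mu(x\,|\,y)=\nu(x\,|\,y)$ on $D$) and subtracting its $\nu$-analogue, I would obtain---via hypothesis~(2) and $\nu(x\,|\,y)\leq 1$---the additive comparison
\[
\bigl|\mu_1(x)\,\mu(D\,|\,x) - \nu_1(x)\,\nu(D\,|\,x)\bigr| \leq 2\epsilon_2,
\]
while mixing yields $\mu(D\,|\,x),\,\nu(D\,|\,x)\in[1-\epsilon_1-\epsilon_3,\,1]$.

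The proof is then completed by a case split on $\min(\mu_1(x),\nu_1(x))$. When $\min(\mu_1(x),\nu_1(x))\geq\sqrt{\epsilon_2}$, the additive bound converts to a multiplicative one because the relative error $2\epsilon_2/\min(\mu_1,\nu_1)\leq 2\sqrt{\epsilon_2}\leq 2\epsilon$; folding in the $O(\epsilon)$ defects of $\mu(D\,|\,x)$ and $\nu(D\,|\,x)$ from $1$ fits within the $9\epsilon$ margin claimed, provided $\epsilon\leq 0.1$. The main obstacle will be the complementary regime in which both $\mu_1(x)$ and $\nu_1(x)$ are smaller than $\sqrt{\epsilon_2}$: here one must rely on the \emph{exact} identity $\mu_1(x)P_\mu(x)=\nu_1(x)P_\nu(x)$ from the first step, together with the estimate $|P_\mu - P_\nu|\leq 4\epsilon_1$, and the precise choice of threshold $\sqrt{\epsilon_2}$ is what forces the two regimes to overlap and cover all of $\Omega_1$. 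The overall constant $9$ is loose and accounts for the sum of the contributions of the three hypotheses.
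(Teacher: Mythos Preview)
Your approach avoids the paper's coupling construction and is conceptually appealing, but it has a genuine gap in the second regime of your case split.

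Your exact identity $\mu_1(x)/\nu_1(x)=P_\nu(x)/P_\mu(x)$ is correct, and so is the bound $|P_\mu(x)-S|,\,|P_\nu(x)-S|\le 2\epsilon_1$ with $S=\sum_{y\in D}\mu_2(y)\nu_2(y)$. The problem is that $S$, and hence $P_\mu,P_\nu$, has no lower bound independent of $|\Omega_2|$: if $\mu_2=\nu_2$ is close to uniform on $\Omega_2$, then $S\approx 1/|\Omega_2|$, which can be arbitrarily small compared to $\epsilon_1$. In that situation the estimate $|P_\mu-P_\nu|\le 4\epsilon_1$ says nothing about the ratio $P_\nu/P_\mu$. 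So in the regime $\min(\mu_1(x),\nu_1(x))<\sqrt{\epsilon_2}$ you have neither tool available: the additive bound from Step~2 cannot be converted to a multiplicative one (the denominator is too small), and the exact identity from Step~1 is uninformative (the quantities $P_\mu,P_\nu$ are themselves too small). Your remark that ``the precise choice of threshold $\sqrt{\epsilon_2}$ is what forces the two regimes to overlap'' does not address this; the two regimes tautologically cover $\Omega_1$, but neither argument closes on the second one.

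The paper's proof avoids this difficulty by constructing a coupling $\Psi$ of $\mu$ and $\nu$ (maximal coupling on the second coordinate, then conditional equality on $D$ to glue the first coordinates) and an event $H$ on which $X_1=Y_1$. One then gets $\mu_1(\xi)/\nu_1(\xi)=\Psi(H\mid Y_1=\xi)/\Psi(H\mid X_1=\xi)$, and the whole point is that \emph{both} conditional probabilities are shown to be at least $1-9\epsilon$, uniformly in $\xi$; a ratio of numbers close to $1$ is automatically close to $1$, regardless of how small $\mu_1(\xi)$ or $\nu_1(\xi)$ may be. The threshold $\sqrt{\epsilon_2}$ does appear, but inside the definition of $H$ (via Markov's inequality on the ``local discrepancy'' functions $g,h$), not as a case split on the size of $\mu_1,\nu_1$.
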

\begin{proof}
	The proof goes by constructing a suitable coupling of \(\mu,\nu\). Let \(Q\) be a maximal coupling of \(\mu_2\) and \(\nu_2\). Then, sample a random vector \((X,Y) = \big((X_1,X_2),(Y_1,Y_2)\big)\) as follows:
	\begin{enumerate}
		\item sample \((X_2,Y_2)\) using \(Q\);
		\item sample \(X_1\) using \(\mu(\ \given \Omega_1\times \{X_2\})\circ \pi_1^{-1}\);
		\item if \(X_2=Y_2\in D\), set \(Y_1 = X_1\). Else, sample \(Y_1\sim \nu(\ \given \Omega_1\times \{Y_2\})\circ \pi_1^{-1}\) independently of \(X_1\).
	\end{enumerate}Denote \(\Psi\) the law of \((X,Y)\).
	\begin{claim}
		\label{prf:lem:mixing_to_ratioMixing:claim1}
		All the following points hold.
		\begin{enumerate}
			\item \(\Psi\) is a coupling of \(\mu\) and \(\nu\).
			\item \((X_2=Y_2\in D)\implies (X_1=Y_1)\).
			\item \(X_1\) and \(Y_2\) are independent conditionally on \(X_2\), and \(Y_1\) and \(X_2\) are independent conditionally on \(Y_2\).
		\end{enumerate}
	\end{claim}
	\begin{proof}
		The second point is by construction. We check the first point. For \(x=(x_1,x_2)\in \Omega\),
		\begin{align*}
			\Psi(X=x)
			&=
			\sum_{y_2\in \Omega_2}Q(x_2,y_2) \Psi(X_1=x_1\given X_2=x_2, Y_2 = y_2)\\
			&=
			\sum_{y_2\in \Omega_2}Q(x_2,y_2) \frac{\mu(x_1,x_2)}{\mu_2(x_2)}
			=
			\mu(x_1,x_2).
		\end{align*}Also, for \(y=(y_1,y_2)\in \Omega\),
		\begin{align*}
			\Psi(Y=y)
			&=
			\sum_{x_2\in \Omega_2}Q(x_2,y_2) \Psi(Y_1=y_1\given X_2=x_2, Y_2 = y_2)\\
			&=
			\sum_{x_2\in \Omega_2}Q(x_2,y_2) \Big(\mathds{1}_{y_2=x_2\in D}\frac{\mu(y_1,x_2)}{\mu_2(x_2)} + (1-\mathds{1}_{y_2=x_2\in D})\frac{\nu(y_1,y_2)}{\nu_2(y_2)}\Big)\\
			&=
			\sum_{x_2\in \Omega_2}Q(x_2,y_2) \frac{\nu(y_1,y_2)}{\nu_2(y_2)}
			=
			\nu(y_1,y_2),
		\end{align*}as, by hypotheses on \(D\), \(\mathds{1}_{y_2=x_2\in D}\frac{\mu(y_1,x_2)}{\mu_2(x_2)} = \mathds{1}_{y_2=x_2\in D}\frac{\mu(y_1,y_2)}{\mu_2(y_2)} = \mathds{1}_{y_2=x_2\in D}\frac{\nu(y_1,y_2)}{\nu_2(y_2)}\).
		
		Let us finally check the third point. First, for any \(x_1\in \Omega_1, x_2,y_2\in \Omega_2\),
		\begin{equation*}
			\Psi(X_1=x_1\given X_2 = x_2, Y_2=y_2) = \frac{\mu(x_1,x_2)}{\mu_2(x_2)}  = \Psi(X_1=x_1\given X_2 = x_2).
		\end{equation*}Then, for any \(y_1\in \Omega_1, x_2,y_2\in \Omega_2\),
		\begin{multline*}
			\Psi(Y_1=y_1\given X_2 = x_2, Y_2=y_2) = \mathds{1}_{y_2=x_2\in D}\frac{\mu(y_1,x_2)}{\mu_2(x_2)} + (1-\mathds{1}_{y_2=x_2\in D})\frac{\nu(y_1,y_2)}{\nu_2(y_2)} \\
			= \frac{\nu(y_1,y_2)}{\nu_2(y_2)} = \Psi(Y_1=y_1\given Y_2=y_2),
		\end{multline*}by hypotheses on \(D\), as before.
	\end{proof}
	Introduce then \(g,h:\Omega_2\to \R_+\) defined by
	\begin{align*}
		g(\xi) = \Psi(X_2\neq Y_2 \given X_2 = \xi),\\
		h(\xi) = \Psi(X_2\neq Y_2 \given Y_2 = \xi).
	\end{align*}Define the event \(H\) by
	\begin{equation*}
		H = \{X_2=Y_2\}\cap \{X_2\in D\}\cap \{Y_2\in D\}\cap \{g(X_2) \leq \sqrt{\epsilon_2}\} \cap \{h(Y_2)\leq \sqrt{\epsilon_2}\}.
	\end{equation*}
	\begin{claim}
		\label{prf:lem:mixing_to_ratioMixing:claim2}
		One has
		\begin{gather*}
			\max_{x_1\in \Omega_2}\Psi(H^c \given X_1=x_1) \leq 9\epsilon,\\
			\max_{y_1\in \Omega_2}\Psi(H^c \given Y_1=y_1) \leq 9\epsilon.
		\end{gather*}
	\end{claim}
	\begin{proof}
		Let \(a=\sqrt{\epsilon_2}\). First, let \(\{g>a\} = \{x_2\in \Omega_2:\ g(x_2) >a\}\), one has
		\begin{multline*}
			\mu(\Omega_1\times \{g>a\}) = \Psi(g(X_2)> a)
			\leq a^{-1}\Psi( g(X_2)) = a^{-1}\Psi( \Psi(X_2\neq Y_2 \given X_2))\\
			= a^{-1}\Psi(X_2\neq Y_2 ) \leq \frac{\epsilon_2}{a},
		\end{multline*}as \(\Psi(X_2\neq Y_2 ) = Q(X_2\neq Y_2 ) = \tvd(\mu_2,\nu_2) \leq \epsilon_2\), and the same for \(\nu(\Omega_1\times \{h> a\})\). Now, by our first hypotheses,
		\begin{multline*}
			\Psi(g(X_2)> a\given X_1 = x_1) = \mu(\Omega_1\times \{g>a\}\given \{x_1\}\times \Omega_2 ) \\
			\leq \mu(\Omega_1\times \{g>a\}) +\epsilon_1 = \frac{\epsilon_2}{a} + \epsilon_1,
		\end{multline*}and the same for \(\Psi(h(Y_2)> a\given Y_1 = y_1)\). Also,
		\begin{equation*}
			\Psi(X_2\in D^c\given X_1 = x_1) = \mu(\Omega_1\times D^c\given \{x_1\}\times \Omega_2 )
			\leq \mu(\Omega_1\times D^c) +\epsilon_1 = \epsilon_3 + \epsilon_1,
		\end{equation*}and the same for \(\Psi(Y_2\in D^c \given Y_1 = y_1)\).
		
		Then, as \(X_1\) and \(Y_2\) are independent conditionally on \(X_2\) (by Claim~\ref{prf:lem:mixing_to_ratioMixing:claim1}),
		\begin{align*}
			\Psi(&X_2\neq Y_2, g(X_2)\leq a\given X_1 = x_1) \\
			&\quad =
			\sum_{x_2: g(x_2)\leq a} \Psi(X_2=x_2\given X_1 = x_1)\Psi(Y_2\neq x_2\given X_2=x_2)\\
			&\quad =
			\sum_{x_2: g(x_2)\leq a} \Psi(X_2=x_2\given X_1 = x_1)g(x_2) \leq a.
		\end{align*}In the same fashion, \(\Psi(X_2\neq Y_2, h(Y_2)\leq a\given Y_1 = y_1) \leq a\). Finally,
		\begin{align*}
			\Psi(&X_2= Y_2\in D, h(Y_2)> a \given X_1 = x_1) \\
			&\qquad =
			\Psi(X_2= Y_2\in D\given X_1 = x_1) \Psi( h(Y_2)> a \given X_1 = x_1, X_2= Y_2\in D)\\
			&\qquad =
			\Psi(X_2= Y_2\in D\given X_1 = x_1) \Psi( h(Y_2)> a \given Y_1 = x_1, X_2= Y_2\in D)\\
			&\qquad =
			\frac{\Psi(X_2= Y_2\in D\given X_1 = x_1)}{\Psi(X_2= Y_2\in D\given Y_1 = x_1)} \Psi(X_2= Y_2\in D, h(Y_2)> a \given Y_1 = x_1)\\
			&\qquad \leq
			\frac{\epsilon_2/a + \epsilon_1}{1-2\epsilon_1 - \epsilon_3 - \epsilon_2/a - a},
		\end{align*}where the second equality is because \(X_2=Y_2\in D \implies X_1 = Y_1\), and the inequality is the previous bounds, and a union bound on \(\Psi(\{X_2= Y_2\in D\}^c\given Y_1 = x_1)\). Putting things together,
		\begin{align*}
			\Psi(H^c \given X_1 = x_1) &\leq \Psi(X_2\in D^c\given X_1 = x_1) + \Psi(g(X_2)> a\given X_1 = x_1)  \\
			&\qquad + \Psi(X_2\neq Y_2, g(X_2)\leq a\given X_1 = x_1) \\
			&\qquad + \Psi(X_2= Y_2\in D, h(Y_2)> a \given X_1 = x_1)\\
			&\leq \epsilon_3+\epsilon_1 + \frac{\epsilon_2}{a} + \epsilon_1 + a + \frac{\epsilon_2/a + \epsilon_1}{1-2\epsilon_1
				-\epsilon_3 -\epsilon_2/a - a}.
		\end{align*}One obtains the same bound on \(\Psi(H^c \given Y_1 = y_1)\) in the same way (with \(h\leftrightarrow g\) and \(Y_2\leftrightarrow X_2\)). Plugging in \(a=\sqrt{\epsilon_2}\) and using the definition of \(\epsilon\) gives that the last display is upper bounded by
		\begin{equation*}
			5\epsilon + \frac{2\epsilon}{1-5\epsilon} \leq 9\epsilon,
		\end{equation*}as \(\epsilon\leq 0.1\) by hypotheses.
	\end{proof}
	Let us now see how Claims~\ref{prf:lem:mixing_to_ratioMixing:claim1} and~\ref{prf:lem:mixing_to_ratioMixing:claim2} imply the wanted result. As \(\Psi\) is a coupling,
	\begin{equation*}
		\frac{\mu_1(\xi)}{\nu_1(\xi)}
		=
		\frac{\Psi(X_1 = \xi)}{\Psi(Y_1 = \xi)}
		=
		\frac{\Psi(X_1 = \xi)\Psi(Y_1 = \xi,H)}{\Psi(Y_1 = \xi)\Psi(X_1 = \xi,H)}
		=
		\frac{\Psi(H\given Y_1 = \xi)}{\Psi(H\given X_1 = \xi)},
	\end{equation*}where the second equality is because \(\{X_1=Y_1\}\) under \(H\) (as \(\{X_2=Y_2\in D\}\subset H\)). But now,
	\begin{equation*}
		1- 9\epsilon \leq 1-\Psi(H^c\given Y_1 = \xi) \leq \frac{\Psi(H\given Y_1 = \xi)}{\Psi(H\given X_1 = \xi)} \leq \frac{1}{1-\Psi(H^c\given X_1 = \xi)}\leq \frac{1}{1-9\epsilon}.
	\end{equation*}
\end{proof}

\section{A relative density bound for monotonic measures}

We prove here a small Lemma that morally says ``if one controls relative densities with maximal boundary conditions, then one control relative densities with any boundary conditions''. We stat it in a somewhat general form so that it can be re-used in the future.
\begin{lemma}
	\label{lem:ratio_FKG_measures}
	Let \(\Lambda\) be a finite set, and \(q\geq 1\) be an integer. Let \(\mu,\nu\) be measures on \(\Omega_{\Lambda} = (\{0,1\}^q)^{\Lambda}\) with mass functions (\(\omega = (\omega_{i,k})_{i\in \Lambda,k=1,\dots, q}\))
	\begin{equation*}
		\mu(\omega) = w_{\mu}(\omega) f_{\Lambda}(\omega),
		\quad
		\nu(\omega) = w_{\nu}(\omega) f_{\Lambda}(\omega),
	\end{equation*}where
	\begin{equation*}
		f_{\Lambda}(\omega)
		=
		\prod_{i\in \Lambda}f_i\big((\omega_{i,k})_{k=1}^q\big),
		\quad
		f_i\big((\omega_{i,k})_{k=1}^q\big)
		=
		\prod_{k=1}^{q-1}\mathds{1}_{\omega_{i,k}\leq \omega_{i,k+1}},
	\end{equation*}and \(w_{\mu},w_{\nu}\) are positive lattice FKG functions. Let \(\epsilon \in [0,1)\), \(\Delta\subset W\subset \Lambda\). Suppose that\footnote{We denote \(0,1\in \{0,1\}^q\) for the constant \(0,1\) sequence.}: for \(X\sim \mu\), \(Y\sim \nu\),
	\begin{enumerate}
		\item \(P(X_W\in \cdot \given X_{\Lambda\setminus W} = 1) = P(Y_W\in \cdot \given Y_{\Lambda\setminus W} = 1)\),
		\item \(P(X_W\in \cdot \given X_{\Lambda\setminus W} = 0) = P(Y_W\in \cdot \given Y_{\Lambda\setminus W} = 0)\),
		\item for any \(A\subset \Omega_{\Delta}\),
		\(
		1-\epsilon
		\leq
		\frac{P(X_{\Delta}\in A \given X_{\Lambda\setminus W} = 1)}{P(X_{\Delta}\in A \given  X_{\Lambda\setminus W} = 0)}
		\leq
		1+\epsilon
		\),
	\end{enumerate}where \(\frac{0}{0} = 1\).
	Then, for any \(A\subset (\{0,1\}^q)^{\Delta}\),
	\begin{equation*}
		\frac{1-\epsilon}{(1+\epsilon)^2}
		\leq
		\frac{P(X_{\Delta}\in A )}{P(Y_{\Delta}\in A)}
		\leq
		\frac{(1+\epsilon)^2}{1-\epsilon},
	\end{equation*}where again \(\frac{0}{0} = 1\).
\end{lemma}
\begin{proof}
	We refer to~\cite[Section~4]{GeoHagMae01} for more details and terminology about monotonic/lattice FKG measures. Let \(X\sim \mu\), \(Y\sim \nu\). Note that \(f\) is lattice FKG, so \(\mu,\nu\) are lattice FKG. Note moreover that \(\mu,\nu\) are supported on an irreducible sub-lattice of \((\{0,1\}^q)^V\), so for any \(\Lambda\subset V\), and almost every realisations \(\eta\leq \eta'\) of \(X_{V\setminus \Lambda}\),
	\begin{equation*}
		P(X_{\Lambda}\in \cdot \given X_{V\setminus \Lambda} = \eta)
		\preccurlyeq
		P(X_{\Lambda}\in \cdot \given X_{V\setminus \Lambda} = \eta'),
	\end{equation*}and the same for \(Y\). It is sufficient to show the claim for \(A\) being a singleton with positive \(\mu\)-probability (and therefore positive \(\nu\)-probability). Let \(\Delta\supset F_1\supset \dots \supset F_q\), \(E_i=\Delta\setminus F_i\). Then, letting
	\begin{gather*}
		A_{X}^{1} = \cap_{i=1}^q \{X_{E_i}(i)=1\},
		\quad
		A_{X}^{0} = \cap_{i=1}^q \{X_{F_i}(i)=0\},
		\\
		A_{Y}^{1} = \cap_{i=1}^q \{Y_{E_i}(i)=1\},
		\quad
		A_{Y}^{0} = \cap_{i=1}^q \{Y_{F_i}(i)=0\},
	\end{gather*}one has
	\begin{align*}
		&\frac{P(\cap_{i=1}^q \{X_{F_i}(i)=0, X_{E_i}(i)=1\})}{P(\cap_{i=1}^q \{Y_{F_i}(i)=0,Y_{E_i}(i)=1\})}
		=
		\frac{P(A_X^1)P(A_X^0 \given A_X^1)}{P(A_Y^1)P(A_Y^0 \given A_Y^1)}
		\\
		&\quad\leq
		\frac{P(A_X^1\given X_{V\setminus W} =1)P(A_X^0 \given A_X^1,\ X_{V\setminus W} = 0)}{P(A_Y^1\given Y_{V\setminus W} = 0)P(A_Y^0 \given A_Y^1,\ Y_{V\setminus W} = 1)}
		\\
		&\quad=
		\frac{P(A_X^1\given X_{V\setminus W} =1) P(A_X^0,\ A_X^1 \given X_{V\setminus W} = 0)P( A_Y^1 \given Y_{V\setminus W} = 1)}{P(A_X^1 \given X_{V\setminus W} = 0) P(A_Y^1 \given Y_{V\setminus W} = 0) P(A_Y^0,\ A_Y^1 \given Y_{V\setminus W} = 1)}
		\\
		&\quad=
		\frac{P(A_X^1\given X_{V\setminus W} =1)}{P(A_X^1 \given X_{V\setminus W} = 0)} \frac{P(A_X^0,\ A_X^1 \given X_{V\setminus W} = 0)}{P(A_X^0,\ A_X^1 \given X_{V\setminus W} = 1)} \frac{P( A_X^1 \given X_{V\setminus W} = 1)}{ P(A_X^1\given X_{V\setminus W} = 0)}
		\leq
		\frac{(1+\epsilon)^2}{1-\epsilon}
	\end{align*}where we used the one-monotonicity of \(\mu,\nu\) in the second line, definition of conditional probability in the third, the first two hypotheses of the Lemma in the fourth line, and the last hypotheses in the last inequality. The lower bound follows the same path:
	\begin{align*}
		&\frac{P(\cap_{i=1}^q \{X_{F_i}(i)=0, X_{E_i}(i)=1\})}{P(\cap_{i=1}^q \{Y_{F_i}(i)=0,Y_{E_i}(i)=1\})}
		\geq
		\frac{P(A_X^1\given X_{V\setminus W} =0)P(A_X^0 \given A_X^1,\ X_{V\setminus W} = 1)}{P(A_Y^1\given Y_{V\setminus W} = 1)P(A_Y^0 \given A_Y^1,\ Y_{V\setminus W} = 0)}
		\\
		&\quad=
		\frac{P(A_X^1\given X_{V\setminus W} =0)}{P(A_X^1 \given X_{V\setminus W} = 1)} \frac{P(A_X^0,\ A_X^1 \given X_{V\setminus W} = 1)}{P(A_X^0,\ A_X^1 \given X_{V\setminus W} = 0)} \frac{P( A_X^1 \given X_{V\setminus W} = 0)}{ P(A_X^1\given X_{V\setminus W} = 1)}
		\geq
		\frac{1-\epsilon}{(1+\epsilon)^2}.
	\end{align*}
\end{proof}

\bibliographystyle{amsalpha}
\bibliography{biblicomplete}

\end{document}